\documentclass[11pt]{artformath}
\usetikzlibrary{arrows.meta,calc}
\begin{document}
\title{Holonomy Asymptotics along Quartic Differential Rays}
\author{Weihan Ma \thanks{Chern Institute of Mathematics and LPMC, Nankai University, Tianjin 300071, China
\\ \email{1120210009@mail.nankai.edu.cn}}}    
\date{}    
\maketitle 
\vspace{-2.0em}    
\begin{abstract}
Let \(X\) be a closed Riemann surface and let
\(q\in H^0(X,K^4)\) be a nonzero holomorphic quartic differential on \(X\). For \(t >0\), the ray \(tq\) determines a family of Hitchin representations in the \(\operatorname{PSp}(4,\mathbb R)\)-Hitchin component. We study, as
\(t\to+\infty\), the asymptotic behavior of their holonomy along
closed curves. We obtain explicit asymptotic formulas for all singular
values and for the absolute values of all eigenvalues of the
holonomy. Their logarithmic growth rates are given by integrating the
local fourth roots of \(q\) along the saddle connections forming the
geodesic representative of the curve with respect to the singular
flat metric \(\lvert q\rvert^{1/2}\). No restriction is imposed on the
orders of the zeros of \(q\).
\keywords{\(\mathrm{PSP}(4,\mathbb R)\)-Hitchin representations, Hitchin section, holonomy asymptotics, Stokes matrices}
\subjclass{53C07, 53C43, 57K20}
\end{abstract}
\tableofcontents
\section{Introduction}
Let \(S\) be a closed oriented surface of genus at least two. The
character variety of representations of \(\pi_1(S)\) into a reductive
Lie group \(G\) is a natural and important object of study. The non-abelian Hodge correspondence, developed through the work of
Hitchin \cite{hitchin1987self}, Donaldson
\cite{donaldson1987twisted}, Corlette \cite{corlette1988flat},
Simpson \cite{simpson1988constructing}, García-Prada, Gothen, and
Mundet i Riera \cite{garciaprada2012hitchinkobayashi}, and many
others, relates this character variety to the moduli space of Higgs
bundles. More
precisely, after fixing a complex structure \(X\) on \(S\), it
identifies reductive representations with polystable Higgs bundles.
We briefly recall how the corresponding representation is recovered
from a Higgs bundle. Given such a Higgs bundle \((E,\phi)\), there exists a Hermitian metric
\(h\) satisfying the Hitchin equation
\[
F_{\nabla_h}+[\phi,\phi^{*_{h}}]=0,
\]
where \(\nabla_h\) is the Chern connection of \(h\). We call such a metric \(h\) is harmonic. The associated
connection
\(
D_h=\nabla_h+\phi+\phi^{*_{h}}
\)
is flat, and its holonomy recovers the corresponding representation. For extensions of the non-abelian Hodge correspondence to noncompact
curves and varieties, see
\cite{simpson1990harmonic,biquard1997higgs,
biquard2004wild,mochizuki2006kobayashi,
mochizuki2009kobayashi,biquard2020parabolic}. 

When \(G\) is a split real form of a complex simple Lie group, Hitchin \cite{hitchin1992lie}
used this correspondence to discover a distinguished connected
component of the character variety, now called the Hitchin component;
its elements are called Hitchin representations. For a fixed
complex structure on \(S\), the Hitchin section parametrizes the
corresponding component of the moduli space of Higgs bundles by holomorphic differentials, and hence also parametrizes the Hitchin component through the non-abelian Hodge correspondence. 

Labourie \cite{labourie2004} introduced dynamical methods to study
Hitchin representations and proved that they are Anosov. In
particular, Hitchin representations are discrete and faithful. They
may therefore be viewed as natural higher-rank generalizations of
Fuchsian representations, and the Hitchin component provides one of
the fundamental examples of a higher Teichmüller space.

For split real groups of rank two, Labourie proved the uniqueness of
the equivariant minimal surface associated with a Hitchin
representation. Equivalently, every such representation determines a
unique complex structure \(X\) on \(S\) for which the quadratic
differential in the Hitchin parametrization vanishes. Consequently, the Hitchin component is naturally identified with the
total space of a vector bundle over Teichmüller space, whose fiber
over \(X\) is the space of remaining holomorphic
differentials
\cite[Theorem~1.2.1]{labourie2017cyclic}. 

In the case of \(\operatorname{SL}(3,\mathbb R)\), this parametrization was also obtained independently by Loftin \cite{loftin2001affine} and Labourie \cite{labourie2007flat}. The resulting
Labourie--Loftin parametrization identifies the Hitchin component \(\operatorname{Hit}_{\operatorname{SL}(3,\mathbb R)}(S)\) with
the bundle of holomorphic cubic differentials over the Teichmüller
space. In \cite{loftin2026limits}, Loftin, Tamburelli, and Wolf studied representations associated with rays of cubic differentials. They showed that the holonomy admits an explicit asymptotic description in
terms of the cubic differential. Reid \cite{reid2026limits}
subsequently studied the same holonomy asymptotics using methods from
convex projective and Finsler geometry, and extended the corresponding
formulas to more general sequences of representations asymptotic to
cubic-differential rays.

Motivated by the work above, we study the analogous problem for the
\(\operatorname{PSp}(4,\mathbb R)\)-Hitchin component. In this case, every
\(\operatorname{PSp}(4,\mathbb R)\)-Hitchin representation
\(
\rho:\pi_1(S)\longrightarrow\operatorname{PSp}(4,\mathbb R)
\)
determines a unique pair \((X,q)\), where \(X\) is the distinguished
complex structure on \(S\) and
\(q\) is a holomorphic quartic differential on \(X\); see
\cite[Theorem~1.2.1]{labourie2017cyclic}. After choosing a square root \(K^{1/2}\), the pair \((X,q)\)
determines the cyclic \(\operatorname{Sp}(4,\mathbb R)\)-Higgs bundle
in the Hitchin section
\(
\bigl(\mathbb K_{\operatorname{Sp}(4,\mathbb R)},\phi(q)\bigr);
\)
see Definition~\ref{def:cyclic-Higgs-bundles}. This Higgs bundle admits a unique compatible
harmonic metric \(h_q\), and the associated connection
\(
D_q=\nabla_{h_q}+\phi(q)+\phi(q)^{*_{h_q}}
\)
is flat. Its holonomy defines a representation
\(
\widetilde\rho_q:
\pi_1(S)\longrightarrow\operatorname{Sp}(4,\mathbb R)
\)
whose projection to \(\operatorname{PSp}(4,\mathbb R)\) is precisely
the original Hitchin representation \(\rho\).

We now fix the complex structure \(X\) on \(S\) and a nonzero holomorphic quartic differential
\(q\in H^0(X,K^4)\). For \(t>0\), let \(D_t\) denote the flat
connection obtained from \(
\bigl(\mathbb K_{\operatorname{Sp}(4,\mathbb R)},\phi(tq)\bigr)
\), the cyclic \(\operatorname{Sp}(4,\mathbb R)\)-Higgs bundle in the Hitchin section associated with
\((X,tq)\). The aim of this paper is to study the asymptotic behavior of its holonomy \(\operatorname{Hol}_t\) as \(t\to+\infty\). The main result is as follows.

Let \(c_\gamma\) be the geodesic representative of a nontrivial closed curve
\(\gamma\) with respect to the singular flat metric
\(\lvert q\rvert^{1/2}\), and write
\[
c_\gamma=c_\ell*\cdots*c_1
\]
as a concatenation of saddle connections. We choose the base point of \(c_\gamma\) to be any point in the interior of \(c_{\ell}\), hence outside the zero set of \(q\). Throughout,
\(\operatorname{Hol}_t(c_\gamma)\) denotes the holonomy  along the curve \(c_\gamma\) based at this point. Let
\[
\phi_k=\sqrt{-1}^{1-k}q^{1/4},
\qquad
k=1,2,3,4,
\]
denote the local fourth roots of \(q\). For each saddle connection
\(c_i\), let
\[
v_i=(v_{i,1},v_{i,2},v_{i,3},v_{i,4}),
\qquad
v_{i,1}\geq v_{i,2}\geq v_{i,3}\geq v_{i,4},
\]
be the non-increasing rearrangement of
\[
-2\operatorname{Re}\int_{c_i}\phi_1,\qquad
-2\operatorname{Re}\int_{c_i}\phi_2,\qquad
-2\operatorname{Re}\int_{c_i}\phi_3,\qquad
-2\operatorname{Re}\int_{c_i}\phi_4.
\]

Our main theorem expresses the asymptotic spectral data of the holonomy entirely in terms of the vectors \(v_i\), which may be viewed as a quartic-differential analogue of the result of Loftin, Tamburelli, and Wolf \cite[Theorem~A]{loftin2026limits}.

\begin{theorem}[Theorem~\ref{thm:exterior-power-holonomy-asymptotics-norm} and Theorem~\ref{thm:exterior-power-spectral-radius-asymptotics}]
\label{thm:introduction-main-result}
With the notation above, for every nontrivial
\(\gamma\in\pi_1(S)\) and every \(k=1,2,3,4\), we have
\[
\lim_{t\to+\infty}
\frac{1}{t^{1/4}}
\log
\left\|
\bigwedge\nolimits^k
\operatorname{Hol}_t(c_\gamma)
\right\|
=
\sum_{i=1}^{\ell}\sum_{j=1}^{k}v_{i,j},
\]
and
\[
\lim_{t\to+\infty}
\frac{1}{t^{1/4}}
\log
\Lambda\left(
\bigwedge\nolimits^k
\operatorname{Hol}_t(c_\gamma)
\right)
=
\sum_{i=1}^{\ell}\sum_{j=1}^{k}v_{i,j},
\]
where \(\lVert\cdot\rVert\) is any fixed matrix norm and
\(\Lambda\) denotes the spectral radius.
\end{theorem}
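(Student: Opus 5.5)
The plan follows the Loftin--Tamburelli--Wolf strategy: decompose \(\operatorname{Hol}_t(c_\gamma)\) into parallel transports along the saddle connections \(c_i\) and small loops around the zeros of \(q\). Then show that, after \(t\)-dependent gauge changes, each piece is asymptotically diagonal or asymptotically a fixed unipotent or Stokes-type matrix.

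\textbf{Step 1: Estimates on the harmonic metric.} Away from the zeros of \(q\), I would establish the exponential estimates for \(h_{tq}\). Take the natural frame adapted to the cyclic splitting of \(\mathbb K_{\operatorname{Sp}(4,\mathbb R)}\), rescaled by powers of \(|tq|^{1/8}\). In this frame, \(h_{tq}\) is exponentially close to the diagonal model metric. The error is \(O(e^{-c t^{1/4} d})\), where \(d\) is the \(|q|^{1/2}\)-distance to the zero set. This is the standard maximum-principle/subharmonicity argument for cyclic Toda-type systems (as in Mochizuki and Dai--Li). In the same gauge, \(D_{tq}\) then equals the model connection \(t^{1/4}\operatorname{diag}\)-type term plus an exponentially small perturbation.

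\textbf{Step 2: Transport along one saddle connection.} I would diagonalize the model connection by the eigenbasis of \(\phi+\phi^{*}\), with eigenvalues built from \(\phi_k\). Along the interior of \(c_i\), parallel transport is then \(\operatorname{diag}\bigl(e^{-2t^{1/4}\operatorname{Re}\int\phi_k}\bigr)\), up to a factor \(I+o(1)\). The part near the endpoints needs separate care.

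\textbf{Step 3: Transport near the zeros.} Near a zero of order \(m\), I would rescale to the model \(z^m dz^4\) on \(\mathbb C\). There the harmonic metric is the unique model solution and is \(t\)-independent after scaling. The transition from the incoming eigenframe of \(c_i\) to the outgoing eigenframe of \(c_{i+1}\), turning through angle \(\theta\ge\pi\) on the appropriate side (geodesic condition), is then a product of Stokes matrices of this model ODE. These matrices are upper-/lower-triangular with respect to the orderings of \(\operatorname{Re}\phi_k\) along the two directions.

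\textbf{Step 4: Combining the pieces.} The product has the form \(\prod_i (S_i + o(1)) D_i(t)\), with \(D_i(t)\) diagonal with exponents \(t^{1/4}v_{i,\cdot}\) after reordering. The key claim is that the relevant minors of the \(S_i\) are nonzero: the leading \(k\times k\) minor, in the ordering matching the dominant directions of \(D_i\) and \(D_{i+1}\), does not vanish. Then \(\bigwedge^k\) of the product has a single dominant entry of size \(\exp\bigl(t^{1/4}\sum_i\sum_{j\le k}v_{i,j}+o(t^{1/4})\bigr)\), and all other entries are smaller. This gives the norm asymptotics. For the spectral radius, the same cyclic structure (base point in the interior of \(c_\ell\)) shows \(\bigwedge^k\operatorname{Hol}_t\) is dominated by a rank-one piece with nonzero trace coefficient. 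Hence the top eigenvalue matches the norm to leading order, by a Perron--Frobenius/contraction-of-cones argument on the projectivized \(\bigwedge^k\).

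The main obstacle is Step 3--4: proving non-vanishing of the needed Stokes minors for arbitrary zero orders and angles \(\ge\pi\), and handling ties \(v_{i,j}=v_{i,j+1}\). I would try first to use the reality/positivity structure, namely total positivity of Hitchin-type Stokes data (the \(\operatorname{Sp}(4,\mathbb R)\) real structure together with the \(\mathbb Z/4\) cyclic symmetry \(\phi_k\mapsto\phi_{k+1}\)). This should make the relevant minors strictly positive. Ties would be handled by grouping equal exponents into blocks.
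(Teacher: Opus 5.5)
Your overall plan is the paper's: Collier--Li estimates on the straight pieces, rescaling to the planar model \(z^d\,dz^4\) at each zero, Stokes matrices for the arcs around the zeros, then composing the blocks. The gap is the step you yourself flag as the main obstacle, and the route you propose for it is not available.

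\textbf{Why total positivity does not close the gap.} No total-positivity result is known for the Stokes data of the \(\operatorname{Sp}(4,\mathbb R)\) planar model. The paper points out that the needed positivity and convexity at infinity are not established by Tamburelli--Wolf, and computes the data instead. A uniform positivity principle would also prove too much. In the paper's normalization, an arc crossing \(k\) sectorial charts contributes \(T^k\), with \(T=Q^{-1}U_2^{-1}U_1^{-1}\). Every relevant entry is one of \(\pm b_{k-1},\pm b_k,b_{k+1}\), where \(b_k=(T^k)_{2,4}\). But \(b_1=b_{N-1}=0\) (with \(N=d+4\)), so strict positivity holds only in the window \(2\le k\le N-2\).

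Landing in this window requires that \emph{both} angles at the cone point be \(\ge\pi\). The arc side gives the lower bound on \(k\), and the complementary side gives the upper bound: the number of Stokes rays crossed must lie between \(4\) and \(2N-4\). Your Step 3 uses the angle condition only ``on the appropriate side''. That is not enough, since at \(k=N-1\) the relevant entry vanishes.

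\textbf{The missing idea: explicit computation.} Identify the rescaled radial model solution with the \(A_3\) tt\(^*\)-Toda solution with parameters \(m_i=(2i-3)d/(2N)\). The Guest--Its--Lin asymptotics then give the Stokes constants \(s_1=\sin(4\theta)/\sin\theta\) and \(s_2=\sin(4\theta)\sin(3\theta)/(\sin\theta\sin(2\theta))\), with \(\theta=\pi/N\). From these, \(T\) has eigenvalues \(e^{\pm\sqrt{-1}\theta}\) and \(e^{\pm3\sqrt{-1}\theta}\). This yields the closed form
\(b_k=\sin((k-1)\theta)\sin(k\theta)\sin((k+1)\theta)/(\sin\theta\sin(2\theta)\sin(3\theta))\).
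For \(\bigwedge^2\) it yields the minors
\(\Delta_k=b_k^2-b_{k-1}b_{k+1}=\sin((k-1)\theta)\sin^2(k\theta)\sin((k+1)\theta)/(\sin\theta\sin^2(2\theta)\sin(3\theta))\).
Both are positive exactly where they are needed.

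\textbf{Step 4 in the tie case.} When a saddle connection runs along a Stokes direction, a block's dominant coefficient contains a nonsingular \(2\times2\) block with entries \(b_{k-1},b_k,b_{k+1}\). So there is neither a single dominant entry nor a rank-one leading term. Nonvanishing of individual minors also does not rule out cancellation in the cyclic product.

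The paper handles this as follows:
\begin{enumerate}
\item It chooses the sectorial coordinates compatibly along each saddle connection, so the frame change at every midpoint is the fixed matrix \(R^{-2}\).
\item Each block then becomes \(\pm\) a nonnegative matrix whose column support matches the row support of the next block.
\item A path argument shows the cyclic product \(\mathcal P_\ell\) is not nilpotent. For \(\bigwedge^2\), its \((23,23)\) entry is at least \(\prod_i\Delta_{k_i}>0\).
\item The spectral-radius limit then follows from non-nilpotence and continuity of \(\Lambda\), with no cone-contraction argument needed.
\end{enumerate}

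\textbf{Comparison near the zeros.} Near a zero, the surface harmonic metric is only close to the model metric, not equal to it. You still need the comparison \(\|G_t-I\|\le e^{-\alpha t^{1/4}}\) on a shrunken disk. The paper obtains it from a maximum principle for the cooperative system satisfied by \(u_{k,t}-u_{k,t}^{\mathrm M}\).
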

As a direct consequence, we obtain the asymptotic behavior of all
the singular values and the absolute values of the eigenvalues of the holonomy.
\begin{corollary}[Corollary~\ref{cor:singular-value-asymptotics} and Corollary~\ref{cor:eigenvalue-modulus-asymptotics}]
\label{cor:introduction-spectral-asymptotics}
For \(M\in\operatorname{Sp}(4,\mathbb R)\), let
\[
\sigma_1(M)\geq \sigma_2(M)\geq
\sigma_3(M)\geq \sigma_4(M)>0
\]
denote its singular values, and let
\[
|\lambda_1(M)|\geq |\lambda_2(M)|\geq
|\lambda_3(M)|\geq |\lambda_4(M)|>0
\]
denote the absolute values of its eigenvalues, counted with multiplicity. Then, for every \(k=1,2,3,4\), we have
\[
\lim_{t\to+\infty}
\frac{1}{t^{1/4}}
\log
\sigma_k\left(
\operatorname{Hol}_t(c_\gamma)
\right)
=
\sum_{i=1}^{\ell}v_{i,k},
\]
and
\[
\lim_{t\to+\infty}
\frac{1}{t^{1/4}}
\log
\left|
\lambda_k\left(
\operatorname{Hol}_t(c_\gamma)
\right)
\right|
=
\sum_{i=1}^{\ell}v_{i,k}.
\]
\end{corollary}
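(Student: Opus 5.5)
The corollary follows from Theorem~\ref{thm:introduction-main-result} by elementary linear algebra. The only preliminary facts I need concern exterior powers. For any \(M\in\operatorname{GL}(4,\mathbb R)\):
\begin{itemize}
\item the singular values of \(\bigwedge^k M\) are the products \(\sigma_{i_1}(M)\cdots\sigma_{i_k}(M)\) over \(i_1<\dots<i_k\);
\item the eigenvalues of \(\bigwedge^k M\) are the corresponding products \(\lambda_{i_1}(M)\cdots\lambda_{i_k}(M)\), counted with multiplicity.
\end{itemize}
Consequently
\[
\bigl\|\textstyle\bigwedge^k M\bigr\|_{\mathrm{op}}=\sigma_1(M)\cdots\sigma_k(M),
\qquad
\Lambda\bigl(\textstyle\bigwedge^k M\bigr)=|\lambda_1(M)|\cdots|\lambda_k(M)|.
\]
All matrix norms on a finite-dimensional space are equivalent. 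Hence replacing the fixed norm of the theorem by the operator norm changes \(\log\|\cdot\|\) only by a bounded amount, which disappears after dividing by \(t^{1/4}\). The limits in Theorem~\ref{thm:introduction-main-result} therefore hold with the operator norm.

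Set \(M_t=\operatorname{Hol}_t(c_\gamma)\), and for \(k=1,\dots,4\) let
\[
S_k(t)=\sum_{j=1}^k\log\sigma_j(M_t),
\qquad
E_k(t)=\sum_{j=1}^k\log|\lambda_j(M_t)|,
\qquad
V_k=\sum_{i=1}^{\ell}\sum_{j=1}^{k}v_{i,j}.
\]
Put \(S_0=E_0=V_0=0\). By the identities above, Theorem~\ref{thm:introduction-main-result} states precisely that
\[
t^{-1/4}S_k(t)\to V_k
\quad\text{and}\quad
t^{-1/4}E_k(t)\to V_k
\qquad\text{for } k=1,2,3,4.
\]
Taking successive differences, \(\log\sigma_k(M_t)=S_k(t)-S_{k-1}(t)\), and the limit of a difference of two convergent quantities is the difference of the limits. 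This gives
\[
t^{-1/4}\log\sigma_k(M_t)\to V_k-V_{k-1}=\sum_{i=1}^{\ell}v_{i,k}.
\]
The same subtraction applied to \(E_k\) gives the eigenvalue statement.

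The case \(k=1\) needs no separate treatment, because \(S_0=V_0=0\). The case \(k=4\) can be cross-checked. Since \(M_t\in\operatorname{Sp}(4,\mathbb R)\), we have \(\det M_t=1\), so \(S_4=E_4=0\). Consistently, \(V_4=0\): the four roots \(\phi_k\) sum to zero, since \(1+(-\sqrt{-1})+(-1)+\sqrt{-1}=0\), so the four entries of each \(v_i\) sum to zero. This check is not needed for the argument.

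There is no genuine obstacle here, since all the analytic content lies in the theorem. The one point requiring care is that the \(\sigma_j\) and \(|\lambda_j|\) are ordered non-increasingly. This ordering is exactly what makes the operator norm and the spectral radius of \(\bigwedge^k M\) equal the products of the top \(k\) values. It also matches the non-increasing ordering of the \(v_{i,j}\), so the differences of partial sums correctly isolate the \(k\)-th term.
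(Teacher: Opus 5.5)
Your proof is correct and follows essentially the same route as the paper. The paper likewise identifies $\|\bigwedge^k M\|_{\mathrm{op}}$ and $\Lambda(\bigwedge^k M)$ with the products of the $k$ largest singular values and eigenvalue moduli, reduces to the operator norm by equivalence of norms, and then takes quotients of consecutive products (equivalently, differences of logarithms) in Theorem~\ref{thm:introduction-main-result}.
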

The asymptotic formulas for the absolute values of the eigenvalues
also have a direct application to the study of compactifications of
the \(\operatorname{PSp}(4,\mathbb R)\)-Hitchin component. Recall that the closed positive Weyl chamber of \(\operatorname{PSp}(4,\mathbb R)\) is
\(
\overline{\mathfrak a^+}
=
\left\{
(a,b,-b,-a)\in\mathbb R^4
\mathrel{}\middle|\mathrel{}
a\geq b\geq 0
\right\}.
\)
In \cite{parreau2012compactification}, Parreau introduced a compactification of the Hitchin component based on the marked Weyl-chamber length spectrum. More precisely, each \(\operatorname{PSp}(4,\mathbb R)\)-Hitchin representation \(\rho\)
is associated with its marked Weyl-chamber length spectrum
\[
L_\rho:\pi_1(S)\longrightarrow\overline{\mathfrak a^+},
\qquad
L_\rho(\gamma)=\lambda\bigl(\rho(\gamma)\bigr),
\]
where
\(
\lambda:\operatorname{PSp}(4,\mathbb R)
\longrightarrow\overline{\mathfrak a^+}
\)
is the Jordan projection. More explicitly, let
\(g\in\operatorname{PSp}(4,\mathbb R)\), and let
\(M\in\operatorname{Sp}(4,\mathbb R)\) be any lift of \(g\). If the
absolute values of the eigenvalues of \(M\) are ordered as
\[
|\lambda_1(M)|\geq|\lambda_2(M)|
\geq|\lambda_3(M)|\geq|\lambda_4(M)|>0,
\]
then
\[
\lambda(g)
=
\operatorname{diag}\left(
\log|\lambda_1(M)|,
\log|\lambda_2(M)|,
\log|\lambda_3(M)|,
\log|\lambda_4(M)|
\right).
\]
This definition is independent of the choice of lift.

Let
\(\widehat{\operatorname{Hit}}_{\operatorname{PSp}(4,\mathbb R)}(S)\)
denote the one-point compactification of
\(\operatorname{Hit}_{\operatorname{PSp}(4,\mathbb R)}(S)\).
Following Parreau \cite{parreau2012compactification}, the Weyl-chamber length compactification
\(
\overline{\operatorname{Hit}}_{\operatorname{PSp}(4,\mathbb R)}(S)
^{\mathrm{WL}}
\)
is defined as the closure of the image of the map
\[
\begin{aligned}
j:
\operatorname{Hit}_{\operatorname{PSp}(4,\mathbb R)}(S)
&\longrightarrow
\widehat{\operatorname{Hit}}_{\operatorname{PSp}(4,\mathbb R)}(S)
\times
\mathbb P\left(
\overline{\mathfrak a^+}^{\,\pi_1(S)}
\right),\\
\rho&\longmapsto\bigl(\rho,[L_\rho]\bigr).
\end{aligned}
\]
Our Corollary~\ref{cor:introduction-spectral-asymptotics} gives the following explicit description of the limit point of the quartic-differential ray in \(\overline{\operatorname{Hit}}_{\operatorname{PSp}(4,\mathbb R)}(S)
^{\mathrm{WL}}\).
\begin{corollary}
\label{cor:introduction-weyl-chamber-limit}
Let \(q\) be a nonzero quartic differential on \(X\), and let \(\rho_t\) be the family of \(\operatorname{PSp}(4,\mathbb R)\)-Hitchin representations corresponding to the ray \(tq\). For every nontrivial \(\gamma\in\pi_1(S)\), let 
\[ 
c_\gamma=c_\ell*\cdots*c_1 
\] 
be the decomposition of its geodesic representative with respect to the singular flat metric \(\lvert q\rvert^{1/2}\) into saddle connections.
Define
\[
L_\infty(\gamma)
=
\sum_{i=1}^{\ell}v_i
=
\left(
\sum_{i=1}^{\ell}v_{i,1},
\sum_{i=1}^{\ell}v_{i,2},
\sum_{i=1}^{\ell}v_{i,3},
\sum_{i=1}^{\ell}v_{i,4}
\right).
\]
Then
\[
\lim_{t\to+\infty}
\frac{1}{t^{1/4}}L_{\rho_t}(\gamma)
=
L_\infty(\gamma)
\]
for every nontrivial \(\gamma\in\pi_1(S)\). Consequently,
\[
[L_{\rho_t}]
\longrightarrow
[L_\infty] \quad \text{in} \quad \mathbb P\left(\overline{\mathfrak a^+}^{\,\pi_1(S)}\right),
\]
and the ray \(\rho_t\) converges to the boundary point
\(\bigl(\infty,[L_\infty]\bigr)\) in \(\overline{\operatorname{Hit}}_{\operatorname{PSp}(4,\mathbb R)}(S)
^{\mathrm{WL}}\).
\end{corollary}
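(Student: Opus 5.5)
This corollary follows from Corollary~\ref{cor:introduction-spectral-asymptotics} once the limiting vector is checked to lie in the Weyl chamber and the ray is checked to leave every compact set. The plan has three steps.

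\emph{Step 1: the limit of the length spectrum.} By the definition of the Jordan projection, $L_{\rho_t}(\gamma)=(\log|\lambda_k(M_t)|)_{k=1}^4$, where $M_t$ is any $\operatorname{Sp}(4,\mathbb R)$-lift of $\rho_t(\gamma)$. We may take $M_t=\operatorname{Hol}_t(c_\gamma)$. Indeed, the Higgs bundle $(\mathbb K_{\operatorname{Sp}(4,\mathbb R)},\phi(tq))$ projects to $\rho_t$. Changing the base point or the representative in the free homotopy class only conjugates the holonomy, and conjugation does not change eigenvalue moduli. The second formula of Corollary~\ref{cor:introduction-spectral-asymptotics} then gives, coordinate by coordinate,
\[
\lim_{t\to+\infty}\frac{1}{t^{1/4}}L_{\rho_t}(\gamma)=\sum_{i=1}^{\ell}v_i=L_\infty(\gamma).
\]
Each $v_i$ is non-increasing, so the sum is non-increasing. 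The four roots satisfy $\phi_{k+2}=-\phi_k$, so the multiset of the numbers $-2\operatorname{Re}\int_{c_i}\phi_k$ is symmetric under negation. Hence $v_i=(a_i,b_i,-b_i,-a_i)$ with $a_i\ge b_i\ge 0$. It follows that $L_\infty(\gamma)\in\overline{\mathfrak a^+}$.

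\emph{Step 2: projective convergence.} Convergence in $\mathbb P\bigl(\overline{\mathfrak a^+}^{\,\pi_1(S)}\bigr)$ requires a nonzero limit. With the product topology this is just coordinatewise convergence after rescaling, so I only need $L_\infty\neq 0$. Take any nontrivial $\gamma$. Its geodesic representative has positive $|q|^{1/2}$-length, so some saddle connection $c_i$ is nondegenerate. Along $c_i$, the integral $\int_{c_i}q^{1/4}=z$ is a nonzero complex number. The values $\operatorname{Re}(\sqrt{-1}^{1-k}z)$ for $k=1,\dots,4$ are $\pm\operatorname{Re}z$ and $\pm\operatorname{Im}z$, and these are not all zero. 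Therefore $a_i>0$, and so $\sum_i a_i>0$. Because each $L_{\rho_t}(\gamma)$ and its rescaling define the same projective class, $[L_{\rho_t}]=[t^{-1/4}L_{\rho_t}]\to[L_\infty]$.

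\emph{Step 3: the first factor, $\rho_t\to\infty$.} Suppose instead that $\rho_{t_n}$ stayed in a compact set. Then $L_{\rho_{t_n}}(\gamma)$ would be bounded for each fixed $\gamma$, because the Jordan projection is continuous. This contradicts Step 1, which gives growth like $t^{1/4}|L_\infty(\gamma)|$ with $|L_\infty(\gamma)|>0$. Alternatively, the Labourie parametrization is a homeomorphism and $tq\to\infty$ in the fiber, so the ray leaves every compact set. Combining Steps 2 and 3, $j(\rho_t)\to(\infty,[L_\infty])$, and this point lies in the closure.

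The only step I expect to need real care is Step 2, the nonvanishing of $L_\infty$. The argument above settles it using the fact that the saddle connection integrals are nonzero.
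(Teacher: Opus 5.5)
Your proposal is correct and follows the same route as the paper, which obtains this corollary directly from the eigenvalue-modulus asymptotics of Corollary~\ref{cor:introduction-spectral-asymptotics}. Your Steps 2 and 3 make explicit two points the paper leaves implicit: the nonvanishing of \(L_\infty\), where the paper instead remarks the stronger comparability \(\|L_\infty(\gamma)\|_\infty\asymp\ell_{|q|^{1/2}}(c_\gamma)\), which your saddle-connection computation essentially proves; and the divergence of \(\rho_t\).
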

It is worth noting that the limiting spectrum is uniformly comparable with the length spectrum of the singular flat metric:
\(
\bigl\|L_\infty(\gamma)\bigr\|_\infty
\asymp
\ell_{|q|^{1/2}}(c_\gamma),
\)
therefore has positive systole, i.e., \(\inf_{\gamma\neq 1}
\bigl\|L_\infty(\gamma)\bigr\|_\infty>0\). By
Burger--Iozzi--Parreau--Pozzetti \cite{burger2021currents}, the point \([L_\infty]\) belongs to the open subset of the Weyl-chamber length boundary on which the mapping class group acts properly discontinuously.

Burger, Iozzi, Parreau, and Pozzetti 
\cite{burger2021real-spectrum,burger2023real-spectrum} introduced a finer compactification of character varieties, called the real-spectrum compactification. There is a natural continuous, \(\operatorname{Out}(\pi_1(S))\)-equivariant, surjective map from the real-spectrum compactification to the Weyl-chamber length compactification \cite[Theorem~6]{burger2021real-spectrum}. Thus, the corollary above
determines the Weyl-chamber image of any real-spectrum limit of the
quartic-differential ray. 

An alternative compactification of the
\(\operatorname{PSp}(4,\mathbb R)\)-Hitchin component was constructed
by Ouyang and Tamburelli \cite{ouyang2023length}. Using the length spectra of the induced metrics on equivariant maximal surfaces, interpreted as geodesic currents, they described the boundary in terms of mixed structures.
Here we use the isomorphism
\(
\operatorname{SO}_0(2,3)\cong\operatorname{PSp}(4,\mathbb R).
\)
For related compactifications associated with other real rank-two Lie
groups, see
\cite{ouyang2021limits,ouyang2023high-energy}.

\subsection*{Outline of the proof and future directions}
We briefly outline the proof. By symplectic duality, it suffices to establish the theorem and its corollary for \(k=1,2\). The proof
naturally divides into the analysis away from the zeros of \(q\) and the analysis near the zeros.

Collier and Li \cite{collier2017asymptotics} studied the asymptotic
behavior away from the zeros of the holomorphic differential for
general cyclic \(\operatorname{SL}(n,\mathbb R)\)-Higgs bundles in
the Hitchin section. For the cyclic Higgs bundles considered here, a
point lies outside \(Z(q)\) if and only if the Higgs field is
regular semisimple at that point. More generally, Mochizuki
\cite{Mochizuki_2016} developed an asymptotic decoupling theory for
generically regular semisimple Higgs bundles, which was subsequently
extended by Sagman and Smillie \cite{sagman2026local} to more general Higgs bundles.
These results provide the asymptotic description needed on
\(X\setminus Z(q)\).

The main difficulty is the asymptotic behavior near the zeros of
\(q\). Since the flat geodesic representative is a concatenation of
saddle connections meeting at these zeros, their contributions to the
holonomy cannot be avoided. At a zero of \(q\), the Higgs field is
no longer semisimple, and hence the results above do not apply.

To analyze the parallel transport near the zeros, we adapt the general
framework developed by Loftin, Tamburelli, and Wolf
\cite{loftin2026limits}. Near a zero of order \(d\), we choose a local
coordinate in which
\(
q=z^ddz^4.
\)
After the natural rescaling associated with the ray \(tq\), the
local problem is reduced to the asymptotic analysis at infinity of the
planar model
\(
\bigl(\mathbb C,z^d \odif{z}^4\bigr).
\) The asymptotic behavior of this model
depends on the sector, giving rise to the Stokes
phenomenon. In particular, when a path crosses several Stokes
directions, one must analyze a product of Stokes matrices. To recover
the global holonomy formula, we show that the product associated with
a zero matches the dominant contributions arising from the two
adjacent saddle connections.

In the \(\operatorname{SL}(3,\mathbb R)\) case, Loftin,
Tamburelli, and Wolf \cite{loftin2026limits} obtained the required nonvanishing property from
the convex geometry of the associated polygon, while convex-projective and Finsler-geometric
structures also play an important role in Reid's approach \cite{reid2026limits}. For the
\(\operatorname{Sp}(4,\mathbb R)\) planar model, however, the
convexity and positivity properties at infinity needed are not established in the work of
Tamburelli and Wolf \cite{tamburelli2024planar}. We therefore take a direct analytic and algebraic approach.
Using the results of Guest, Its and Lin \cite{guestlin2026110730}, we compute the relevant Stokes matrices
explicitly and establish the required properties of their products.

 A further difficulty arises because we seek asymptotic formulas for
all four singular values and all four eigenvalue moduli. The symplectic
duality reduces the problem to \(k=1,2\), but the case \(k=2\)
requires a separate analysis of the induced parallel transport on
\(\bigwedge^2\mathbb R^4\), together with the corresponding products
of Stokes matrices. This issue does not arise separately in the
\(\operatorname{SL}(3,\mathbb R)\) case, since
\(
\bigwedge\nolimits^2\mathbb R^3
\simeq
(\mathbb R^3)^*.
\)
Consequently, Loftin, Tamburelli, and Wolf \cite{loftin2026limits} did not need to carry out
an independent analysis of the second exterior power.

We conclude by mentioning several possible directions for future
work. We conclude by mentioning several possible directions for future
work. First, the results of Guest, Its, and Lin
\cite{guestlin2026110730} apply to the planar models associated with
general cyclic \(\operatorname{SL}(n,\mathbb R)\)-Higgs bundles in
the Hitchin section. Thus, one expects that the methods of this paper
could yield analogous holonomy asymptotics in this more general
setting. The main obstacle is that the explicit analysis of the
Stokes matrices, their products, and their induced actions on
exterior powers becomes increasingly complicated as \(n\) grows.

A related but distinct direction concerns the split real group
\(G_2'\). Evans studied polynomial sextic differentials and the
associated almost-complex curves in
\(\widehat{\mathbb S}^{2,4}\)
\cite{evans2024polynomial}. Since the corresponding Higgs bundles are
sub-cyclic rather than cyclic, the results of Guest, Its, and Lin \cite{loftin2026limits} do not apply directly. It would be interesting to develop an analogous Stokes analysis for these planar models, compute the relevant Stokes matrices, and investigate the resulting holonomy asymptotics.

Second, Tamburelli and Wolf \cite{tamburelli2024planar} identified
polynomial quartic differentials with a connected component of the
moduli space of future-directed negative lightlike polygons in the
Einstein universe, but did not give a complete characterization of
the polygons in this component. The calculations developed here
suggest a possible approach to this problem. One may first establish
the relevant convexity or positivity properties for the polygons
associated with the model differentials
\(z^d\odif{z}^4,\) and then extend these properties to the entire component through a deformation and connectedness argument.

Another natural question concerns the asymptotic behavior of the
associated harmonic maps. In the rank-one case, Wolf related rays of
quadratic differentials to harmonic maps into
\(\mathbb R\)-trees
\cite{wolf1989teichmuller,wolf1995rtrees}. In higher rank, harmonic
maps into buildings and their relation to compactifications have been
studied in \cite{katzarkov2015harmonic,katzarkov2017constructing,
burger2021real-spectrum,parreau2022invariant,martone2024closed,}. Loftin, Tamburelli, and
Wolf \cite{loftin2026limits} proved that, after suitable rescaling
along a cubic-differential ray, the associated harmonic maps converge
to a harmonic map into a Euclidean building. Owing to the length of
the present paper, we do not study the analogous building-valued
limit here.

Finally, the holonomy asymptotics obtained here may provide a first
step toward understanding quartic-differential rays in the
real-spectrum compactification of Burger, Iozzi, Parreau, and Pozzetti
\cite{burger2021real-spectrum,burger2023real-spectrum}. Refining these results to precise
asymptotic expansions of trace functions may lead to the existence
and uniqueness of the corresponding limits in this compactification.

\subsection*{Organization}
The paper is organized as follows. In Section~\ref{sec:preliminaries~and~notations}, we review the theory of \(\operatorname{Sp}(4,\mathbb R)\)-Higgs bundles and fix the conventions used in our parallel transport calculations. We also
introduce the notation associated with quartic differentials and the
corresponding rays in the Hitchin component. In Section~\ref{sec:local-model}, we first recall the results of Tamburelli and Wolf \cite{tamburelli2024planar} on the planar model. We reformulate these results in our own notation. We then use the results of Guest, Its, and Lin \cite{guestlin2026110730} to compute the relevant Stokes matrices explicitly. In Section~\ref{sec:local-comparison-near-zeros}, we show that the asymptotic behavior of parallel
transport near a zero of the quartic differential is governed by the
corresponding planar model. In Section~\ref{sec:asymptotic-holonomy}, we first determine the asymptotic behavior of parallel
transport along two adjacent saddle connections and study the
products of Stokes matrices arising at their common endpoint. We then
assemble these local contributions to obtain the holonomy asymptotics
along the entire closed curve. Finally, we carry out the analogous
analysis on \(\bigwedge^2\operatorname{Hol_t}\) and complete the proofs of the main results.
 \subsection*{Acknowledgment}
  I am deeply grateful to my supervisor, Prof. Qiongling Li, for suggesting the problem and for her stimulating discussions. This work was partially supported by the National Key R\&D Program of China (Grant No. 2022YFA1006600), the Fundamental Research Funds for the Central Universities (Grant No. 63243067), and the Nankai Zhide Foundation.

\section{Preliminaries and notations}\label{sec:preliminaries~and~notations}
This preliminary section collects the main objects and conventions used throughout the paper. We begin by recalling \(\mathrm{Sp}(4,\mathbb{R})\)-Higgs bundles, the Hitchin equation, and the compatibility condition that harmonic metrics must satisfy in this real setting. Next, we set up the formalism of parallel transport, holonomy, and the multiplication rules for transport matrices along concatenated paths.  After that we review how a holomorphic quartic differential on a Riemann surface induces a singular flat metric, and we introduce natural coordinates, saddle connections, and the decomposition of flat geodesics into saddle connections. Finally, we define the one-parameter family of holonomies obtained by scaling the quartic differential, which will be the central object of study. The explicit local Stokes matrices will be introduced later, in Section~\ref{sec:local-model}.

Let \(X\) be a Riemann surface and \(K\) its canonical line bundle.

\subsection{\texorpdfstring{\(\mathrm{Sp}(4,\mathbb{R})\)}{SP}-Higgs bundles}
\label{section SP(4,R)-Higgs bundles}

Since every \(\operatorname{PSp}(4,\mathbb R)\)-Hitchin
representation admits a lift to \(\operatorname{Sp}(4,\mathbb R)\),
we work throughout with \(\operatorname{Sp}(4,\mathbb R)\)-Higgs
bundles. We begin by recalling their definition over \(X\). We then
introduce the Hitchin equations and explain the additional
compatibility condition satisfied by the harmonic metric in this
setting.

\begin{definition}
    An \(\mathrm{Sp}(4,\mathbb{R})\)-Higgs bundle over \(X\) is a pair \((E,\phi)\) consisting of
    \[
    E = V \oplus V^*,\qquad
    \phi = \begin{pmatrix} 0 & \beta \\ \gamma & 0 \end{pmatrix},
    \]
    where
    \begin{itemize}
        \item \(V\) is a holomorphic vector bundle of rank \(2\) over \(X\) and \(V^*\) is its dual,
        \item \(\beta \in \mathrm{H}^0(X, S^2V \otimes K)\) and \(\gamma \in \mathrm{H}^0(X, S^2V^* \otimes K)\).
    \end{itemize}
\end{definition}

In the direct sum decomposition \(E = V \oplus V^*\), the bundle carries a natural symplectic form \(\Omega\) and a natural orthogonal form \(Q\).  In block matrix notation, they are given by
\begin{align}\label{symplectic structure and orthogonal structure}
    \Omega = \begin{pmatrix} 0 & \mathbf{1} \\ -\mathbf{1} & 0 \end{pmatrix},
\qquad
    Q = \begin{pmatrix} 0 & \mathbf{1} \\ \mathbf{1} & 0 \end{pmatrix},
\end{align}
where \(\mathbf{1}\) denotes the identity map on the fibres of \(V\) (equivalently, of \(V^{*}\)).  

One may view \(\beta\) and \(\gamma\) as homomorphisms twisted by \(K\),
\[
\beta \colon V^{*} \longrightarrow V \otimes K,\qquad
\gamma \colon V \longrightarrow V^{*} \otimes K,
\]
and their symmetry is precisely the condition that \(\phi\) is skew-adjoint with respect to \(\Omega\), or equivalently, self-adjoint with respect to \(Q\).

Let us now recall the notion of a harmonic metric.  Given a Hermitian metric \(h\) on \(E\), write \(\nabla_h\) for its Chern connection and \(\phi^{*_h}\) for the adjoint of \(\phi\) with respect to \(h\).  The Hitchin equation for \(h\) is
\begin{align}\label{Hitchin equation}
    F_{\nabla_h} + [\phi, \phi^{*_h}] = 0,
\end{align}
where \(F_{\nabla_h}\) is the curvature of \(\nabla_h\).  This condition is equivalent to requiring that the connection
\[
D_{\phi} = \nabla_h + \phi + \phi^{*_h}
\]
be flat.
\begin{definition}
    A Hermitian metric \(h\) on \(E\) is called harmonic if it satisfies the Hitchin equation \eqref{Hitchin equation}.
\end{definition}

The definition above is the standard one for \(\mathrm{SL}(n,\mathbb{C})\)-Higgs bundles.  In the \(\mathrm{Sp}(4,\mathbb{R})\) case, however, the harmonic metric must additionally be compatible with the real symplectic structure.  We now explain what this compatibility means.

Let \(E\) be a complex vector space equipped with an orthogonal structure \(Q\).  Choose a Hermitian metric \(h\) on \(E\) and denote by \(E^*\) the dual space of \(E\).  Both \(h\) and \(Q\) give natural identifications of \(E\) with \(E^*\):
\begin{align*}
    \Phi_h(v)(u) &= h(u,v), \\
    \Phi_Q(v)(u) &= Q(u,v).
\end{align*}
Here \(\Phi_h\) is anti-\(\mathbb{C}\)-linear, whereas \(\Phi_Q\) is \(\mathbb{C}\)-linear.  The dual Hermitian metric \(h^*\) on \(E^*\) is defined by
\[
h^*(u^*,v^*) = h\bigl(\Phi_h^{-1}(v^*), \Phi_h^{-1}(u^*)\bigr),
\]
and one checks that \(\Phi_h^{-1} = \Phi_{h^*}\).  Composing the two isomorphisms yields an anti-\(\mathbb{C}\)-linear automorphism of \(E\):
\[
\kappa = \Phi_h^{-1} \circ \Phi_Q .
\]
\begin{definition}
    The metric \(h\) is called compatible with \(Q\) if \(\kappa^2 = \id\).
\end{definition}
When this holds, \(\kappa\) is an anti-linear involution, and its fixed point set
\[
E^{\kappa} = \{ v \in E : \kappa(v) = v \}
\]
is a real form of \(E\) (that is, \(E \cong E^{\kappa} \otimes_{\mathbb{R}} \mathbb{C}\)).

Returning to our Higgs bundle, \(E = V \oplus V^*\) for a complex vector space \(V\) of rank \(2\), and it comes with the natural symplectic structure \(\Omega\) and orthogonal structure \(Q\) given by
\begin{align*}
    \Omega\bigl((v,\alpha),(w,\eta)\bigr) &= \eta(v) - \alpha(w), \\
    Q\bigl((v,\alpha),(w,\eta)\bigr) &= \eta(v) + \alpha(w),
\end{align*}
where \(v,w \in V\) and \(\alpha,\eta \in V^*\).
\begin{definition}
    A Hermitian metric \(h\) on \(E\) is said to be compatible with the \(\mathrm{Sp}(4,\mathbb{R})\)-structure if
    \begin{itemize}
        \item \(h\) is compatible with \(Q\);
        \item \(V\) and \(V^*\) are mutually orthogonal with respect to \(h\).
    \end{itemize}
\end{definition}
Assume now that \(h\) is compatible with the \(\mathrm{Sp}(4,\mathbb{R})\)-structure.  Then \(h\) takes the block-diagonal form
\[
h = \begin{pmatrix}
    h_V & 0 \\
    0   & h_V^*
\end{pmatrix},
\]
and consequently
\[
E^{\kappa} = V^{\kappa} \oplus (V^{\kappa})^* .
\]
Thus the restriction \(\Omega|_{E^{\kappa}}\) defines a real symplectic structure on \(E^{\kappa}\).

Let \((E,\phi)\) be an \(\mathrm{Sp}(4,\mathbb{R})\)-Higgs bundle; as explained, it carries natural forms \(\Omega\) and \(Q\) given by \eqref{symplectic structure and orthogonal structure}.  Choose a harmonic metric \(h\) on \(E\) that is fibrewise compatible with the \(\mathrm{Sp}(4,\mathbb{R})\)-structure.  The fixed point set of the involution \(\kappa\) associated with \(h\) is a real subbundle \(E_{\mathbb{R}} \subset E\), and \(\Omega\) restricts to a real symplectic form \(\Omega_{\mathbb{R}}\) on \(E_{\mathbb{R}}\).

One verifies that the flat connection
\[
D_{\phi} = \nabla_h + \phi + \phi^{*_h}
\]
preserves both the real subbundle \(E_{\mathbb{R}}\) and the symplectic form \(\Omega_{\mathbb{R}}\).  Hence it gives a fundamental group representation to \(\mathrm{SP}(4,\mathbb{R})\). For a more detailed discussion we refer the reader to \cite{Li2023generically,li2023higgs}.

\subsection{Parallel transport, frames, and multiplication conventions}
\label{subsec:parallel-transport-holonomy}

Let \((E,\phi)\) be an \(\mathrm{Sp}(4,\mathbb{R})\)-Higgs bundle over \(X\) and let \(h\) be a harmonic metric compatible with the \(\mathrm{Sp}(4,\mathbb{R})\)-structure. The associated flat connection
\[
D_{\phi}=\nabla_h+\phi+\phi^{*_h}
\]
preserves the real symplectic bundle \((E_{\mathbb{R}},\Omega_{\mathbb{R}})\).

Consider an oriented piecewise smooth closed curve \(c\) on \(X\). Fixing a symplectic frame at a base point, parallel transport along \(c\) yields a matrix
\[
\operatorname{Hol}_{\phi}(c)\in\mathrm{Sp}(4,\mathbb{R}),
\]
whose conjugacy class depends only on the free homotopy class of \(c\).Different choices of base point or frame merely conjugate this matrix. 

To carry out explicit computations, it is convenient to lift the whole picture to the universal cover, where a global frame becomes available and parallel transport can be written in matrix form. Let
\[
\pi:\widetilde X\longrightarrow X
\]
be the universal covering map, and set
\[
\widetilde E_{\mathbb{R}}:=\pi^*E_{\mathbb{R}},\qquad
\widetilde D_{\phi}:=\pi^*D_{\phi}
\]
for the pulled-back real symplectic bundle and the pulled-back flat connection. Choose a global real symplectic frame
\[
\mathcal F=(e_1,e_2,e_3,e_4)
\]
of \(\widetilde E_{\mathbb{R}}\). With respect to this frame, the connection form of \(\widetilde D_{\phi}\) is a globally defined \(\mathfrak{sp}(4,\mathbb{R})\)-valued one-form \(A_{\phi}\), determined by
\[
\widetilde D_{\phi}\mathcal F=\mathcal F A_{\phi},
\qquad
A_{\phi}\in\Omega^1(\widetilde X,\mathfrak{sp}(4,\mathbb{R})).
\]

For \(p,p'\in\widetilde X\), we denote by
\[
T^{\phi}_{p,p'}:\widetilde E_{\mathbb{R},p}\longrightarrow\widetilde E_{\mathbb{R},p'}
\]
for the parallel transport of \(\widetilde D_{\phi}\) from \(p\) to \(p'\). Fix a point \(p\in\widetilde X\) and define a matrix-valued function
\[
\Psi_{\phi,p}:\widetilde X\longrightarrow \mathrm{Sp}(4,\mathbb{R})
\]
by the condition
\begin{align}\label{eq:def-Psi}
    T^{\phi}_{p,x}(\mathcal F(p))\Psi_{\phi,p}(x)=\mathcal F(x),\qquad x\in\widetilde X.
\end{align}

Equivalently,
\begin{equation}\label{eq:Psi-connection-equation}
    \begin{dcases}
        \Psi_{\phi,p}(p)=I,  \\
        \Psi_{\phi,p}^{-1}\odif\Psi_{\phi,p}=A_{\phi}.
    \end{dcases}
\end{equation}
Thus, if \(c(s):[a,b]\to\widetilde X\) is a piecewise smooth path with \(c(a)=p\), then
\begin{equation*}
    \begin{dcases}
        \Psi_{\phi,p}(c(a))=I, \\
        \odv{}{s}\Psi_{\phi,p}(c(s))=
\Psi_{\phi,p}(c(s))A_{\phi}(\dot c(s)).
    \end{dcases}
\end{equation*}
We call \(\Psi^{\mathrm M}(\tau)\) the fundamental matrix of \(\widetilde D_{\phi}\), with respect to the
 frame \(\mathcal{F}\) and normalized at the base point
\(p\).

In the global frame \(\mathcal F\), the parallel transport from \(p\) to \(p'\) is represented by the matrix
\[
M_{\mathcal F}^{\phi}(p,p')=\Psi_{\phi,p}(p')^{-1}.
\]
More generally, for arbitrary \(x,y\in\widetilde X\) the matrix of
\[
T^{\phi}_{x,y}:\widetilde E_{\mathbb{R},x}\longrightarrow\widetilde E_{\mathbb{R},y}
\]
with respect to the same global frame \(\mathcal F\) is
\[
M_{\mathcal F}^{\phi}(x,y)=\Psi_{\phi,p}(y)^{-1}\Psi_{\phi,p}(x),
\]
where \(p\in\widetilde X\) is an arbitrary auxiliary base point. The expression does not depend on the choice of \(p\).

We now explain how the holonomy of a closed curve on \(X\) is recovered from these lifted data. Let \(\gamma\in\pi_1(X,x_0)\) be represented by a closed curve \(c\) based at \(x_0\). Choose a lift \(\tilde x_0\in\widetilde X\) of \(x_0\). The curve \(c\) lifts uniquely to a path \(\tilde c\) in \(\widetilde X\) starting at \(\tilde x_0\); its endpoint is \(\tilde x_0\cdot\gamma\), where \(\cdot\) denotes the action of \(\pi_1(X)\) on \(\widetilde X\) by deck transformations. Parallel transport along \(\tilde c\) is described by the matrix \(M_{\mathcal F}^{\phi}(\tilde x_0,\tilde x_0\cdot\gamma)\), which relates the bases \(\mathcal F(\tilde x_0)\) and \(\mathcal F(\tilde x_0\cdot\gamma)\) via
\[
T^{\phi}_{\tilde x_0,\tilde x_0\cdot\gamma}(\mathcal F(\tilde x_0)) =
\mathcal F(\tilde x_0\cdot\gamma)M_{\mathcal F}^{\phi}(\tilde x_0,\tilde x_0\cdot\gamma).
\]

To view the holonomy as an automorphism of the single fibre \(E_{\mathbb{R},x_0}\), we must express both the initial vector and its parallel translate in the same frame. The projection \(\pi\) identifies both \(\widetilde E_{\mathbb{R},\tilde x_0}\) and \(\widetilde E_{\mathbb{R},\tilde x_0\cdot\gamma}\) with \(E_{\mathbb{R},x_0}\), but the two global frames \(\mathcal F(\tilde x_0)\) and \(\mathcal F(\tilde x_0\cdot\gamma)\) may differ. We therefore introduce the transition matrix \(C_\gamma\in\mathrm{Sp}(4,\mathbb{R})\) defined by
\[
\mathcal F(\tilde x_0\cdot\gamma) = \mathcal F(\tilde x_0)C_\gamma .
\]
In the common frame \(\mathcal F(\tilde x_0)\) the parallel transport then reads
\[
T^{\phi}_{\tilde x_0,\tilde x_0\cdot\gamma}(\mathcal F(\tilde x_0)) =
\mathcal F (\tilde x_0)C_\gamma M_{\mathcal F}^{\phi}(\tilde x_0,\tilde x_0\cdot\gamma).
\]
Hence the holonomy of \(c\) with respect to the frame \(\mathcal F(\tilde x_0)\) is
\[
\operatorname{Hol}_{\phi}(c) =
C_\gamma M_{\mathcal F}^{\phi}(\tilde x_0,\tilde x_0\cdot\gamma).
\]
Changing the lift \(\tilde x_0\) or the global frame \(\mathcal F\) conjugates this matrix, so the map
\[
\rho_{\phi}: \pi_1(X) \longrightarrow \mathrm{Sp}(4,\mathbb{R}), \qquad
\gamma \longmapsto C_\gamma M_{\mathcal F}^{\phi}(\tilde x_0,\tilde x_0\cdot\gamma)
\]
is a well-defined representation up to conjugation.

For later use we also record how parallel transport matrices behave under a change of the global frame. Let
\[
\mathcal G=\mathcal F C
\]
be another global symplectic frame of \(\widetilde E_{\mathbb{R}}\), where
\[
C:\widetilde X\longrightarrow \mathrm{Sp}(4,\mathbb{R})
\]
is a smooth map. If
\[
\widetilde D_{\phi}\mathcal G=\mathcal G A_{\phi}^{\mathcal G},
\]
then
\[
A_{\phi}^{\mathcal G}=C^{-1}A_{\phi} C+C^{-1}\odif C.
\]
The corresponding solutions satisfy
\[
\Psi^{\mathcal G}_{\phi,p}(x)=C(p)^{-1}\Psi^{\mathcal F}_{\phi,p}(x)C(x),
\]
and consequently the parallel transport matrices transform by
\[
M_{\mathcal G}^{\phi}(x,y)=C(y)^{-1}M_{\mathcal F}^{\phi}(x,y)C(x).
\]

We now state the multiplication convention for concatenated paths.
Let
\[
c=c_1*c_2*\cdots*c_m,
\]
which means that \(c_1\) is traversed first, then \(c_2\), and so on, with \(c_j\) running from \(x_{j-1}\) to \(x_j\) (see Figure~\ref{fig:concatenated-paths}).

\begin{figure}[htbp]
\centering
\begin{tikzpicture}[scale=1.05,>=Stealth]

% points
\coordinate (x0) at (0,0);
\coordinate (x1) at (2,0.9);
\coordinate (x2) at (4,0.1);
\coordinate (dots) at (5.6,0.55);
\coordinate (xm1) at (7.2,0.9);
\coordinate (xm) at (9.2,0);

% curves
\draw[thick,->] (x0) .. controls (0.7,0.1) and (1.2,0.8) .. (x1);
\draw[thick,->] (x1) .. controls (2.7,1.2) and (3.3,0.2) .. (x2);

% dotted middle
\node at (dots) {\(\cdots\)};

\draw[thick,->] (xm1) .. controls (7.8,0.8) and (8.5,0.1) .. (xm);

% points
\fill (x0) circle (1.5pt) node[below] {$x_0$};
\fill (x1) circle (1.5pt) node[above] {$x_1$};
\fill (x2) circle (1.5pt) node[below] {$x_2$};
\fill (xm1) circle (1.5pt) node[above] {$x_{m-1}$};
\fill (xm) circle (1.5pt) node[below] {$x_m$};

% labels for segments
\node at (0.95,0.65) {$c_1$};
\node at (3.0,0.8) {$c_2$};
\node at (8.25,0.65) {$c_m$};
\end{tikzpicture}
\caption{}
\label{fig:concatenated-paths}
\end{figure}

If all transport matrices are written with respect to the same global frame \(\mathcal F\), then
\[
M_{\mathcal F}^{\phi}(c)=M_{\mathcal F}^{\phi}(c_m)\cdots M_{\mathcal F}^{\phi}(c_2)M_{\mathcal F}^{\phi}(c_1).
\]

Later we will need to work with different frames on consecutive segments. We therefore record the transition rule here.

Suppose that along the segment \(c_j\) we use a frame \(\mathcal F_j\), and let \(M_j\) be the matrix of parallel transport along \(c_j\) with respect to \(\mathcal F_j\).  At the intermediate point \(x_j\) define the change-of-frame matrix \(C_j\) by
\[
\mathcal F_{j+1}(x_j)=\mathcal F_j(x_j)C_j .
\]
Then the total transport matrix from \(x_0\) to \(x_m\), expressed with respect to the initial frame \(\mathcal F_1(x_0)\) and the terminal frame \(\mathcal F_m(x_m)\), is
\begin{equation}\label{eq:different-frames-on-consecutive-segments}
    M_mC_{m-1}^{-1}M_{m-1}\cdots C_2^{-1}M_2C_1^{-1}M_1 .
\end{equation}

\subsection{The cyclic \texorpdfstring{\(\operatorname{Sp}(4,\mathbb{R})\)}{SP}-Higgs bundles in the Hitchin section}
\label{subsec:the-cyclic-SP(4,R)-Higgs-bundles-in-the-Hitchin-ection}

We now turn to the cyclic \(\operatorname{Sp}(4,\mathbb R)\)-Higgs bundles in the Hitchin
section. Fix a square root \(K^{\frac{1}{2}}\) of the canonical line bundle \(K\) and let \(q\) be a holomorphic quartic differential, i.e. \(q \in \mathrm{H}^0(X,K^4)\).  
We construct \(V\), \(\beta\) and \(\gamma\) as follows:
\begin{align*}
    V   &= K^{\frac{3}{2}} \oplus K^{-\frac{1}{2}}, &
    \beta &=
    \begin{pmatrix}
        q & 0 \\
        0 & 1
    \end{pmatrix},
    &
    \gamma &=
    \begin{pmatrix}
        0 & 1 \\
        1 & 0
    \end{pmatrix}.
\end{align*}
With respect to the decomposition \(E = V \oplus V^{*}\), we obtain
\begin{align*}
    E = K^{\frac{3}{2}} \oplus K^{-\frac{1}{2}} \oplus K^{-\frac{3}{2}} \oplus K^{\frac{1}{2}}, \qquad
    \phi =
    \begin{pmatrix}
        0 & 0 & q & 0\\
        0 & 0 & 0 & 1\\
        0 & 1 & 0 & 0\\
        1 & 0 & 0 & 0
    \end{pmatrix}.
\end{align*}
To make the cyclic structure more visible, we reorder the summands of \(E\) as
\[
E = K^{\frac{3}{2}} \oplus K^{\frac{1}{2}} \oplus K^{-\frac{1}{2}} \oplus K^{-\frac{3}{2}}.
\]
In this ordered basis the symplectic structure \(\Omega\) and the orthogonal structure \(Q\) take the form
\[
\Omega =
\begin{pmatrix}
    0 & 0 & 0 & 1\\
    0 & 0 & -1 & 0\\
    0 & 1 & 0 & 0\\
    -1 & 0 & 0 & 0
\end{pmatrix},
\qquad
Q =
\begin{pmatrix}
    0 & 0 & 0 & 1\\
    0 & 0 & 1 & 0\\
    0 & 1 & 0 & 0\\
    1 & 0 & 0 & 0
\end{pmatrix},
\]
while the Higgs field becomes
\[
\phi =
\begin{pmatrix}
    0 & 0 & 0 & q\\
    1 & 0 & 0 & 0\\
    0 & 1 & 0 & 0\\
    0 & 0 & 1 & 0
\end{pmatrix}.
\]
This Higgs bundle can be described by the diagram
\[
\begin{tikzcd}[
    column sep = large,
    arrows = {->, >=Stealth}
]
    K^{\frac{3}{2}}
        \arrow[r, "1"]
    & K^{\frac{1}{2}}
        \arrow[r, "1"]
    & K^{-\frac{1}{2}}
        \arrow[r, "1"]
    & K^{-\frac{3}{2}}
        \arrow[lll, bend left=20, "q"']
\end{tikzcd}
\]
The description above is also consistent with the cyclic \(\mathrm{SL}(n,\mathbb{R})\)-Higgs bundles studied in \cite{collier2017asymptotics} and \cite{li2025complete}, which specializes here to the case \(n=4\).

\begin{definition}\label{def:cyclic-Higgs-bundles}
    For a holomorphic quartic differential \(q\), we call
    \[
    ( E,\; \phi ) =
    \left(
    K^{\frac{3}{2}} \oplus K^{\frac{1}{2}} \oplus K^{-\frac{1}{2}} \oplus K^{-\frac{3}{2}},\;
    \begin{pmatrix}
        0 & 0 & 0 & q\\
        1 & 0 & 0 & 0\\
        0 & 1 & 0 & 0\\
        0 & 0 & 1 & 0
    \end{pmatrix}
    \right)
    \]
    the cyclic \(\mathrm{Sp}(4,\mathbb{R})\)-Higgs bundle in the Hitchin section and denote it by \((\mathbb{K}_{\mathrm{Sp}(4,\mathbb{R})}, \phi(q))\).
\end{definition}
\begin{remark}
    By the non-abelian Hodge correspondence, the above Higgs bundle,
    equipped with its harmonic metric, determines a representation
    \[
    \widetilde{\rho}_q:
    \pi_1(S)\longrightarrow \operatorname{Sp}(4,\mathbb R).
    \]
    Its projection to
    \(\operatorname{PSp}(4,\mathbb R)\) is the Hitchin representation
    associated with \(q\).

    The lift \(\widetilde{\rho}_q\) depends on the choice of the square
    root \(K^{1/2}\). Indeed, any other square root is of the form
    \[
    \widehat K^{1/2}=K^{1/2}\otimes L,
    \qquad L^2\simeq\mathcal O,
    \]
    and the corresponding Higgs bundle is obtained by tensoring \(E\)
    with \(L\). On the representation side, this changes the lift by
    a character
    \[
    \chi_L:\pi_1(S)\longrightarrow\{\pm I\},
    \qquad
    \widehat{\rho}_q(\gamma)
    =
    \chi_L(\gamma)\widetilde{\rho}_q(\gamma).
    \]
    Thus different choices of \(K^{1/2}\) give different
    \(\operatorname{Sp}(4,\mathbb R)\)-lifts of the same
    \(\operatorname{PSp}(4,\mathbb R)\)-Hitchin representation.
\end{remark}
When \(X\) is compact, it is a standard fact that \(\bigl(\mathbb{K}_{\mathrm{Sp}(4,\mathbb{R})},\phi(q)\bigr)\) admits a unique harmonic metric \(h\) compatible with the \(\mathrm{Sp}(4,\mathbb{R})\)-structure and taking the diagonal form
\[
h = \operatorname{diag}(h_1,h_2,h_3,h_4)=\operatorname{diag}(h_1, h_2, h_2^{-1}, h_1^{-1}).
\]
We refer to \cite{baraglia2010g2,collier2017asymptotics} for further details.  

Let us write the Hitchin equation in a local coordinate to see its explicit form.  Choose a local coordinate \(z\) on \(X\) and write \(q = q(z) \odif{z}^4\). Such a coordinate induces a local holomorphic section \(\odif{z}^{\frac{1}{2}}\) of \(K^{\frac{1}{2}}\) satisfying
\[
\odif{z}^{\frac{1}{2}} \otimes \odif{z}^{\frac{1}{2}} = \odif{z}
\]
on its domain of definition.

\begin{definition}\label{def:frame-induced-by-the-coordinate}
    The local frame
    \[
    \mathcal{F}_z = \bigl\{ \odif{z}^{\frac{3}{2}}, \odif{z}^{\frac{1}{2}}, \odif{z}^{-\frac{1}{2}},\odif{z}^{-\frac{3}{2}} \bigr\}
    \]
    is called the canonical frame induced by \(z\).
\end{definition}

Under the frame \(\mathcal{F}_z\), the Hitchin equation \eqref{Hitchin equation} takes the form
\begin{equation}
    \begin{dcases}
         \overline{\partial}\partial \log(h_1) + \abs{q}^2 h_1 h_4^{-1} \odif{z}\wedge\odif{\bar{z}} + h_2 h_1^{-1} \odif{\bar{z}} \wedge \odif{z} &=0, \\
         \overline{\partial}\partial \log(h_2) + h_2 h_1^{-1} \odif{z}\wedge\odif{\bar{z}} + h_3 h_2^{-1} \odif{\bar{z}} \wedge \odif{z} &=0, \\
        \overline{\partial}\partial \log(h_3) + h_3 h_2^{-1} \odif{z}\wedge\odif{\bar{z}} + h_4 h_3^{-1} \odif{\bar{z}} \wedge \odif{z} &=0, \\
        \overline{\partial}\partial \log(h_4) + h_4 h_3^{-1} \odif{z}\wedge\odif{\bar{z}} + \abs{q}^2 h_1 h_4^{-1} \odif{\bar{z}} \wedge \odif{z} &= 0 .
    \end{dcases}
\end{equation}
Equivalently,
\begin{equation}
    \begin{dcases}
        \partial_{z}\partial_{\bar{z}}\log(h_1) = \abs{q}^2 h_1 h_4^{-1} - h_2 h_1^{-1}, \\
         \partial_{z}\partial_{\bar{z}}\log(h_2) = h_2 h_1^{-1} - h_3 h_2^{-1}, \\
       \partial_{z}\partial_{\bar{z}}\log(h_3) = h_3 h_2^{-1} - h_4 h_3^{-1}, \\
        \partial_{z}\partial_{\bar{z}}\log(h_4) = h_4 h_3^{-1} - \abs{q}^2 h_1 h_4^{-1}.
    \end{dcases}
\end{equation}

To write the equation globally, we introduce a conformal metric \(g\) on \(X\).  In a local complex coordinate \(z = x + \sqrt{-1} y\) it can be written as
\[
g = g(z)(\odif{x}^2 + \odif{y}^2).
\]
This metric induces a Hermitian metric on \(K^{-1}\), which we still denote by \(g\):
\[
g = g(z) \odif{z} \otimes \odif{\bar{z}} .
\]
From \(g\) we obtain Hermitian metrics \(g_i\) on \(K^{\frac{5-2i}{2}}\) for \(i = 1,\dots,4\) by setting
\[
g_i = g^{\frac{2i-5}{2}} .
\]
Notice that \(g_1 = g_4^{-1}\) and \(g_2 = g_3^{-1}\).  Writing \(h_i = e^{u_i} g_i\), the relations \(h_1 h_4 = 1\) and \(h_2 h_3 = 1\) imply \(u_1 + u_4 = 0\) and \(u_2 + u_3 = 0\).  Then
\[
    \partial_{z}\partial_{\bar{z}}\log(h_i)
    =  \partial_{z}\partial_{\bar{z}} u_i +  \partial_{z}\partial_{\bar{z}}\log(g_i) 
    =  \partial_{z}\partial_{\bar{z}} u_i + \frac{2i-5}{2}  \partial_{z}\partial_{\bar{z}}\log(g)
\]
for \(i = 1,2,3,4\).

The Laplace operator \(\Delta_g\), the Gaussian curvature \(k_g\), and the pointwise \(g\)-norm \(\abs{q}_g^2\) of \(q\) are defined globally by
\[
\Delta_g := \frac{1}{g}\partial_{z}\partial_{\bar{z}},\qquad
k_g := -\frac{2}{g}\partial_{z}\partial_{\bar{z}}\log(g),\qquad
\abs{q}_g^2 := \frac{\abs{q}^2}{g^4}.
\]
In these terms, the Hitchin equation \eqref{Hitchin equation} becomes
\begin{equation}\label{long eq: globoal cyclic SP(4,R)-Hitchin equation}
    \begin{dcases}
        \Delta_g u_1 = \abs{q}_g^2 e^{u_1-u_4} - e^{u_2-u_1} -\frac{3}{4} k_g, \\
        \Delta_g u_2 =  e^{u_2-u_1} - e^{u_3-u_2} -\frac{1}{4} k_g, \\
        \Delta_g u_3 =  e^{u_3-u_2} - e^{u_4-u_3} +\frac{1}{4} k_g, \\
        \Delta_g u_4 = e^{u_4-u_3}  - \abs{q}_g^2 e^{u_1-u_4} +\frac{3}{4} k_g .
    \end{dcases}
\end{equation}
Because \(u_4 = -u_1\) and \(u_3 = -u_2\), the elliptic system \eqref{long eq: globoal cyclic SP(4,R)-Hitchin equation} reduces to
\begin{equation}\label{eq:globoal-cyclic-SP(4,R)-Hitchin-equation}
    \begin{dcases}
        \Delta_g u_1 = \abs{q}_g^2 e^{2u_1} - e^{u_2-u_1} -\frac{3}{4} k_g, \\
        \Delta_g u_2 =  e^{u_2-u_1} - e^{-2u_2} -\frac{1}{4} k_g .
    \end{dcases}
\end{equation}
\begin{remark}
    The Laplace operator used here differs from the usual Laplacian.  For the flat metric \(g = \odif{x}^2 + \odif{y}^2\), we have
    \[
    \Delta_g = \partial_{z}\partial_{\bar{z}} = \frac{1}{4}(\partial_x^2 + \partial_y^2) = \frac{1}{4}\Delta,
    \]
    where \(\Delta\) denotes the ordinary Laplacian on the complex plane.
\end{remark}

 The corresponding flat connection is denoted by \(D_{q}=\nabla_h+\phi(q)+\phi(q)^{*_h}\).  
We now write its local form in the canonical frame \(\mathcal{F}_z\) induced by the local coordinate \(z\).  
With respect to this frame the connection takes the form \(D_q = \odif + A_q\), where the connection form \(A_q\) is
{\allowdisplaybreaks
\begin{align}
    A_q &=
    \begin{pmatrix} 
        \partial_z\log h_1 & 0 & 0 & 0 \\
        0 & \partial_z\log h_2 & 0 & 0 \\
        0 & 0 & \partial_z\log h_3 & 0 \\
        0 & 0 & 0 & \partial_z\log h_4
    \end{pmatrix} \odif{z} \nonumber\\
    &\qquad + 
    \begin{pmatrix} 
        0 & 0 & 0 & q(z) \\
        1 & 0 & 0 & 0 \\
        0 & 1 & 0 & 0 \\
        0 & 0 & 1 & 0
    \end{pmatrix} \odif{z}
    +
    \begin{pmatrix}
        0 & h_2 h_1^{-1} & 0 & 0 \\
        0 & 0 & h_3 h_2^{-1} & 0 \\
        0 & 0 & 0 & h_4 h_3^{-1} \\
        \bar{q}(z)h_1 h_4^{-1} & 0 & 0 & 0
    \end{pmatrix} \odif{\bar{z}} \nonumber\\
    &= 
    \begin{pmatrix}
        \partial_z u_1 & 0 & 0 & 0 \\
        0 & \partial_z u_2 & 0 & 0 \\
        0 & 0 & -\partial_z u_2 & 0 \\
        0 & 0 & 0 & -\partial_z u_1
    \end{pmatrix} \odif{z} 
    + 
    \begin{pmatrix}
        \partial_z\log g & 0 & 0 & 0 \\
        0 & \partial_z\log g & 0 & 0 \\
        0 & 0 & \partial_z\log g & 0 \\
        0 & 0 & 0 & \partial_z\log g
    \end{pmatrix} \odif{z} \nonumber\\
    &\qquad + 
    \begin{pmatrix}
        0 & 0 & 0 & q(z) \\
        1 & 0 & 0 & 0 \\
        0 & 1 & 0 & 0 \\
        0 & 0 & 1 & 0
    \end{pmatrix} \odif{z}
    +\begin{pmatrix}
        0 & e^{u_2-u_1} & 0 & 0 \\
        0 & 0 & e^{-2u_2} & 0 \\
        0 & 0 & 0 & e^{u_2-u_1}\\
        \bar{q}(z)e^{2u_1} & 0 & 0 & 0
    \end{pmatrix} \odif{\bar{z}} \label{eq:connection form}
\end{align}
}

\subsection{Quartic differentials and the induced flat metrics}\label{subsec:quartic-differentials-and-the-induced-flat-metrics}

Let \(q\in H^0(X,K^4)\) be a non-zero holomorphic quartic differential.  Its zero set is denote by
\[
Z(q)=\{p\in X:q(p)=0\}.
\]
When \(X\) is compact, \(Z(q)\) is finite and every zero has finite order.

The quartic differential induces a singular flat metric \(\abs{q}^{1/2}\) on \(X\), given locally by
\[
\abs{q}^{1/2}=|q(z)|^{1/2}|\odif{z}|^2
\]
when \(q=q(z)\odif{z}^4\) in a holomorphic coordinate \(z\).  This expression is independent of the choice of coordinate, so it defines a genuine flat Riemannian metric on \(X\setminus Z(q)\).

A local holomorphic coordinate \(w\) on \(X\setminus Z(q)\) is called a natural coordinate for \(q\) if
\[
q=\odif{w}^4.
\]
In such a coordinate the flat metric is simply \(|q|^{1/2}=|\mathrm{d}w|^2\).

Natural coordinates exist on any simply connected domain.  Indeed, let \(U\subset X\setminus Z(q)\) be simply connected.  Then \(q\) admits a holomorphic fourth root on \(U\): a holomorphic one-form \(\varphi\) with
\[
\varphi^4=q .
\]
Fix a base point \(z_*\in U\) and set
\[
w(z)=\int_{z_*}^{z}\varphi ,\qquad z\in U .
\]
Because \(U\) is simply connected, the integral is path-independent. We have \(\odif{w}=\varphi\), hence \(q=\odif{w}^4\). Thus \(w\) is a natural coordinate. Changing \(z_*\) merely adds a constant to \(w\).

Natural coordinates are unique up to translation and multiplication by a fourth root of unity: if \(w\) and \(w'\) are two natural coordinates on a connected open set, then
\[
w'=(\sqrt{-1})^k w+b
\]
for some \(k\in\mathbb Z/4\mathbb Z\) and \(b\in\mathbb C\).  The four choices of a holomorphic fourth root of \(q\) are, in a local natural coordinate,
\[
\varphi_k=(\sqrt{-1})^{1-k}\odif{w},\qquad k=1,\dots,4 .
\]
Near a zero \(p\in Z(q)\) of order \(d\), we can choose a local coordinate \(z\) centred at \(p\) with
\(
q=z^d\odif{z}^4,
\)
so that
\[
|q|^{1/2}=|z|^{d/2}|\mathrm{d}z|^2 .
\]
Hence \(p\) is a conical singularity of total angle \(2\pi+\frac{d\pi}{2}\).

With the singular flat metric \(|q|^{1/2}\) at hand, we can now choose for every non-trivial free homotopy class \([\gamma]\) a closed geodesic curve along which to compute the holonomy. Each such class contains a closed geodesic representative for this metric. We pick one and denote it by \(c_\gamma\). Since the holonomy of a flat connection depends only on the free homotopy class (different base points or frames merely conjugate the matrix, and these conjugations do not affect the asymptotic quantities we study), it suffices to work with these geodesic representatives.

To compute the holonomy along \(c_\gamma\) we first look at its geometry. Away from \(Z(q)\) the metric is Euclidean in a natural coordinate, so every geodesic segment there becomes a straight line segment. Recall that a saddle connection of \(q\) is a geodesic segment for \(|q|^{1/2}\) whose endpoints lie in \(Z(q)\) and whose interior avoids \(Z(q)\). The closed geodesic curve \(c_{\gamma}\) passes through zeros of \(q\) and decomposes as a finite concatenation of saddle connections
\[
c_\gamma=c_{\ell}*c_{\ell-1}*\cdots*c_1,
\]
where each \(c_i\) joins \(p_{i}\) and \(p_{i-1}\) as drawn in Figure~\ref{geodesic representation}.

\begin{figure}[h]
 \centering
\begin{tikzpicture}[
    >=Stealth,
    thick,
    every node/.style={font=\normalsize}
]

\coordinate (p1)   at (0,0);
\coordinate (pl) at (4,3);
\coordinate (p2)   at (9,0);

\node[below left]  at (p1)   {$p_1$};
\node[above]       at (pl) {$p_2$};
\node[below right] at (p2)   {$p_{\ell}$};

\draw[->] (p1) -- (p2)    node[midway,below] {$c_1$};

\draw[->] (pl) -- (p1) node[midway,above left] {$c_{2}$};

\draw[->]      (p2) -- ($(pl)!0.7!(p2)$) 
  node[near end,above right] {$c_{\ell}$};
\draw[dashed] ($(pl)!0.7!(p2)$)-- ($(pl)!0.3!(p2)$);
\draw[->]      ($(pl)!0.3!(p2)$)-- (pl) 
node[near end,above right] {$c_{3}$};
\end{tikzpicture}
\caption{}
\label{geodesic representation}
\end{figure}

\subsection{The ray family}
\FloatBarrier
We now turn to the one-parameter family of holonomies that is the central object of this paper.  Fix a non-zero holomorphic quartic differential \(q\in H^0(X,K^4)\).  For \(t>0\), set
\[
q_t = t q .
\]
Denote the corresponding cyclic \(\mathrm{Sp}(4,\mathbb{R})\)-Higgs bundle in the Hitchin section by
\[
(\mathbb{K}_{\mathrm{Sp}(4,\mathbb{R})},\phi(q_t)),
\]
and let \(h_t\) be the compatible harmonic metric solving the Hitchin equation \eqref{Hitchin equation}.  The associated flat connection is
\[
D_t = \nabla_{h_t} + \phi(q_t) + \phi(q_t)^{*_{h_t}} .
\]
For an oriented piecewise smooth closed curve \(c\) on \(X\) we write \(\operatorname{Hol}_t(c)\) for the holonomy of \(D_t\) along \(c\).

The analysis away from the zeros is straightforward in the natural coordinates, which has been studied by \cite{collier2017asymptotics,Mochizuki_2016}. Near a zero, however, the Stokes phenomenon occurs. This is a purely local question, which motivates studying the model
\[
q = z^d \odif{z}^4
\]
on the complex plane. We carry out this local analysis in the next section. Later, in Section~\ref{sec:local-comparison-near-zeros}, we will prove that this model gives the correct rescaled picture near any zero of \(q\) on the compact Riemann surface \(X\).  Its Stokes matrices will then supply the transition matrices needed for the global holonomy computation.
\section{Planar theory and Stokes data}
\label{sec:local-model}

In this section we collect the results from 
\cite{tamburelli2024planar} that are needed in our setting.  We give a
self-contained account both for completeness and to establish the
notation and explicit formulas that will be essential for our own
computations later in the paper. All of the general statements
presented here are proved in \cite{tamburelli2024planar}.We reproduce
them in our right-half-plane convention and with our choice of the
diagonalizing matrix \(S\).

We present the general theory: the existence and uniqueness of the
harmonic metric, the constant model \(q = \odif{z}^4\), the half-plane
decomposition, the comparison with the constant model, the
Tamburelli--Wolf estimates, and the Stokes directions together with
the associated sectorial limits and Stokes jump theorem.  After that
we specialize to the model
\[
q = z^d \odif{z}^4,
\]
which describes the local behavior near a zero of a quartic differential on a compact Riemann surface. For this model, we compute the Stokes data explicitly, based on results from \cite{guestlin2026110730}.

\subsection{The harmonic metric}

Let
\[
q = q(z)\odif{z}^4
\]
be a polynomial quartic differential on \(\mathbb{C}\) with \(q(z)\) a polynomial of degree \(d\).  
Fix a square root \(K^{\frac{1}{2}}\) of the canonical line bundle of \(\mathbb{C}\).  
The cyclic \(\mathrm{Sp}(4,\mathbb{R})\)-Higgs bundle in the Hitchin section \((\mathbb{K}_{\mathrm{Sp}(4,\mathbb{R})},\phi(q))\) is
\begin{align*}
   \mathbb{K}_{\mathrm{Sp}(4,\mathbb{R})}
   = K^{\frac{3}{2}} \oplus K^{\frac{1}{2}} \oplus K^{-\frac{1}{2}} \oplus K^{-\frac{3}{2}},
   \qquad
   \phi =
   \begin{pmatrix}
       0 & 0 & 0 & q \\
       1 & 0 & 0 & 0 \\
       0 & 1 & 0 & 0 \\
       0 & 0 & 1 & 0
   \end{pmatrix}.
\end{align*}
In this ordered basis, the symplectic structure \(\Omega\) and the orthogonal structure \(Q\) take the form
\[
\Omega =
\begin{pmatrix}
    0 & 0 & 0 & 1\\
    0 & 0 & -1 & 0\\
    0 & 1 & 0 & 0\\
    -1 & 0 & 0 & 0
\end{pmatrix},
\qquad
Q =
\begin{pmatrix}
    0 & 0 & 0 & 1\\
    0 & 0 & 1 & 0\\
    0 & 1 & 0 & 0\\
    1 & 0 & 0 & 0
\end{pmatrix}.
\]

We equip \(\mathbb{C}\) with the standard flat metric \(g = \odif{x}^2 + \odif{y}^2\), whose Gaussian curvature is \(k_g = 0\).  
A diagonal harmonic metric
\[
h = \operatorname{diag}(e^{u_1}, e^{u_2}, e^{u_3}, e^{u_4}),
\]
compatible with the \(\mathrm{Sp}(4,\mathbb{R})\)-structure must satisfy
\[
u_1 + u_4 = 0,\qquad u_2 + u_3 = 0.
\]
Inserting these conditions into \eqref{eq:globoal-cyclic-SP(4,R)-Hitchin-equation}, the Hitchin equation becomes
\begin{equation}
\label{eq:planar-hitchin-equation}
\begin{dcases}
\Delta u_1 = |q|^2 e^{2u_1} - e^{u_2-u_1},\\
\Delta u_2 = e^{u_2-u_1} - e^{-2u_2},
\end{dcases}
\end{equation}
where \(\Delta = \partial_z \partial_{\bar z}\).

\begin{remark}
    Note a sign difference with the notation of \cite{tamburelli2024planar}:
    their functions \(u_i^{\mathrm{TW}}\) are the negatives of ours,
    \(u_i^{\mathrm{TW}} = -u_i\).
    After this change of variables, their equations coincide with
    \eqref{eq:planar-hitchin-equation}.
\end{remark}

To guarantee uniqueness, we impose the completeness condition on \(h\): the induced metrics
\[
e^{u_i - u_{i-1}} g,\qquad i=1,\dots,4,
\]
over \(\mathbb{C}\) are required to be complete, with the convention \(u_0 = u_4\).

The existence and uniqueness theory for such solutions is now summarized. 
The general cyclic \(\mathrm{SL}(n,\mathbb{R})\) case was treated in \cite{li2025complete}, while the \(\mathrm{SL}(3,\mathbb{R})\) case appeared earlier in \cite{dumas2015polynomial}.  
The following statement collects the relevant facts for our \(\mathrm{Sp}(4,\mathbb{R})\) setting.

\begin{theorem}{\cite[Theorem~2.1 and Corollary~2.6]{tamburelli2024planar}}
\label{thm:general-planar-theory}
Let \(q = q(z)\odif{z}^4\) be a polynomial quartic differential on \(\mathbb{C}\).  
Then the cyclic \(\mathrm{Sp}(4,\mathbb{R})\)-Higgs bundle \((\mathbb{K}_{\mathrm{Sp}(4,\mathbb{R})},\phi(q))\) admits a unique diagonal complete harmonic metric
\[
h = \operatorname{diag}\bigl(e^{u_1}, e^{u_2}, e^{-u_2}, e^{-u_1}\bigr)
\]
satisfying
\begin{align*}
   -A r^{-\alpha} &\le u_1 + \tfrac{3}{4}\log|q| \le 0,\\
   -A r^{-\alpha} &\le u_2 + \tfrac{1}{4}\log|q| \le 0.
\end{align*}
Here \(r\) denotes the \(|q|^{1/2}\)-distance to the zero set of \(q\) and \(\alpha,A\) are positive constants.
\end{theorem}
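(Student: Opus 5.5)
The plan is to obtain the solution by a monotone (sub/super-solution) iteration on exhaustions of \(\mathbb C\), and then to prove uniqueness by a maximum-principle comparison that uses completeness. We pass to the variables
\[
w_1=u_1+\tfrac34\log|q|,\qquad w_2=u_2+\tfrac14\log|q|.
\]
Away from \(Z(q)\), \(\log|q|\) is harmonic. In these variables \eqref{eq:planar-hitchin-equation} becomes
\[
\Delta w_1=|q|^{1/2}\bigl(e^{2w_1}-e^{w_2-w_1}\bigr),\qquad
\Delta w_2=|q|^{1/2}\bigl(e^{w_2-w_1}-e^{-2w_2}\bigr).
\]
The constant pair \(w_1=w_2=0\) solves this system, and it corresponds to the metric determined by the natural coordinate. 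It is singular at the zeros but is a supersolution in the barrier sense, since \(u_i\to+\infty\) there while the true \(u_i\) must stay bounded. The system is cooperative for the variables \((w_1,-w_2)\): the right-hand side of the \(w_1\)-equation is increasing in \(w_1\) and decreasing in \(w_2\), and symmetrically for \(w_2\). A componentwise maximum principle is therefore available.

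\textbf{Existence.} I would take the supersolution \(\bar w=(0,0)\) and a subsolution \(\underline w=(-Ar^{-\alpha},-Ar^{-\alpha})\), suitably regularized near \(Z(q)\). One way to regularize is to glue in \(\log\) of a smooth metric of the form \((|z|^2+1)^{\cdot}\), chosen so that the diagonal harmonic metric of a Fuchsian-type cyclic bundle bounds \(u_i\) from below near the zeros. Checking the subsolution inequality far out is a direct computation. In the coordinate \(\zeta\) with \(|d\zeta|^2=|q|^{1/2}\), one has \(\Delta_\zeta(-Ar^{-\alpha})\approx -A\alpha^2 r^{-\alpha-2}\). On the other side, linearizing the right-hand side at \(w=0\) gives a matrix \(\begin{pmatrix}3&-1\\-1&3\end{pmatrix}\) applied to \(-Ar^{-\alpha}(1,1)\), which produces \(-2Ar^{-\alpha}\). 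This dominates \(r^{-\alpha-2}\) for large \(r\) and small \(\alpha\). Near the zeros the inequality is arranged by making \(A\) large.

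Next, solve Dirichlet problems on discs \(B_R\) with boundary data \(\bar w\). Monotone iteration between \(\underline w\) and \(\bar w\) gives solutions \(w^R\), which decrease monotonically in \(R\) by the comparison principle. Local elliptic estimates then allow passage to the limit, and the two-sided bound is inherited.

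\textbf{Completeness.} This follows from the bounds. Since \(w_i\) are bounded and tend to \(0\) at infinity, each metric \(e^{u_i-u_{i-1}}g\) is comparable to \(|q|^{1/2}\) near infinity. The flat metric \(|q|^{1/2}\) is complete on \(\mathbb C\) for a polynomial \(q\).

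\textbf{Uniqueness.} This is the main obstacle, because we need to compare an arbitrary complete solution with the constructed one without assuming a priori bounds. I would follow Li--Mochizuki \cite{li2025complete}. Completeness together with the Omori--Yau maximum principle (or the Cheng--Yau argument) first yields \(u_1+\tfrac34\log|q|\le 0\) and \(u_2+\tfrac14\log|q|\le0\). The idea is to apply the principle to \(\max(w_1,\,-w_2,\dots)\) with respect to the complete metric, using the cooperative structure: at an approximate maximum of a positive \(w_i\), the right-hand side is strictly positive, which is a contradiction. Then, given two complete solutions \(u,u'\), apply Omori--Yau to \(\max_i|u_i-u'_i|\) with respect to the complete metric. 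Both solutions are bounded relative to each other by the first step combined with a lower-bound barrier, so the sign structure forces the maximum to vanish.
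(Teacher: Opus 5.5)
Your overall route is the paper's route. The paper's proof is only a citation: existence and the two-sided bounds come from the Tamburelli--Wolf sub/super-solution method, completeness is read off from the bounds, and uniqueness is taken from the Li--Mochizuki theorem on complete solutions. So the architecture is sound. The problems are in the details you add.

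\emph{The coupling sign.} Write the system as $\Delta w=F(w)$. Then $\partial F_1/\partial w_2=-|q|^{1/2}e^{w_2-w_1}<0$ and $\partial F_2/\partial w_1<0$, so the system is cooperative in $(w_1,w_2)$ itself, not in $(w_1,-w_2)$. This is what justifies your same-direction ordering $(-Ar^{-\alpha},-Ar^{-\alpha})\le w\le(0,0)$. It also fixes which quantity the Omori--Yau step must control, namely $\max(w_1,w_2)$:
at a positive approximate maximum with $w_1\ge w_2$ one has $e^{2w_1}-e^{w_2-w_1}>0$, and with $w_2\ge w_1$ one has $e^{w_2-w_1}-e^{-2w_2}>0$. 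Controlling $\max(w_1,-w_2)$ instead would aim at $w_2\ge 0$, which is false near $Z(q)$, where $w_2\to-\infty$.

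\emph{The subsolution.} With equal components, $e^{\underline w_2-\underline w_1}=1$, so $F_1(\underline w)>-|q|^{1/2}$. On the other hand, $\Delta\underline w_1=-\tfrac{A\alpha^2}{4}|q|^{1/2}r^{-\alpha-2}$. The first inequality therefore fails wherever $r^{\alpha+2}\le A\alpha^2/4$, for every $A$, and increasing $A$ only enlarges the bad set. You have two ways out.
\begin{itemize}
\item Glue in a genuine subsolution near the zeros by a componentwise maximum. This is allowed for a cooperative system, provided each piece is a subsolution wherever it is the larger one.
\item Use unequal weights, e.g.\ $\underline w=-r^{-\alpha}(2A,A)$. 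Then $F_1(\underline w)/|q|^{1/2}\le 1-e^{Ar^{-\alpha}}$ and $F_2(\underline w)/|q|^{1/2}=e^{Ar^{-\alpha}}-e^{2Ar^{-\alpha}}$, so taking $A$ large really does work near the zeros. At infinity the linearization gives $-(5A,A)r^{-\alpha}$, which is more negative than the $O(r^{-\alpha-2})$ Laplacian.
\end{itemize}
There is a further issue: $r$ is only Lipschitz. We have $-Ar^{-\alpha}=\min_j(-Ar_j^{-\alpha})$, which has concave kinks along the sets equidistant from two zeros. These sets in general run out to infinity, and such kinks destroy the subsolution property. So $r$ must be replaced by a smooth comparable exhaustion everywhere, not just near $Z(q)$.

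\emph{Uniqueness.} You cannot bound an arbitrary complete solution from below by comparing it with the barrier through the maximum principle on $\mathbb C$. Nothing is known about that solution at infinity, so there is no boundary control. Obtaining such a priori control from completeness alone is the real content of Li--Mochizuki, so invoke their theorem directly, as the paper does.

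\emph{Completeness.} Your comparability claim holds for $e^{u_i-u_{i-1}}g$ with $i=2,3,4$. It also holds for the metric attached to the $q$-entry, $|q|^2e^{2u_1}g=e^{2w_1}|q|^{1/2}|dz|^2$. It fails for $e^{u_1-u_4}g=e^{2u_1}g$ taken literally, which is not even complete when $\deg q\ge 2$. The completeness condition should be read with the $|q|^2$ factor.
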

\begin{proof}
Existence and the stated estimates follow from the super-subsolution method used in \cite[Theorem~2.1]{tamburelli2024planar}.
In the same work, uniqueness is formulated in the language of good filtered Higgs bundles \cite{mochizuki2010wild,Mochizuki2021good} (see also \cite{mochizuki2014harmonic}).
To keep our exposition elementary, we avoid this machinery and instead rely on the uniqueness theorem for complete solutions proved in \cite{li2025complete} for general cyclic \(\mathrm{SL}(n,\mathbb{R})\)-Higgs bundles.
Any solution satisfying the above bounds is easily checked to be complete and hence unique.
\end{proof}

\subsection{The constant model \texorpdfstring{\(q=\odif{z}^4\)}{q}}

We briefly reproduce the constant-coefficient model from
\cite[Section~3.2]{tamburelli2024planar}, translated into our notation and ordering. It is easy to verify that
\[
u_1=u_2=0
\]
is the unique complete solution of \eqref{eq:planar-hitchin-equation},
hence the diagonal harmonic metric is
\[
h=\operatorname{diag}(1,1,1,1).
\]

In the canonical frame \(\mathcal F_z\) induced by the coordinate \(z\) (see Definition~\ref{def:frame-induced-by-the-coordinate}; in the planar case this frame is globally defined), the flat connection \(D_0\) becomes
\[
D_0 = \odif + U_0\odif{z} + V_0\odif{\bar{z}},
\]
with
\[
U_0=
\begin{pmatrix}
0&0&0&1\\
1&0&0&0\\
0&1&0&0\\
0&0&1&0
\end{pmatrix},
\qquad
V_0=
\begin{pmatrix}
0&1&0&0\\
0&0&1&0\\
0&0&0&1\\
1&0&0&0
\end{pmatrix}.
\]

Let \(\Psi_{0}(z)\) be the fundamental matrix of \(D_0\) with base point \(0\) and with respect to the frame \(\mathcal F_z\), as in \eqref{eq:def-Psi} and \eqref{eq:Psi-connection-equation}.  Thus
\begin{equation}
    \begin{dcases}
        \Psi_{0}(0)=I, \\
        \Psi_{0}^{-1}\odif\Psi_{0}=U_0\odif{z}+V_0\odif{\bar{z}}.
    \end{dcases}
\end{equation}
For the ray \(\gamma_\theta(s)=s e^{\sqrt{-1}\theta}\) (\(s\ge 0\)), set \(\psi_0(s)=\Psi_{0}(\gamma_\theta(s))\).  Then
\begin{equation}
\label{eq:constant-model-ode}
\begin{dcases}
    \psi_0(0)=I,\\
    \dfrac{\odif\psi_0}{\odif s}(s)
    =\psi_0(s)\bigl(e^{\sqrt{-1}\theta}U_0+e^{-\sqrt{-1}\theta}V_0\bigr).
\end{dcases}
\end{equation}
Solving this linear ODE yields
\[
\psi_0(s)=
\exp\left(s
\begin{pmatrix}
     0 & e^{-\sqrt{-1}\theta} & 0 & e^{\sqrt{-1}\theta} \\
       e^{\sqrt{-1}\theta} & 0 & e^{-\sqrt{-1}\theta} & 0 \\
       0 & e^{\sqrt{-1}\theta} & 0 & e^{-\sqrt{-1}\theta} \\
       e^{-\sqrt{-1}\theta} & 0 & e^{\sqrt{-1}\theta}& 0
\end{pmatrix}
\right).
\]
Hence, writing \(z=|z|e^{\sqrt{-1}\theta}\), we have
\begin{equation}\label{eq:constant-standard-model}
    \Psi_0(z)=
    \exp\left(|z|
    \begin{pmatrix}
         0 & e^{-\sqrt{-1}\theta} & 0 & e^{\sqrt{-1}\theta} \\
           e^{\sqrt{-1}\theta} & 0 & e^{-\sqrt{-1}\theta} & 0 \\
           0 & e^{\sqrt{-1}\theta} & 0 & e^{-\sqrt{-1}\theta} \\
           e^{-\sqrt{-1}\theta} & 0 & e^{\sqrt{-1}\theta}& 0
    \end{pmatrix}
    \right).
\end{equation}
Introduce the constant unitary matrix
\begin{equation}\label{eq:definition-of-S}
    S=\frac{1}{2}
    \begin{pmatrix}
        1 & e^{-\frac{3\pi\sqrt{-1}}{4}} 
        & e^{\frac{\pi\sqrt{-1}}{2}} 
        & e^{-\frac{\pi\sqrt{-1}}{4}} \\ 
        1 & e^{-\frac{\pi\sqrt{-1}}{4}} 
        & e^{-\frac{\pi\sqrt{-1}}{2}} 
        & e^{-\frac{3\pi\sqrt{-1}}{4}} \\ 
        1 & e^{\frac{\pi\sqrt{-1}}{4}} 
        & e^{\frac{\pi\sqrt{-1}}{2}} 
        & e^{\frac{3\pi\sqrt{-1}}{4}} \\ 
         1 & e^{\frac{3\pi\sqrt{-1}}{4}} 
        & e^{-\frac{\pi\sqrt{-1}}{2}} 
        & e^{\frac{\pi\sqrt{-1}}{4}}
    \end{pmatrix}.
\end{equation}
A direct computation shows that \(S\) diagonalizes the coefficient matrix in \eqref{eq:constant-standard-model}:
\[
S^{-1}
\begin{pmatrix}
     0 & e^{-\sqrt{-1}\theta} & 0 & e^{\sqrt{-1}\theta} \\
       e^{\sqrt{-1}\theta} & 0 & e^{-\sqrt{-1}\theta} & 0 \\
       0 & e^{\sqrt{-1}\theta} & 0 & e^{-\sqrt{-1}\theta} \\
       e^{-\sqrt{-1}\theta} & 0 & e^{\sqrt{-1}\theta}& 0
\end{pmatrix}
S
=
\begin{pmatrix}
    2\cos\theta &   &   &  \\
    & 2\sin\theta &  &    \\
    &  & -2\cos\theta &    \\
    &  &    & -2\sin\theta
\end{pmatrix}.
\]
The ordering of the eigenvalues differs from the one used in \cite{tamburelli2024planar}; our choice is motivated by the cyclic form \eqref{eq:cyclic-form-for-S-and-D} that \(S\) and the diagonal matrix admit, which simplifies the later calculations. Moreover, \(\mathcal{F}_zS\) is also a real frame for the induced real structure.

Set
\begin{equation}\label{eq:diagonal-in-constant-standard-model}
    D(z)=
    \begin{pmatrix}
        2|z|\cos\theta &   &   &  \\
        & 2|z|\sin\theta &  &    \\
        &  & -2|z|\cos\theta &    \\
        &  &    & -2|z|\sin\theta
    \end{pmatrix}.
\end{equation}
Then from \eqref{eq:constant-standard-model} we obtain
\begin{equation}\label{eq:constant-standard-model-diagonal}
     \Psi_0(z)
     = S\exp \left(D(z)\right)S^{-1} \notag\\
     = S
     \begin{pmatrix}
         e^{2|z|\cos\theta} &   &   &  \\
         & e^{2|z|\sin\theta} &  &    \\
         &  & e^{-2|z|\cos\theta} &    \\
         &  &    & e^{-2|z|\sin\theta}
     \end{pmatrix}
     S^{-1}.
   \end{equation}

For later calculations, it is convenient to write \(S\) and \(D(\theta)\) in the following form
\begin{equation}\label{eq:cyclic-form-for-S-and-D}
    (S)_{i,j}= \frac{1}{2}
    e^{\frac{\pi\sqrt{-1}}{4}(2i-5)(j-1)},
    \qquad
    \bigl(D(\theta)\bigr)_{i,i}= 2\cos\left(\theta-\frac{(i-1)\pi}{2}\right),
\end{equation}
where \(1\le i,j\le 4\).

\subsection{Half-plane decompositions and Stokes jumps}\label{subsec:Half-plane-decompositions-and-Stokes-jumps}

The constant model is not merely a toy example: on a half-plane equipped with a suitable natural coordinate, any polynomial quartic differential reduces to this local form. The constant-coefficient connection \(D_0\) therefore serves as the asymptotic model with which the planar connection \(D_q\) is compared.

For a general monic polynomial, a standard half-plane decomposition was established in \cite{tamburelli2024planar}. The statement given there uses \emph{upper} half-planes. Passing to right-half-planes replaces the quarter-turn \(\sqrt{-1}\) by \(-\sqrt{-1}\) in the transition law. We recall the adapted statement.

\begin{definition}
    A \emph{\(q\)-right-half-plane} is a pair \((U,w)\) where
    \(U\subset\mathbb C\setminus Z(q)\) is a simple connected domain and \(w\) is a natural
    coordinate for \(q\) (so \(q = \odif{w}^4\)) such that
    \[
    w : U \longrightarrow \mathbb{H}= \{\operatorname{Re} w > 0\}
    \]
    is biholomorphic.
\end{definition}

\begin{theorem}
    \label{thm:standard-half-plane-decomposition}
    Let \(q = q(z)\odif{z}^4\) be a monic polynomial quartic differential of
    degree \(d \ge 1\). Set \(N=d+4\). Then there exist a compact set
    \(K_0\supset K\) and \(q\)-right-half-planes
    \[
    (U_1,w_1),\dots,(U_N,w_N),
    \]
    with indices taken modulo \(N\), satisfying the following properties.
    \begin{enumerate}
        \item \(\mathbb C\setminus K_0 = \bigcup_{j=1}^{N} U_j\).
        \item The Euclidean ray \(\arg z = \frac{2\pi j}{N}\) is eventually
              contained in \(U_j\).
        \item The adjacent rays \(\arg z = \frac{2\pi(j-1)}{N}\) and
              \(\arg z = \frac{2\pi(j+1)}{N}\) are eventually disjoint from \(U_j\).
        \item On \(U_j\cap U_{j+1}\) the natural coordinates satisfy
              \(
              w_{j+1} = -\sqrt{-1} w_j + c_j
              \)
              for some constant \(c_j\in\mathbb C\). Both \(w_j\) and \(w_{j+1}\)
              map the overlap onto a sector of angle \(\pi/2\).
        \item Every Euclidean ray is eventually contained in one of the half-planes \(U_j\).
    \end{enumerate}
\end{theorem}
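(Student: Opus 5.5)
The statement is the right-half-plane version of the standard half-plane decomposition of Tamburelli--Wolf, which is formulated with upper half-planes. I would not reprove it from scratch. Instead I would start from their upper-half-plane decomposition and rotate every natural coordinate by a fixed quarter-turn. Concretely, suppose \((U_j,\zeta_j)\) are the \(q\)-upper-half-planes given there, with \(\zeta_j:U_j\to\{\operatorname{Im}\zeta>0\}\) biholomorphic and \(q=\odif{\zeta_j}^4\). I would set
\[
w_j=-\sqrt{-1}\,\zeta_j .
\]
Since \((-\sqrt{-1})^4=1\), we still have \(\odif{w_j}^4=\odif{\zeta_j}^4=q\), so each \(w_j\) is a natural coordinate. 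Multiplication by \(-\sqrt{-1}\) maps the upper half-plane onto \(\{\operatorname{Re} w>0\}\), so each \((U_j,w_j)\) is a \(q\)-right-half-plane.

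Properties (1), (2), (3) and (5) concern only the domains \(U_j\), the compact set \(K_0\), and Euclidean rays in the \(z\)-plane. These are untouched by the rotation of the coordinates, so they carry over verbatim. Two points need checking: that the index convention, with ray \(\arg z=2\pi j/N\) lying in \(U_j\), matches theirs, and that \(N=d+4\) is the correct count.

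The count comes from the asymptotics of a natural coordinate. Near infinity we have \(w\sim \frac{4}{d+4}z^{(d+4)/4}\). In the \(w\)-plane, the \(N=d+4\) Stokes-type directions at angle spacing \(2\pi/N\) correspond to half-plane directions in \(w\) spaced by \(\pi/2\). Hence a half-plane centred on \(\arg z=2\pi j/N\) contains that ray eventually. It misses the rays at \(2\pi(j\pm1)/N\), because those map to the boundary directions \(\arg w=\pm\pi/2\) relative to the centre, with error terms coming from the lower-order terms of \(q\). This is exactly the Tamburelli--Wolf construction, and I would cite it rather than redo it. If the indexing in their statement is shifted, I would relabel the \(U_j\) cyclically.

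The only substantive point is property (4). In their version the transition on overlaps reads \(\zeta_{j+1}=\sqrt{-1}\,\zeta_j+c'_j\), as a quarter-turn with the sign fixed by the orientation of the indexing. Substituting \(\zeta=\sqrt{-1}\,w\) gives
\[
w_{j+1}=\sqrt{-1}\,w_j-\sqrt{-1}\,c'_j .
\]
This has the wrong sign relative to the statement. So the change from upper to right half-planes alone does not flip the sign; the sign flip must come from how the indices are ordered relative to \(\arg z\).

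This is where I expect the main obstacle: carefully tracking the orientation. I would verify the sign directly from the leading asymptotics \(w_j\approx \frac{4}{N}e^{\sqrt{-1}\alpha_j}z^{N/4}\). The constant \(e^{\sqrt{-1}\alpha_j}\) is chosen so that \(\arg z=2\pi j/N\) maps to the positive real axis, which forces \(e^{\sqrt{-1}\alpha_j}=e^{-\sqrt{-1}\cdot 2\pi j/4}=(-\sqrt{-1})^j\). Two natural coordinates differ by a fourth root of unity and a translation, and the ratio of leading coefficients is \(e^{\sqrt{-1}(\alpha_{j+1}-\alpha_j)}=-\sqrt{-1}\). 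That yields \(w_{j+1}=-\sqrt{-1}\,w_j+c_j\), as claimed.

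Finally, the overlap \(U_j\cap U_{j+1}\) corresponds to the quadrant between the centre directions of the two half-planes. Its images are the sectors \(\{0<\arg w_j<\pi/2\}\) and \(\{-\pi/2<\arg w_{j+1}<0\}\), up to translation. These are consistent with the transition law, and each has angle \(\pi/2\), which gives the second part of (4).
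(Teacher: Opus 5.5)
Your proposal is correct and follows essentially the same route as the paper, which gives no independent argument: it quotes the upper-half-plane decomposition of Tamburelli--Wolf and converts it to right half-planes. You note that a uniform rotation $w_j=-\sqrt{-1}\,\zeta_j$ leaves the multiplier in the transition law unchanged, and you verify the multiplier $-\sqrt{-1}$ directly from the leading terms $w_j\approx\frac{4}{N}(-\sqrt{-1})^{j}z^{N/4}$ under counterclockwise indexing; this matches the explicit model $w_j=\frac{4}{N}\bigl(ze^{-2\pi\sqrt{-1}j/N}\bigr)^{N/4}$ with $w_{j+1}=-\sqrt{-1}\,w_j$ used later, and is more careful than the paper's one-line claim that passing to right half-planes is what flips the sign.
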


By Theorem~\ref{thm:standard-half-plane-decomposition}, each Euclidean ray is eventually contained in one of the half-planes \(U_j\).  Hence, to understand the asymptotic behavior of the fundamental matrix along a given ray, it suffices to work in the corresponding right-half-plane. We now fix one such half-plane and denote it by \((U,w)\).  Thus
\[
q = \odif{w}^4 \qquad\text{on } U,
\qquad
w : U \xrightarrow{\cong} \mathbb{H}= \{\operatorname{Re} w > 0\}.
\]
All subsequent computations in this subsection are performed with respect to the canonical frame \(\mathcal{F}_w\) induced by the natural coordinate \(w\) (see Definition~\ref{def:frame-induced-by-the-coordinate}).

With the geometric framework in place, we now write the connection form
explicitly in the chosen half-plane.  Let \(D_q\) be the flat connection
associated with the cyclic
\(\mathrm{Sp}(4,\mathbb{R})\)-Higgs bundle
\((\mathbb{K}_{\mathrm{Sp}(4,\mathbb{R})},\phi(q))\) over \(\mathbb{C}\).
On \(U\) we use the flat metric \(g = |q|^{1/2} = |\mathrm{d}w|^2\).  
In the canonical frame \(\mathcal F_w\) the connection takes the form
\(D_q = \odif + A_q\) with
\begin{align}\label{eq:connection-form-on-right-half-plane}
    A_q &=
    \begin{pmatrix}
    \partial_w \tilde u_1 & 0 & 0 & 0 \\
    0 & \partial_w \tilde u_2 & 0 & 0 \\
    0 & 0 & -\partial_w \tilde u_2 & 0 \\
    0 & 0 & 0 & -\partial_w \tilde u_1
    \end{pmatrix} \odif{w}
    +
    \begin{pmatrix}
    0 & 0 & 0 & 1\\
    1 & 0 & 0 & 0 \\
    0 & 1 & 0 & 0 \\
    0 & 0 & 1 & 0
    \end{pmatrix} \odif{w} \\
    &\qquad +
    \begin{pmatrix}
    0 & e^{\tilde u_2-\tilde u_1} & 0 & 0 \\
    0 & 0 & e^{-2\tilde u_2} & 0 \\
    0 & 0 & 0 & e^{\tilde u_2-\tilde u_1}\\
    e^{2\tilde u_1} & 0 & 0 & 0
    \end{pmatrix} \odif{\bar{w}} .
\end{align}
Here
\[
    \tilde u_1 = \log h_1(\odif{w}^{\frac32},\odif{w}^{\frac32})
               = u_1 + \tfrac34\log|q|,\qquad
    \tilde u_2 = \log h_2(\odif{w}^{\frac12},\odif{w}^{\frac12})
               = u_2 + \tfrac14\log|q|.
\]
By \eqref{eq:globoal-cyclic-SP(4,R)-Hitchin-equation} they satisfy
\begin{equation}
\label{eq:tilde-u-equation on w-plane}
\begin{dcases}
\Delta_w \tilde u_1 = e^{2\tilde u_1} - e^{\tilde u_2-\tilde u_1},\\
\Delta_w \tilde u_2 = e^{\tilde u_2-\tilde u_1} - e^{-2\tilde u_2},
\end{dcases}
\end{equation}
where \(\Delta_w = \partial_w \partial_{\bar w}\).

For the constant model \(q = \odif{w}^4\), the corresponding connection is \(D_0 = \odif + A_0\) with
\[
A_0 =
\begin{pmatrix}
0&0&0&1\\
1&0&0&0\\
0&1&0&0\\
0&0&1&0
\end{pmatrix}\odif{w}
+
\begin{pmatrix}
0&1&0&0\\
0&0&1&0\\
0&0&0&1\\
1&0&0&0
\end{pmatrix}\odif{\bar{w}} .
\]

Fix a base point \(w_*\in w(U)=\mathbb{H}\) and let \(\Psi_q(w)\) be the fundamental matrix of \(D_q\) in the frame \(\mathcal F_w\), normalized by \(\Psi_q(w_*)=I\).  As in Section~\ref{subsec:parallel-transport-holonomy}, it satisfies \(\Psi_q^{-1}\odif\Psi_q = A_q\).

For the constant model, we let \(\Psi_0(w)\) be the fundamental matrix normalized by \(\Psi_0(0)=I\) (the value at the origin is understood as the limit from the interior of \(\mathbb{H}\)).  Thus \(\Psi_0^{-1}\odif\Psi_0 = A_0\) and, writing \(w = |w|e^{\sqrt{-1}\theta}\), we have
\[
\Psi_0(w) = S\exp\bigl(D(w)\bigr)S^{-1} 
          = S
            \begin{pmatrix}
                   e^{2|w|\cos\theta} & & & \\
                   & e^{2|w|\sin\theta} & & \\
                   & & e^{-2|w|\cos\theta} & \\
                   & & & e^{-2|w|\sin\theta}
            \end{pmatrix}
          S^{-1}.
 \]

We now compare the two connections by defining
\[
G(w) = \Psi_q(w)\Psi_0(w)^{-1}.
\]
Then
\begin{equation}\label{eq:formula-for-G}
    \odif G = G\Theta,\qquad
    \Theta = \Psi_0(A_q-A_0)\Psi_0^{-1}.
\end{equation}
Hence the asymptotic analysis of \(D_q\) relative to the constant model reduces to the study of the one-form \(\Theta\).

Set
\[
U=
\begin{pmatrix}
\partial_{w} \tilde u_1 & 0 & 0 & 0 \\
0 & \partial_{w} \tilde u_2 & 0 & 0 \\
0 & 0 & -\partial_{w} \tilde u_2 & 0 \\
0 & 0 & 0 & -\partial_{w} \tilde u_1
\end{pmatrix},
\qquad
V=
\begin{pmatrix}
0 & e^{\tilde u_2-\tilde u_1}-1 & 0 & 0 \\
0 & 0 & e^{-2\tilde u_2}-1 & 0 \\
0 & 0 & 0 & e^{\tilde u_2-\tilde u_1}-1\\
e^{2\tilde u_1}-1 & 0 & 0 & 0
\end{pmatrix}.
\]
Then 
\[
A_q-A_0 = U\odif w + V\odif{\bar w}.
\]  
Introducing
\(R_1 = S^{-1} U S\) and \(R_2 = S^{-1} V S\), we obtain
\begin{equation}\label{eq:formula-for-Theta}
    \Theta
    = \Psi_0(U\odif w+V\odif{\bar w})\Psi_0^{-1}
    = S\exp(D(w))(R_1\odif w+R_2\odif{\bar w})\exp(-D(w))S^{-1}.
\end{equation}
To obtain explicit expressions for \(R_1\) and \(R_2\), we use the form \eqref{eq:cyclic-form-for-S-and-D} of \(S\) to compute.  For convenience, write
\(\tilde u_3=-\tilde u_2\) and \(\tilde u_4=-\tilde u_1\).  Because \(S\)
is unitary, \((S^{-1})_{i,k}= \overline{S_{k,i}}\).  Then
\[
(R_1)_{i,j}
= \sum_{k=1}^4 (S^{-1})_{i,k}U_{k,k}S_{k,j} 
= \frac14 \sum_{k=1}^4
   e^{\frac{\pi\sqrt{-1}}{4}(5-2k)(i-j)}\partial_w\tilde u_k .
\]
Separating the contributions of \(k=1,2\) and using
\(\tilde u_3=-\tilde u_2,\tilde u_4=-\tilde u_1\) gives
\begin{equation}\label{eq:explict-formula-for-R1}
    (R_1)_{i,j}
    = (-1)^{j-i}\frac{\sqrt{-1}}{2}\Bigl(
        \sin\Bigl(\frac{\pi(j-i)}{4}\Bigr)\partial_w\tilde u_1
       +\sin\Bigl(\frac{3\pi(j-i)}{4}\Bigr)\partial_w\tilde u_2
       \Bigr).
\end{equation}
For the \(\odif{\bar w}\)-part we compute similarly, understanding the
index \(k+1\) modulo \(4\):
\begin{align}\label{eq:explict-formula-for-R2-old}
(R_2)_{i,j}
& = \sum_{k=1}^4 (S^{-1})_{i,k}V_{k,k+1}S_{k+1,j} \nonumber \\
& = \frac14 e^{\frac{\pi\sqrt{-1}}{2}(j-1)}
   \sum_{k=1}^4
   e^{\frac{\pi\sqrt{-1}}{4}(5-2k)(i-j)}
   \bigl(e^{\tilde{u}_{k+1}-\tilde u_k}-1\bigr).
\end{align}

To simplify the above formulas we introduce the auxiliary functions
\[
    \tilde{w}_1 = \tilde{w}_3 = \tilde u_1+\tilde u_2,\qquad
    \tilde{w}_2 = \tilde u_1-\tilde u_2,
\]
where all indices are taken modulo \(4\).  Then, for \(i \neq j\),
\begin{align}
    (R_1)_{i,j}
        &= (-1)^{j-i}\frac{\sqrt{-1}}{2}
          \sin\Bigl(\frac{\pi(j-i)}{4}\Bigr)
          \partial_w \tilde{w}_{j-i},\label{eq:R1-offdiag}\\[2mm]
    (R_2)_{i,j}
        &= -\frac14 e^{\frac{\pi\sqrt{-1}}{4}(j-i)}\sqrt{-1}^{1+i}
          \Delta_w \tilde w_{j-i},\label{eq:R2-offdiag}
\end{align}
with the index \(j-i\) understood modulo \(4\) as well.  For the
diagonal entries, we have
\begin{equation}\label{eq:R1-diag}
    (R_1)_{i,i} = 0, \qquad
(R_2)_{i,i}
   = \frac14 e^{\frac{\pi\sqrt{-1}}{2}(i-1)}
      \bigl(2e^{\tilde u_2-\tilde u_1}+e^{2\tilde u_1}+e^{-2\tilde u_2}-4\bigr).
\end{equation}

We shall need the following estimates for the auxiliary functions.

\begin{proposition}[{\cite[Lemma~3.11]{tamburelli2024planar}}]
\label{prop:estimates-of-tilde-w}
Let \(w=r e^{\sqrt{-1}\theta}\).  Then, as \(r\to+\infty\),
\[
\tilde w_k = O\biggl( \frac{e^{-2|1-(\sqrt{-1})^{k}|r}}{\sqrt{r}} \biggr), \qquad k=1,2.
\]
Moreover, their first derivatives and Laplacians satisfy
\[
\partial_w \tilde w_{k}
   = O\biggl( \frac{e^{-2|1-(\sqrt{-1})^{k}|r}}{\sqrt{r}} \biggr),\qquad
\Delta_w \tilde w_{k}
   = O\biggl( \frac{e^{-2|1-(\sqrt{-1})^{k}|r}}{\sqrt{r}} \biggr), \qquad k=1,2
\]
as \(r \to +\infty\).
\end{proposition}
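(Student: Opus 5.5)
The plan is to linearize the system \eqref{eq:tilde-u-equation on w-plane} around the constant solution \(\tilde u_1=\tilde u_2=0\). Then we show that \(\tilde w_1\) and \(\tilde w_2\) are, up to quadratic errors, solutions of two \emph{decoupled} Helmholtz-type equations. Finally we compare them with radial Bessel-type barriers. Expanding the exponentials gives
\[
\Delta_w\tilde u_1 = 3\tilde u_1-\tilde u_2+O(|\tilde u|^2),\qquad
\Delta_w\tilde u_2 = 3\tilde u_2-\tilde u_1+O(|\tilde u|^2),
\]
hence
\[
\Delta_w\tilde w_1 = 2\tilde w_1+N_1,\qquad \Delta_w\tilde w_2=4\tilde w_2+N_2,
\]
with \(N_k=O(\tilde w_1^2+\tilde w_2^2)\). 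Since \(\Delta_w=\tfrac14\Delta\), these read \(\Delta\tilde w_1=8\tilde w_1+4N_1\) and \(\Delta\tilde w_2=16\tilde w_2+4N_2\). The decay rates \(\sqrt8=2|1-\sqrt{-1}|\) and \(\sqrt{16}=2|1-(-1)|\) are exactly the exponents in the statement. The factor \(r^{-1/2}\) is the standard asymptotic of the modified Bessel function \(K_0(\mu r)\sim\sqrt{\pi/(2\mu r)}\,e^{-\mu r}\).

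\textbf{Step 1 (smallness and a crude exponential decay).} By Theorem~\ref{thm:general-planar-theory}, \(\tilde u_1,\tilde u_2\le 0\) and \(|\tilde u_k|\le A r^{-\alpha}\to0\), so the \(N_k\) are small relative to the linear terms for \(r\) large. To get a first exponential decay I would work with the exact nonlinear equations. The monotonicity of \(x\mapsto e^x\) gives differential inequalities of the form \(\Delta_w\tilde w_1\ge(2-\varepsilon)\tilde w_1\) and \(\Delta_w(-\tilde w_1)\le \ldots\), valid on \(\{r>R\}\), at least for the signed combinations. Comparison with \(C e^{-\sqrt{8-\varepsilon'}\,r}\) on large discs centred far from \(Z(q)\) (equivalently, on the complement of a compact set in the \(w\)-chart) then yields \(|\tilde w_k|\le C_\varepsilon e^{-(\mu_k-\varepsilon)r}\), where \(\mu_1=2\sqrt2\) and \(\mu_2=4\). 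Here the maximum principle uses that the \(\tilde w_k\) tend to \(0\) at infinity. For \(\tilde w_2\), whose sign is not obvious, I would first bound \(|\tilde w_2|\) by a barrier for the coupled system, using cooperative-system comparison (the off-diagonal coupling has a favourable sign in the \((\tilde u_1,\tilde u_2)\) variables).

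\textbf{Step 2 (sharp rate).} Once the crude decay holds, the errors satisfy \(|N_k|\le Ce^{-2(\mu_1-\varepsilon)r}\). For \(k=1\) this is \(o(e^{-\mu_1 r}/\sqrt r)\). For \(k=2\) it is also \(o(e^{-4r}/\sqrt r)\), since \(2\cdot 2\sqrt2>4\). The only term needing care is the coupling through the quadratic expression in \(\tilde w_1\), and this is exactly where the inequality \(2\mu_1>\mu_2\) is used. I would then compare \(\pm\tilde w_k\) with a barrier \(C K_0(\mu_k|w-w_0|)+\)(particular solution of size \(e^{-2(\mu_1-\varepsilon)r}\)) and apply the maximum principle to \(\Delta-\mu_k^2\) on the exterior region. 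This upgrades the estimate to \(O(e^{-\mu_k r}/\sqrt r)\).

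\textbf{Step 3 (derivatives).} On each unit disc \(B_1(w)\) far out, rescaling is unnecessary: interior Schauder/\(L^p\) estimates for \(\Delta_w\tilde w_k=\mu_k^2\tilde w_k/4+N_k\) bound \(|\partial_w\tilde w_k|\) by \(\sup_{B_1}|\tilde w_k|+\sup_{B_1}|N_k|\). Because \(e^{-\mu_k r}/\sqrt r\) varies by a bounded factor across a unit disc, this gives the same bound for \(\partial_w\tilde w_k\). The Laplacian bound is immediate from the equation.

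The main obstacle I expect is Step~1 for \(\tilde w_2\), which has the faster rate: a barrier argument must not let the slower-decaying \(\tilde w_1\) contaminate it. I would handle this by writing the \(\tilde w_2\)-equation exactly, as \(\Delta_w\tilde w_2=c(w)\tilde w_2+\)(terms quadratic in \(\tilde w_1\)) with \(c\to4\), and bootstrapping.
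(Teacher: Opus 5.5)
You take a genuinely different route from the paper. The paper does not prove this estimate itself. It imports it from Tamburelli--Wolf \cite{tamburelli2024planar} (their Lemma~3.11, with the derivative and Laplacian bounds taken from the arguments of their Lemma~3.11 and Proposition~3.3), after noting that the auxiliary functions differ only by constant factors. You instead give a self-contained barrier argument, and its analytic skeleton is right. The linearized rates $2\sqrt2=2|1-\sqrt{-1}|$ and $4=2|1-(-1)|$ are correct. The $K_0$ barrier is what produces the factor $r^{-1/2}$. The bootstrap for $\tilde w_2$ rests exactly on $2\mu_1>\mu_2$, and Step~3 is fine.

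Step~1 becomes cleaner if you use the exact form of the system:
\[
\Delta_w\tilde w_1=2e^{\tilde w_2}\sinh\tilde w_1,\qquad
\Delta_w\tilde w_2=4\sinh\tilde w_2+2e^{\tilde w_2}\bigl(\cosh\tilde w_1-1\bigr).
\]
The first is a scalar equation for $\tilde w_1\le0$ with coefficient tending to $2$. From the second, $\tilde w_2\le 0$ by the maximum principle: it tends to $-\infty$ at $Z(q)$ and to $0$ at infinity, and a positive maximum contradicts the positive right-hand side. Hence $\Delta_w(-\tilde w_2)\ge 4(-\tilde w_2)-O(\tilde w_1^2)$, and no cooperative-system comparison is needed. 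Also, the inequality you wrote points the wrong way. The upper bound needs $\Delta_w(-\tilde w_1)\ge(2-\varepsilon)(-\tilde w_1)$, whereas $\Delta_w\tilde w_1\ge(2-\varepsilon)\tilde w_1$ is false for large $r$, since $\tilde w_1<0$.

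The genuine gap is the domain on which you apply the maximum principle. \emph{Large discs} and \emph{the complement of a compact set in the $w$-chart} are not equivalent, and neither supports Step~2.

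Inside one $q$-right-half-plane, the region $\{w\in\mathbb H:|w-w_0|>R\}$ has the whole line $\operatorname{Re}w=0$ in its boundary. There you only know $|\tilde w_k|\lesssim e^{-(\mu_k-\varepsilon)|w|}$, which is not dominated by $CK_0(\mu_k|w-w_0|)$, so the comparison cannot close. On a disc of radius $\rho$ the usable barrier is of $I_0$-type and gives only $1/I_0(\mu\rho)\sim\sqrt\rho\,e^{-\mu\rho}$. That is enough for the crude bound, but it is not the barrier $Ce^{-\sqrt{8-\varepsilon'}r}$ you named.

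The comparison must be made on a true exterior domain $\{|z|>R\}\subset\mathbb C$, where the $\tilde u_k$ are globally defined, with a barrier built from a global distance function. For $q=z^d\,dz^4$, the only case the paper later uses, this is automatic. There $|w_j|=\frac4N|z|^{N/4}$ is independent of $j$, everything is radial, and Step~2 is an ODE comparison on $(R,\infty)$ with $\phi=MK_0(\mu_k r)-Ke^{-\nu r}$, where $\mu_k<\nu<2(\mu_1-\varepsilon)$. The negative sign on the correction term is what gives $(\Delta-\mu_k^2)\phi\le-4|N_k|$.

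For a general polynomial $q$ you need an extra geometric ingredient. For example, take $K_0(\mu_k d_0)$, where $d_0$ is the $|q|^{1/2}$-distance to a fixed zero. Since all cone angles exceed $2\pi$, geodesics are unique, $d_0$ is $C^{1,1}$ away from $Z(q)$, and $\Delta d_0\ge 1/d_0$ for the flat Laplacian. These properties make $K_0(\mu_k d_0)$ a supersolution there.
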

\begin{proof}
The first statement is a direct reformulation of
\cite[Lemma~3.11]{tamburelli2024planar}. Our auxiliary functions differ from those used by Tamburelli and Wolf only by constant multiplicative factors. The estimates for the derivatives and Laplacians follow from the arguments given in the proof of Lemma~3.11 and Proposition~3.3 of \cite{tamburelli2024planar}..
\end{proof}

From the above estimates and the explicit formulas \eqref{eq:R1-offdiag}, \eqref{eq:R2-offdiag} and \eqref{eq:R1-diag}, we immediately obtain the following bounds.
\begin{proposition}
\label{prop:TW-error-estimate}
Let \(w=r e^{\sqrt{-1}\theta}\). Then, as \(r\to+\infty\), the entries
of \(R_1\) and \(R_2\) satisfy
\[
(R_\nu)_{i,j} =
O\biggl(
\frac{\exp\bigl(-2\bigl|1-(\sqrt{-1})^{i-j}\bigr|r\bigr)}{\sqrt{r}}
\biggr),\qquad i\neq j,
\]
and
\[
(R_\nu)_{i,i} =
o\biggl(
\frac{e^{-2\sqrt{2}r}}{\sqrt{r}}
\biggr),\qquad \nu=1,2 .
\]
\end{proposition}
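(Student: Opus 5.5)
The plan is to read the bounds off the explicit formulas \eqref{eq:R1-offdiag}, \eqref{eq:R2-offdiag} and \eqref{eq:R1-diag}, using only Proposition~\ref{prop:estimates-of-tilde-w}. The off-diagonal and diagonal entries are handled separately, and the diagonal case is where a small algebraic observation is needed.

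\textbf{Off-diagonal entries.} Let \(i\neq j\) and write \(m\equiv j-i \pmod 4\), so \(m\in\{1,2,3\}\). By \eqref{eq:R1-offdiag} and \eqref{eq:R2-offdiag}, \((R_1)_{i,j}\) and \((R_2)_{i,j}\) equal bounded constants times \(\partial_w\tilde w_m\) and \(\Delta_w\tilde w_m\), respectively. Since \(\tilde w_3=\tilde w_1\), Proposition~\ref{prop:estimates-of-tilde-w} covers all three values of \(m\).

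\begin{itemize}
\item If \(m\in\{1,3\}\), the relevant function is \(\tilde w_1\). Its decay rate is \(e^{-2|1-\sqrt{-1}|r}/\sqrt r=e^{-2\sqrt2 r}/\sqrt r\).
\item If \(m=2\), the relevant function is \(\tilde w_2\). Its decay rate is \(e^{-4r}/\sqrt r\).
\end{itemize}

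It remains to check that these rates agree with \(|1-(\sqrt{-1})^{i-j}|\). If \(i-j\equiv \pm1\pmod 4\), then \((\sqrt{-1})^{i-j}=\pm\sqrt{-1}\) and \(|1\mp\sqrt{-1}|=\sqrt2\). If \(i-j\equiv 2\pmod 4\), then \((\sqrt{-1})^{i-j}=-1\) and \(|1-(-1)|=2\). These match the two cases above, which gives the off-diagonal bound; vanishing sine factors only help.

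\textbf{Diagonal entries.} By \eqref{eq:R1-diag}, \((R_1)_{i,i}=0\). For \((R_2)_{i,i}\), the unit-modulus prefactor is irrelevant, so it suffices to estimate
\[
f=2e^{\tilde u_2-\tilde u_1}+e^{2\tilde u_1}+e^{-2\tilde u_2}-4 .
\]
Put \(a=\tilde w_1=\tilde u_1+\tilde u_2\) and \(b=\tilde w_2=\tilde u_1-\tilde u_2\). Then \(\tilde u_2-\tilde u_1=-b\), \(2\tilde u_1=a+b\) and \(-2\tilde u_2=b-a\). Hence
\[
f=2e^{-b}+2e^{b}\cosh a-4=2\bigl(e^{b}+e^{-b}-2\bigr)+2e^{b}\bigl(\cosh a-1\bigr).
\]

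The key point is that \(f\) has no linear terms in \(a\) and \(b\); it is purely quadratic. Since \(a,b\to0\) as \(r\to+\infty\), we get \(f=O(a^2+b^2)\). Proposition~\ref{prop:estimates-of-tilde-w} then gives
\[
f=O\bigl(e^{-4\sqrt2 r}/r\bigr)+O\bigl(e^{-8r}/r\bigr).
\]
Both terms are \(o\bigl(e^{-2\sqrt2 r}/\sqrt r\bigr)\), which proves the diagonal bound.

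The only possible obstacle is this diagonal cancellation. A naive linear estimate would give only \(O(e^{-2\sqrt2 r}/\sqrt r)\), not little-\(o\); the rewriting in terms of \(\cosh\) shows the linear terms vanish identically.
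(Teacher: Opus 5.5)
Your proposal is correct and takes the same approach as the paper, which simply reads the bounds off \eqref{eq:R1-offdiag}, \eqref{eq:R2-offdiag}, \eqref{eq:R1-diag} and Proposition~\ref{prop:estimates-of-tilde-w}. Your rewriting $2e^{-b}+2e^{b}\cosh a-4=2(e^{b}+e^{-b}-2)+2e^{b}(\cosh a-1)$ makes explicit a point the paper leaves implicit: $(R_2)_{i,i}$ is quadratic in $\tilde w_1,\tilde w_2$, and this is exactly what the little-$o$ bound on the diagonal requires.
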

\begin{proof}
This follows directly from Proposition~\ref{prop:estimates-of-tilde-w}
together with the expressions \eqref{eq:R1-offdiag},
\eqref{eq:R2-offdiag} and \eqref{eq:R1-diag}.
\end{proof}

\subsubsection{Stokes directions and sectorial limits}

\begin{definition}
\label{def:stokes-directions}
The \emph{Stokes directions} inside the right half-plane $\mathbb{H}$ are
\[
\theta_{1} = -\frac{\pi}{4},\qquad \theta_{2} = 0,\qquad \theta_{3} = \frac{\pi}{4}.
\]
These three rays divide $\mathbb{H}$ into four open sectors.
We denote by $J_0,J_1,J_2,J_3$ both the angular intervals
\[
J_{0} = \Bigl(-\frac{\pi}{2},-\frac{\pi}{4}\Bigr),\qquad
J_{1}  = \Bigl(-\frac{\pi}{4},0\Bigr),\qquad
J_{2}  = \Bigl(0,\frac{\pi}{4}\Bigr),\qquad
J_{3} = \Bigl(\frac{\pi}{4},\frac{\pi}{2}\Bigr).
\]
and the corresponding sectors of $\mathbb{H}$:
\[
J_k = \{ w = r e^{\sqrt{-1}\theta} : r>0, -\frac{\pi}{2}+\frac{k\pi}{4}<\theta<-\frac{\pi}{2}+\frac{(k+1)\pi}{4} \}.
\]
\end{definition}

\begin{figure}[htbp]
\centering
\begin{tikzpicture}[scale=1.8]
 
  \draw[dashed, gray] (0,-1.5) -- (0,1.5) node[above] {\(\operatorname{Im} w\)};

  \draw[thick, red] (0,0) -- (1.6,1.6) node[right] {\(\theta_3 = \frac{\pi}{4}\)};
  \draw[thick, red] (0,0) -- (1.8,0)   node[below] {\(\theta_2 = 0\)};
  \draw[thick, red] (0,0) -- (1.6,-1.6) node[right] {\(\theta_1 = -\frac{\pi}{4}\)};
  
  \node at (1.1,0.6)  {\(J_{2}\)};
  \node at (1.1,-0.3) {\(J_{1}\)};
  \node at (0.5,0.9)  {\(J_{3}\)};
  \node at (0.5,-0.9) {\(J_{0}\)};
  
  \fill (0,0) circle (0.5pt) node[below left] {\(0\)};
\end{tikzpicture}
\caption{Stokes directions and stable sectors in the right half-plane \(\mathbb{H}\).}
\label{fig:stokes-directions}
\end{figure}
\begin{theorem}
\label{thm:sectorial-limits}
Let \(\Psi_q(w)\) be the fundamental matrix of the connection \(D_q\) with respect the frame \(\mathcal{F}_w\), and let \(\Psi_0(w)\) be the corresponding
fundamental matrix for the constant model \(q = \odif{w}^4\).  For each Stokes sector
\[
J_{*} \in \{ J_{0},\; J_{1},\; J_{2},\; J_{3} \},
\]
there exists a constant matrix \(L_{*} \in \mathrm{Sp}(4,\mathbb{R})\)
such that
\[
\lim_{r\to+\infty}
\Psi_q(r e^{\sqrt{-1}\theta})
\Psi_0(r e^{\sqrt{-1}\theta})^{-1}
= L_{*}
\]
as \(r\to+\infty\) with \(\theta \in J_*\).
\end{theorem}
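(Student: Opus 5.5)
The plan is to show that \(G(w)=\Psi_q(w)\Psi_0(w)^{-1}\) converges along rays in each sector, uniformly on compact subsets of the angular interval, and that the limit does not depend on the angle. By \eqref{eq:formula-for-G}, \(G\) solves \(\odif G=G\Theta\) with \(\Theta\) given by \eqref{eq:formula-for-Theta}. Conjugating by \(S\) and setting \(\widehat G=GS\), we get \(\odif\widehat G=\widehat G\,\widehat\Theta\), where
\[
\widehat\Theta=\exp(D(w))\bigl(R_1\odif w+R_2\odif{\bar w}\bigr)\exp(-D(w)).
\]
Its \((i,j)\) entry is \(e^{D_{ii}-D_{jj}}\bigl((R_1)_{ij}\odif w+(R_2)_{ij}\odif{\bar w}\bigr)\). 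By \eqref{eq:cyclic-form-for-S-and-D}, for \(w=re^{\sqrt{-1}\theta}\) the exponent is
\[
D_{ii}-D_{jj}=2r\bigl(\cos(\theta-\tfrac{(i-1)\pi}{2})-\cos(\theta-\tfrac{(j-1)\pi}{2})\bigr).
\]
Its modulus is at most \(2r\,|1-\sqrt{-1}^{\,i-j}|\). Proposition~\ref{prop:TW-error-estimate} gives a decay factor \(e^{-2|1-\sqrt{-1}^{i-j}|r}/\sqrt r\) for the off-diagonal entries and \(o(e^{-2\sqrt2 r}/\sqrt r)\) for the diagonal ones.

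The key step is a sharp comparison of these two exponents. The growth factor equals the decay rate exactly only when \(\theta\) is a direction where \(\cos(\theta-\alpha_i)-\cos(\theta-\alpha_j)\) attains its maximum modulus \(|1-\sqrt{-1}^{i-j}|\). These directions are precisely the bisectors of the pairs \(\alpha_i,\alpha_j+\pi\). Inside \(\mathbb H\) they are the Stokes directions \(-\pi/4,0,\pi/4\), together with the boundary rays \(\pm\pi/2\) for the pair with \(|i-j|=2\). I will verify this by a short trigonometric computation: for each pair \((i,j)\), the quantity \(|1-\sqrt{-1}^{i-j}|-(\cos(\theta-\alpha_i)-\cos(\theta-\alpha_j))\) is a nonnegative function of \(\theta\) that vanishes only at those angles.

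Consequently, for \(\theta\) in a compact subinterval \(I\Subset J_*\), there is \(\delta=\delta(I)>0\) with \(|\widehat\Theta|\le Ce^{-\delta r}/\sqrt r\,(|\odif w|)\) on the sector \(\{\arg w\in I,\ r\ge r_0\}\). The diagonal terms are fine, since \(D_{ii}-D_{ii}=0\).

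From this bound, convergence follows by a standard Grönwall/ODE argument. Along the ray \(s\mapsto se^{\sqrt{-1}\theta}\) we have \(\int^\infty\|\widehat\Theta\|<\infty\). So \(\widehat G\) is bounded and Cauchy, and the limit exists. For independence of \(\theta\), I connect two rays in \(I\) by circular arcs \(|w|=r\). The arc has length \(O(r)\), and \(\|\widehat\Theta\|\lesssim e^{-\delta r}\) on it, so the transport along the arc tends to \(I\). Hence the limits coincide, and the convergence is locally uniform. Exhausting \(J_*\) by compact subintervals gives a single constant \(L_*\) for the whole sector.

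For the last claim, \(L_*\in\mathrm{Sp}(4,\mathbb R)\), I use that \(\Psi_q\) and \(\Psi_0\) both take values in \(\mathrm{Sp}(4,\mathbb R)\) relative to the real symplectic frame. For the constant model, \(u=0\) and \(\Psi_0=S e^{D}S^{-1}\), where \(\mathcal F_wS\) is real. Both harmonic metrics are compatible with the same structure in the frame \(\mathcal F_w\), so \(G\) lies in the closed subgroup \(\mathrm{Sp}(4,\mathbb R)\) and so does its limit. Invertibility of \(L_*\) is automatic, because \(\det G=1\) (or because the inverse satisfies the analogous ODE with the same integrable bound).

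The main obstacle I expect is the exponent bookkeeping near the middle Stokes direction \(\theta=0\). There the pairs \((1,3)\), with \(|1-\sqrt{-1}^{\,2}|=2\), and the adjacent pairs, with rate \(\sqrt2\), interact. I need to check that none of them is critical in the open sectors. I also need the Tamburelli--Wolf estimates to hold uniformly in \(\theta\) on \(\mathbb H\) (away from the boundary if necessary), which I will take from Proposition~\ref{prop:estimates-of-tilde-w}.
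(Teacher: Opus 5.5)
Your convergence argument is correct, and it is the argument the paper relies on when it cites Lemma~3.5 of Tamburelli--Wolf. You conjugate by \(S\) and compare the growth \(r\lambda_{i,j}(\theta)\) of \(e^{D_{ii}-D_{jj}}\) with the decay rate \(2|1-\sqrt{-1}^{\,i-j}|r\) from Proposition~\ref{prop:TW-error-estimate}. In the closed half-plane these balance only at \(-\pi/4,0,\pi/4\), and, for the pairs \((2,4),(4,2)\), at \(\pm\pi/2\). So on a compact subinterval of \(J_*\) the form \(\widehat\Theta\) is exponentially small, and integrating along rays and arcs gives a \(\theta\)-independent limit. The difficulty you anticipated at \(\theta=0\) does not occur.

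The step that fails is the deduction \(L_*\in\mathrm{Sp}(4,\mathbb R)\).

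\textbf{Why it fails.} The real structure \(\kappa=\Phi_h^{-1}\circ\Phi_Q\) depends on the harmonic metric. In the frame \(\mathcal F_w\) it is:
\begin{itemize}
\item \(v\mapsto Q\bar v\) for \(D_0\);
\item \(v\mapsto H_q^{-1}Q\bar v\) for \(D_q\), where \(H_q=\operatorname{diag}(e^{\tilde u_1},e^{\tilde u_2},e^{-\tilde u_2},e^{-\tilde u_1})\).
\end{itemize}
So \(\mathcal F_wS\) is a real frame only for \(D_0\). For \(D_q\) the real frame is \(\mathcal F_wH_q^{-1/2}S\). Consequently \(S^{-1}G(w)S\) is not real at finite \(w\), and the normalization \(\Psi_q(w_*)=I\) is imposed in a non-real frame. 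There is therefore no closed real subgroup containing all \(G(w)\) in which to pass to the limit. Your setup does give \(L_*\in\mathrm{Sp}(4,\mathbb C)\) for free.

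\textbf{How to get realness.} Define \(\Psi_q^{\mathbb R}(w):=S^{-1}H_q(w_*)^{1/2}\Psi_q(w)H_q(w)^{-1/2}S\). This is real symplectic, for the same form as \(e^{-D}\), because \(H_q^{-1/2}\) preserves \(\Omega\). Then
\[
S^{-1}G(w)S=\bigl(S^{-1}H_q(w_*)^{-1/2}S\bigr)\bigl(\Psi_q^{\mathbb R}(w)e^{-D(w)}\bigr)\bigl(e^{D(w)}S^{-1}H_q(w)^{1/2}S\,e^{-D(w)}\bigr).
\]
\begin{itemize}
\item The off-diagonal entries of \(S^{-1}H_q^{1/2}S-I\) have the structure of \((R_1)_{i,j}\) with \(\partial_w\tilde u_k\) replaced by \(\tilde u_k/2\), up to quadratic terms. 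Hence they are \(O(\tilde w_{j-i})\) plus \(O(e^{-4\sqrt2 r})\).
\item The diagonal entries are quadratic in \(\tilde u\).
\item Your exponent comparison then shows the last factor tends to \(I\) on \(J_*\), so the real middle factor converges.
\end{itemize}
Thus \(S^{-1}L_*S\) is real symplectic only up to the fixed left factor \(S^{-1}H_q(w_*)^{-1/2}S\), which comes from the base-point normalization. The realness claim holds once \(\Psi_q\) is normalized in the real frame at \(w_*\); the paper's statement is loose on this point as well. In any case this factor cancels in the Stokes ratios \(L_{k-1}^{-1}L_k\), which is all that is used later.
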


\begin{proof}[Sketch of proof]
The argument follows exactly \cite[Lemma~3.5]{tamburelli2024planar}, only translated into our convention. We refer to \cite[Lemma~3.5]{tamburelli2024planar} for the full details.
\end{proof}

\begin{remark}
The limit matrices \(L_{*}\) depend on the choice of the base
point used to normalize the fundamental matrices \(\Psi_q\). The unipotent matrices \(U\) that describe the Stokes jumps below, however, do not depend on this choice.
\end{remark}

\subsubsection{The Stokes jump theorem}

\begin{theorem}
\label{thm:stokes-jumps}
With the notation of Theorem~\ref{thm:sectorial-limits}, the sectorial
limits
\[
L_{0},\qquad L_{1},\qquad L_{2},\qquad L_{3}
\]
satisfy the following Stokes jump relations. There exist unipotent
matrices
\[
 U_{1},\qquad
 U_{2},\qquad
 U_{3}
\]
such that
\[
L_{k-1}^{-1}L_k = S U_k S^{-1},\qquad k=1,2,3.
\]
\end{theorem}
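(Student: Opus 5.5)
We need to show that \(S^{-1}L_{k-1}^{-1}L_kS\) is unipotent. Set \(G(w)=\Psi_q(w)\Psi_0(w)^{-1}\). In the diagonalized frame \(\widehat G:=S^{-1}GS\) satisfies \(\odif\widehat G=\widehat G\,\widehat\Theta\), where
\[
\widehat\Theta=\exp(D(w))\bigl(R_1\odif w+R_2\odif{\bar w}\bigr)\exp(-D(w))
\]
by \eqref{eq:formula-for-Theta}. Its \((i,j)\) entry equals \((R_\nu)_{i,j}\,e^{D_{ii}-D_{jj}}\), with \(D_{ii}=2r\cos(\theta-(i-1)\pi/2)\) by \eqref{eq:cyclic-form-for-S-and-D}. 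Since \(\Psi_0\) is analytic and \(\Psi_q\) is defined on all of \(\mathbb H\), the matrix \(L_{k-1}^{-1}L_k\) can be computed as a limit of \(G(w)^{-1}G(w')\). Here \(w=re^{\sqrt{-1}\theta}\), \(w'=re^{\sqrt{-1}\theta'}\), and \(\theta\in J_{k-1}\), \(\theta'\in J_k\) are fixed close to \(\theta_k\). This quantity is the parallel transport of \(\odif G=G\Theta\) along the circular arc from \(w\) to \(w'\).

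\textbf{Step 1: entry bounds on the arc.} Combine Proposition~\ref{prop:TW-error-estimate} with the exponential factors to get, for \(i\neq j\),
\[
\bigl|\widehat\Theta_{i,j}\bigr|\lesssim r^{-1/2}\exp\Bigl(r\bigl(-2|1-\sqrt{-1}^{\,i-j}|+D_{ii}/r-D_{jj}/r\bigr)\Bigr).
\]
The exponent \(D_{ii}-D_{jj}\) equals \(2r\,\mathrm{Re}\bigl((\omega_i-\omega_j)e^{\sqrt{-1}\theta}\bigr)\) with \(\omega_i=\sqrt{-1}^{\,1-i}\), and \(|\omega_i-\omega_j|=|1-\sqrt{-1}^{\,i-j}|\). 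So the total exponent is \(2r\bigl(\mathrm{Re}((\omega_i-\omega_j)e^{\sqrt{-1}\theta})-|\omega_i-\omega_j|\bigr)\le 0\). It vanishes only when \((\omega_i-\omega_j)e^{\sqrt{-1}\theta}\) is real positive, that is, exactly on a Stokes ray for the pair \((i,j)\).

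Away from the rays where equality holds, every off-diagonal entry decays exponentially. The diagonal entries decay as \(o(e^{-2\sqrt2 r})\). Hence on arcs avoiding \(\theta_k\) the total variation goes to \(0\), which recovers Theorem~\ref{thm:sectorial-limits}.

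\textbf{Step 2: isolate the jump at \(\theta_k\).} For each \(\theta_k\in\{-\pi/4,0,\pi/4\}\), list the ordered pairs \((i,j)\) with \((\omega_i-\omega_j)e^{\sqrt{-1}\theta_k}>0\). I expect these to be two pairs at \(\theta=0\) (differences \(\pm2\)-type or adjacent) and one or two pairs at \(\pm\pi/4\).

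Near \(\theta_k\), write the arc ODE as \(\widehat G'=\widehat G\,\widehat\Theta(\dot\gamma)\). Only the entries from the listed set \(P_k\) are not exponentially small. They are only \(O(r^{-1/2})\) times a Gaussian-type factor \(e^{-cr(\theta-\theta_k)^2}\), and over the arc of length \(r\,\Delta\theta\) their integral is bounded. So the transport converges to the limit of a time-ordered exponential of matrices supported on \(P_k\).

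\textbf{Step 3: unipotency.} Show that the entries in \(P_k\) generate a nilpotent subalgebra. I would order the indices by \(\mathrm{Re}(\omega_ie^{\sqrt{-1}\theta_k})\). Each pair \((i,j)\in P_k\) has \(\mathrm{Re}(\omega_ie^{\sqrt{-1}\theta_k})>\mathrm{Re}(\omega_je^{\sqrt{-1}\theta_k})\) strictly, because \(\omega_i-\omega_j\) rotated onto the positive real axis has positive real part. Therefore all these elementary matrices are strictly upper triangular for a common total order refining this partial order (ties cannot occur within \(P_k\)). The ordered exponential of strictly triangular matrices is unipotent. The limit of unipotent matrices in that fixed triangular form remains unipotent, giving \(U_k\).

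Finally, \(L_{k-1}^{-1}L_k=\lim G(w)^{-1}G(w')=S\bigl(\lim\widehat G(w)^{-1}\widehat G(w')\bigr)S^{-1}=SU_kS^{-1}\).

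\textbf{Main obstacle.} The hardest part is the uniform control in Step 2. We must show the error from the exponentially small entries, and from diagonal terms, stays negligible when multiplied against the \(O(1)\) transport generated by the \(P_k\) entries. I would handle this by a Grönwall/Volterra-series bound on the arc, using \(\int|\widehat\Theta_{P_k}|=O(1)\) and \(\int|\text{rest}|\to0\).
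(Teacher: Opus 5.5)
Your proposal is correct, and its analytic skeleton matches the paper's. You transport $\mathrm{d}G=G\Theta$ along a circular arc across $\theta_k$. You detect the surviving entries through the exponent $2r\bigl(\operatorname{Re}((\omega_i-\omega_j)e^{\sqrt{-1}s})-|\omega_i-\omega_j|\bigr)$, which is the paper's $t\lambda_{i,j}(s)-2|1-\sqrt{-1}^{\,i-j}|t$. You use the Gaussian profile for uniform integrability, and your Gr\"onwall-type comparison plays the role of the paper's appeal to Lemma~B.2 of Dumas--Wolf.

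The genuine difference is the unipotency step. The paper enumerates the surviving pairs: $(1,2),(4,3)$ at $-\pi/4$, only $(1,3)$ at $0$, and $(1,4),(2,3)$ at $\pi/4$. So there is one pair at $0$ and two at each of $\pm\pi/4$, not the counts you guessed. Within each set the elementary matrices commute and have vanishing pairwise products, so the ordered exponential is literally $I+\sum_{(i,j)\in\mathcal I_k}\bigl(\int\mu_t^{(i,j)}\,\mathrm{d}s\bigr)E_{i,j}$.

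You instead order the indices by $\operatorname{Re}(\omega_ie^{\sqrt{-1}\theta_k})$, which makes every surviving $E_{i,j}$ strictly triangular for a single order. You then pass to the limit inside the closed group of unipotent triangular matrices. Convergence of your ordered exponentials is inherited from Theorem~\ref{thm:sectorial-limits}, because they differ by $o(1)$ from $\widehat G(w)^{-1}\widehat G(w')$.

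Each route buys something different. Your version needs neither the explicit table nor commutativity, so it would go through unchanged where non-commuting entries survive on a single Stokes ray (e.g.\ higher rank). The paper's version yields the explicit form $U_k=I+\sum c_{i,j}E_{i,j}$ with $c_{i,j}=\lim_{t\to+\infty}\int\mu_t^{(i,j)}\,\mathrm{d}s$. Proposition~\ref{prop:stokes-factor-structure} and the evaluation of $s_1,s_2$ in Proposition~\ref{prop:explicit-U} depend on that form.

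Your aside that Step~1 ``recovers'' Theorem~\ref{thm:sectorial-limits} overstates things, since arc estimates alone do not give existence of radial limits. You never use it, so it does not affect the proof.
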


\begin{proof}
We give the proof in full detail, both for completeness and because the explicit form of the unipotent matrices \(U_{1},U_2,U_{3}\) obtained along the way will be essential for all later computations.  The argument follows the general framework of Tamburelli--Wolf \cite[Lemma~3.6]{tamburelli2024planar}, expressed in our right-half-plane coordinate convention.

Fix an index \(k\in\{1,2,3\}\) and let \(\theta_k=-\frac{\pi}{2}+\frac{k\pi}{4}\) be the Stokes direction that separates \(J_{k-1}\) from \(J_k\).
For fixed small \(\varepsilon>0\) and \(t>0\), consider the two rays
\[
\gamma_{k-1}(t)=t e^{\sqrt{-1}(\theta_k-\varepsilon)},
\qquad
\gamma_{k}(t)=t e^{\sqrt{-1}(\theta_k+\varepsilon)}.
\]
By Theorem~\ref{thm:sectorial-limits}, as \(t\to+\infty\),
\[
G(\gamma_{k-1}(t))\longrightarrow L_{k-1},
\qquad
G(\gamma_{k}(t))\longrightarrow L_{k}.
\]
Set
\[
G_{k-1}(t):=G(\gamma_{k-1}(t)),\qquad
G_{k}(t):=G(\gamma_{k}(t)).
\]
Join \(\gamma_{k-1}(t)\) and \(\gamma_{k}(t)\) by the circular arc
\[
c_{t,\varepsilon}(s)= t e^{\sqrt{-1}s},\qquad s\in[\theta_k-\varepsilon,\theta_k+\varepsilon].
\]

To capture the jump across \(\theta_k\), we study the relative transport along this arc.  Define
\[
g_t(s)=G\bigl(c_{t,\varepsilon}(\theta_k-\varepsilon)\bigr)^{-1}
G\bigl(c_{t,\varepsilon}(s)\bigr),\qquad s\in[\theta_k-\varepsilon,\theta_k+\varepsilon].
\]
Then \(g_t(\theta_k-\varepsilon)=I\).  Differentiating with respect to \(s\) and using \(\odif G = G\Theta\) gives
\[
g_t(s)^{-1}g_t'(s)
= \Theta\bigl(c_{t,\varepsilon}(s)\bigr)
c_{t,\varepsilon}'(s).
\]
Evaluating at the right endpoint yields
\[
g_t(\theta_k+\varepsilon) = G_{k-1}(t)^{-1}G_{k}(t).
\]

As \(s\) crosses the Stokes direction \(\theta_k\), the one-form \(\Theta\) is no longer exponentially small.  We now compute it explicitly.  Recall that 
\[
\Theta = S\exp(D(w))(R_1\odif w+R_2\odif{\bar w})\exp(-D(w))S^{-1}.
\]
Hence on the arc \(w = t e^{\sqrt{-1}s}\), we have
\[
\Theta\bigl(c_{t,\varepsilon}(s)\bigr)c_{t,\varepsilon}'(s)
= Se^{D(te^{\sqrt{-1}s})}
\bigl(R_1\sqrt{-1}te^{\sqrt{-1}s} - R_2\sqrt{-1}te^{-\sqrt{-1}s}\bigr)
e^{-D(te^{\sqrt{-1}s})}S^{-1}\odif s .
\]
Set
\begin{equation}\label{eq:formula-for-R}
    R(te^{\sqrt{-1}s})=R_1\sqrt{-1}te^{\sqrt{-1}s}-R_2\sqrt{-1}te^{-\sqrt{-1}s}.
\end{equation}
By \eqref{eq:cyclic-form-for-S-and-D} we have
\[
\bigl(D(te^{\sqrt{-1}s})\bigr)_{i,i}=2t\cos\Bigl(s-\frac{(i-1)\pi}{2}\Bigr).
\]
Hence
\begin{align*}
    \bigl(e^{D(te^{\sqrt{-1}s})}R(te^{\sqrt{-1}s})e^{-D(te^{\sqrt{-1}s})}\bigr)_{i,j}
    = \exp\Bigl(2t\Bigl[\cos\Bigl(s-\frac{(i-1)\pi}{2}\Bigr)-\cos\Bigl(s-\frac{(j-1)\pi}{2}\Bigr)\Bigr]\Bigr)
      R(te^{\sqrt{-1}s})_{i,j}.
\end{align*}
Set
   \begin{equation}\label{eq:definition-for-lambda}
     \lambda_{i,j}(s)
     = 2\Bigl[\cos\Bigl(s-\frac{(i-1)\pi}{2}\Bigr)-\cos\Bigl(s-\frac{(j-1)\pi}{2}\Bigr)\Bigr] 
     = 4\sin\Bigl(\frac{(i-j)\pi}{4}\Bigr)
        \sin\Bigl(s-\frac{(i+j-2)\pi}{4}\Bigr).
   \end{equation}
By Proposition~\ref{prop:TW-error-estimate} and \eqref{eq:formula-for-R}, for \(i\neq j\)
\begin{equation}\label{eq:estiamte-for-R}
  R(te^{\sqrt{-1}s})_{i,j}=
  O\left(
  \exp\Bigl(-2\bigl|1-(\sqrt{-1})^{i-j}\bigr|t\Bigr)\sqrt{t}
  \right),
\end{equation}
while the diagonal entries satisfy
\[
R(te^{\sqrt{-1}s})_{i,i}=
o\bigl(e^{-2\sqrt{2}t}\sqrt{t}\bigr).
\]
Since conjugation by a diagonal matrix leaves diagonal entries unchanged,
\[
\bigl(e^{D(te^{\sqrt{-1}s})}R(te^{\sqrt{-1}s})e^{-D(te^{\sqrt{-1}s})}\bigr)_{i,i}
=R(te^{\sqrt{-1}s})_{i,i}
=o\bigl(e^{-2\sqrt{2}t}\sqrt{t}\bigr).
\]

We now examine the off-diagonal entries.  Note that
\[
4\Bigl|\sin\Bigl(\frac{(i-j)\pi}{4}\Bigr)\Bigr|
=2\bigl|1-(\sqrt{-1})^{i-j}\bigr|,
\]
and for \(i\neq j\)
\[
4\Bigl|\sin\Bigl(\frac{(i-j)\pi}{4}\Bigr)\Bigr|
=
\begin{dcases}
\;\;4\sin\Bigl(\frac{(i-j)\pi}{4}\Bigr), & 1\geq i>j \geq 4,\\
-4\sin\Bigl(\frac{(i-j)\pi}{4}\Bigr), & 1 \leq i<j\leq 4 .
\end{dcases}
\]
Combining with \eqref{eq:definition-for-lambda} and \eqref{eq:estiamte-for-R}, the off-diagonal entry
\[
\bigl(e^{D(te^{\sqrt{-1}s})}R(te^{\sqrt{-1}s})e^{-D(te^{\sqrt{-1}s})}\bigr)_{i,j}=e^{t\lambda_{i,j}(s)}R(te^{\sqrt{-1}s})_{i,j}
\]
is not exponentially small precisely when the corresponding one of the following two equalities holds for some integer \(m\):
  \begin{equation}\label{eq:non-decay-condition}
      \begin{dcases}
      s-\frac{(i+j-2)\pi}{4}= \frac{\pi}{2}+2\pi m, & \text{if } i>j,\\
      s-\frac{(i+j-2)\pi}{4}=-\frac{\pi}{2}+2\pi m, & \text{if } i<j .
      \end{dcases}
  \end{equation}
In that case the entry is of order \(O(\sqrt{t})\), otherwise it decays exponentially as \(t\to\infty\). In summarize, this behavior can only occur when \(s=-\frac{\pi}{2}+\frac{m\pi}{4}\) for some integer \(m\), namely, equals one of the Stokes directions.  The positions that are not exponentially small at each Stokes direction are listed in Table~\ref{tab:non-decay-positions}.

\begin{table}[htbp]
\centering
\begin{tabular}{@{\hspace{2em}}r@{=}l|c}
\toprule
\multicolumn{2}{c|}{Stokes direction \(\theta_i\)} & Non-decaying entries \((i,j)\) \\
\(\theta_1\) & \(-\dfrac{\pi}{4}\) & \((4,3),\;(1,2)\) \\[4pt]
\(\theta_2\) & \(0\)               & \((1,3)\) \\[4pt]
\(\theta_3\) & \(\dfrac{\pi}{4}\)  & \((1,4),\;(2,3)\) \\
\bottomrule
\end{tabular}
\caption{Positions \((i,j)\) at which the error entry is not exponentially small at each Stokes direction.}
\label{tab:non-decay-positions}
\end{table}

We now given the precise formula for the unipotent matrices \(U_k\) in terms of the error one-form \(\Theta\).  
Along the arc \(c_{t,\varepsilon}(s)=t e^{\sqrt{-1}s}\) across a Stokes
direction, the error one-form decomposes as
\[
\Theta(c_{t,\varepsilon}(s)) = \Theta_0(c_{t,\varepsilon}(s)) + \sum_{(i,j)\in\mathcal{I}_k} \mu_t^{(i,j)}(s)S E_{i,j} S^{-1},
\]
where \(\Theta_0(c_{t,\varepsilon}(s)) = O(e^{-\alpha t})\) for some \(\alpha>0\),
\(\mathcal{I}_k\) is the set of non-decaying index pairs at the Stokes
direction \(\theta_k\) listed in Table~\ref{tab:non-decay-positions},
\(E_{i,j}\) is the elementary matrix with a single non-zero entry at
position \((i,j)\), and
\[
\mu_t^{(i,j)}(s)=\bigl(e^{D(te^{\sqrt{-1}s})}R(te^{\sqrt{-1}s})e^{-D(te^{\sqrt{-1}s})}\bigr)_{i,j}
               =e^{t\lambda_{i,j}(s)}R(te^{\sqrt{-1}s})_{i,j}.
\]

Each \(\mu_t^{(i,j)}(s)\) admits a Gaussian upper bound,
rescaled so that its integral over \(\mathbb{R}\) is independent of \(t\).
Hence it is uniformly absolutely integrable over
\(s\in[\theta_k-\varepsilon,\theta_k+\varepsilon]\) (see
\cite[Lemma~3.6]{tamburelli2024planar}).  By \cite[Lemma~B.2]{dumas2015polynomial},
we obtain
\[
\left|g_t(\theta_k+\varepsilon) - S\exp\Bigl(\sum_{(i,j)\in\mathcal{I}_k} \int_{\theta_k-\varepsilon}^{\theta_k+\varepsilon}\mu_t^{(i,j)}(s)E_{i,j}\Bigr) S^{-1}\right|
\longrightarrow 0 \qquad\text{as}\qquad t\to+\infty .
\]

Taking the limits \(t\to+\infty\) and then \(\varepsilon\to0^+\) yields
\[
L_{k-1}^{-1}L_k = S U_k S^{-1},
\]
with
\[
U_k = \lim_{t\to+\infty}
      \exp\Bigl(\sum_{(i,j)\in\mathcal{I}_k} \int_{\theta_k-\varepsilon}^{\theta_k+\varepsilon}\mu_t^{(i,j)}(s)\odif{s}E_{i,j}\Bigr)
    = I + \sum_{(i,j)\in\mathcal{I}_k}
      \lim_{t\to+\infty}
      \int_{\theta_k-\varepsilon}^{\theta_k+\varepsilon}\mu_t^{(i,j)}(s)\odif{s}E_{i,j} .
\]
The last equality uses that the summands are unipotent and that distinct elementary matrices in the sum commute. This
completes the proof.
\end{proof}

For later explicit computation, we summarize the structure of the matrices
\(U_k\) that has already been established in the proof of
Theorem~\ref{thm:stokes-jumps}.

\begin{proposition}
\label{prop:stokes-factor-structure}
The unipotent matrices \(U_1,U_2,U_3\) in Theorem~\ref{thm:stokes-jumps}
satisfy the following properties.
\begin{enumerate}
    \item Their non-zero off-diagonal entries are exactly those listed in
    Table~\ref{tab:non-decay-positions}.  Concretely,
    \[
    U_1 = I + c_{4,3} E_{4,3} + c_{1,2} E_{1,2},\qquad
    U_2 = I + c_{1,3} E_{1,3},\qquad
    U_3 = I + c_{1,4} E_{1,4} + c_{2,3} E_{2,3}.
    \]
    \item Each constant is given by the limit
    \[
    c_{i,j}=\lim_{t\to+\infty}\int_{\theta_k-\varepsilon}^{\theta_k+\varepsilon}\mu_t^{(i,j)}(s)\odif s
    =\lim_{t\to+\infty}\int_{\theta_k-\varepsilon}^{\theta_k+\varepsilon}
    e^{t\lambda_{i,j}(s)}R(te^{\sqrt{-1}s})_{i,j}\odif s
    \]
    for the corresponding index pair \((i,j)\) and Stokes direction
    \(\theta_k\) appearing in the definition of the respective constant. Here
    \[
    \lambda_{i,j}(s)= 4\sin\Bigl(\frac{(i-j)\pi}{4}\Bigr)\sin\Bigl(s-\frac{(i+j-2)\pi}{4}\Bigr).
    \]
\end{enumerate}
\end{proposition}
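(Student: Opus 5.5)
The statement is a repackaging of what was established inside the proof of Theorem~\ref{thm:stokes-jumps}, so the plan is to extract it from that argument rather than prove anything new. Fix \(k\in\{1,2,3\}\) and the Stokes direction \(\theta_k=-\frac{\pi}{2}+\frac{k\pi}{4}\). First I will recheck the decay analysis. On the arc \(c_{t,\varepsilon}\), each off-diagonal entry of \(e^{D}Re^{-D}\) equals \(e^{t\lambda_{i,j}(s)}R_{i,j}\). By \eqref{eq:estiamte-for-R}, \(|R_{i,j}|\) is at most \(e^{-t\,4|\sin((i-j)\pi/4)|}\sqrt t\) up to a constant. Since \(\lambda_{i,j}(s)\le 4|\sin((i-j)\pi/4)|\), with equality exactly under \eqref{eq:non-decay-condition}, the product is exponentially small uniformly on \([\theta_k-\varepsilon,\theta_k+\varepsilon]\) unless \((i,j)\) satisfies that condition at \(\theta_k\). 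I will solve \eqref{eq:non-decay-condition} for \(s\in\{-\frac{\pi}{4},0,\frac{\pi}{4}\}\) and \(1\le i\ne j\le 4\). This reproduces Table~\ref{tab:non-decay-positions}; the check is a finite case enumeration. Diagonal entries are \(o(e^{-2\sqrt2 t}\sqrt t)\) and are discarded.

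Next, the limiting formula \(U_k=I+\sum_{(i,j)\in\mathcal I_k}c_{i,j}E_{i,j}\) with \(c_{i,j}=\lim_{t\to+\infty}\int\mu_t^{(i,j)}\,\odif s\) comes from the proof of Theorem~\ref{thm:stokes-jumps}. That proof uses \cite[Lemma~B.2]{dumas2015polynomial} to replace the ordered exponential by the ordinary exponential of the integrated dominant part. The exponential is then expanded using nilpotency, which needs two facts. For \(\mathcal I_1=\{(4,3),(1,2)\}\) we have \(E_{4,3}E_{1,2}=E_{1,2}E_{4,3}=0\). For \(\mathcal I_3=\{(1,4),(2,3)\}\) we have \(E_{1,4}E_{2,3}=E_{2,3}E_{1,4}=0\). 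In both cases each \(E_{i,j}^2=0\), so the exponential of \(aE+bE'\) equals \(I+aE+bE'\). This gives item 1 in the stated form, and item 2 is the definition of the constants.

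The one point needing care, and the main obstacle, is that the limits \(c_{i,j}\) exist and do not depend on \(\varepsilon\). For existence I will argue as follows. \(g_t(\theta_k+\varepsilon)=G_{k-1}(t)^{-1}G_k(t)\) converges to \(L_{k-1}^{-1}L_k\) by Theorem~\ref{thm:sectorial-limits}. The difference between \(g_t(\theta_k+\varepsilon)\) and \(S(I+\sum\int\mu_t^{(i,j)}E_{i,j})S^{-1}\) tends to \(0\). The \((i,j)\) entries of \(S^{-1}g_tS\) are therefore exactly the integrals plus \(o(1)\), so the integrals converge and \(c_{i,j}\) is the \((i,j)\) entry of \(S^{-1}L_{k-1}^{-1}L_kS\).

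For independence of \(\varepsilon\), note that the limit matrix does not involve \(\varepsilon\). Alternatively, for \(\varepsilon'<\varepsilon\) the difference of the two integrals lives on arcs where \(\mu_t^{(i,j)}\) is exponentially small, thanks to the Gaussian localisation near \(\theta_k\).
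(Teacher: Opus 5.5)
Your proposal is correct and follows the paper's route. The paper gives no separate proof of this proposition; it only summarizes what the proof of Theorem~\ref{thm:stokes-jumps} establishes, namely the decay analysis yielding Table~\ref{tab:non-decay-positions}, the application of \cite[Lemma~B.2]{dumas2015polynomial}, and the expansion of the exponential using that the relevant elementary matrices commute and multiply to zero. Your additional step, identifying each \(c_{i,j}\) as an entry of \(S^{-1}L_{k-1}^{-1}L_kS\) to get existence and \(\varepsilon\)-independence of the limits, makes explicit a point the paper passes over.
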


\subsection{Explicit computation of the Stokes constants for \texorpdfstring{\(q=z^d\odif{z}^4\)}{q}}\label{subsec:explicit-computation-for-z^d}

For the rest of the paper we work exclusively with the monomial model
\[
q = z^d\odif{z}^4,
\]
and all subsequent computations are performed on this differential.

Set \(N = d+4\). For \(j\in\mathbb Z/N\mathbb Z\), define the sector
\[
U_j = \bigl\{ z = r e^{\sqrt{-1}\theta} : r>0,
          \tfrac{2\pi(j-1)}{N} < \theta < \tfrac{2\pi(j+1)}{N} \bigr\},
\]
centered along the ray \(\ell_j = e^{2\pi\sqrt{-1}j/N}\mathbb R_{>0}\).
Using the principal branch of the logarithm, let
\[
w_j(z) = \frac{4}{N} \bigl( z e^{-\frac{2\pi j\sqrt{-1}}{N}} \bigr)^{N/4}.
\]
A direct computation shows
\[
\odif{w}_j^4 = z^d\odif{z}^4 = q,
\]
and \(w_j : U_j \to \mathbb{H}\) is biholomorphic.  Thus each
\((U_j,w_j)\) is a \(q\)-right-half-plane satisfying the properties of
Theorem~\ref{thm:standard-half-plane-decomposition} with all \(c_j = 0\) and the compact set \(K_0=\{0\}\).
In particular, on the overlap
\[
U_j \cap U_{j+1}
   = \bigl\{ z : \tfrac{2\pi j}{N} < \theta < \tfrac{2\pi(j+1)}{N} \bigr\},
\]
the transition law simplifies to
\[
w_{j+1} = -\sqrt{-1} w_j .
\]

\subsubsection{Radial symmetry and reduction to a single half-plane}

With the decomposition of \(\mathbb{C}\) into the sectors \(U_j\) in
place, we now study the harmonic metric on each of
them.  For \(q = z^d\odif{z}^4\), the elliptic
system~\eqref{eq:planar-hitchin-equation} takes the form
\begin{equation}
\label{eq:planar-hitchin-equation-for-z^d}
\begin{dcases}
\Delta u_1 = |z^d|^2 e^{2u_1} - e^{u_2-u_1},\\
\Delta u_2 = e^{u_2-u_1} - e^{-2u_2}.
\end{dcases}
\end{equation}
Here \(\Delta=\partial_z\partial_{\bar{z}}\). Since \(\bigl|(e^{\sqrt{-1}\theta}z)^d\bigr|=|z^d|\) for any angle
\(\theta\), the right-hand side depends only on \(|z|\). By the
uniqueness of the complete solution
(Theorem~\ref{thm:general-planar-theory}), the functions \(u_1,u_2\)
are radial:
\[
u_1(z)=u_1(|z|),\qquad u_2(z)=u_2(|z|).
\]
Consequently, the functions
\begin{equation}\label{eq:definition-for-tilde-u-z^d}
    \tilde u_1 = u_1 + \tfrac34\log|q| = u_1 + \tfrac34\log|z^d|,
    \qquad
    \tilde u_2 = u_2 + \tfrac14\log|q| = u_2 + \tfrac14\log|z^d|
\end{equation}
introduced in \eqref{eq:connection-form-on-right-half-plane} are also
radial in \(z\).

Each sector \(U_j\) is obtained from \(U_0\) by the rotation
\(z \mapsto e^{2\pi\sqrt{-1}j/N}z\), while \(\tilde u_1,\tilde u_2\)
depend only on \(|z|\) and are therefore invariant under this rotation.
Hence \(\tilde u_1,\tilde u_2\) are identical on every \(U_j\), and the
Stokes data on each right-half-plane \((U_j,w_j)\) coincide.  It
therefore suffices to compute them on a single half-plane.

We fix the sector \(U_0\) and write \(U = U_0\), \(w = w_0\) for
brevity.  The origin \(z=0\) is the unique zero of \(q\), of order
\(d\).  The
natural coordinate \(w\) sends this singularity to \(w=0\), while the
interior of \(U\) is mapped to the right half-plane
\(\mathbb{H} = \{\operatorname{Re} w > 0\}\).

Let us write the relation between the two coordinate systems.  With
\(z = r_z e^{\sqrt{-1}\theta_z}\) and \(w = r_w e^{\sqrt{-1}\theta_w}\),
the formula \(w = \frac{4}{N} z^{N/4}\) (\(N = d+4\)) yields
\[
r_w = \frac{4}{N} r_z^{N/4},\qquad \theta_w = \frac{N}{4}\theta_z .
\]
Since \(\tilde u_1,\tilde u_2\) depend only on \(r_z\) and \(r_z\) is a
function of \(r_w\) alone, they depend only on \(r_w\) on \(U\):
\[
\tilde u_1 = \tilde u_1(r_w),\qquad \tilde u_2 = \tilde u_2(r_w).
\]

Using the expressions above,
\[
\tilde u_1(r_w) = u_1(r_z) + \frac{3d}{N}\log r_w + \text{constant},
\qquad
\tilde u_2(r_w) = u_2(r_z) + \frac{d}{N}\log r_w + \text{constant},
\]
where \(r_z = (\frac{N}{4}r_w)^{4/N}\).  By
Theorem~\ref{thm:general-planar-theory}, \(u_1,u_2\) are bounded in a
neighborhood of the origin, so the logarithmic terms dominate the
singular behavior of \(\tilde u_1,\tilde u_2\) as \(r_w \to 0\):
\[
\tilde u_1(r_w) \sim \frac{3d}{N}\log r_w,\qquad
\tilde u_2(r_w) \sim \frac{d}{N}\log r_w .
\]
Their asymptotic behavior as \(r_w = |w|\to\infty\)  is what will determine the Stokes constants.

Thus, on the half-plane \(U\), \(\tilde u_1,\tilde u_2\) are smooth radial
solutions of
\begin{equation}
\label{eq:tilde-u-equation}
\begin{dcases}
\Delta_w \tilde u_1 = e^{2\tilde u_1} - e^{\tilde u_2-\tilde u_1},\\
\Delta_w \tilde u_2 = e^{\tilde u_2-\tilde u_1} - e^{-2\tilde u_2},
\end{dcases}
\end{equation}
with logarithmic singularities at the origin as above. Here \(\Delta_w=\partial_w\partial_{\bar{w}}\). 

The radial solutions of this elliptic system on \(\mathbb{C}^*\) were studied in
\cite{guestlin2026110730}.  Following that work, we regard
\(\tilde u_1,\tilde u_2\) as the smooth radial solution on
\(\mathbb{C}^*\) of \eqref{eq:tilde-u-equation}, with the above
singular behavior at the origin.
\subsubsection{Computation of the Stokes constants}

In order to evaluate the limits appearing in
Lemma~\ref{prop:stokes-factor-structure}, we need the asymptotic
expansion of \(\tilde u_1,\tilde u_2\) as \(r_w\to\infty\).  The
necessary result is provided by \cite{guestlin2026110730}.  We now
state the asymptotics that will be used in our computation.

\begin{theorem}[{\cite[Theorem~B.4 and Corollary~8.14]{guestlin2026110730}}]
\label{thm:asymptotics-radial}
Consider the \(\mathrm{tt}^*\)-Toda equations of type \(A_3\):
\begin{equation}
\label{eq:w-equation}
2\Delta w_i = e^{2(w_i-w_{i-1})} - e^{2(w_{i+1}-w_{i})},
\qquad w_i : \mathbb{C}^* \to \mathbb{R},
\end{equation}
where the index \(i\) runs over \(\mathbb{Z}/4\mathbb{Z}\) and
\(\Delta = \partial_z \partial_{\bar{z}}\).  The solutions are required
to satisfy the symmetry condition \(w_i + w_{3-i} = 0\).  For every
point \(m = (m_0,m_1,m_2,m_3)\) in the polytope
\[
\mathcal{A} = \{ m\in \mathbb{R}^4 :
m_i - m_{i-1} > -1,\; m_i + m_{3-i} = 0 \},
\]
there exist unique smooth radial solutions \(w_i\) such that
\[
w_i(x) \sim -m_i \log x \qquad \text{as } x \to 0,
\]
where \(x = |z|\).  Moreover, for \(j = 1,2\) the following asymptotic
expansion holds as \(x \to +\infty\):
\[
\begin{aligned}
w_0(x) \sin\Bigl(\frac{j\pi}{4}\Bigr)
 + w_1(x) \sin\Bigl(\frac{3j\pi}{4}\Bigr)
 = -\frac{1}{2} s_j (\pi L_j x)^{-\frac{1}{2}}
   e^{-2L_j x}
   + O\Bigl(x^{-\frac{3}{2}} e^{-2L_j x}\Bigr),
\end{aligned}
\]
where \(L_j = 2\sin(j\pi/4)\) and \(s_j\) denotes the \(j\)-th
elementary symmetric function of the four numbers
\((\sqrt{-1})^{m_0+3/2}\), \((\sqrt{-1})^{m_1+1/2}\),
\((\sqrt{-1})^{m_2-1/2}\), \((\sqrt{-1})^{m_3-3/2}\).
\end{theorem}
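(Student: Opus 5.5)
This statement is quoted from \cite{guestlin2026110730}, so the plan is to reconstruct the structure of their argument rather than derive anything new. The first step is to match notation. Set $w_0=-w_3$ and $w_1=-w_2$; then the system \eqref{eq:w-equation} with $i\in\mathbb Z/4$ reduces to two equations for $(w_0,w_1)$. Under a rescaling of the variable and of the unknowns, these are exactly the radial system \eqref{eq:tilde-u-equation} for $(\tilde u_1,\tilde u_2)$. The polytope condition $m_i-m_{i-1}>-1$ is the condition that the prescribed logarithmic singularity at $0$ is integrable. For $q=z^d\odif z^4$ it is satisfied by the values $m$ coming from $\tilde u_1\sim\frac{3d}{N}\log r_w$ and $\tilde u_2\sim\frac dN\log r_w$.

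Existence and uniqueness of the radial solution with prescribed behaviour $w_i\sim -m_i\log x$ at $0$ I would treat as a two-point boundary problem for the radial ODE.
\begin{itemize}
\item One route is to construct sub- and supersolutions on $\mathbb C^*$, in the spirit of Theorem~\ref{thm:general-planar-theory}, and use a maximum-principle comparison for the cooperative Toda system to get uniqueness among solutions that are bounded at $\infty$.
\item An alternative is the isomonodromy route. Each such solution is the compatibility condition of a Lax pair $\Psi_\lambda=\Psi(U/\lambda+\dots)$, so one can parametrize solutions by monodromy data of the $\lambda$-equation. These data are computable at $x\to0$ from the leading behaviour, and this gives the bijection with $\mathcal A$.
\end{itemize}

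The asymptotic expansion is where the real content lies, and I would obtain it in two stages.
\begin{enumerate}
\item \emph{The decay rates and the linear combinations.} Linearize at $w\equiv0$. The linearized operator is a circulant, so the Fourier modes $\sum_i w_i\sin(ji\pi/4)$ decouple. Mode $j$ satisfies a modified Helmholtz equation with mass $2L_j$, where $L_j=2\sin(j\pi/4)$. Its decaying radial solution is $K_0(2L_jx)\sim(\pi/(4L_jx))^{1/2}e^{-2L_jx}$. This explains both the combinations $w_0\sin(j\pi/4)+w_1\sin(3j\pi/4)$ and the prefactor $(\pi L_jx)^{-1/2}$ in the statement.
\item \emph{The constant $s_j$.} The coefficient is not determined by the linearization. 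I would extract it from the Stokes data of the Lax pair at $\lambda=\infty$, that is, the irregular singularity. The Stokes matrices are constant in $x$ by isomonodromy. Computing them at $x\to0$ via the explicit Euler-type connection with exponents $m_i$ and the cyclic symmetry expresses them through the elementary symmetric functions $s_j$ of $(\sqrt{-1})^{m_0+3/2},\dots,(\sqrt{-1})^{m_3-3/2}$.
\end{enumerate}
Then I would solve the associated Riemann--Hilbert problem as $x\to\infty$ by nonlinear steepest descent. The jumps become exponentially close to the identity, and the first-order correction is an explicit integral giving $-\tfrac12 s_j(\pi L_jx)^{-1/2}e^{-2L_jx}$. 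The error term $O(x^{-3/2}e^{-2L_jx})$ comes from the next term of the Bessel expansion.

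The main obstacle is the connection problem in stage 2: rigorously identifying the Stokes data at $\infty$ with the $x\to0$ data $m$, and justifying the small-norm Riemann--Hilbert estimate uniformly. For this I would rely on \cite[Theorem~B.4 and Corollary~8.14]{guestlin2026110730} rather than reprove it. The paper's own task is only the translation of notation described in the first paragraph.
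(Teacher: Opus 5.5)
Your plan agrees with the paper, which gives no independent proof of this statement. It quotes it from \cite{guestlin2026110730}, and the paper's own work is only the translation in Lemma~\ref{lem:asymptotics-tilde-u}: the exact identification $\tilde u_1=2w_0$, $\tilde u_2=2w_1$, with the same radial variable. Both systems use $\Delta=\partial\bar\partial$, so no rescaling of $x$ is involved. Indeed none is allowed, since it would change the rate $e^{-2L_jx}$. Your outline of the isomonodromy and Riemann--Hilbert mechanism fairly describes where the real content of that reference lies.

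One claim in your first paragraph is wrong, though. Near $0$ one has $e^{2(w_i-w_{i-1})}\sim x^{-2(m_i-m_{i-1})}$. Local integrability of this term therefore requires $m_i-m_{i-1}<1$ for all $i\in\mathbb Z/4\mathbb Z$, i.e.\ $m_{i-1}-m_i>-1$. That is the opposite inequality to the displayed one.

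The displayed condition also fails for the data coming from $z^d\,\mathrm{d}z^4$. With $m_i=\frac{(2i-3)d}{2N}$, the cyclic difference is $m_0-m_3=-\frac{3d}{d+4}$, which is $\le -1$ once $d\ge 2$. By contrast, $m_{i-1}-m_i\in\{-\frac{d}{N},\frac{3d}{N}\}$ is $>-1$ for every $d$.

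The condition compatible with both integrability and the application is $m_{i-1}-m_i\ge -1$. Equivalently, the exponents $\alpha_i=m_i+\frac32-i$ are non-increasing with $\alpha_0-\alpha_3\le 4$; here $\alpha_i=\frac{6-4i}{N}$. This is presumably the polytope of \cite{guestlin2026110730}. The inequality as transcribed in the statement looks like a sign slip, which you reproduced and then misread as the integrability condition without checking the cyclic difference.

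A smaller slip: the modes that decouple under linearization are $\sum_i w_i\sin\frac{(2i+1)j\pi}{4}=2\bigl(w_0\sin\frac{j\pi}{4}+w_1\sin\frac{3j\pi}{4}\bigr)$, not $\sum_i w_i\sin\frac{ij\pi}{4}$.
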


\begin{remark}
The results of \cite{guestlin2026110730} are formulated for the
\(\mathrm{tt}^*\)-Toda equations of type \(A_{n}\) for general \(n\).
For our purposes it suffices to state the specialization to
\(A_{3}\), which corresponds to the \(\mathrm{Sp}(4,\mathbb{R})\)
setting studied in this paper.
\end{remark}

With Theorem~\ref{thm:asymptotics-radial} in hand, we can now obtain the
asymptotic behavior of our functions \(\tilde u_1,\tilde u_2\) at
infinity.  To do so we simply need to identify the correct parameter
\(m\) that matches the singular behavior of \(\tilde u_1,\tilde u_2\)
at the origin, as determined in the previous subsection.  The result
is the following.

\begin{lemma}[Asymptotics of \(\tilde u_1,\tilde u_2\)]
\label{lem:asymptotics-tilde-u}
On the half-plane \((U,w)\), let \(r = |w|\).  For \(j = 1,2\), as
\(r \to +\infty\),
\[
\begin{aligned}
\tilde u_1(r) \sin\Bigl(\frac{j\pi}{4}\Bigr)
 + \tilde u_2(r) \sin\Bigl(\frac{3j\pi}{4}\Bigr)
 = - s_j (\pi L_j r)^{-\frac{1}{2}}
   e^{-2L_j r}
   + O\Bigl(r^{-\frac{3}{2}} e^{-2L_j r}\Bigr),
\end{aligned}
\]
where \(L_j = 2\sin(j\pi/4)\) and
\[
s_j = \prod_{i=1}^{j}\frac{\sin\bigl(\frac{(5-i)\pi}{N}\bigr)}
                         {\sin\bigl(\frac{i\pi}{N}\bigr)} .
\]
\end{lemma}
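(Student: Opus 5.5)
The plan is to recognize the system \eqref{eq:tilde-u-equation} as the radial \(\mathrm{tt}^*\)-Toda system of type \(A_3\) in Theorem~\ref{thm:asymptotics-radial}, read off the parameter \(m\) from the behavior of \(\tilde u_1,\tilde u_2\) at \(r_w\to0\), and then evaluate the elementary symmetric functions \(s_j\).

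\textbf{Step 1: matching the equations.} Working in the coordinate \(w\) (so \(x=|w|=r\) and \(\Delta=\partial_w\partial_{\bar w}\)), I set
\[
w_0=\tfrac12\tilde u_1,\qquad w_1=\tfrac12\tilde u_2,\qquad w_2=-w_1,\qquad w_3=-w_0,
\]
so that the symmetry \(w_i+w_{3-i}=0\) holds. For \(i=0\), \eqref{eq:w-equation} reads \(2\Delta w_0=e^{4w_0}-e^{2(w_1-w_0)}\), which is \(\Delta\tilde u_1=e^{2\tilde u_1}-e^{\tilde u_2-\tilde u_1}\). For \(i=1\) it reads \(2\Delta w_1=e^{2(w_1-w_0)}-e^{-4w_1}\), which is the second equation of \eqref{eq:tilde-u-equation}. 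The cases \(i=2,3\) follow from the symmetry. Because the asymptotic identity in Theorem~\ref{thm:asymptotics-radial} is linear in the \(w_i\), multiplying it by \(2\) gives exactly the displayed formula of the lemma: the factor \(-\tfrac12\) becomes \(-1\), and the error term is unchanged up to a constant.

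\textbf{Step 2: the parameter \(m\).} From \(\tilde u_1\sim\frac{3d}{N}\log r\) and \(\tilde u_2\sim\frac dN\log r\) as \(r\to0\) (the previous subsection), I get
\[
m_0=-\tfrac{3d}{2N},\qquad m_1=-\tfrac{d}{2N},\qquad m_2=\tfrac{d}{2N},\qquad m_3=\tfrac{3d}{2N}.
\]
Uniqueness in Theorem~\ref{thm:asymptotics-radial} then identifies our radial solution with the Guest--Its--Lin solution, provided \(m\in\mathcal A\).

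\textbf{Step 3: evaluating \(s_j\).} Using \(1-\frac dN=\frac4N\), the four numbers \((\sqrt{-1})^{m_0+3/2},\dots,(\sqrt{-1})^{m_3-3/2}\) become
\[
e^{3\pi\sqrt{-1}/N},\qquad e^{\pi\sqrt{-1}/N},\qquad e^{-\pi\sqrt{-1}/N},\qquad e^{-3\pi\sqrt{-1}/N}.
\]
Their elementary symmetric functions are symmetrized Gaussian binomials \(\binom{4}{j}_{q}\) with \(q=e^{2\pi\sqrt{-1}/N}\), namely \(q^{-j(4-j)/2}\binom{4}{j}_q\). This equals \(\prod_{i=1}^j\frac{\sin((5-i)\pi/N)}{\sin(i\pi/N)}\). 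For \(j=1\) I check it directly:
\[
2\cos\tfrac{\pi}{N}+2\cos\tfrac{3\pi}{N}=4\cos\tfrac{2\pi}{N}\cos\tfrac{\pi}{N}=\frac{\sin(4\pi/N)}{\sin(\pi/N)}.
\]
For \(j=2\) I would expand \(e_2\) directly as a sum of cosines and apply product-to-sum identities.

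\textbf{Main obstacle.} I expect the hard step to be verifying that \(m\in\mathcal A\), together with matching the conventions (indexing of the \(w_i\), the normalization of the Laplacian, and the variable \(x\)) exactly with \cite{guestlin2026110730}. Read literally, the condition \(m_0-m_3>-1\) becomes \(3d<N=d+4\), which fails for \(d\ge2\). This suggests that the intended indexing of the inequality is reversed, for instance \(m_{i-1}-m_i>-1\), or that the roles of \(w_i\) and \(-w_{3-i}\) are swapped. Under that reading all the gaps equal \(d/N\) or \(3d/N\) with the right sign, and the condition holds.

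I would first resolve this by checking the polytope directly against the original paper. As a consistency test I would confirm the local integrability of \(e^{2(w_i-w_{i-1})}\) near \(0\). I would also check that the resulting \(s_j\) is positive and reduces to the known value \(s_j=\binom4j\) in the limit \(d\to0\), where \(q\) is constant and the solution must vanish.
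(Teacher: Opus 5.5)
Your proposal is correct and follows the paper's proof step for step: the identification $\tilde u_1=2w_0$, $\tilde u_2=2w_1$; the parameters $m_i=\frac{(2i-3)d}{2N}$; and the $q$-binomial evaluation of the elementary symmetric functions of $e^{\pm 3\pi\sqrt{-1}/N}$ and $e^{\pm\pi\sqrt{-1}/N}$. Your suspicion about the polytope is also justified: the inequality as printed, $m_i-m_{i-1}>-1$, fails at $i=0$ for $d\ge 2$, whereas your reading $m_{i-1}-m_i>-1$ (exactly local integrability of $e^{2(w_i-w_{i-1})}$ at the origin) holds for every $d$.

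There is one small slip in your proposed sanity check. For $d=0$ one has $N=4$, and the four numbers are the roots of $x^4+1$, so $s_1=s_2=0$; this is consistent with the vanishing solution. The value $\binom{4}{j}$ is instead the limit as $N\to\infty$.
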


\begin{proof}
The functions \(\tilde u_1,\tilde u_2\) satisfy the same elliptic system
\eqref{eq:tilde-u-equation} as \(2w_0,2w_1\) in
\eqref{eq:w-equation}, and the symmetry condition
\(w_i+w_{3-i}=0\) matches the relations
\(\tilde u_3=-\tilde u_2,\tilde u_4=-\tilde u_1\) that hold on our
half-plane.  We may therefore identify
\[
\tilde u_1 = 2w_0,\qquad \tilde u_2 = 2w_1.
\]

Recall from the previous subsection that
\[
\tilde u_1(r_w) \sim \frac{3d}{N}\log r_w,\qquad
\tilde u_2(r_w) \sim \frac{d}{N}\log r_w
\]
as \(r_w \to 0\) in the right-half-plane, where \(N = d+4\).
Hence as \(x \to 0\),
\[
w_0(x) \sim \frac{3d}{2N}\log x,\qquad
w_1(x) \sim \frac{d}{2N}\log x,
\]
which yields
\[
m_0 = -\frac{3d}{2N},\qquad m_1 = -\frac{d}{2N}.
\]
By the symmetry condition \(m_i + m_{3-i} = 0\), we obtain
\[
m_i = \frac{(2i-3)d}{2N},\qquad i = 0,1,2,3.
\]
One readily verifies that these values satisfy the inequalities
\(m_i - m_{i-1} > -1\) required in Theorem~\ref{thm:asymptotics-radial}.

Thus we only need to compute the symmetric functions \(s_j\) for
\(j = 1,2\).  Note that \(s_j\) is the \(j\)-th elementary symmetric
function of \((\sqrt{-1})^{\alpha_i}\) for \(i = 0,1,2,3\), where
\[
\alpha_i = \frac{(2i-3)d}{2N} + \frac{3}{2} - i = \frac{6-4i}{N}.
\]

To evaluate these symmetric functions we use the following identity.

\begin{lemma}\label{lem:q-formula-for-symmetric-polynomial}
Let \(e_j\) be the \(j\)-th elementary symmetric function of \(n\)
variables \(x_1,\dots,x_n\), and let \(q\) be a parameter.  Then
\[
e_j(1,q,\dots,q^{n-1})
   = q^{j(j-1)/2} \prod_{i=1}^{j} \frac{1-q^{n+1-i}}{1-q^{i}} .
\]
\end{lemma}

\begin{proof}
Let \(y\) be a formal variable.  The generating function for the
elementary symmetric functions is
\[
\sum_{j=0}^{\infty} e_j y^j = \prod_{k=1}^{n} (1 + x_k y).
\]
Hence
\[
\sum_{j=0}^{\infty} e_j(1,q,\dots,q^{n-1}) y^j
   = \prod_{k=0}^{n-1} (1 + q^{k} y).
\]
By the \(q\)-binomial theorem (see \cite[Chapter~5]{kac2002quantum}),
\[
\prod_{k=0}^{n-1} (1 + q^{k} y)
   = \sum_{j=0}^{n-1} q^{j(j-1)/2} \binom{n}{j}_{q} y^j,
\]
where
\[
\binom{n}{j}_{q}
   = \frac{\prod_{i=1}^{j} (1-q^{n+1-i})}
          {\prod_{i=1}^{j} (1-q^{i})} .
\]
Comparing coefficients yields the claimed formula.
\end{proof}

Set \(\xi = (\sqrt{-1})^{-4/N} = e^{-2\pi\sqrt{-1}/N}\).  Then
\[
1-\xi^{i}
   = \xi^{i/2}\bigl(\xi^{-i/2} - \xi^{i/2}\bigr)
   = \xi^{i/2}\cdot 2\sqrt{-1}\sin\Bigl(\frac{i\pi}{N}\Bigr).
\]
Applying Lemma~\ref{lem:q-formula-for-symmetric-polynomial} with
\(n = 4\) and \(q = \xi\), we obtain
\begin{align*}
e_j(1,\xi,\xi^2,\xi^3)
  &= \xi^{j(j-1)/2}
     \prod_{i=1}^{j} \frac{1-\xi^{5-i}}{1-\xi^{i}} \\[2mm]
  &= \xi^{j(j-1)/2}
     \prod_{i=1}^{j}
     \frac{\xi^{(5-i)/2}\sin\bigl(\frac{(5-i)\pi}{N}\bigr)}
          {\xi^{i/2}\sin\bigl(\frac{i\pi}{N}\bigr)} \\[2mm]
  &= \xi^{j(j-1)/2}
     \prod_{i=1}^{j}
     \xi^{5/2-i}
     \frac{\sin\bigl(\frac{(5-i)\pi}{N}\bigr)}
          {\sin\bigl(\frac{i\pi}{N}\bigr)} \\[2mm]
  &= \xi^{j(j-1)/2}
     \xi^{5j/2 - j(j+1)/2}
     \prod_{i=1}^{j}
     \frac{\sin\bigl(\frac{(5-i)\pi}{N}\bigr)}
          {\sin\bigl(\frac{i\pi}{N}\bigr)} \\[2mm]
  &= \xi^{3j/2}
     \prod_{i=1}^{j}
     \frac{\sin\bigl(\frac{(5-i)\pi}{N}\bigr)}
          {\sin\bigl(\frac{i\pi}{N}\bigr)} .
\end{align*}
Consequently,
\begin{align*}
s_j
  &= (\sqrt{-1})^{6j/N}
     e_j(1,\xi,\xi^2,\xi^3) \\
  &= (\sqrt{-1})^{6j/N}
     \xi^{3j/2}
     \prod_{i=1}^{j}
     \frac{\sin\bigl(\frac{(5-i)\pi}{N}\bigr)}
          {\sin\bigl(\frac{i\pi}{N}\bigr)} \\
  &= \prod_{i=1}^{j}
     \frac{\sin\bigl(\frac{(5-i)\pi}{N}\bigr)}
          {\sin\bigl(\frac{i\pi}{N}\bigr)} .
\end{align*}
This completes the proof of Lemma~\ref{lem:asymptotics-tilde-u}.
\end{proof}

For later use we introduce the notation \(\theta = \pi/N\) with
\(N = d+4\).  Then
\[
s_1 = \frac{\sin(4\theta)}{\sin\theta},\qquad
s_2 = \frac{\sin(4\theta)\sin(3\theta)}{\sin\theta\sin(2\theta)} .
\]

We now turn to the derivative of the asymptotic expansion.

\begin{lemma}
\label{lem:derivative-asymptotics}
As \(r \to +\infty\),
\[
\tilde u_1'(r) \sin\Bigl(\frac{j\pi}{4}\Bigr)
 + \tilde u_2'(r) \sin\Bigl(\frac{3j\pi}{4}\Bigr)
 \sim 2L_j s_j (\pi L_j r)^{-\frac{1}{2}} e^{-2L_j r} .
\]
\end{lemma}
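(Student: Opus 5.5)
The plan is to differentiate the asymptotic expansion of Lemma~\ref{lem:asymptotics-tilde-u}. Asymptotic expansions cannot in general be differentiated term by term, so the justification has to come from the ODE. Set
\[
f_j(r)=\tilde u_1(r)\sin\Bigl(\frac{j\pi}{4}\Bigr)+\tilde u_2(r)\sin\Bigl(\frac{3j\pi}{4}\Bigr), \qquad j=1,2.
\]
Because \(\tilde u_1,\tilde u_2\) are radial, \(\Delta_w=\tfrac14\bigl(\partial_r^2+r^{-1}\partial_r\bigr)\). Linearizing the right-hand side of \eqref{eq:tilde-u-equation} at \(\tilde u=0\) gives \(2\tilde u_1-(\tilde u_2-\tilde u_1)=3\tilde u_1-\tilde u_2\) and \((\tilde u_2-\tilde u_1)+2\tilde u_2=3\tilde u_2-\tilde u_1\). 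The vectors \((\sin(j\pi/4),\sin(3j\pi/4))\) are exactly the eigenvectors of this symmetric matrix \(\begin{pmatrix}3&-1\\-1&3\end{pmatrix}\). For \(j=1\) the vector is \((1,1)/\sqrt2\) with eigenvalue \(2=L_1^2\); for \(j=2\) it is \((1,-1)\) with eigenvalue \(4=L_2^2\). In each case the eigenvalue equals \(L_j^2\).

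It follows that \(f_j\) satisfies
\[
f_j''+r^{-1}f_j'-4L_j^2f_j=E_j(r),
\]
where \(E_j\) collects the quadratic and higher terms of the Taylor expansion of the exponentials. Proposition~\ref{prop:estimates-of-tilde-w} bounds \(\tilde u_k\) by \(e^{-2\sqrt2 r}\) and \(e^{-4r}\) respectively, so \(E_j=O(e^{-4\sqrt2 r}/r)\). Since \(4\sqrt2>2L_j\) for both \(j\), the error is strictly faster decaying than \(e^{-2L_jr}\). For \(j=2\) one checks the cross term \(\tilde u_1^2-\tilde u_2^2\) is also \(O(e^{-4\sqrt2 r})\), which beats \(e^{-4r}\).

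The rest is a variation-of-parameters argument. The homogeneous solutions are the modified Bessel functions \(I_0(2L_jr)\) and \(K_0(2L_jr)\). Since \(f_j\to0\), the \(I_0\) component must vanish, and
\[
f_j=C K_0(2L_jr)+\int_r^\infty G(r,\rho)E_j(\rho)\,d\rho .
\]
The integral and its \(r\)-derivative are \(O(e^{-4\sqrt2 r})\). Matching with Lemma~\ref{lem:asymptotics-tilde-u}, using \(K_0(x)\sim\sqrt{\pi/(2x)}\,e^{-x}\), fixes \(C\). Then \(f_j'=-2L_jC K_1(2L_jr)+\text{(small)}\), and \(K_1\sim K_0\) at leading order. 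This gives \(f_j'\sim -2L_jf_j\sim 2L_js_j(\pi L_jr)^{-1/2}e^{-2L_jr}\), which is the claim.

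A shortcut would be to integrate \(r f_j'' + f_j' = r(4L_j^2f_j+E_j)\) from \(r\) to \(\infty\) to get \(rf_j'(r)=-\int_r^\infty\rho(4L_j^2f_j+E_j)\,d\rho\). This uses \(\rho f_j'(\rho)\to0\), which follows from the derivative bounds in Proposition~\ref{prop:estimates-of-tilde-w}. Substituting the expansion of \(f_j\) and evaluating \(\int_r^\infty\rho^{1/2}e^{-2L_j\rho}\,d\rho\sim r^{1/2}e^{-2L_jr}/(2L_j)\) gives \(rf_j'\sim -2L_jrf_j\), hence the result.

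The main obstacle is confirming that the nonlinear remainder \(E_j\) decays strictly faster than \(e^{-2L_jr}\). For \(j=2\) the margin is only \(4\sqrt2\) against \(4\), which still holds.
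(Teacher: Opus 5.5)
Your proof is correct, and its first half agrees with the paper's. Both project the linearization of \eqref{eq:tilde-u-equation} onto the eigenvectors $\bigl(\sin(j\pi/4),\sin(3j\pi/4)\bigr)$ of $\begin{pmatrix}3&-1\\-1&3\end{pmatrix}$, obtaining $f_j''+r^{-1}f_j'=4L_j^2f_j+E_j$ with $E_j=O(e^{-4\sqrt2 r}/r)$.

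The difference lies in how the asymptotics of $f_j$ are transferred to $f_j'$. The paper proceeds in three steps:
\begin{itemize}
\item it takes a rough bound $f_j'=O(A_j)$, where $A_j=(\pi L_jr)^{-1/2}e^{-2L_jr}$, from interior elliptic estimates, and uses it to discard $r^{-1}f_j'$;
\item it reads off $f_j''\sim-4L_j^2s_jA_j$ from the equation;
\item it concludes by l'H\^opital applied to $f_j'/A_j'$.
\end{itemize}

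Your shortcut is the integrated version of this argument. Writing the operator as $r^{-1}(rf_j')'$ absorbs the first-order term exactly. You therefore need only the qualitative fact $\rho f_j'(\rho)\to0$, not a rate for $f_j'$.

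Your Bessel route is genuinely different. The Green's-function representation $f_j=CK_0(2L_jr)+O(e^{-4\sqrt2r})$ (up to polynomial factors) uses no a priori derivative information, only the decay of $\tilde u_1,\tilde u_2$. It also gives more than the lemma: the asymptotics of all derivatives, and the correction terms, follow from those of $K_0$. The price is Bessel-function asymptotics and estimates on the kernel. Note that $C\neq0$ requires $s_j\neq0$. This holds because $\theta=\pi/N$ with $N=d+4\geq5$.

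There are two harmless slips.
\begin{itemize}
\item Proposition~\ref{prop:estimates-of-tilde-w} gives the rates $e^{-2\sqrt2r}$ and $e^{-4r}$ for the combinations $\tilde w_1=\tilde u_1+\tilde u_2$ and $\tilde w_2=\tilde u_1-\tilde u_2$, not for $\tilde u_1,\tilde u_2$ individually. Each $\tilde u_k$ is only $O(e^{-2\sqrt2r}/\sqrt r)$, which still suffices for your bound on $E_j$.
\item For $j=2$, the quadratic part of $\Delta_wf_2-4f_2$ is $(\tilde u_1+\tilde u_2)^2$, not $\tilde u_1^2-\tilde u_2^2$.
\end{itemize}
Neither affects the comparison $4\sqrt2>2L_j$.
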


\begin{proof}
Set
\begin{align*}
v_1(r) &= \tilde u_1(r) \sin\Bigl(\frac{\pi}{4}\Bigr)
        + \tilde u_2(r) \sin\Bigl(\frac{3\pi}{4}\Bigr)
        = \frac{\sqrt{2}}{2}\bigl(\tilde u_1(r) + \tilde u_2(r)\bigr), \\[2mm]
v_2(r) &= \tilde u_1(r) \sin\Bigl(\frac{2\pi}{4}\Bigr)
        + \tilde u_2(r) \sin\Bigl(\frac{6\pi}{4}\Bigr)
        = \tilde u_1(r) - \tilde u_2(r),
\end{align*}
and let \(A_j(r) = (\pi L_j r)^{-\frac{1}{2}} e^{-2L_j r}\).
By Lemma~\ref{lem:asymptotics-tilde-u} we have
\(v_j(r) \sim -s_j A_j(r) \) as \(r \to +\infty\).  

Linearizing the system \eqref{eq:tilde-u-equation} around
\((\tilde u_1,\tilde u_2) = (0,0)\) gives
\[
\Delta v_j = L_j^2 v_j + N_j,
\]
where the nonlinear remainder satisfies
\(N_j = O(v_1^2 + v_2^2)\).  Because
\(v_1 = O(A_1)\) and \(v_2 = O(A_2)\), and
\(A_1^2, A_2^2 = o(A_j)\) for both\(j = 1,2\), we obtain
\(N_j = o(A_j)\).  Therefore
\[
\Delta v_j = L_j^2 v_j + o(A_j).
\]

Standard interior elliptic estimates give the rough bound
\(v_j'(r) = O(A_j(r))\).  Since the \(v_j\) are radial and our
convention is \(\Delta = \partial_w \partial_{\bar w}\), we have
\[
\Delta v_j = \frac14\Bigl(v_j'' + \frac{1}{r} v_j'\Bigr).
\]
Thus
\[
v_j'' + \frac{1}{r} v_j' = 4L_j^2 v_j + o(A_j).
\]
Using \(v_j \sim -s_j A_j\) and
\(\frac{1}{r} v_j' = O(A_j/r) = o(A_j)\), we deduce
\[
v_j'' \sim -4L_j^2 s_j A_j .
\]

On the other hand, differentiating \(A_j\) gives
\[
A_j'(r) = \Bigl(-2L_j - \frac{1}{2r}\Bigr) A_j(r),
\qquad
A_j''(r) \sim 4L_j^2 A_j(r).
\]
Hence
\[
\frac{v_j''(r)}{A_j''(r)} \longrightarrow -s_j .
\]

Since both \(v_j'(r)\) and \(A_j'(r)\) tend to zero as \(r\to\infty\),
l'Hôpital's rule yields
\[
\frac{v_j'(r)}{A_j'(r)} \longrightarrow -s_j .
\]
Finally, because \(A_j'(r) \sim -2L_j A_j(r)\), we conclude that
\[
v_j'(r) \sim 2L_j s_j A_j(r)
       = 2L_j s_j (\pi L_j r)^{-\frac{1}{2}} e^{-2L_j r},
\]
which is exactly the stated formula.
\end{proof}

We are now ready to compute the asymptotics of the entries of
\(R_1\) and \(R_2\) along a ray, which will be used to determine the Stokes constants.

\begin{lemma}
\label{lem:asymptotics-R1-R2}
Let \(t \to +\infty\).  The convergence in the following asymptotic
formulas is uniform in \(s\) for \(s \in (-\pi/2,\pi/2)\).  For
\(j > i\),
\begin{align*}
(R_1)_{i,j}(t e^{\sqrt{-1}s})
  &\sim (-1)^{j-i}\frac{\sqrt{-1}}{4} e^{-\sqrt{-1}s}
     2L_{j-i} s_{j-i} (\pi L_{j-i} t)^{-\frac{1}{2}}
     e^{-2L_{j-i} t}, \\
(R_2)_{i,j}(t e^{\sqrt{-1}s})
  &\sim (-1)^{j-i}\frac{\sqrt{-1}^{j} - \sqrt{-1}^{i}}{2}
     s_{j-i} (\pi L_{j-i} t)^{-\frac{1}{2}}
     e^{-2L_{j-i} t}.
\end{align*}
For \(i > j\),
\begin{align*}
(R_1)_{i,j}(t e^{\sqrt{-1}s})
  &\sim (-1)^{j-i+1}\frac{\sqrt{-1}}{4} e^{-\sqrt{-1}s}
     2L_{i-j} s_{i-j} (\pi L_{i-j} t)^{-\frac{1}{2}}
     e^{-2L_{i-j} t}, \\
(R_2)_{i,j}(t e^{\sqrt{-1}s})
  &\sim (-1)^{j-i}\frac{\sqrt{-1}^{i} - \sqrt{-1}^{j}}{2}
     s_{i-j} (\pi L_{i-j} t)^{-\frac{1}{2}}
     e^{-2L_{i-j} t}.
\end{align*}
Here we define \(s_3=-s_1\).
\end{lemma}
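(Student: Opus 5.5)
The plan is to substitute the radial asymptotics of Lemma~\ref{lem:asymptotics-tilde-u} and Lemma~\ref{lem:derivative-asymptotics} into the explicit off-diagonal formulas \eqref{eq:R1-offdiag} and \eqref{eq:R2-offdiag}. First I express the auxiliary functions through \(v_1,v_2\) from the proof of Lemma~\ref{lem:derivative-asymptotics}: \(\tilde w_1=\tilde w_3=\tilde u_1+\tilde u_2=\sqrt2\,v_1\) and \(\tilde w_2=\tilde u_1-\tilde u_2=v_2\). Writing \(k=j-i\) modulo \(4\), the off-diagonal entries then only involve \(\partial_w\tilde w_k\) and \(\Delta_w\tilde w_k\) with \(k\in\{1,2,3\}\). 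The case \(k=3\) reduces to \(\tilde w_1\), which explains the convention \(s_3=-s_1\) once the sign of \(\sin(3\pi/4)\) versus \(\sin(\pi/4)\) in the prefactor is taken into account. I also record that \(L_3=L_1\) and that, for \(1\le i,j\le4\), the residue \(j-i\bmod 4\) and \(|j-i|\) select the same \(L\).

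Next come the radial derivative computations. Since \(\tilde w_k\) depends only on \(r=|w|\), we have \(\partial_w f=\tfrac12 e^{-\sqrt{-1}s}f'(r)\) for \(w=re^{\sqrt{-1}s}\). Lemma~\ref{lem:derivative-asymptotics} then gives \(\partial_w\tilde w_k\sim \tfrac12 e^{-\sqrt{-1}s}c_k\,2L_ks_kA_k(t)\), where \(c_1=\sqrt2\) and \(c_2=1\), and \(A_k(t)=(\pi L_kt)^{-1/2}e^{-2L_kt}\). Substituting into \eqref{eq:R1-offdiag}, with \(\sin(\pi/4)\sqrt2=1\) and \(\sin(\pi/2)=1\), should yield the stated \(R_1\) formula with the factor \(\tfrac{\sqrt{-1}}4e^{-\sqrt{-1}s}\). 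For \(i>j\) I use \(\sin(\pi(j-i)/4)=-\sin(\pi(i-j)/4)\), which produces the extra sign \((-1)^{j-i+1}\).

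For \(R_2\), I use the linearized equation from the proof of Lemma~\ref{lem:derivative-asymptotics}: \(\Delta v_k=L_k^2v_k+o(A_k)\). Together with Lemma~\ref{lem:asymptotics-tilde-u}, this gives \(\Delta_w\tilde w_k\sim -c_kL_k^2s_kA_k\). Substituting into \eqref{eq:R2-offdiag} leaves a phase \(-\tfrac14 e^{\pi\sqrt{-1}(j-i)/4}\sqrt{-1}^{1+i}\) times \(L_k^2c_k\). I then check entry by entry that this combination equals \((-1)^{j-i}(\sqrt{-1}^j-\sqrt{-1}^i)/2\). Here I use that \(L_1^2c_1=2\sqrt2\) and \(L_2^2=4\), together with \(|\sqrt{-1}^j-\sqrt{-1}^i|=|1-\sqrt{-1}^{j-i}|\). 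This is a finite trigonometric identity over the twelve pairs \((i,j)\), and I verify it by grouping by \(k\).

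Uniformity in \(s\) is immediate, because the only \(s\)-dependence is the explicit factor \(e^{-\sqrt{-1}s}\) multiplying a function of \(t\) alone. The remainders are \(o(A_k(t))\) independently of \(s\).

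The main obstacle is bookkeeping rather than analysis: keeping signs and phases consistent between the modulo-\(4\) index \(j-i\), the choice \(s_3=-s_1\), and the phase factor in \eqref{eq:R2-offdiag}. There is also a subtler point about the justification of \(\Delta v_k\sim L_k^2v_k\). The step "\(N_j=o(A_j)\)" requires \(A_1^2=o(A_2)\), which holds because \(4L_1=4\sqrt2>2L_2=4\). If that margin were not available, I would have to treat the quadratic term explicitly, so I would double-check this inequality first.
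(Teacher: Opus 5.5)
The step your $R_2$ argument rests on fails. Your treatment of $R_1$ and of the analytic input is sound: the radial formula $\partial_w f=\frac12 e^{-\sqrt{-1}s}f'$, the linearized equation with remainder $o(A_k)$, and the margin $4\sqrt2>4$ are all fine. Converting $R_2$ into Laplacians and then using $\Delta_w v_k=L_k^2v_k+o(A_k)$ is also a legitimate alternative to the paper's route. The paper instead linearizes $e^{\tilde u_{k+1}-\tilde u_k}-1$ directly in \eqref{eq:explict-formula-for-R2-old}, sums by parts to extract $\frac14\bigl(\sqrt{-1}^{\,i-1}-\sqrt{-1}^{\,j-1}\bigr)$, and inserts the values of $\tilde u_1,\tilde u_2$ from Lemma~\ref{lem:asymptotics-tilde-u}, so it never needs the PDE.

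The problem is that the displayed identity \eqref{eq:R2-offdiag} is off by a factor $(-1)^{j-i}$. For example, at $(i,j)=(1,2)$ the $k=1$ and $k=3$ terms of \eqref{eq:explict-formula-for-R2-old} cancel, and one finds
\[
(R_2)_{1,2}=\tfrac14 e^{\frac{\pi\sqrt{-1}}{4}}\bigl(e^{-2\tilde u_2}-e^{2\tilde u_1}\bigr)=-\tfrac14 e^{\frac{\pi\sqrt{-1}}{4}}\Delta_w\tilde w_1 .
\]
By contrast, \eqref{eq:R2-offdiag} gives $+\frac14 e^{\frac{\pi\sqrt{-1}}{4}}\Delta_w\tilde w_1$. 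Feeding $\Delta_w\tilde w_1\sim-2\sqrt2\,s_1A_1$ into that version yields $(R_2)_{1,2}\sim-\frac{1+\sqrt{-1}}{2}s_1A_1$, the negative of the claimed value. So the ``finite trigonometric identity'' you plan to check is false for every pair with $j-i$ odd. The sign matters: it would make the Stokes constant $c_{1,2}$ vanish. You need to rederive the exact identity from \eqref{eq:explict-formula-for-R2-old} and \eqref{eq:tilde-u-equation}; the correct version is
\[
(R_2)_{i,j}=(-1)^{j-i+1}\,\tfrac14\, e^{\frac{\pi\sqrt{-1}}{4}(j-i)}\sqrt{-1}^{\,1+i}\,\Delta_w\tilde w_{j-i},\qquad i\neq j,
\]
and with it your argument goes through.

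Second, your account of the convention $s_3=-s_1$ is wrong. Since $\sin(3\pi/4)=\sin(\pi/4)=\sin(9\pi/4)$, there is no sign to absorb. In \eqref{eq:explict-formula-for-R1} the combination for $j-i=3$ is literally the one for $j-i=1$; in fact $(R_1)_{1,4}=(R_1)_{1,2}=-\frac{\sqrt{-1}}{2}\partial_w v_1$ identically. Likewise the corrected identity above gives $(R_2)_{1,4}=\frac14e^{-\frac{\pi\sqrt{-1}}{4}}\Delta_w\tilde w_1\sim-\frac{1-\sqrt{-1}}{2}s_1A_1$.

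Carried out correctly, your computation therefore produces $s_1$, not $-s_1$, in the entries with $|i-j|=3$. The displayed formulas with $s_3=-s_1$ would assign opposite nonzero leading terms to the identical functions $(R_1)_{1,2}$ and $(R_1)_{1,4}$, so that part of the statement cannot be proved as written. The consistent convention is $s_3=s_1$, for two reasons:
\begin{itemize}
\item It is the third elementary symmetric function of the four numbers in Theorem~\ref{thm:asymptotics-radial}: their product is $1$ and they lie on the unit circle, so $e_3=\overline{e_1}=s_1$.
\item It is what Proposition~\ref{prop:explicit-U} actually requires, since $c_{1,4}=(-1)^{4}s_3$ must equal the entry $s_1$ of $U_3$.
\end{itemize}
The paper's proof does not examine this case either; it substitutes $s_{j-i}$ without comment. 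You should flag and correct this convention rather than try to explain it.
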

\begin{proof}
Recall the explicit formulas for \(R_1\) and \(R_2\) from
\eqref{eq:explict-formula-for-R1} and
\eqref{eq:explict-formula-for-R2-old}.
Since \(\tilde u_1,\tilde u_2\) are radial, we have
\[
\partial_w \tilde u_i(r e^{\sqrt{-1}\theta})
   = \frac{1}{2} e^{-\sqrt{-1}\theta} \tilde u_i'(r),
\]
and the asymptotic estimates that follow depend only on \(r = |w|\),
hence are uniform in \(s\) on \((-\pi/2,\pi/2)\).

We begin with \(R_1\).  As \(t \to +\infty\), when \(j > i\),
\begin{align*}
(R_1)_{i,j}(t e^{\sqrt{-1}s})
  &= (-1)^{j-i}\frac{\sqrt{-1}}{4} e^{-\sqrt{-1}s}
     \Bigl(
        \sin\Bigl(\frac{\pi(j-i)}{4}\Bigr)\tilde u_1'(t)
       +\sin\Bigl(\frac{3\pi(j-i)}{4}\Bigr)\tilde u_2'(t)
     \Bigr) \\[2mm]
  &\sim (-1)^{j-i}\frac{\sqrt{-1}}{4} e^{-\sqrt{-1}s}
     2L_{j-i} s_{j-i} (\pi L_{j-i} t)^{-\frac{1}{2}}
     e^{-2L_{j-i} t}.
\end{align*}
When \(i > j\), using
\(\sin(\frac{\pi(j-i)}{4}) = -\sin(\frac{\pi(i-j)}{4})\) and
\(\sin(\frac{3\pi(j-i)}{4}) = -\sin(\frac{3\pi(i-j)}{4})\), we obtain
\begin{align*}
(R_1)_{i,j}(t e^{\sqrt{-1}s})
  &= (-1)^{j-i}\frac{\sqrt{-1}}{4} e^{-\sqrt{-1}s}
     \Bigl(
        \sin\Bigl(\frac{\pi(j-i)}{4}\Bigr)\tilde u_1'(t)
       +\sin\Bigl(\frac{3\pi(j-i)}{4}\Bigr)\tilde u_2'(t)
     \Bigr) \\
  &= (-1)^{j-i+1}\frac{\sqrt{-1}}{4} e^{-\sqrt{-1}s}
     \Bigl(
        \sin\Bigl(\frac{\pi(i-j)}{4}\Bigr)\tilde u_1'(t)
       +\sin\Bigl(\frac{3\pi(i-j)}{4}\Bigr)\tilde u_2'(t)
     \Bigr) \\
  &\sim (-1)^{j-i+1}\frac{\sqrt{-1}}{4} e^{-\sqrt{-1}s}
     2L_{i-j} s_{i-j} (\pi L_{i-j} t)^{-\frac{1}{2}}
     e^{-2L_{i-j} t}.
\end{align*}

We now turn to \(R_2\).  From \eqref{eq:explict-formula-for-R2-old},
as \(t \to +\infty\) we expand the exponentials to first order and
rearrange the sum:
\allowdisplaybreaks[1]
\begin{align*}
(R_2)_{i,j}(t e^{\sqrt{-1}s})
  &= \frac14 e^{\frac{\pi\sqrt{-1}}{2}(j-1)}
     \sum_{k=1}^4
     e^{\frac{\pi\sqrt{-1}}{4}(5-2k)(i-j)}
     \bigl(e^{\tilde u_{k+1}-\tilde u_k} - 1\bigr) \\
  &\sim \frac14 e^{\frac{\pi\sqrt{-1}}{2}(j-1)}
     \sum_{k=1}^4
     e^{\frac{\pi\sqrt{-1}}{4}(5-2k)(i-j)}
     (\tilde u_{k+1} - \tilde u_k) \\
  &= \frac14 e^{\frac{\pi\sqrt{-1}}{2}(j-1)}
     \sum_{k=1}^4
     \Bigl(e^{\frac{\pi\sqrt{-1}}{4}(5-2(k-1))(i-j)}
          - e^{\frac{\pi\sqrt{-1}}{4}(5-2k)(i-j)}\Bigr)\tilde u_k \\
  &= \frac14 e^{\frac{\pi\sqrt{-1}}{2}(j-1)}
     \Bigl(e^{\frac{\pi\sqrt{-1}}{4}2(i-j)} - 1\Bigr)
     \sum_{k=1}^4
     e^{\frac{\pi\sqrt{-1}}{4}(5-2k)(i-j)}\tilde u_k .
\end{align*}
The prefactor simplifies to
\[
\frac14 e^{\frac{\pi\sqrt{-1}}{2}(j-1)}
   \Bigl(e^{\frac{\pi\sqrt{-1}}{4}2(i-j)} - 1\Bigr)
 = \frac{\sqrt{-1}^{i-1} - \sqrt{-1}^{j-1}}{4}.
\]
Hence
\[
(R_2)_{i,j}(t e^{\sqrt{-1}s})
   \sim \frac{\sqrt{-1}^{i-1} - \sqrt{-1}^{j-1}}{4}
     2\sqrt{-1}(-1)^{j-i}
     \Bigl(
        \sin\Bigl(\frac{\pi(j-i)}{4}\Bigr)\tilde u_1(t)
       +\sin\Bigl(\frac{3\pi(j-i)}{4}\Bigr)\tilde u_2(t)
     \Bigr).
\]

Inserting the asymptotics of \(\tilde u_1,\tilde u_2\) from
Lemma~\ref{lem:asymptotics-tilde-u}, we obtain the final formulas.
For \(j > i\),
\[
(R_2)_{i,j}(t e^{\sqrt{-1}s})
   \sim (-1)^{j-i}\frac{\sqrt{-1}^{j} - \sqrt{-1}^{i}}{2}
     s_{j-i} (\pi L_{j-i} t)^{-\frac{1}{2}} e^{-2L_{j-i} t},
\]
and for \(i > j\),
\[
(R_2)_{i,j}(t e^{\sqrt{-1}s})
   \sim (-1)^{j-i}\frac{\sqrt{-1}^{i} - \sqrt{-1}^{j}}{2}
     s_{i-j} (\pi L_{i-j} t)^{-\frac{1}{2}} e^{-2L_{i-j} t}.
\]
This completes the proof of Theorem~\ref{thm:stokes-jumps} and
Lemma~\ref{lem:asymptotics-R1-R2}.
\end{proof}

We are now evaluate the limits defining the Stokes
constants and obtain the explicit form of the unipotent matrices
\(U_1,U_2,U_3\).

\begin{proposition}
\label{prop:explicit-U}
With the notation of Proposition~\ref{prop:stokes-factor-structure}, the
unipotent matrices are given by
\[
U_1 =
\begin{pmatrix}
1 & s_1 & 0 & 0 \\
0 &  1  & 0 & 0 \\
0 & 0   & 1 & 0 \\
0 & 0 & -s_1 & 1
\end{pmatrix},\qquad
U_2 =
\begin{pmatrix}
1 & 0 & -s_2 & 0 \\
0 &  1  & 0 & 0 \\
0 & 0   & 1 & 0 \\
0 & 0 &  0 & 1
\end{pmatrix},\qquad
U_3 =
\begin{pmatrix}
1 & 0 & 0 & s_1 \\
0 &  1  & s_1 & 0 \\
0 & 0   & 1 & 0 \\
0 & 0 &  0 & 1
\end{pmatrix},
\]
where
\[
s_1 = \frac{\sin(4\theta)}{\sin\theta},\qquad
s_2 = \frac{\sin(4\theta)\sin(3\theta)}{\sin\theta\sin(2\theta)},
\qquad \theta = \frac{\pi}{d+4}.
\]
\end{proposition}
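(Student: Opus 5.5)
We compute each constant \(c_{i,j}\) directly from the limit formula in Proposition~\ref{prop:stokes-factor-structure}, inserting the asymptotics of Lemma~\ref{lem:asymptotics-R1-R2}. For a pair \((i,j)\) in Table~\ref{tab:non-decay-positions} at the Stokes direction \(\theta_k\), we first combine the two components of \(R\) from \eqref{eq:formula-for-R}:
\[
R_{i,j}(te^{\sqrt{-1}s})=\sqrt{-1}\,t\bigl(e^{\sqrt{-1}s}(R_1)_{i,j}-e^{-\sqrt{-1}s}(R_2)_{i,j}\bigr).
\]
The factor \(e^{-\sqrt{-1}s}\) in \((R_1)_{i,j}\) cancels the \(e^{\sqrt{-1}s}\), so the first term is independent of \(s\). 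The second term carries \(e^{-\sqrt{-1}s}\), which is continuous and can be evaluated at \(s=\theta_k\) in the limit. Thus \(R_{i,j}\sim C_{i,j}(s)\,\sqrt{t}\,e^{-2L_mt}\), where \(m=|j-i|\) (with \(m=3\) treated as \(L_3=L_1\), \(s_3=-s_1\)) and \(C_{i,j}\) is explicit. Uniformity in \(s\) is guaranteed by Lemma~\ref{lem:asymptotics-R1-R2}.

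Next we handle the exponent. By \eqref{eq:definition-for-lambda} and the non-decay condition \eqref{eq:non-decay-condition}, \(\lambda_{i,j}(s)\) attains its maximum \(2L_m\) exactly at \(s=\theta_k\). Its Taylor expansion there is \(\lambda_{i,j}(s)=2L_m-\tfrac{1}{2}\cdot 4|\sin(\tfrac{(i-j)\pi}{4})|(s-\theta_k)^2+O((s-\theta_k)^3)\), i.e. \(2L_m - L_m(s-\theta_k)^2+\dots\), since \(4|\sin|=2L_m\). The integrand therefore becomes \(C_{i,j}(\theta_k)\sqrt t\,e^{-tL_m(s-\theta_k)^2}(1+o(1))\). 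Laplace's method, with the Gaussian domination already used in the proof of Theorem~\ref{thm:stokes-jumps} to control the tails on \([\theta_k-\varepsilon,\theta_k+\varepsilon]\), gives
\[
c_{i,j}=C_{i,j}(\theta_k)\sqrt{\pi/L_m}.
\]
This factor exactly cancels the \((\pi L_m t)^{-1/2}\) prefactor up to powers of \(L_m\). What remains is the prefactor \(L_m s_m\) from \(R_1\) and \(s_m\) from \(R_2\), multiplied by the phases \((-1)^{j-i}\), \(\sqrt{-1}\), and \(\sqrt{-1}^{i}-\sqrt{-1}^{j}\) evaluated at \(e^{\pm\sqrt{-1}\theta_k}\).

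We then carry out this arithmetic case by case for the five entries: \((1,2)\) and \((4,3)\) at \(\theta_1=-\pi/4\); \((1,3)\) at \(\theta_2=0\); \((1,4)\) and \((2,3)\) at \(\theta_3=\pi/4\). The expected outputs are \(s_1\), \(-s_1\), \(-s_2\), \(s_1\), \(s_1\). For \((1,4)\) we have \(m=3\), so we use \(L_3=L_1\) and \(s_3=-s_1\), consistent with the convention in Lemma~\ref{lem:asymptotics-R1-R2} coming from \(\sin(3j\pi/4)\) at \(j=3\). Two sanity checks catch sign slips. First, the result must make \(SU_kS^{-1}\) real symplectic, because \(L_{k-1}^{-1}L_k\in\mathrm{Sp}(4,\mathbb R)\); this forces the paired entries in \(U_1\) and \(U_3\) to be related (\(c_{4,3}=-c_{1,2}\), \(c_{2,3}=c_{1,4}\)). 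Second, all constants must come out real.

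The main obstacle is bookkeeping of the phases: the two contributions from \(R_1\) and \(R_2\) must combine into a real number with the correct sign. We should verify in each case that the \(R_1\) contribution (proportional to \(L_m\), with factor \(\tfrac{1}{2}\sqrt{\pi/L_m}\cdot 2L_m\cdot(\pi L_m)^{-1/2}\cdot\tfrac{1}{4}\)) and the \(R_2\) contribution add to an integer multiple of \(s_m\). The normalization of the second derivative of \(\lambda_{i,j}\) is the likeliest source of a factor-of-two error, so we will double-check it against the \(\mathrm{SL}(3)\) computation in \cite{dumas2015polynomial}.
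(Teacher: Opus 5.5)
Your route is the paper's: split \(R_{i,j}\) into the \(R_1\) and \(R_2\) pieces, use \(\lambda_{i,j}(s)=2L_m\cos(s-\theta_k)\) near the Stokes direction, and evaluate the resulting Gaussian integral. The paper does the last step with the substitution \(x=t^{1/2}(s-\theta_k)\) and dominated convergence, which is your Laplace argument. For \(j>i\) each piece contributes \(\tfrac12(-1)^{j-i+1}s_{j-i}\); for the \(R_2\) piece this uses \(\sqrt{-1}^{\,j+1}-\sqrt{-1}^{\,i+1}=L_{j-i}e^{\sqrt{-1}\theta_k}\) at the contributing pairs. The analogous computation for \((4,3)\) gives \(-s_1\). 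So the entries \(s_1,-s_1,-s_2,s_1\) at \((1,2),(4,3),(1,3),(2,3)\) come out as you expect. One slip: your parenthetical factor for the \(R_1\) piece multiplies out to \(\tfrac14\), but the correct value is \(\tfrac14\cdot 2L_m(\pi L_m)^{-1/2}\sqrt{\pi/L_m}=\tfrac12\). The leading \(\tfrac12\) should be dropped, because the factor from \(\partial_w\tilde u=\tfrac12e^{-\sqrt{-1}s}\tilde u'\) is already inside the \(\tfrac{\sqrt{-1}}{4}\) of Lemma~\ref{lem:asymptotics-R1-R2}.

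The step that would actually fail is the \((1,4)\) entry. The convention \(s_3=-s_1\), which is also written at the end of Lemma~\ref{lem:asymptotics-R1-R2}, does not follow from the formulas, and your justification has the sign backwards. For \(m=j-i=3\) the relevant combination is \(\sin(3\pi/4)\tilde u_1+\sin(9\pi/4)\tilde u_2\). Since \(\sin(9\pi/4)=\sin(\pi/4)=\sin(3\pi/4)\), this is literally the \(m=1\) combination, hence asymptotic to \(-s_1(\pi L_1 r)^{-1/2}e^{-2L_1 r}\); the \(q\)-binomial product likewise gives \(e_3=e_1\) here. If you insert \(s_3=-s_1\), both pieces of \(c_{1,4}\) equal \(\tfrac12 s_3\), so you get \(c_{1,4}=-s_1\). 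That contradicts the target and violates the relation \(c_{1,4}=c_{2,3}\) that your (correct) symplecticity check imposes. With \(L_3=L_1\) and \(s_3=+s_1\) you get \(c_{1,4}=(-1)^4s_1=s_1\).

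A cleaner way to settle \(U_3\) is the chart symmetry. Since \(w_{j+1}=-\sqrt{-1}w_j\), the ray \(\arg w_j=\pi/4\) is the ray \(\arg w_{j+1}=-\pi/4\). Using \(\mathcal F_{w_{j+1}}S=\mathcal F_{w_j}SQ\), \(\Psi_0^{w_{j+1}}=R^{-1}\Psi_0^{w_j}R\), and base-point independence of the jumps, one gets \(U_3=QU_1Q^{-1}=I+s_1E_{1,4}+s_1E_{2,3}\). Also, because \(\mathcal F_wS\) is the real frame, the reality check is that \(U_k\) itself is real; \(SU_kS^{-1}\) is not a real matrix.
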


\begin{proof}
For brevity we present the computation only for the case \(j > i\). The remaining entries are obtained by an entirely analogous argument.
Let \(c_{i,j}\) denote the Stokes constant associated with the index
pair \((i,j)\).  By Proposition~\ref{prop:stokes-factor-structure} and the
definition of \(\mu_t^{(i,j)}(s)\),
\[
c_{i,j} = \lim_{t\to+\infty}
\int_{\theta_k-\varepsilon}^{\theta_k+\varepsilon}
e^{t\lambda_{i,j}(s)} R(t e^{\sqrt{-1}s})_{i,j} \odif s,
\]
where \(\theta_k = -\frac{\pi}{2} + \frac{k\pi}{4}\) for \(k = 1,2,3\)
and
\[
\lambda_{i,j}(s)
   = 4\sin\Bigl(\frac{(i-j)\pi}{4}\Bigr)
      \sin\Bigl(s-\frac{(i+j-2)\pi}{4}\Bigr).
\]

Recall the condition~\eqref{eq:non-decay-condition} in the proof of Theorem~\ref{thm:stokes-jumps} which characterizes the positions of non-exponentially
small off-diagonal entries.  For the index pairs that actually
contribute (see Table~\ref{tab:non-decay-positions}) we have
\[
\frac{(i+j-2)\pi}{4} = \frac{k\pi}{4} + 2m\pi
\]
for some integer \(m\), and consequently
\[
\lambda_{i,j}(s)
   = 4\sin\Bigl(\frac{(j-i)\pi}{4}\Bigr)
      \sin\Bigl(\frac{\pi}{2} + \theta_k - s\Bigr)
   = 2L_{j-i} \cos(s - \theta_k).
\]

From \eqref{eq:formula-for-R} we have
\[
R(t e^{\sqrt{-1}s})
   = R_1 \sqrt{-1}t e^{\sqrt{-1}s}
     - R_2 \sqrt{-1}t e^{-\sqrt{-1}s}.
\]
We evaluate the contribution of each term separately.

\noindent\textbf{Contribution of \(R_1\).}
Using Lemma~\ref{lem:asymptotics-R1-R2} and the Lebesgue dominated
convergence theorem,
\begin{align*}
&\lim_{t\to+\infty}
   \int_{\theta_k-\varepsilon}^{\theta_k+\varepsilon}
   e^{t\lambda_{i,j}(s)} (R_1)_{i,j} \sqrt{-1}t e^{\sqrt{-1}s}
   \odif s \\
&\qquad =
   \lim_{t\to+\infty}
   \int_{\theta_k-\varepsilon}^{\theta_k+\varepsilon}
   e^{t(2L_{j-i}\cos(s-\theta_k))}
   (-1)^{j-i}\frac{\sqrt{-1}}{4} e^{-\sqrt{-1}s}
   2L_{j-i} s_{j-i} (\pi L_{j-i} t)^{-\frac{1}{2}}
   e^{-2L_{j-i} t}
   \sqrt{-1}t e^{\sqrt{-1}s} \odif s \\
&\qquad =
   \frac{(-1)^{j-i+1}}{2}
   s_{j-i} \Bigl(\frac{L_{j-i}}{\pi}\Bigr)^{\!1/2}
   \lim_{t\to+\infty}
   \int_{\theta_k-\varepsilon}^{\theta_k+\varepsilon}
   e^{2L_{j-i}(\cos(s-\theta_k)-1)t}
   t^{1/2} \odif s .
\end{align*}
The change of variables \(x = t^{1/2}(s-\theta_k)\) transforms the
integral into
\[
\int_{-t^{1/2}\varepsilon}^{t^{1/2}\varepsilon}
   e^{2L_{j-i}(\cos(x/\sqrt{t})-1)t} \odif x .
\]
As \(t\to+\infty\), the integrand converges pointwise to
\(e^{-L_{j-i}x^2}\) and is dominated by an integrable Gaussian.
Applying dominated convergence again,
\[
\lim_{t\to+\infty}
\int_{-t^{1/2}\varepsilon}^{t^{1/2}\varepsilon}
   e^{2L_{j-i}(\cos(x/\sqrt{t})-1)t} \odif x
= \int_{\mathbb{R}} e^{-L_{j-i}x^2} \odif x
= \sqrt{\frac{\pi}{L_{j-i}}} .
\]
Hence the contribution of \(R_1\) equals
\[
\frac{(-1)^{j-i+1}}{2} s_{j-i}.
\]

\noindent\textbf{Contribution of \(R_2\).}
Proceeding in the same way,
\begin{align*}
&\lim_{t\to+\infty}
   \int_{\theta_k-\varepsilon}^{\theta_k+\varepsilon}
   e^{t\lambda_{i,j}(s)} (-R_2)_{i,j} \sqrt{-1}t e^{-\sqrt{-1}s}
   \odif s \\
&\qquad =
   \lim_{t\to+\infty}
   \int_{\theta_k-\varepsilon}^{\theta_k+\varepsilon}
   e^{t(2L_{j-i}\cos(s-\theta_k))}
   (-1)^{j-i+1}\frac{\sqrt{-1}^{j} - \sqrt{-1}^{i}}{2}
   s_{j-i} (\pi L_{j-i} t)^{-\frac{1}{2}}
   e^{-2L_{j-i} t}
   \sqrt{-1}t e^{-\sqrt{-1}s} \odif s \\
&\qquad =
   (-1)^{j-i+1}\frac{\sqrt{-1}^{j+1} - \sqrt{-1}^{i+1}}{2}
   s_{j-i} (\pi L_{j-i})^{-\frac{1}{2}}
   \lim_{t\to+\infty}
   \int_{\theta_k-\varepsilon}^{\theta_k+\varepsilon}
   e^{2L_{j-i}(\cos(s-\theta_k)-1)t}
   t^{1/2} e^{-\sqrt{-1}s} \odif s .
\end{align*}
After the same change of variables \(x = t^{1/2}(s-\theta_k)\),
the integral becomes
\[
\int_{-t^{1/2}\varepsilon}^{t^{1/2}\varepsilon}
   e^{2L_{j-i}(\cos(x/\sqrt{t})-1)t}
   e^{-\sqrt{-1}(x/\sqrt{t}+\theta_k)} \odif x .
\]
Taking the limit \(t\to+\infty\) and applying dominated convergence
yields
\[
e^{-\sqrt{-1}\theta_k} \int_{\mathbb{R}} e^{-L_{j-i}x^2} \odif x
= e^{-\sqrt{-1}\theta_k} \sqrt{\frac{\pi}{L_{j-i}}} .
\]
It remains to simplify the coefficient. Observe that
\begin{align*}
\sqrt{-1}^{j+1} - \sqrt{-1}^{i+1}
&= e^{\frac{\pi\sqrt{-1}}{2}(j+1)}
   - e^{\frac{\pi\sqrt{-1}}{2}(i+1)} \\
&= e^{\frac{\pi\sqrt{-1}}{2}\cdot\frac{i+j+2}{2}}
   \Bigl(e^{\frac{\pi\sqrt{-1}}{2}(j-i)}
        - e^{\frac{\pi\sqrt{-1}}{2}(i-j)}\Bigr) \\
&= e^{\frac{(i+j-2)\pi\sqrt{-1}}{4}}
   e^{\pi\sqrt{-1}}
   \cdot 2\sqrt{-1}
   \sin\Bigl(\frac{(j-i)\pi}{4}\Bigr) \\
&= e^{\theta_k\sqrt{-1}} L_{j-i}.
\end{align*}
Substituting this identity into the previous expression,
the contribution of \(R_2\) equals
\[
\frac{(-1)^{j-i+1}}{2} s_{j-i}.
\]

\noindent\textbf{Conclusion.}
Adding the two contributions we obtain
\[
c_{i,j} = (-1)^{j-i+1} s_{j-i}.
\]

Inspecting the sign of \((-1)^{j-i+1}\) for the individual index pairs
listed in Table~\ref{tab:non-decay-positions} and recalling the
notation of Lemma~\ref{prop:stokes-factor-structure} gives the matrices
\(U_1,U_2,U_3\) exactly as stated.
\end{proof}

\subsubsection{Circular arcs across Stokes directions} \label{subsubsec:circular-arcs-across-stokes-directions} 
We now describe how the Stokes matrices computed in this section are assembled along a circular arc that
crosses several Stokes directions. This will be used later to evaluate the local contribution around a zero point to the asymptotic holonomy.

Let
\[
q = z^d\odif{z}^4 , \qquad N = d+4 ,
\]
and let \(D_q\) be the flat connection associated with the cyclic
\(\mathrm{Sp}(4,\mathbb{R})\)-Higgs bundle
\((\mathbb{K}_{\mathrm{Sp}(4,\mathbb{R})},\phi(q))\) over \(\mathbb{C}\).
In this subsection we study the asymptotic behavior of the holonomy of
\(D_q\) along a circular arc that crosses several Stokes directions.

Recall the sectorial natural coordinates
\[
w_j(z) = \frac{4}{N} \bigl( z e^{-\frac{2\pi j\sqrt{-1}}{N}} \bigr)^{N/4},
\qquad j \in \mathbb Z/N\mathbb Z .
\]
Each \(w_j\) identifies the corresponding sector
\[
U_j = \bigl\{ z = r e^{\sqrt{-1}\theta} :
          \tfrac{2\pi(j-1)}{N} < \theta < \tfrac{2\pi(j+1)}{N} \bigr\}
\]
in the punctured \(z\)-plane with the right half-plane.  On \(U_j\), we
work with the canonical frame \(\mathcal{F}_{w_j}\) induced by the
coordinate \(w_j\) (Definition~\ref{def:frame-induced-by-the-coordinate}).
The corresponding fundamental matrices for \(q = z^d\odif{z}^4\) and for
the constant model are denoted by \(\Psi^{w_j}_q\) and \(\Psi^{w_j}_0\),
respectively. 

In the \(w_j\)-right half-plane, the four stable sectors are
\begin{alignat*}{2}
J_0^{(j)} &= \Bigl\{ -\frac{\pi}{2} < \arg w_j < -\frac{\pi}{4} \Bigr\},
\qquad &
J_1^{(j)} &= \Bigl\{ -\frac{\pi}{4} < \arg w_j < 0 \Bigr\}, \\
J_2^{(j)} &= \Bigl\{ 0 < \arg w_j < \frac{\pi}{4} \Bigr\},
\qquad &
J_3^{(j)} &= \Bigl\{ \frac{\pi}{4} < \arg w_j < \frac{\pi}{2} \Bigr\},
\end{alignat*}
see Definition~\ref{def:stokes-directions} and Figure~\ref{fig:stokes-directions}.  Adjacent sectorial coordinates
satisfy
\[
w_{j+1} = -\sqrt{-1}w_j .
\]
Hence the overlap of the two right-half-plane charts \(w_j\) and
\(w_{j+1}\) corresponds, in the \(w_j\)-coordinate, to the quadrant
\[
0 < \arg w_j < \frac{\pi}{2},
\]
which contains the two stable sectors \(J_2^{(j)}\) and \(J_3^{(j)}\).
Under the coordinate change \(w_{j+1} = -\sqrt{-1}w_j\), these are identified as
\[
J_2^{(j)} = J_0^{(j+1)}, \qquad
J_3^{(j)} = J_1^{(j+1)} .
\]
Consequently, every point not lying on a Stokes direction belongs
uniquely to a stable sector of the form \(J_0^{(j)}\) or \(J_1^{(j)}\)
for some \(j\).

When a circular arc crosses from \(J_1^{(j)}\) to \(J_0^{(j+1)}\), we
pass from the frame \(\mathcal{F}_{w_j}\) to the frame
\(\mathcal{F}_{w_{j+1}}\). The change-of-frame matrix involves a choice of lift to the
half-canonical bundle. On the overlap \(U_j\cap U_{j+1}\), we fix the
lift
\begin{equation}
\label{eq:half-differential-transition}
(\odif w_{j+1})^{1/2}
=
e^{-\frac{\pi\sqrt{-1}}{4}}
(\odif w_j)^{1/2}.
\end{equation}

Consequently, the two induced frames satisfy
\[
\mathcal F_{w_{j+1}}
=
\mathcal F_{w_j}R,
\]
where
\begin{equation}
\label{eq:definition-for-frame-change-R}
R
=
\operatorname{diag}\left(
e^{-\frac{3\pi\sqrt{-1}}{4}},
e^{-\frac{\pi\sqrt{-1}}{4}},
e^{\frac{\pi\sqrt{-1}}{4}},
e^{\frac{3\pi\sqrt{-1}}{4}}
\right).
\end{equation} 
Let \(S\) be the matrix that diagonalizes the constant model (see
\eqref{eq:definition-of-S}).  In the \(S\)-diagonal convention, the
change from \(\mathcal{F}_{w_j}S\) to \(\mathcal{F}_{w_{j+1}}S\) is
represented by
\[
Q = S^{-1}RS .
\]
A direct computation yields
\begin{equation}\label{eq:definition-for-frame-change-Q}
Q =
\begin{pmatrix}
0 & 0 & 0 & -1\\
1 & 0 & 0 & 0\\
0 & 1 & 0 & 0\\
0 & 0 & 1 & 0
\end{pmatrix},
\qquad\text{so that}\qquad
R = S Q S^{-1}.
\end{equation}
We will use this matrix whenever we pass from the sectorial frame
\(\mathcal{F}_{w_j}\) to the adjacent sectorial frame
\(\mathcal{F}_{w_{j+1}}\).

We now study the holonomy asymptotics along a large circular
arc.  Let
\[
\beta_r(\theta) = r e^{\sqrt{-1}\theta}, \qquad
\theta_0 \leq \theta \leq \theta_1,
\]
and assume that \(\theta_0\) and \(\theta_1\) are not Stokes directions.
\begin{proposition}
\label{prop:one-stokes-direction}
Let
\[
\beta_r(\theta) = r e^{\sqrt{-1}\theta}, \qquad
\theta_0 \leq \theta \leq \theta_1,
\]
be an oriented circular arc.  Assume that the initial point lies in
\(J_1^{(j)}\) and the terminal point lies in \(J_0^{(j+1)}\).  Then,
with respect to the initial frame \(\mathcal{F}_{w_j}\) and the terminal
frame \(\mathcal{F}_{w_{j+1}}\),
\[
\operatorname{Hol}(\beta_r)
   = \bigl( \Psi_0^{w_{j+1}}(\beta_r(\theta_1)) \bigr)^{-1}
     \bigl( S Q^{-1} U_2^{-1} S^{-1} + o(I) \bigr)
     \Psi_0^{w_j}(\beta_r(\theta_0))
\]
as \(r \to +\infty\).
\end{proposition}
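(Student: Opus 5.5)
The plan is to reduce everything to the single half-plane chart \((U_j,w_j)\), where the arc only crosses the Stokes direction \(\theta_2=0\), and then to switch frames at the terminal point. In the \(w_j\)-coordinate the initial point \(\beta_r(\theta_0)\) lies in \(J_1^{(j)}\) and the terminal point \(\beta_r(\theta_1)\) lies in \(J_0^{(j+1)}=J_2^{(j)}\). The whole arc therefore stays inside \(U_j\), where \(-\pi/2<\arg w_j<\pi/2\). Because \(\theta_0,\theta_1\) are fixed and not Stokes directions, both endpoints move along fixed Euclidean rays. In the \(w_j\)-chart each such ray is a fixed ray \(\arg w_j=\mathrm{const}\), since the \(z\)-rays map to \(w_j\)-rays.

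\textbf{Step 1: transport in the frame \(\mathcal F_{w_j}\).} By Section~\ref{subsec:parallel-transport-holonomy}, parallel transport from \(x=\beta_r(\theta_0)\) to \(y=\beta_r(\theta_1)\) in the frame \(\mathcal F_{w_j}\) is \(\Psi_q^{w_j}(y)^{-1}\Psi_q^{w_j}(x)\), independent of the base point \(w_*\). Set \(G=\Psi_q^{w_j}(\Psi_0^{w_j})^{-1}\). Then
\[
M_j=\Psi_0^{w_j}(y)^{-1}\,G(y)^{-1}G(x)\,\Psi_0^{w_j}(x).
\]
Theorem~\ref{thm:sectorial-limits} gives \(G(x)\to L_1\) and \(G(y)\to L_2\) as \(r\to\infty\) along these rays. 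Theorem~\ref{thm:stokes-jumps} with \(k=2\) gives \(L_1^{-1}L_2=SU_2S^{-1}\). Hence
\[
G(y)^{-1}G(x)=L_2^{-1}L_1+o(I)=SU_2^{-1}S^{-1}+o(I).
\]
This uses only the invertibility of the limits, which lie in \(\mathrm{Sp}(4,\mathbb R)\). No path-dependent estimate along the arc is needed: the integrand's behaviour near \(\theta_2\) is already encoded in the jump \(L_1^{-1}L_2\), and flatness makes the transport depend only on the endpoints inside the simply connected \(U_j\).

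\textbf{Step 2: changing to the terminal frame \(\mathcal F_{w_{j+1}}\).} Since \(\mathcal F_{w_{j+1}}=\mathcal F_{w_j}R\) on the overlap, formula~\eqref{eq:different-frames-on-consecutive-segments} shows that the transport with initial frame \(\mathcal F_{w_j}\) and terminal frame \(\mathcal F_{w_{j+1}}\) is \(R^{-1}M_j\). Next I would relate the two constant-model fundamental matrices. On \(U_j\cap U_{j+1}\) we have \(q=\odif{w}_{j+1}^4\), and the harmonic metric has the same radial \(\tilde u_i\). So the constant model written in \(\mathcal F_{w_{j+1}}\) is exactly the gauge transform of the one in \(\mathcal F_{w_j}\) by the constant matrix \(R\). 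Both are normalized at \(w_j=0\iff w_{j+1}=0\). The change-of-frame rule \(\Psi^{\mathcal G}_p(x)=C(p)^{-1}\Psi^{\mathcal F}_p(x)C(x)\) then yields
\[
\Psi_0^{w_{j+1}}=R^{-1}\Psi_0^{w_j}R.
\]
Equivalently, this follows from the explicit formula for \(\Psi_0\) and \(w_{j+1}=-\sqrt{-1}w_j\), which one can check directly. Consequently
\[
R^{-1}\Psi_0^{w_j}(y)^{-1}=\Psi_0^{w_{j+1}}(y)^{-1}R^{-1},
\qquad
R^{-1}SU_2^{-1}S^{-1}=SQ^{-1}U_2^{-1}S^{-1},
\]
using \(R=SQS^{-1}\). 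The \(o(I)\) term becomes \(R^{-1}o(I)=o(I)\) because \(R\) is constant. Substituting gives exactly the claimed formula.

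\textbf{Main obstacle.} I expect the bookkeeping in Step 2 to be the only delicate point. It requires checking that the identity \(\Psi_0^{w_{j+1}}=R^{-1}\Psi_0^{w_j}R\) is consistent with the chosen lift~\eqref{eq:half-differential-transition}, so that no stray sign \(\pm I\) appears, and that the normalization point \(0\), a boundary limit, is compatible in both charts. I would verify this directly from~\eqref{eq:constant-standard-model-diagonal}. Concretely, I would check that \(R^{-1}S\exp(D(w_j))S^{-1}R=S\exp(D(w_{j+1}))S^{-1}\), using \(Q^{-1}D(\theta)Q=D(\theta-\pi/2)\), which follows from the cyclic form~\eqref{eq:cyclic-form-for-S-and-D}.
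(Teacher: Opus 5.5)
Your proposal is correct and uses essentially the same argument as the paper: the sectorial limits together with the Stokes jump \(L_1^{-1}L_2=SU_2S^{-1}\) in the \(w_j\)-chart, then the conjugation identity \(\Psi_0^{w_{j+1}}=R^{-1}\Psi_0^{w_j}R\), then \(R=SQS^{-1}\). The only difference is bookkeeping. The paper switches frames at an intermediate angle \(\theta'\in J_0^{(j+1)}\) and then uses the trivial limit \(I+o(I)\) in the \(w_{j+1}\)-chart. You instead observe that the terminal point already lies in \(J_2^{(j)}\subset U_j\) and switch frames there directly, which is slightly more economical. Your check \(Q^{-1}D(\theta)Q=D(\theta-\pi/2)\) is correct and justifies the conjugation identity.
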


\begin{proof}
Choose \(\theta'\in J_0^{(j+1)} = J_2^{(j)}\) with
\(\theta_0 < \theta' < \theta_1\).  Using the composition rule for
different frames \eqref{eq:different-frames-on-consecutive-segments},
the holonomy along \(\beta_r\) is
\[
\operatorname{Hol}(\beta_r)
   = \bigl( \Psi_q^{w_{j+1}}(r e^{\sqrt{-1}\theta_1}) \bigr)^{-1}
     \Psi_q^{w_{j+1}}(r e^{\sqrt{-1}\theta'})
     R^{-1}
     \bigl( \Psi_q^{w_j}(r e^{\sqrt{-1}\theta'}) \bigr)^{-1}
     \Psi_q^{w_j}(r e^{\sqrt{-1}\theta_0}) .
\]

By Theorem~\ref{thm:sectorial-limits} and
Theorem~\ref{thm:stokes-jumps}, as \(r \to +\infty\), we have
\begin{align*}
\bigl( \Psi_q^{w_j}(r e^{\sqrt{-1}\theta'}) \bigr)^{-1}
   \Psi_q^{w_j}(r e^{\sqrt{-1}\theta_0})
   &= \Psi_0^{w_j}(r e^{\sqrt{-1}\theta'})^{-1}
      \bigl( S U_2^{-1} S^{-1} + o(I) \bigr)
      \Psi_0^{w_j}(r e^{\sqrt{-1}\theta_0}), \\[1mm]
\bigl( \Psi_q^{w_{j+1}}(r e^{\sqrt{-1}\theta_1}) \bigr)^{-1}
   \Psi_q^{w_{j+1}}(r e^{\sqrt{-1}\theta'})
   &= \Psi_0^{w_{j+1}}(r e^{\sqrt{-1}\theta_1})^{-1}
      \bigl( I + o(I) \bigr)
      \Psi_0^{w_{j+1}}(r e^{\sqrt{-1}\theta'}) .
\end{align*}
Therefore 
\begin{align*}
\operatorname{Hol}(\beta_r)
   &= \Psi_0^{w_{j+1}}(r e^{\sqrt{-1}\theta_1})^{-1}
      \bigl( I + o(I) \bigr)
      \Psi_0^{w_{j+1}}(r e^{\sqrt{-1}\theta'})
      R^{-1} \\
   &\qquad \cdot
      \Psi_0^{w_j}(r e^{\sqrt{-1}\theta'})^{-1}
      \bigl( S U_2^{-1} S^{-1} + o(I) \bigr)
      \Psi_0^{w_j}(r e^{\sqrt{-1}\theta_0}) .
\end{align*}
as \(r \to +\infty\).
On the overlap of the two
adjacent natural coordinates, the matrices \(\Psi_0^{w_j}\) and
\(\Psi_0^{w_{j+1}}\) satisfy the same linear differential equation with
the same initial condition, but are expressed respectively in the frames
\(\mathcal{F}_{w_j}\) and \(\mathcal{F}_{w_{j+1}}\).  Since these frames
are related by \(\mathcal{F}_{w_{j+1}} = \mathcal{F}_{w_j} R\), uniqueness
of solutions gives
\[
\Psi_0^{w_{j+1}}(w_{j+1}(p)) = R^{-1} \Psi_0^{w_j}(w_j(p)) R ,
\qquad p \in U_{j+1} \cap U_j .
\]
We obtain
\begin{align*}
\operatorname{Hol}(\beta_r)
   &= \bigl( \Psi_0^{w_{j+1}}(r e^{\sqrt{-1}\theta_1}) \bigr)^{-1}
      \bigl( I + o(1) \bigr)
      R^{-1}
      \bigl( S U_2^{-1} S^{-1} + o(I) \bigr)
      \Psi_0^{w_j}(r e^{\sqrt{-1}\theta_0}) \\
   &= \bigl( \Psi_0^{w_{j+1}}(r e^{\sqrt{-1}\theta_1}) \bigr)^{-1}
      \bigl( S Q^{-1} U_2^{-1} S^{-1} + o(I) \bigr)
      \Psi_0^{w_j}(r e^{\sqrt{-1}\theta_0}) ,
\end{align*}
as claimed.
\end{proof}

We now consider a counterclockwise circular arc crossing \(k\)
successive sectorial charts. Iterating
Proposition~\ref{prop:one-stokes-direction} along these \(k\)
transitions yields the following general statement.

\begin{proposition}
\label{prop:large-circular-holonomy}
Let
\[
\beta_r(\theta) = r e^{\sqrt{-1}\theta}, \qquad
\theta_0 \leq \theta \leq \theta_1,
\]
be an oriented circular arc.  Suppose that the initial point belongs to
\(J_a^{(j)}\) and the terminal point belongs to \(J_b^{(j+k)}\), with
\(a,b \in \{0,1\}\).  Then, with respect to the initial frame
\(\mathcal{F}_{w_j}\) and the terminal frame \(\mathcal{F}_{w_{j+k}}\),
\[
\operatorname{Hol}(\beta_r)
   = \bigl( \Psi_0^{w_{j+k}}(\beta_r(\theta_1)) \bigr)^{-1}
     \bigl( S T_{ab}^{(k)} S^{-1} + o(I) \bigr)
     \Psi_0^{w_j}(\beta_r(\theta_0)),
\]
where
\[
T_{00}^{(k)} = T^{k},\qquad
T_{01}^{(k)} = U_1^{-1} T^{k},\qquad
T_{10}^{(k)} = T^{k} U_1,\qquad
T_{11}^{(k)} = U_1^{-1} T^{k} U_1,
\]
with \(T = Q^{-1} U_2^{-1} U_1^{-1}\).  Equivalently,
\[
T_{ab}^{(k)} = A_b^{-1} T^{k} A_a ,\qquad
A_0 = I,\qquad
A_1 = U_1 .
\]
\end{proposition}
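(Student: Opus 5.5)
The plan is to subdivide the arc at intermediate angles, one in each stable sector that the arc visits, and then chain the one-chart estimates together.

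\textbf{Subdivision.} The arc starts in \(J_a^{(j)}\), passes through \(J_1^{(j)}\) and \(J_0^{(j+1)}\), continues through \(J_1^{(j+1)}\) and \(J_0^{(j+2)}\), and so on, ending in \(J_b^{(j+k)}\).

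\textbf{Elementary pieces.} There are two kinds of steps, each proved exactly like Proposition~\ref{prop:one-stokes-direction}.

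\emph{Type (i): inside one chart \(w_m\), from \(J_0^{(m)}\) to \(J_1^{(m)}\).} This crosses only the Stokes direction \(\theta_1\). Theorem~\ref{thm:sectorial-limits} and Theorem~\ref{thm:stokes-jumps} give
\[
\Psi_q^{w_m}(\beta_r(\theta''))^{-1}\Psi_q^{w_m}(\beta_r(\theta'))
=\Psi_0^{w_m}(\beta_r(\theta''))^{-1}\bigl(SU_1^{-1}S^{-1}+o(I)\bigr)\Psi_0^{w_m}(\beta_r(\theta')).
\]
This holds because \(L_0^{-1}L_1=SU_1S^{-1}\) and the transport matrix is \(\Psi_q^{-1}(\text{end})\Psi_q(\text{start})\).

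\emph{Type (ii): from \(J_1^{(m)}\) to \(J_0^{(m+1)}\).} This is Proposition~\ref{prop:one-stokes-direction} and contributes \(SQ^{-1}U_2^{-1}S^{-1}\).

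A step whose endpoints lie in the same stable sector contributes \(I+o(I)\) by Theorem~\ref{thm:sectorial-limits} alone.

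\textbf{Telescoping the constant-model factors.} Compose the pieces with the frame-change rule \eqref{eq:different-frames-on-consecutive-segments}. At each intermediate angle, the inner factors \(\Psi_0^{w_m}(\beta_r(\theta'))\Psi_0^{w_m}(\beta_r(\theta'))^{-1}\) cancel, since each piece is written relative to the same chart there (for type (ii), Proposition~\ref{prop:one-stokes-direction} already absorbs \(R\)).

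\textbf{Main obstacle: the \(o(I)\) terms.} A naive product gives \((X+o(I))(Y+o(I))\), but here the intermediate errors are sandwiched between constant-model factors. After cancellation, the middle error sits between \(\Psi_0(\theta')\) and \(\Psi_0(\theta')^{-1}\), and these factors grow exponentially in \(r\).

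To avoid this, I would not compose the pieces naively. Instead I would write all transports via the sectorial limits \(G=\Psi_q\Psi_0^{-1}\to L_*\) at the common intermediate points. Concretely, the full transport equals
\[
\Psi_0(\text{end})^{-1}\,G(\text{end})^{-1}\cdots G(\text{start})\,\Psi_0(\text{start}).
\]
Each interior product \(G(\theta')\cdot\bigl(\text{frame change}\bigr)\cdot G(\theta')^{-1}\) is taken at the same point, so the constant-model factors do cancel exactly there. Only the two endpoint factors \(\Psi_0^{w_{j+k}}(\beta_r(\theta_1))^{-1}\) and \(\Psi_0^{w_j}(\beta_r(\theta_0))\) remain outside. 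The middle then becomes a finite product of the convergent matrices \(L_*^{\pm1}\) and \(R^{-1}\), with error \(o(1)\) inside. This is exactly how Proposition~\ref{prop:one-stokes-direction} is structured, so iterating it in this \(G\)-form is legitimate.

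\textbf{Reading off the constant matrix.} By the Stokes jump relations and the identification \(L_2^{(m)}=R\,L_0^{(m+1)}\) up to frame conversion, the middle product is a product of \(SU_1^{-1}S^{-1}\) and \(SQ^{-1}U_2^{-1}S^{-1}\) factors.

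\textbf{Cases in \(a,b\).} Reading the order right to left, the middle matrix is as follows.

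For \(a=0\), the arc begins with a type (i) step, so each chart contributes \(Q^{-1}U_2^{-1}U_1^{-1}=T\), giving \(T^k\). The final type (i) step in chart \(j+k\) is present only if \(b=1\), which yields the extra left factor \(U_1^{-1}\).

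For \(a=1\), the type (i) step in chart \(j\) is missing. Writing the product as \(T^kU_1\) compensates for it, since \(T U_1=Q^{-1}U_2^{-1}\).

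Together these give \(T_{ab}^{(k)}=A_b^{-1}T^kA_a\). The only delicate point is this bookkeeping of \(R\) versus \(Q\) and of the ordering of the factors.
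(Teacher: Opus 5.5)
Your proposal is correct and follows the paper's route. The paper's proof only says that the formula comes from iterating Theorem~\ref{thm:sectorial-limits}, Theorem~\ref{thm:stokes-jumps} and Proposition~\ref{prop:one-stokes-direction}, the only issue being where the endpoints lie, and your right-to-left bookkeeping is exactly that: $T^k$ for $a=0$, an extra $U_1^{-1}$ when $b=1$, and $T^kU_1$ via $TU_1=Q^{-1}U_2^{-1}$ when $a=1$.

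The obstacle you raise is not actually present. If you compose the stated forms directly, the intermediate factors $\Psi_0(\beta_r(\theta'))$ and $\Psi_0(\beta_r(\theta'))^{-1}$ are adjacent and cancel, leaving $(Y+o(I))(X+o(I))$ in the middle, which is what your $G$-form rewriting reproduces. For the same reason you do not need any identification between $L_2^{(m)}$ and $L_0^{(m+1)}$, since each factor can be taken as a ratio $G^{-1}G$ inside a single chart.
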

\begin{proof}
The formula is obtained by repeatedly applying Theorem~\ref{thm:sectorial-limits},Theorem~\ref{thm:stokes-jumps} and
Proposition~\ref{prop:one-stokes-direction}.  The only point is the location of the initial and terminal points. 
\end{proof}

\section{Local comparison near zeros}
\label{sec:local-comparison-near-zeros}

In this section we compare the parallel transport of the flat connection
\(D_t\) on the compact Riemann surface with that of the corresponding
polynomial model near a zero of \(q\).  The comparison is carried out
directly using matrix-valued functions built
from parallel transports.

Let \(X\) be a compact Riemann surface and 
\(q \in H^0(X, K^4)\) a holomorphic quartic differential.
We consider the ray family
\[
q_t = t q , \qquad t > 0 .
\]
Denote by
\((\mathbb{K}_{\mathrm{Sp}(4,\mathbb{R})}, \phi_t, h_t, D_t)\)
the corresponding cyclic \(\mathrm{Sp}(4,\mathbb{R})\)-Higgs bundle in the
Hitchin section, its harmonic metric, and the associated flat connection,
where
\[
\phi_t = \phi(q_t) , \qquad
D_t = \nabla_{h_t} + \phi_t + \phi_t^{*_{h_t}} .
\]
(See Section~\ref{subsec:the-cyclic-SP(4,R)-Higgs-bundles-in-the-Hitchin-ection}.)
Near a zero \(p\) of \(q\) of order \(d\), we pick a local holomorphic coordinate \(z\) with
\[
q = z^d \odif{z}^4 ,
\qquad\text{and therefore}\qquad
q_t = t z^d \odif{z}^4 .
\]
Choose \(\varepsilon_0>0\) sufficiently small and set
\(B_0 = \{ z : |z| < \varepsilon_0 \}\).
We also introduce the polynomial model on \(\mathbb{C}\) given by
\[
q_t^{\mathrm M} = t z^d \odif{z}^4 .
\]
Let
\(
(\mathbb{K}^{\mathrm M}_{\mathrm{Sp}(4,\mathbb R)},
 \phi_t^{\mathrm M}, h_t^{\mathrm M}, D_t^{\mathrm M})
\)
be the corresponding cyclic \(\mathrm{Sp}(4,\mathbb R)\)-Higgs bundle,
harmonic metric, and flat connection on the complex plane. Let \(N=d+4\) and 
\[
\tau=t^{1/N}z.
\]
Both on the surface and on the plane, we will work on the canonical
holomorphic frame \(\mathcal{F}_{\tau}\) induced by the coordinate \(\tau\), i.e., 
\begin{equation}\label{eq:definition-frmae-tau}
    \mathcal{F}_{\tau}=\bigl\{ t^{\frac{3}{2N}}\odif{z}^{\frac{3}{2}}, t^{\frac{1}{2N}}\odif{z}^{\frac{1}{2}}, t^{-\frac{1}{2N}}\odif{z}^{-\frac{1}{2}},t^{-\frac{3}{2N}}\odif{z}^{-\frac{3}{2}} \bigr\}
\end{equation}
(see
Definition~\ref{def:frame-induced-by-the-coordinate}). In summarize, we will work on the coordinate \(z\) but with the holomorphic frame \(\mathcal{F}_{\tau}\). All fundamental matrices of flat connections are normalized at the zero \(p\) (corresponding to
\(z=0\)).  With respect to this frame, let
\(
\Psi_t(z)
\)
be the parallel transport matrix for \(D_t\), characterized by
\[
D_t\Psi_t=0, \qquad \Psi_t(0)=I.
\]
Similarly, let \(\Psi_t^{\mathrm M}(z)\) be the parallel transport
matrix for the model flat connection \(D_t^{\mathrm M}\), expressed in the
same frame and characterized by
\[
D_t^{\mathrm M}\Psi_t^{\mathrm M}=0, \qquad
\Psi_t^{\mathrm M}(0)=I.
\]
The comparison matrix on the punctured disk \(B\) is then
\[
G_t(z) = \Psi_t(z) \bigl(\Psi_t^{\mathrm M}(z)\bigr)^{-1},
\qquad z\in B .
\]
Our objective is to understand the asymptotic behavior of
\(G_t(z)\) as \(t\to+\infty\).

\subsection{Comparison of harmonic metrics}

Since the connection one-form of \(D_t\) is expressed through the harmonic metric \(h_t\) and its first derivatives, the first step is to compare \(h_t\) with the model harmonic metric \(h_t^{\mathrm M}\).

Recall that we work in the holomorphic frame \(\mathcal{F}_{\tau}\) (see \eqref{eq:definition-frmae-tau}), but with coordinate \(z\). Here \(\tau=t^{1/N}z\) for \(N=d+4\).
Fix the local background flat metric \(g = |\mathrm{d}\tau|^2\) around the zero. With respect to the ordered splitting
\[
\mathbb{K}_{\mathrm{Sp}(4,\mathbb{R})}
   = K^{3/2} \oplus K^{1/2} \oplus K^{-1/2} \oplus K^{-3/2},
\]
the harmonic metric \(h_t\) is diagonal and takes the form
\[
h_{t} = \operatorname{diag}\bigl( h_{1,\tau}, h_{2,\tau}, h_{2,\tau}^{-1}, h_{1,\tau}^{-1} \bigr).
\]
Define scalar functions \(u_{1,\tau}\) and \(u_{2,\tau}\) locally by
\[
h_{1,\tau} = e^{u_{1,\tau}}g^{-3/2} , \qquad
h_{2,\tau} = e^{u_{2,\tau}}g^{-1/2} .
\]
On the dual factors one then has
\[
h_{2,\tau}^{-1} = e^{-u_{2,\tau}} g^{1/2}, \qquad
h_{1,\tau}^{-1} = e^{-u_{1,\tau}} g^{3/2}.
\]
From the global form of the Hitchin equation \eqref{eq:globoal-cyclic-SP(4,R)-Hitchin-equation}, we obtain the local elliptic system
\begin{equation}\label{eq:local-Hitchin-equation-with-tau}
\begin{dcases}
\Delta_{\tau} u_{1,\tau} = |\tau^d|^{2} e^{2u_{1,\tau}} - e^{u_{2,\tau}-u_{1,\tau}} ,\\
\Delta_{\tau} u_{2,\tau} = e^{ u_{2,\tau}- u_{1,\tau}} - e^{-2 u_{2,\tau}} .
\end{dcases}
\end{equation}
Here \(\Delta_{\tau} = \partial_{\tau} \partial_{\bar{\tau}}\).
Transforming back to the \(z\)-variable, we introduce the functions
\begin{align*}
    u_{1,t}(z)&=u_{1,\tau}(\tau)=u_{1,\tau}(t^{1/N}z),\\
    u_{2,t}(z)&=u_{2,\tau}(\tau)=u_{2,\tau}(t^{1/N}z).
\end{align*}
with \(N=d+4\).
Then
\begin{equation}
\begin{dcases}
\Delta_{z} u_{1,t} = t^{2/N}\left(|t z^d|^{2} e^{2u_{1,t}} - e^{u_{2,t}-u_{1,t}} \right),\\
\Delta_{z} u_{2,t} =t^{2/N}\left( e^{ u_{2,t}- u_{1,t}} - e^{-2 u_{2,t}}\right) .
\end{dcases}
\end{equation}
Here \(\Delta_{z} = \partial_{z} \partial_{\bar{z}}\). We have obtained the local form of the Hitchin equation with respect to the frame \(\mathcal{F}_{\tau}\), but written with coordinate \(z\).

Recall that in Section~\ref{sec:local-model} we studied the equation \eqref{eq:local-Hitchin-equation-with-tau} over \(\mathbb{C}\).  To distinguish the notation, we let \(u_1^{\mathrm M}(\tau), u_2^{\mathrm M}(\tau)\) denote the unique radial complete solution of
\begin{equation}
\label{eq:planar-hitchin-equation-for-tau}
\begin{dcases}
\Delta_{\tau} u_1^{\mathrm M} = |\tau^d|^2 e^{2u_1^{\mathrm M}}
                               - e^{u_2^{\mathrm M}-u_1^{\mathrm M}},\\
\Delta_{\tau} u_2^{\mathrm M} = e^{u_2^{\mathrm M}-u_1^{\mathrm M}}
                               - e^{-2u_2^{\mathrm M}},
\end{dcases}
\end{equation}
on \(\mathbb{C}\) with \(\Delta_{\tau} = \partial_{\tau} \partial_{\bar{\tau}}\).

Using the coordinate transformation \(\tau = t^{\frac{1}{d+4}} z\), a direct computation shows that the rescaled functions
\begin{align*}
    u_{1,t}^{\mathrm M}(z)
        &= u_1^{\mathrm M}(\tau) 
         = u_1^{\mathrm M}\bigl( t^{\frac{1}{d+4}} z \bigr),\\
    u_{2,t}^{\mathrm M}(z)
        &= u_2^{\mathrm M}(\tau)
         = u_2^{\mathrm M}\bigl( t^{\frac{1}{d+4}} z \bigr),
\end{align*}
satisfy the same local elliptic system as \(u_{1,t}, u_{2,t}\):
\begin{equation}\label{eq:hitchin-equation-for-tz^d}
\begin{dcases}
\Delta_z u_{1,t}^{\mathrm M}
    = t^{2/N}\left(|t z^d|^{2} e^{2u_{1,t}^{\mathrm M}}
      - e^{u_{2,t}^{\mathrm M}-u_{1,t}^{\mathrm M}}\right) ,\\
\Delta_z u_{2,t}^{\mathrm M}
    =t^{2/N}\left( e^{ u_{2,t}^{\mathrm M}- u_{1,t}^{\mathrm M}}
      - e^{-2 u_{2,t}^{\mathrm M}}\right) ,
\end{dcases}
\end{equation}
with \(\Delta_z = \partial_z \partial_{\bar z}\).
Indeed, \(u_{1,t}^{\mathrm M},u_{2,t}^{\mathrm M}\) are the scalar components of the harmonic metric with respect to the frame \(\mathcal{F}_{\tau}\), but written with coordinate \(z\).

We shall estimate the differences
\(v_{k,t}=u_{k,t} - u_{k,t}^{\mathrm{M}}\) for \(k = 1,2\).
\begin{proposition}
\label{prop:boundary-comparison-v}
For any \(0<\delta<1\), there exists a constant \(C>0\) such that, on the circle
\(\partial B_0 = \{ z : |z| = \varepsilon_0 \}\),
\[
|v_{k,t}(z)|
   \leq C t^{-\frac{1}{8}}
     e^{-2\sqrt{2} t^{\frac{1}{4}} r_0\delta},
\qquad k = 1,2,
\]
for all sufficiently large \(t\), where \(r_0 = \frac{4}{d+4} \varepsilon_0^{\frac{d+4}{4}}\).
\end{proposition}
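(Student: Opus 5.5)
We bound \(u_{k,t}\) and \(u^{\mathrm M}_{k,t}\) separately against the "flat" values determined by \(q_t\), and then subtract. In the frame \(\mathcal F_\tau\) with background metric \(|\mathrm d\tau|^2\), the natural-coordinate quantities are \(\tilde u_{1}=u_{1,\tau}+\tfrac34\log|\tau^d|\) and \(\tilde u_2=u_{2,\tau}+\tfrac14\log|\tau^d|\), and likewise for the model. Since the logarithmic terms are identical for the surface and the model, we have
\[
v_{k,t}=\tilde u_{k,t}-\tilde u^{\mathrm M}_{k,t},
\]
and it suffices to show that each of \(\tilde u_{k,t}\) and \(\tilde u^{\mathrm M}_{k,t}\) is \(O\bigl(t^{-1/8}e^{-2\sqrt2\,t^{1/4}r_0\delta}\bigr)\) on \(\partial B_0\). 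Here \(t^{1/4}r_0\) is exactly the \(|q_t|^{1/2}\)-distance from \(z=0\) to \(\partial B_0\), because \(|w|=\frac4N|\tau|^{N/4}=t^{1/4}\frac4N|z|^{N/4}\).

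\textbf{Model term.} On \(\partial B_0\) we have \(r_w=t^{1/4}r_0\). Lemma~\ref{lem:asymptotics-tilde-u} (equivalently Proposition~\ref{prop:estimates-of-tilde-w}) gives
\[
\tilde u^{\mathrm M}_1\pm\tilde u^{\mathrm M}_2=O\bigl(r_w^{-1/2}e^{-2L_jr_w}\bigr),
\]
with \(L_1=\sqrt2\) and \(L_2=2\). Hence \(|\tilde u^{\mathrm M}_k|\le C t^{-1/8}e^{-2\sqrt2 t^{1/4}r_0}\), which is stronger than required.

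\textbf{Surface term.} This is the main step. We need an exponential decay estimate for \(\tilde u_{k,t}\) (the solution on \(X\) written in natural coordinates) at points whose \(|q|^{1/2}\)-distance to \(Z(q)\) is at least \(r_0\). The plan follows the standard approach of Dumas--Wolf, Collier--Li, and Mochizuki. First, the subsolution/maximum-principle argument used for Theorem~\ref{thm:general-planar-theory} applies on the compact surface as well, giving \(\tilde u_{k,t}\le 0\) together with a uniform bound away from zeros; for \(t\) large, \(\tilde u_{k,t}\) is small on the \(t\)-region at distance \(\gtrsim 1\) from \(Z(q_t)\). Next, we pass to the linear combinations \(v_1\propto\tilde u_1+\tilde u_2\) and \(v_2=\tilde u_1-\tilde u_2\). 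In the natural coordinate these satisfy \(\Delta v_j=L_j^2v_j+O(v^2)\), and hence a differential inequality \(\Delta|v_j|\ge c^2|v_j|\) with \(c\) arbitrarily close to \(\sqrt2\) once \(v\) is small. On the flat disk of radius \(\rho=t^{1/4}r_0\) centred at a point of \(\partial B_0\), which is embedded in \(X\setminus Z(q)\) after shrinking \(\varepsilon_0\) so that no other zero is close, we compare with the radial supersolution \(C\,I_0(2c|w|)/I_0(2c\rho)\). This yields
\[
|v_j(\text{centre})|\lesssim \rho^{1/2}e^{-2c\rho}.
\]
Choosing \(c=\sqrt2\,\delta'\) with \(\delta<\delta'<1\) absorbs the loss (including the use of only a fraction of the radius, which is needed since the disk around a point of \(\partial B_0\) of full radius \(r_0\) reaches the zero) and the polynomial prefactor into \(t^{-1/8}e^{-2\sqrt2 t^{1/4}r_0\delta}\). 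This is the origin of the factor \(\delta<1\) in the statement.

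\textbf{Expected obstacle.} The delicate point is the surface estimate: making the domain on which the comparison is valid long enough, namely a fraction \(\delta\) of the full distance \(r_0\). This requires first controlling \(\tilde u_{k,t}\) in the annular region near the zero, where it is not small, so that the linearized inequality holds at all points of distance greater than a fixed constant from the zero. We would first try the global sub/supersolution bound \(-Ar^{-\alpha}\le\tilde u\le0\), with \(r\) the \(|q_t|^{1/2}\)-distance to \(Z(q_t)\), adapted to the compact surface. With that bound in hand, smallness holds for \(r\ge R_0\) independent of \(t\), and the barrier argument on the region \(R_0\le r\le t^{1/4}r_0\) finishes the proof.
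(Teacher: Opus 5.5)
Your reduction is the same as the paper's. The $\log|\tau^d|$ terms cancel, so $v_{k,t}$ is the difference of the natural-coordinate quantities for the surface and the model. The model term is $O\bigl(t^{-1/8}e^{-2\sqrt2\,t^{1/4}r_0}\bigr)$ by the Tamburelli--Wolf/Guest--Its--Lin decay at $|w|=t^{1/4}r_0$, and the surface term is bounded separately. The difference is that the paper does not prove the surface estimate. It quotes \cite[Theorem~6.1]{collier2017asymptotics}, which gives $|\tilde w_{k,t}(w)|\le C_1t^{-3/8}e^{-2|1-(\sqrt{-1})^{k}|t^{1/4}|w|\delta}$ on the compact surface for every $0<\delta<1$. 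That citation is where $\delta$ comes from, and it is exactly the estimate your ``main step'' sets out to rederive. Your barrier sketch is essentially how such a theorem is proved: uniform smallness far from $Z(q_t)$, then an $I_0$-comparison on an embedded flat disk whose radius is a fraction of $t^{1/4}r_0$, with the polynomial factor absorbed by taking $\delta'>\delta$. So your route gains self-containedness but obliges you to supply that theorem's inputs.

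If you take that route, two points need repair.

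First, $\Delta v_j=L_j^2v_j+O(v^2)$ does not give $\Delta|v_j|\ge c^2|v_j|$ for each $j$, because the quadratic remainder need not vanish where $v_j$ does. Set $\sigma_\pm:=\tilde u_1\pm\tilde u_2$. Then exactly $\Delta\sigma_+=e^{-2\tilde u_2}\bigl(e^{2\sigma_+}-1\bigr)$, which is fine. But $\Delta\sigma_-=4\sinh\sigma_-+2e^{\sigma_-}(\cosh\sigma_+-1)$, so Kato's inequality only yields $\Delta|\sigma_-|\ge4|\sigma_-|-C\sigma_+^2$. Run the barrier instead for $F=|\sigma_+|+|\sigma_-|$, which satisfies $\Delta F\ge(2-\epsilon)F$ wherever $\tilde u_1,\tilde u_2$ are small.

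Second, the uniform-in-$t$ smallness of $\tilde u_{k,t}$ at $|q_t|^{1/2}$-distance $\ge R_0$ from $Z(q_t)$ is only announced, not proved. The planar bound of Theorem~\ref{thm:general-planar-theory} is proved on $\mathbb C$. On $X$ you need a genuine argument, either a sub/supersolution construction adapted to the compact surface or a comparison with complete solutions on flat disks, as in Collier--Li and Mochizuki.

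With both repairs your argument closes. Otherwise, citing Collier--Li's theorem directly, as the paper does, is the intended route.
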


\begin{proof}
The idea of the proof is to work in the natural coordinate \(w\), where
exponentially decaying estimates are available for both \(u_{k,t}\)
and \(u_{k,t}^{\mathrm{M}}\).  When returning to the \(z\)-coordinate,
each function acquires a logarithmic divergence term.  These divergent
terms cancel in the difference \(v_{k,t}\), leaving an exponentially
decaying error.

We first analyze \(u_{k,t}\).  These functions were studied in
\cite{collier2017asymptotics}, but with different normalizations.
To relate the two formulations, we work in the natural coordinate \(w = \frac{4}{d+4} z^{\frac{d+4}{4}}\), which is defined on a sector
contained in the disk (see Section~\ref{subsec:explicit-computation-for-z^d})
and satisfies \(q = \odif{w}^4\).  In \cite{collier2017asymptotics} the
authors use the rescaled frame \(\sigma_1,\dots,\sigma_4\) given by
\[
\sigma_k = t^{\frac{5-2k}{8}} \odif{w}^{\frac{5-2k}{2}},
\]
and introduce the functions
\begin{equation}\label{eq:definition-of-tilde-u-in-proof}
    \tilde u_{1,t}(w) = -u_{1,t}(w) - \tfrac{3}{4}\log|\tau^d|,\qquad
    \tilde u_{2,t}(w) = -u_{2,t}(w) - \tfrac{1}{4}\log|\tau^d|,
\end{equation}
for \(\tau=t^{1/N}z\). Together with the symmetric extensions
\(\tilde u_{3,t} = -\tilde u_{2,t}\), \(\tilde u_{4,t} = -\tilde u_{1,t}\),
they define
\begin{align*}
\tilde w_{1,t}
   &= \frac{1}{2} \sum_{i \in \mathbb{Z}_4} \sqrt{-1}^{i}
      (\tilde u_{i,t} - \tilde u_{i+1,t})
    = -(\tilde u_{1,t} + \tilde u_{2,t}),\\[2mm]
\tilde w_{2,t}
   &= \frac{1}{2} \sum_{i \in \mathbb{Z}_4} (-1)^{i}
      (\tilde u_{i,t} - \tilde u_{i+1,t})
    = -2(\tilde u_{1,t} - \tilde u_{2,t}).
\end{align*}
According to \cite[Theorem~6.1]{collier2017asymptotics}, for any
\(0<\delta<1\), there exists a constant \(C_1>0\) such that
\[
|\tilde w_{k,t}(w)|
   \leq C_1 t^{-\frac{3}{8}} e^{-2|1-(\sqrt{-1})^{k}| t^{\frac{1}{4}} |w|\delta}
\qquad\text{as } t \to +\infty .
\]
Restrict to the circle \(|z| = \varepsilon_0\) and write
\(|w| = r_0 = \frac{4}{d+4} \varepsilon_0^{\frac{d+4}{4}}\).  Since
\(\tilde u_{k,t}\) are linear combinations of \(\tilde w_{1,t}\) and
\(\tilde w_{2,t}\), we obtain a constant \(C_2>0\) such that
\[
\bigl| \tilde u_{k,t}(z) \bigr|
   \leq C_2 t^{-\frac{3}{8}} e^{-2\sqrt{2} t^{\frac{1}{4}} r_0\delta}
\]
for all \(z\in\partial B_0\) and all sufficiently large \(t\).  By
\eqref{eq:definition-of-tilde-u-in-proof}, there exists a constant
\(C_3>0\) such that
\begin{equation}
    \begin{aligned}\label{eq:estimate-for-u-in-proof}
        \bigl| u_{1,t}(z) + \tfrac{3}{4}\log|t^{d/N} \varepsilon_0^d| \bigr|
       & \leq C_3 t^{-\frac{3}{8}} e^{-2\sqrt{2} t^{\frac{1}{4}} r_0\delta}, \\
     \bigl| u_{2,t}(z) + \tfrac{1}{4}\log|t^{d/N} \varepsilon_0^d| \bigr|
       & \leq C_3 t^{-\frac{3}{8}} e^{-2\sqrt{2} t^{\frac{1}{4}} r_0\delta}
    \end{aligned}
\end{equation}
for all \(z\in\partial B_0\) and all sufficiently large \(t\). 

We now turn to \(u_{k,t}^{\mathrm{M}}\).  Let \(\tau = t^{\frac{1}{d+4}} z\).
From the definition of the rescaled model functions, we have
\begin{equation}
    \begin{aligned}\label{eq:relation-of-model-functions}
    u_{1,t}^{\mathrm M}(z) &= u_1^{\mathrm M}(\tau) ,\\
    u_{2,t}^{\mathrm M}(z) &= u_2^{\mathrm M}(\tau),
    \end{aligned}
\end{equation}
where \(u_1^{\mathrm M},u_2^{\mathrm M}\) is the unique complete solution of
\eqref{eq:planar-hitchin-equation-for-tau}. From
\eqref{eq:definition-for-tilde-u-z^d}, we can write
\begin{align*}
u_1^{\mathrm M}(\tau) + u_2^{\mathrm M}(\tau) &= -\log|\tau^d| + \tilde w_1(\tau),\\
u_1^{\mathrm M}(\tau) - u_2^{\mathrm M}(\tau) &= -\frac12\log|\tau^d| + \tilde w_2(\tau),
\end{align*}
where \(\tilde w_1,\tilde w_2\) are the combinations appearing in
Proposition~\ref{prop:estimates-of-tilde-w}.  Applying the estimates of
Proposition~\ref{prop:estimates-of-tilde-w} (see also
Lemma~\ref{lem:asymptotics-tilde-u}) to the error terms \(\tilde w_1,\tilde w_2\)
yields a constant \(C_4>0\) such that
\begin{align*}
\bigl| u_1^{\mathrm M}(\tau) + u_2^{\mathrm M}(\tau) + \log|\tau^d| \bigr|
   &\leq C_4 e^{-2\sqrt{2}r} r^{-\frac{1}{2}},\\
\bigl| u_1^{\mathrm M}(\tau) - u_2^{\mathrm M}(\tau) + \tfrac12\log|\tau^d| \bigr|
   &\leq C_4 e^{-4r} r^{-\frac{1}{2}},
\end{align*}
for all sufficiently large \(|\tau|\), where
\(r = \frac{4}{d+4} |\tau|^{\frac{d+4}{4}}\).  Hence there exists a
constant \(C_5>0\) such that
\begin{align*}
\bigl| u_1^{\mathrm M}(\tau) + \tfrac34 \log|\tau^d| \bigr|
   &\leq C_5 e^{-2\sqrt{2}r} r^{-\frac{1}{2}},\\
\bigl| u_2^{\mathrm M}(\tau) + \tfrac14\log|\tau^d| \bigr|
   &\leq C_5 e^{-2\sqrt{2}r} r^{-\frac{1}{2}}
\end{align*}
for all sufficiently large \(|\tau|\).

Translating back to the \(z\)-variable and combining with
\eqref{eq:relation-of-model-functions}, we obtain a constant
\(C_6>0\) such that, on \(\partial B_0\),
\begin{align*}
\bigl| u_{1,t}^{\mathrm{M}}(z) + \tfrac34\log|t^{d/N} \varepsilon_0^d| \bigr|
   &\leq C_6 t^{-\frac{1}{8}} e^{-2\sqrt{2} t^{\frac{1}{4}} r_0},\\
\bigl| u_{2,t}^{\mathrm{M}}(z) + \tfrac14\log|t^{d/N} \varepsilon_0^d| \bigr|
   &\leq C_6 t^{-\frac{1}{8}} e^{-2\sqrt{2} t^{\frac{1}{4}} r_0},
\end{align*}
for all sufficiently large \(t\).

Note that
\begin{align*}
\bigl| u_{1,t}(z) - u_{1,t}^{\mathrm{M}}(z) \bigr|
   &\leq \bigl| u_{1,t}(z) + \tfrac34\log|t^{d/N} \varepsilon_0^d| \bigr|
      + \bigl| u_{1,t}^{\mathrm{M}}(z) + \tfrac34\log|t^{d/N} \varepsilon_0^d| \bigr|,\\
\bigl| u_{2,t}(z) - u_{2,t}^{\mathrm{M}}(z) \bigr|
   &\leq \bigl| u_{2,t}(z) + \tfrac14\log|t^{d/N} \varepsilon_0^d| \bigr|
      + \bigl| u_{2,t}^{\mathrm{M}}(z) + \tfrac14\log|t^{d/N} \varepsilon_0^d| \bigr|.
\end{align*}
 The term with the slower exponential decay dominates
the sum. Hence there exists a constant \(C>0\) such that, for all
\(z\in\partial B_0\) and all sufficiently large \(t\),
\[
|v_{k,t}(z)|
   \leq C t^{-\frac{1}{8}}
     e^{-2\sqrt{2} t^{\frac{1}{4}} r_0\delta},
\qquad k = 1,2.
\]
This completes the proof.
\end{proof}
We now extend the boundary estimates of Proposition~\ref{prop:boundary-comparison-v}
to the interior of the disk.

\begin{proposition}
\label{prop:interior-comparison-v}
For any \(0<\delta<1\), there exists a constant \(C>0\) such that
\[
|v_{k,t}(z)| = |u_{k,t}(z) - u_{k,t}^{\mathrm{M}}(z)|
   \leq C t^{-\frac{1}{8}}
            e^{-2\sqrt{2} t^{\frac{1}{4}} r_0\delta},
\qquad k = 1,2,
\]
for all \(z\in B_0 = \{ z : |z| \leq \varepsilon_0 \}\) and all sufficiently large
\(t\), where \(r_0 = \frac{4}{d+4} \varepsilon_0^{\frac{d+4}{4}}\).
\end{proposition}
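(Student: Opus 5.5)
We argue by the maximum principle applied to the difference system. Both pairs \((u_{1,t},u_{2,t})\) and \((u_{1,t}^{\mathrm M},u_{2,t}^{\mathrm M})\) satisfy the same elliptic system \eqref{eq:hitchin-equation-for-tz^d} on \(B_0\), and both are smooth at \(z=0\). Indeed, in the frame \(\mathcal F_\tau\) the metric is a genuine smooth metric, and the logarithmic behaviour only appears after passing to the natural coordinate \(w\). Proposition~\ref{prop:boundary-comparison-v} bounds \(v_{k,t}\) on \(\partial B_0\) by \(\eta_t:=Ct^{-1/8}e^{-2\sqrt2 t^{1/4}r_0\delta}\). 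We want to propagate this bound inward without loss. The natural tool is the cooperative (quasi-monotone) structure of cyclic Toda systems, as in \cite{li2025complete} and \cite{collier2017asymptotics}.

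The first step is to rewrite the system in the four variables \(u_1,u_2,u_3=-u_2,u_4=-u_1\). Each equation then reads
\[
\Delta_z u_i = t^{2/N}\bigl(a_{i}e^{u_i-u_{i-1}}-a_{i+1}e^{u_{i+1}-u_i}\bigr),
\]
with coefficients \(a_i\ge0\) (\(a_1=|tz^d|^2\), the others equal to \(1\)). The right-hand side is increasing in \(u_i\) and decreasing in the neighbours \(u_{i\pm1}\).

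Next, we subtract the two systems and argue at an extremum. Let \(M=\max_i\max_{\overline{B_0}}v_{i,t}\), where \(v_{3,t}=-v_{2,t}\) and \(v_{4,t}=-v_{1,t}\). By the symmetry \(v_{i}\) versus \(-v_{5-i}\), this maximum also controls \(-\min_i\min v_{i,t}\), so it suffices to bound \(M\). Suppose \(M\) is attained at an interior point \(z_*\) for some index \(i\). At \(z_*\) we have \(v_i\ge v_{i-1},v_{i+1}\), hence \(u_i-u_{i-1}\ge u_i^{\mathrm M}-u_{i-1}^{\mathrm M}\) and \(u_{i+1}-u_i\le u_{i+1}^{\mathrm M}-u_i^{\mathrm M}\). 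By monotonicity of the exponential this gives \(\Delta_z v_i(z_*)\ge0\), with strict inequality unless all the relevant differences agree. To make this rigorous we would use the standard trick: either consider the maximum of \(\max_i v_i\) and apply the strong maximum principle for cooperative systems, or perturb to \(v_i-\epsilon|z|^2\) to force a strict contradiction. Concretely, at an interior maximum point of \(v_i-\epsilon|z|^2\) one gets \(\Delta_z(v_i-\epsilon|z|^2)\le0\) but also \(\Delta_z v_i\ge0\) from the ordering. This yields \(-\epsilon\ge\) something nonnegative, which is a contradiction. Hence \(\max_i\max_{\overline{B_0}}(v_i-\epsilon|z|^2)\) is attained on \(\partial B_0\). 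Letting \(\epsilon\to0\) gives \(M\le\max_{\partial B_0}\max_i|v_{i,t}|\le\eta_t\), and similarly for the minimum.

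The main obstacle is making the ordering argument precise. With the \(\epsilon|z|^2\) perturbation, the maximizing index for the perturbed functions still gives \(v_i\ge v_{i\pm1}\) at \(z_*\), because the same perturbation is subtracted from all components. At the indices \(i=1,4\) the coefficient \(|tz^d|^2\) vanishes at \(z=0\), so strict monotonicity fails there. The perturbation handles this, since we only need the non-strict inequality \(\Delta_z v_i\ge0\). One also has to check that \(v_{k,t}\) is continuous on the closed disk, in particular at \(z=0\). This holds because both metrics are smooth solutions in the frame \(\mathcal F_\tau\), which is where Theorem~\ref{thm:general-planar-theory} and the global smoothness of \(h_t\) enter. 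With these points checked, the boundary bound extends to all of \(B_0\) with the same constant \(C\).
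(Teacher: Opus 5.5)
Your argument is essentially the paper's: both apply a maximum principle to the cooperative system satisfied by the differences \(v_{1,t},v_{2,t}\). The paper linearizes via \(e^a-e^b=(a-b)\int_0^1 e^{sa+(1-s)b}\,\mathrm{d}s\), obtaining a linear system with positive coupling, and cites Sirakov's maximum principle. You instead prove the maximum principle by hand in the four-variable Toda form, where the antisymmetry \(v_4=-v_1\), \(v_3=-v_2\) controls the maximum and the minimum at once.

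One slip needs fixing: the perturbation must be \(v_i+\epsilon|z|^2\), not \(v_i-\epsilon|z|^2\).
\begin{itemize}
\item With the minus sign, the interior maximum gives \(\Delta_z v_i(z_*)\le\epsilon\), and the ordering gives \(\Delta_z v_i(z_*)\ge 0\). Together these yield only \(0\le\Delta_z v_i(z_*)\le\epsilon\), which is no contradiction.
\item With the plus sign, \(\Delta_z(v_i+\epsilon|z|^2)(z_*)\ge\epsilon>0\), which does contradict the interior maximum. This is what your inequality ``\(-\epsilon\ge\) something nonnegative'' actually encodes.
\end{itemize}
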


\begin{proof}
We first derive the elliptic system satisfied by the differences
\(v_{1,t},v_{2,t}\).  Recall that
\begin{equation*}
\begin{dcases}
t^{-\frac{2}{N}}\Delta_z u_{1,t} = |t z^d|^{2} e^{2u_{1,t}} - e^{u_{2,t}-u_{1,t}} ,\\
t^{-\frac{2}{N}}\Delta_z u_{2,t} = e^{ u_{2,t}- u_{1,t}} - e^{-2 u_{2,t}} ,
\end{dcases}
\end{equation*}
and that the model functions \(u_{1,t}^{\mathrm M},u_{2,t}^{\mathrm M}\) satisfy
the same system.  Subtracting the two systems gives
\begin{equation}\label{eq:elliptic-systems-for-v}
\begin{dcases}
t^{-\frac{2}{N}}\Delta_z v_{1,t} = |t z^d|^{2} \bigl(e^{2u_{1,t}} - e^{2u^{\mathrm M}_{1,t}}\bigr)
                  - \bigl(e^{ u_{2,t}- u_{1,t}} - e^{ u^{\mathrm M}_{2,t}- u^{\mathrm M}_{1,t}}\bigr),\\
t^{-\frac{2}{N}}\Delta_z v_{2,t} = \bigl(e^{ u_{2,t}- u_{1,t}} - e^{ u^{\mathrm M}_{2,t}- u^{\mathrm M}_{1,t}}\bigr)
                  - \bigl(e^{-2 u_{2,t}} - e^{-2u^{\mathrm M}_{2,t}}\bigr).
\end{dcases}
\end{equation}
Using the identity \(e^a-e^b = (a-b)\int_{\,0}^1 e^{s a+(1-s)b}\odif s\), we rewrite
the right-hand side as
\begin{equation}\label{eq:elliptic-systems-for-v-f}
\begin{dcases}
t^{-\frac{2}{N}}\Delta_z v_{1,t} = 2v_{1,t}f_1 + (v_{1,t}-v_{2,t})f_3,\\
t^{-\frac{2}{N}}\Delta_z v_{2,t} = (v_{2,t}-v_{1,t})f_3 + 2v_{2,t}f_2,
\end{dcases}
\end{equation}
with the strictly positive coefficients
\begin{align*}
f_1 &= \int_0^1 \exp\Bigl(2s(u_{1,t}+\log|t z^d|)
           + 2(1-s)(u^{\mathrm M}_{1,t}+\log|t z^d|)\Bigr)\odif s,\\
f_2 &= \int_0^1 \exp\Bigl(-2s u_{2,t} - 2(1-s)u^{\mathrm M}_{2,t}\Bigr)\odif s,\\
f_3 &= \int_0^1 \exp\Bigl(s(u_{2,t}-u_{1,t})
           + (1-s)(u^{\mathrm M}_{2,t}-u^{\mathrm M}_{1,t})\Bigr)\odif s .
\end{align*}

Note that \(f_3>0\) making the system cooperative. The maximum principle (see \cite[Theorem~1]{Sirakov2009}) implies
\[
\sup_{B_0} \max_{k=1,2} |v_{k,t}| \leq \sup_{\partial B_0} \max_{k=1,2} |v_{k,t}|.
\]
The stated estimate therefore follows
directly from Proposition~\ref{prop:boundary-comparison-v}.
\end{proof}

We now estimate the first derivatives and Laplacians of \(v_{k,t}\).

\begin{proposition}\label{prop:interior-comparison-v-derivatives-Laplacians}
For any \(0<\delta<1\), there exist constants \(C>0\) and \(M>0\) such that
\[
|\Delta_z v_{k,t}(z)| \leq C t^M e^{-2\sqrt{2}t^{1/4} r_0\delta},
\qquad k=1,2,
\]
for all \(z \in B_0\) and all sufficiently large \(t\).  Moreover, for any \(\varepsilon < \varepsilon_0\) and the
smaller disk \(B_{\varepsilon} = \{ z : |z| \leq \varepsilon \}\), there exist a positive constant \(C(\varepsilon) >0\) such that 
\[
|\partial_z v_{k,t}(z)| + |\partial_{\bar z} v_{k,t}(z)|
   \leq C(\varepsilon) t^M e^{-2\sqrt{2}t^{1/4} r_0\delta},
\qquad k=1,2,
\]
hold for all \(z\in B_{\varepsilon}\) and sufficiently large \(t\).
\end{proposition}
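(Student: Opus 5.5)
The plan is to prove the Laplacian bound directly from the difference system \eqref{eq:elliptic-systems-for-v-f}, and then to get the gradient bound from interior elliptic estimates on the smaller disk. Both steps rest on the sup bound of Proposition~\ref{prop:interior-comparison-v}, which gives \(|v_{k,t}|\le Ct^{-1/8}e^{-2\sqrt2 t^{1/4}r_0\delta}\) on \(B_0\).

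\textbf{Laplacian.} By \eqref{eq:elliptic-systems-for-v-f} we have
\[
\Delta_z v_{k,t}=t^{2/N}\bigl(\text{linear combination of }v_{1,t},v_{2,t}\text{ with coefficients }2f_1,2f_2,f_3\bigr).
\]
So it suffices to show that \(f_1,f_2,f_3\) are bounded on \(B_0\) by \(Ct^{M'}\) for some \(M'\).

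\begin{itemize}
\item For \(f_2\) and \(f_3\): the exponents are convex combinations of \(-2u_{2,t},-2u^{\mathrm M}_{2,t}\) and of \(u_{2,t}-u_{1,t},u^{\mathrm M}_{2,t}-u^{\mathrm M}_{1,t}\). Since \(v_{k,t}\) is uniformly small, each of these is within \(O(1)\) of the corresponding model quantity. The factor \(e^{O(|v|)}\) is bounded, so it suffices to bound the model quantities \(e^{-2u^{\mathrm M}_{2}(\tau)}\) and \(e^{u^{\mathrm M}_2-u^{\mathrm M}_1}(\tau)\) for \(|\tau|\le t^{1/N}\varepsilon_0\).
\item For these model quantities: \(u^{\mathrm M}_k\) is bounded near \(\tau=0\) (Theorem~\ref{thm:general-planar-theory} gives boundedness on compacts). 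For large \(|\tau|\), the estimates \(u^{\mathrm M}_1=-\tfrac34\log|\tau^d|+O(1)\) and \(u^{\mathrm M}_2=-\tfrac14\log|\tau^d|+O(1)\) hold, so the exponentials are bounded by powers of \(|\tau|\le t^{1/N}\varepsilon_0\). This gives a polynomial bound in \(t\).
\item For \(f_1\): the exponent is \(2(u^{\mathrm M}_{1,t}+\log|tz^d|)+O(|v|)\). Since \(|tz^d|=|\tau|^d\) and \(u_1^{\mathrm M}+\log|\tau^d|=\tfrac14\log|\tau^d|+O(1)\), we get \(f_1\le C|\tau|^{d/2}\le Ct^{d/(2N)}\).
\end{itemize}

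Multiplying these bounds by \(t^{2/N}\) and by the sup bound for \(|v_{k,t}|\) yields the first claim, with \(M=2/N+d/(2N)+\max\) of the exponents above (the factor \(t^{-1/8}\) is discarded).

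\textbf{Gradient.} On \(B_\varepsilon\subset B_0\) we use the standard interior estimate for the Poisson equation at fixed scale, for instance via the Green representation on \(B_{\varepsilon_0}\) or via \(W^{2,p}\) estimates followed by Sobolev embedding:
\[
\sup_{B_\varepsilon}|\nabla v|\le C(\varepsilon)\Bigl(\sup_{B_0}|v|+\sup_{B_0}|\Delta_z v|\Bigr).
\]
The constant depends only on \(\varepsilon,\varepsilon_0\) and not on \(t\), because we work in the fixed \(z\)-coordinate and all \(t\)-dependence sits in the data. Inserting the sup bound and the Laplacian bound gives \(|\partial_z v_{k,t}|+|\partial_{\bar z}v_{k,t}|\le C(\varepsilon)t^Me^{-2\sqrt2t^{1/4}r_0\delta}\).

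The main obstacle I expect is the uniform polynomial control of the coefficients \(f_i\) over the whole disk, especially near \(z=0\) and in the transition region. There we need two-sided bounds on \(u^{\mathrm M}_k+\tfrac{c_k}{4}\log|\tau^d|\) for all \(\tau\). The plan is to combine boundedness of the smooth solution on \(|\tau|\le1\) with the \(O(1)\) estimates from Theorem~\ref{thm:general-planar-theory} on \(|\tau|\ge1\). The point where the argument must stay honest is that the exponents remain \(O(1)\)-perturbations, which is guaranteed because \(v_{k,t}\to0\) uniformly.
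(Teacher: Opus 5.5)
Your proposal is correct and follows essentially the same route as the paper. You bound the coefficients of the difference system polynomially in \(t\), using Theorem~\ref{thm:general-planar-theory} for large \(|\tau|\) and smoothness near \(\tau=0\). You then multiply by the sup bound of Proposition~\ref{prop:interior-comparison-v} and get the gradient bound from $t$-independent interior Poisson estimates on the fixed nested disks \(B_\varepsilon\subset B_0\). The paper writes the right-hand side as \(e^{2u^{\mathrm M}_{1,t}}(e^{2v_{1,t}}-1)\) and so on, and uses \(|e^x-1|\le 2|x|\), which is equivalent to your mean-value coefficients \(f_i\).

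One cosmetic point: literally \(|tz^d|=t|z|^d\neq|\tau|^d=t^{d/N}|z|^d\), and the coefficient in the rescaled equation should really be \(|\tau^d|^2\). Either reading still gives a polynomial bound on \(B_0\), so nothing in your argument changes.
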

\begin{proof}
We start from an equivalent form of the elliptic system for \(v_{k,t}\):
\begin{equation}\label{eq:elliptic-systems-for-v-changed}
\begin{dcases}
t^{-\frac{2}{N}}\Delta_z v_{1,t} = |t z^d|^{2} e^{2u^{\mathrm M}_{1,t}} (e^{2v_{1,t}}-1)
                  - e^{ u^{\mathrm M}_{2,t}- u^{\mathrm M}_{1,t}}
                    (e^{ v_{2,t}- v_{1,t}}-1),\\
t^{-\frac{2}{N}}\Delta_z v_{2,t} = e^{ u^{\mathrm M}_{2,t}- u^{\mathrm M}_{1,t}}
                    (e^{ v_{2,t}- v_{1,t}}-1)
                  - e^{-2u^{\mathrm M}_{2,t}} (e^{-2 v_{2,t}}-1).
\end{dcases}
\end{equation}
Let \(\tau = t^{\frac{1}{N}} z\).  By definition of the rescaled model functions,
\begin{equation*}
u_{1,t}^{\mathrm M}(z) = u_1^{\mathrm M}(\tau), \qquad
u_{2,t}^{\mathrm M}(z) = u_2^{\mathrm M}(\tau) ,
\end{equation*}
where \(u_1^{\mathrm M},u_2^{\mathrm M}\) is the unique complete solution of
\eqref{eq:planar-hitchin-equation-for-tau} on \(\mathbb{C}\).
Theorem~\ref{thm:general-planar-theory} provides constants
\(C_1,C_2>0\) such that
\[
|u_1^{\mathrm M}(\tau)|
   \leq \max\!\Bigl\{C_1,\; \tfrac{3}{4}\log|\tau^d| + C_2\Bigr\},\qquad
|u_2^{\mathrm M}(\tau)|
   \leq \max\!\Bigl\{C_1,\; \tfrac{1}{4}\log|\tau^d| + C_2\Bigr\},
\]
for all \(\tau\in\mathbb{C}\).  Consequently, there exist constants
\(C_3>0\) and \(M>0\) such that,
\[
|t z^d|^2 e^{2u^{\mathrm M}_{1,t}} \leq C_3 t^N,\qquad
e^{u^{\mathrm M}_{2,t}-u^{\mathrm M}_{1,t}} \leq C_3 t^N,\qquad
e^{-2u^{\mathrm M}_{2,t}} \leq C_3 t^N .
\]
for all \(z\in B\) and sufficiently large \(t\).
Set \(E_t = t^{-1/8} e^{-2\sqrt{2}t^{1/4} r_0\delta}\).  By
Proposition~\ref{prop:interior-comparison-v}, there exists a constant
\(C_4>0\) such that
\[
|v_{1,t}(z)| \leq C_4 E_t,\qquad |v_{2,t}(z)| \leq C_4 E_t,
\qquad z\in B_0 .
\]
for all sufficiently large \(t\).
Since \(E_t\to 0\) as \(t\to+\infty\), for all sufficiently large
\(t\) the argument of each exponential in
\eqref{eq:elliptic-systems-for-v-changed} lies in a fixed compact set
on which \(|e^x-1| \leq 2|x|\) holds.  Hence there exists a constant \(C_5 >0\) such that
\[
|\Delta_z v_{k,t}(z)| \leq C_5 t^M E_t,\qquad k=1,2,
\]
 for all \(z \in B_0\) and sufficiently large \(t\).  This proves the
Laplacian estimates.

The bounds for the first derivatives follow from standard interior
elliptic estimates for the Poisson equation on nested disks.
\end{proof}

\subsection{Comparison of parallel transports}

We continue to work in the local coordinate \(z\) with \(q=z^d\odif{z}^4\) 
and use the canonical holomorphic frame \(\mathcal{F}_{\tau}\) induced by \(\tau=t^{1/N}z\) with \(N=d+4\) 
(see Definition~\ref{def:frame-induced-by-the-coordinate} and equation~\eqref{eq:definition-frmae-tau}).
Note that \(G_t(z) = \Psi_t(z) \bigl(\Psi_t^{\mathrm M}(z)\bigr)^{-1}\) obeys the differential equation
\[
\odif G_t = G_t  \Theta_t,\qquad
\Theta_t = \Psi_t^{\mathrm M} \bigl( A_t - A_t^{\mathrm M} \bigr)\bigl( \Psi_t^{\mathrm M} \bigr)^{-1},
\]
where \(A_t\) and \(A_t^{\mathrm M}\) are the connection matrices of
\(D_t\) and \(D_t^{\mathrm M}\) with respect to the frame \(\mathcal{F}_{\tau}\) respectively. 
Moreover, \(G_t(0)=I\). Note that the inverse of \(G_t\) satisfies the following differential equation:
\[
\odif G_t^{-1} = -\Theta_tG_t^{-1}.
\]

The goal of this subsection is to analyze the asymptotic behavior
of \(G_t\) as \(t \to +\infty\), using the estimates obtained in the previous subsection.

As above, let \(B_0 = \bigl\{ z : |z| \leq \varepsilon_0 \bigr\}\) 
with a small constant \(\varepsilon_0>0\).
The parameter \(r_0 = \frac{4}{d+4}\varepsilon_0^{\frac{d+4}{4}}\) 
is fixed once \(\varepsilon_0\) is chosen. 
Where necessary, we work on a smaller disk 
\(B_{\varepsilon}=\{z : |z| \leq \varepsilon\} \subset B_0\) 
for some \(\varepsilon < \varepsilon_0\).

Recall that inside the right half-plane \(\mathbb{H}\), the constant model
\(q = \odif{w}^4\) has the three Stokes directions
\[
\theta_1 = -\frac{\pi}{4},\qquad \theta_2 = 0,\qquad \theta_3 = \frac{\pi}{4},
\]
introduced in Definition~\ref{def:stokes-directions}.  We now translate
this notion to the \(z\)-coordinate using the sectorial natural
coordinates.  Set \(N=d+4\).  In
Section~\ref{subsec:explicit-computation-for-z^d} we constructed, for
each \(j\in\mathbb Z/N\mathbb Z\), the natural coordinate
\[
w_j(z) = \frac{4}{N} \bigl( z e^{-\frac{2\pi j\sqrt{-1}}{N}} \bigr)^{N/4},
\]
which maps the sector
\[
U_j = \bigl\{ z = r e^{\sqrt{-1}\theta} :
          \tfrac{2\pi(j-1)}{N} < \theta < \tfrac{2\pi(j+1)}{N} \bigr\}
\]
biholomorphically onto the right half-plane \(\mathbb{H}\) and satisfies
\(z^d\odif{z}^4 = \odif{w_j}^4\).

On the \(j\)-th sector, the rays in the \(z\)-plane that map under \(w_j\) to the Stokes directions in the right half-plane are precisely 
\[
\arg z = \frac{(2j-1)\pi}{N},\qquad
\arg z = \frac{2j\pi}{N},\qquad
\arg z = \frac{(2j+1)\pi}{N}.
\]
Taking the union over all sectors, the Stokes directions in the
punctured \(z\)-disk are the rays 
\[
\arg z = \frac{k\pi}{N},\qquad k\in\mathbb Z .
\]  
This description is compatible on overlaps because adjacent natural coordinates satisfy
\(w_{j+1} = -\sqrt{-1}w_j\), so a Stokes ray in one right-half-plane
coordinate is sent to a Stokes ray in the adjacent one.

\begin{definition}\label{def:Stokes-directions-in-z-coordinate}
The Stokes directions for \(q=z^d\odif{z}^4\) in the \(z\)-coordinate are
\[
\theta_k = \frac{k\pi}{N}, \qquad k= 0,1,2,\dots, 2N-1,
\]
where \(N=d+4\).
\end{definition}

The Stokes rays divide \(B_0\setminus\{0\}\) into finitely many open
sectors, each lying between two adjacent Stokes directions. We fix a
closed sector \(W_0\subset B_0\) with vertex at the origin such that
\(W_0\setminus\{0\}\) is compactly contained in one of these open
sectors. When necessary, we consider the corresponding closed sector
\(W_\varepsilon\subset B_\varepsilon\) with the same angular bounds.
\begin{proposition}
\label{prop:conjugated-error-estimate}
There exists \(\varepsilon < \varepsilon_0\) such that the corresponding sector \(W_\varepsilon\) has the following property. 
Let \(\|\cdot\|\) be any sub-multiplicative matrix norm. Then there exists \(t_0>0\) such that, for all \(t>t_0\) and
all \(z \in W_{\varepsilon}\), we have
\[
\bigl\| \Theta_t(z) \bigr\| \leq e^{-\alpha t^{1/4}} .
\]
for some \(\alpha>0\).
\end{proposition}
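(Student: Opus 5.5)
Write \(A_t-A_t^{\mathrm M}=U_t\,\odif z+V_t\,\odif{\bar z}\) in the frame \(\mathcal F_\tau\). From the explicit connection form \eqref{eq:connection form}, \(U_t\) is diagonal with entries \(\pm\partial_z v_{1,t},\pm\partial_z v_{2,t}\); the holomorphic parts \(\phi_t\) cancel. The entries of \(V_t\) are differences \(e^{u_{2,t}-u_{1,t}}-e^{u^{\mathrm M}_{2,t}-u^{\mathrm M}_{1,t}}\), \(e^{-2u_{2,t}}-e^{-2u^{\mathrm M}_{2,t}}\) and \(t^{2/N}\overline{tz^d}\,(e^{2u_{1,t}}-e^{2u^{\mathrm M}_{1,t}})\), up to powers \(t^{1/N}\) from the rescaled frame. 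The plan is as follows.

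\textbf{Step 1: bound the bare error.} Use Propositions~\ref{prop:interior-comparison-v} and \ref{prop:interior-comparison-v-derivatives-Laplacians}, together with the crude polynomial bounds on \(e^{\pm u^{\mathrm M}}\) from Theorem~\ref{thm:general-planar-theory} used in their proofs. This gives, on \(B_\varepsilon\) for any fixed \(\varepsilon<\varepsilon_0\),
\[
\|U_t(z)\|+\|V_t(z)\|\le C t^{M'}e^{-2\sqrt2\,t^{1/4}r_0\delta}.
\]
The polynomial prefactor is harmless.

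\textbf{Step 2: bound the conjugation.} Estimate \(\|\Psi_t^{\mathrm M}(z)\|\,\|\Psi_t^{\mathrm M}(z)^{-1}\|\) on \(W_\varepsilon\). On a sector avoiding the Stokes rays, Theorem~\ref{thm:sectorial-limits} (in the chart \(w_j\) containing \(W_\varepsilon\)) gives \(\Psi^{\mathrm M}=L\,\Psi_0\,(\text{bounded, invertible})\) for \(|\tau|\) large. Near \(\tau=0\), \(\Psi^{\mathrm M}\) is continuous and invertible, so it is bounded on compact sets. One must pass from the base point \(0\) to a base point in the sector; this costs only a fixed matrix and a frame factor \(R\), which are \(t\)-independent.

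In the natural coordinate \(w=\frac4N\tau^{N/4}\) we have \(|w|=t^{1/4}|w(z)|\), where \(|w(z)|\le\frac4N\varepsilon^{N/4}=:r_\varepsilon\). Using the explicit form \(\Psi_0=Se^{D}S^{-1}\) with \(|D_{ii}|\le 2|w|\), this gives
\[
\|\Psi_t^{\mathrm M}\|\,\|(\Psi_t^{\mathrm M})^{-1}\|\le C\,e^{4 t^{1/4}r_\varepsilon}.
\]
Note that any bound must be uniform in \(z\), including \(z\) near \(0\) where the large-\(|\tau|\) asymptotics don't apply; the continuity argument above covers this region.

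\textbf{Step 3: choose \(\varepsilon\).} Combining the two steps,
\[
\|\Theta_t\|\le C t^{M'}\exp\bigl(-t^{1/4}(2\sqrt2 r_0\delta-4r_\varepsilon)\bigr).
\]
Fix \(\delta\) close to \(1\) and take \(\varepsilon\) small enough that \(4r_\varepsilon<2\sqrt2\,r_0\delta\); this is possible since \(r_\varepsilon\to0\). Any \(\alpha\) strictly less than the gap then works, and the polynomial factor is absorbed for \(t>t_0\).

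\textbf{Main obstacle.} The hardest point is making Step 2 rigorous with a uniform constant. Both the sector restriction and the \(w\)-growth must be handled, and on a sector the exponent is really \(2|w|\max_{i,j}|\cos(\cdot)-\cos(\cdot)|\) rather than \(4|w|\). I would first try the crude bound \(4|w|\), since smallness of \(\varepsilon\) absorbs it. In fact, with this crude bound the sector hypothesis is not even needed; it matters only for sharper exponents, and the estimate holds uniformly on the whole disk \(B_\varepsilon\).
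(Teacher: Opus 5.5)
Your proposal is correct and follows the paper's proof essentially step for step. You bound $A_t-A_t^{\mathrm M}$ by $\mathrm{poly}(t)\,e^{-2\sqrt2\,t^{1/4}r_0\delta}$ via Propositions~\ref{prop:interior-comparison-v} and~\ref{prop:interior-comparison-v-derivatives-Laplacians}, bound $\|\Psi_t^{\mathrm M}\|\,\|(\Psi_t^{\mathrm M})^{-1}\|$ by $\mathrm{poly}(t)\,e^{4t^{1/4}r_\varepsilon}$ via Theorem~\ref{thm:sectorial-limits} and $\Psi_0=Se^{D}S^{-1}$, and shrink $\varepsilon$ until $4r_\varepsilon<2\sqrt2\,r_0\delta$ (the paper fixes $\delta=\tfrac12$).

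There are two minor slips. First, the passage from $\mathcal F_w$ to $\mathcal F_\tau$ at the moving point is $\operatorname{diag}(a^{3/2},a^{1/2},a^{-1/2},a^{-3/2})$ with $|a|=|\tau|^{d/4}$, so your Step~2 constant is really polynomial in $t$ (the paper's $C_1(t)$); this is harmlessly absorbed in Step~3. Second, your closing claim that the sector hypothesis can be dropped is true, but it needs a Gr\"onwall argument along rays, using $\|e^{\sqrt{-1}\vartheta}U_0+e^{-\sqrt{-1}\vartheta}V_0\|_{\mathrm{op}}\le 2$ and the integrable decay of $A_q-A_0$, because Theorem~\ref{thm:sectorial-limits} only controls $\Psi_q\Psi_0^{-1}$ inside open Stokes sectors.
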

\begin{proof}
We first relate \(\Psi_t^{\mathrm M}\) to the parallel transport of the
planar model with \(q_{\tau}=\tau^d\odif{\tau}^4\) studied in
Section~\ref{subsec:Half-plane-decompositions-and-Stokes-jumps}.
Let \(\Psi^{\mathrm M}(\tau)\) denote the fundamental matrix, with respect to the
canonical frame \(\mathcal{F}_{\tau}\) and normalized at \(0\), of the flat connection \(D_{q_{\tau}}\) associated with this polynomial differential. Under the coordinate change \(\tau = t^{\frac{1}{d+4}} z\), we have
\(t z^d\odif{z}^4 = \tau^d\odif{\tau}^4\). Consequently the connection
\(D_t^{\mathrm M}\) written in the \(\tau\)-coordinate and in the frame
\(\mathcal{F}_{\tau}\) coincides with \(D_{q_{\tau}}\).  Hence its parallel
transport matrix with respect to the frame
\(\mathcal{F}_{\tau}\) is exactly \(\Psi^{\mathrm M}(\tau)\).

Transforming back to the
\(z\)-variable, we obtain
\[
\Psi_t^{\mathrm M}(z)=\Psi^{\mathrm M}(\tau)=\Psi^{\mathrm M}(t^{\frac{1}{d+4}} z).
\]
On the sector \(W_0\), we have a globally well-defined natural coordinate \(w\) such that \(\tau^d\odif{\tau}^4 = \odif{w}^4\). We now work in the canonical frame \(\mathcal F_w\) induced by the coordinate \(w\). Let
 \(
 \Psi^{\mathrm M}_w(w) 
 \)
denote the fundamental matrix of the polynomial model connection \(D_{q_\tau}\), written in the frame \(\mathcal F_w\).

Recalling \eqref{eq:constant-standard-model-diagonal}, the fundamental
matrix of the constant model is 
\[
\Psi_0(w) = S\exp\bigl(D(w)\bigr)S^{-1}
          = S
            \begin{pmatrix}
              e^{2|w|\cos\theta} & & & \\
              & e^{2|w|\sin\theta} & & \\
              & & e^{-2|w|\cos\theta} & \\
              & & & e^{-2|w|\sin\theta}
            \end{pmatrix}
            S^{-1}.
\]
By Theorem~\ref{thm:sectorial-limits}, there exists a constant \(C_1 >0\) such that 
\[
\|\Psi^{\mathrm M}_w(w)\| \leq C_1 \|\Psi_0(w)\|
\]
when \(|w|\) is large enough. Consequently, translating back to the \(z\)-coordinate and \(\tau\)-frame \(\mathcal{F}_{\tau}\), we obtain 
\[
\|\Psi_t^{\mathrm M}(z)\| \leq C_1(t) \|\Psi_0(w(t^{\frac{1}{d+4}}z))\|
\]
for all \(z \in W_0\), where \(C_1(t)>0\) grows at most polynomially in \(t\) as \(t \to +\infty\). A similar estimate also holds for the inverse of \(\Psi^{\mathrm M}_w(w)\). Hence we have
\[
 \|\Psi_t^{\mathrm M}(z)\|\|\Psi_t^{\mathrm M}(z)^{-1}\| \leq C_1(t)^2\|\Psi_0(w(t^{\frac{1}{d+4}}z))\|\|\Psi_0(w(t^{\frac{1}{d+4}}z))^{-1}\| \leq C_2(t) e^{4t^{1/4}\frac{4}{d+4}|z|^{\frac{d+4}{4}}}.
\]
for \(z \in W_0\), where \(C_2(t)\) grows at most polynomially in \(t\) as \(t \to +\infty\).

We now estimate the conjugated difference
\[
\begin{aligned}
    \Theta_t
    &= \Psi_t^{\mathrm M} \bigl( A_t - A_t^{\mathrm M} \bigr) \bigl( \Psi_t^{\mathrm M} \bigr)^{-1} \\
    &= \Psi_t^{\mathrm M}
       \left( U_t
        \odif{z}
         + V_t
         \odif{\bar{z}}
       \right)
       \bigl( \Psi_t^{\mathrm M} \bigr)^{-1},
\end{aligned}
\]
where
\begin{align*}
    U_t &= \begin{pmatrix}
            \partial_z v_{1,t} & 0 & 0 & 0 \\
            0 & \partial_z v_{2,t} & 0 & 0 \\
            0 & 0 & -\partial_z v_{2,t} & 0 \\
            0 & 0 & 0 & -\partial_z v_{1,t}
         \end{pmatrix} ,\\
   V_t &= t^{\frac{1}{d+4}} \begin{pmatrix}
            0 & e^{u^{\mathrm M}_{2,t} - u^{\mathrm M}_{1,t}} \bigl( e^{v_{2,t} - v_{1,t}} - 1 \bigr) & 0 & 0 \\
            0 & 0 & e^{-2u^{\mathrm M}_{2,t}} \bigl( e^{-2v_{2,t}} - 1 \bigr) & 0 \\
            0 & 0 & 0 & e^{u^{\mathrm M}_{2,t} - u^{\mathrm M}_{1,t}} \bigl( e^{v_{2,t} - v_{1,t}} - 1 \bigr) \\
           t \bar{z}^d e^{2u^{\mathrm M}_{1,t}} \bigl( e^{2v_{1,t}} - 1 \bigr) & 0 & 0 & 0
         \end{pmatrix} 
\end{align*}
with \(v_{1,t} = u_{1,t} - u^{\mathrm M}_{1,t},
v_{2,t} = u_{2,t} - u^{\mathrm M}_{2,t}\).
Using Theorem~\ref{thm:general-planar-theory}, one can check that the quantities
\[
|t z^d|^2 e^{2u^{\mathrm M}_{1,t}},\qquad
e^{u^{\mathrm M}_{2,t}-u^{\mathrm M}_{1,t}} ,\qquad
e^{-2u^{\mathrm M}_{2,t}}
\]
all grow polynomially as \(t \to +\infty\). For details, see the proof of Proposition~\ref{prop:interior-comparison-v-derivatives-Laplacians}. Fix \(0 <\varepsilon_1<\varepsilon_0\).
By Proposition~\ref{prop:interior-comparison-v} and Proposition~\ref{prop:interior-comparison-v-derivatives-Laplacians}, we have 
\[
\|U_t(z)\|+\|V_t(z)\| \leq C_3(t)e^{-\sqrt{2}t^{1/4} \frac{4}{d+4} \varepsilon_0^{\frac{d+4}{4}}},
\] 
for all \(z \in W_{\varepsilon_1}\), where \(C_3(t)\) grows polynomially as \(t \to +\infty\). Here we fix \(\delta=\frac12\) in Proposition~\ref{prop:interior-comparison-v} and Proposition~\ref{prop:interior-comparison-v-derivatives-Laplacians}.
Choose some small positive number \(\varepsilon\) such that \(\varepsilon <\varepsilon_1\) and 
\[
4\frac{4}{d+4}|\varepsilon|^{\frac{d+4}{4}} < \sqrt{2}\frac{4}{d+4} \varepsilon_0^{\frac{d+4}{4}} .
\]
Then on \(W_{\varepsilon}\), we have
\[
\|\Theta_t\| \leq C_4(t) e^{-\alpha t^{1/4}}
\]
for some \(\alpha >0\), where \(C_4(t)\) grows polynomially as \(t \to +\infty\). Since \(C_4(t)\) grows at most polynomially, it can be absorbed into
the exponential term. Thus, after decreasing \(\alpha>0\) if
necessary, we have
\[
\|\Theta_t\|
\leq
e^{-\alpha t^{1/4}}
\]
for all sufficiently large \(t\).This completes the proof.
\end{proof}
From now on, we work on the disk \(B_{\varepsilon}\) with the radius \(\varepsilon\) given by Proposition~\ref{prop:conjugated-error-estimate}.
\begin{proposition} \label{prop:comparison-parallel-transport-matrices}
Let \(\|\cdot\|\) be any sub-multiplicative matrix norm. For any \(z \in B_{\varepsilon}\) that does not lie on a Stokes direction, there exists \(t_0\) such that for all \(t>t_0\)
 \[
  \|G_t(z)-I\|\leq e^{-\alpha t^{1/4}},\qquad  \|G_t(z)^{-1}-I\|\leq e^{-\alpha t^{1/4}} 
  \] 
for some \(\alpha>0\). 
\end{proposition}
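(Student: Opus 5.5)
Fix \(z\in B_\varepsilon\) not lying on a Stokes direction. The plan is to integrate the linear ODE \(\odif G_t=G_t\Theta_t\) from the origin to \(z\) along the straight radial segment \(c(s)=sz\), \(s\in[0,1]\), and bound the result with Grönwall's inequality, using Proposition~\ref{prop:conjugated-error-estimate}. Since \(\arg z\) is not a Stokes angle, we can choose a closed sector \(W_0\) with vertex at \(0\) containing the whole segment \(\{sz: 0<s\le 1\}\), with \(W_0\setminus\{0\}\) compactly inside the open sector between the two Stokes rays adjacent to \(\arg z\). We then apply Proposition~\ref{prop:conjugated-error-estimate} to this sector. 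Since \(\varepsilon\) there may depend on the sector, I will allow it to shrink; alternatively, I will note that the bound on \(\Theta_t\) uses only the comparison estimates on a fixed disk together with the sectorial bound \(\|\Psi^{\mathrm M}_t\|\|(\Psi^{\mathrm M}_t)^{-1}\|\le C_2(t)e^{4t^{1/4}\frac{4}{N}|z|^{N/4}}\). Either way, this gives \(\|\Theta_t(sz)\|\le e^{-\alpha t^{1/4}}\) for all \(s\in(0,1]\) and \(t>t_0\).

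Next, set \(g(s)=G_t(sz)\), so that \(g(0)=I\) and \(g'(s)=g(s)\Theta_t(sz)[z]\), where \(\Theta_t(sz)[z]\) denotes \(\Theta_t\) evaluated on the tangent vector \(z\,\partial_z+\bar z\,\partial_{\bar z}\). Its norm is at most \(2|z|\,\|\Theta_t(sz)\|\le 2\varepsilon e^{-\alpha t^{1/4}}\). Writing \(g(s)-I=\int_0^s g(\sigma)\Theta_t(\sigma z)[z]\,\odif\sigma\), Grönwall gives \(\|g(s)\|\le \exp(2\varepsilon e^{-\alpha t^{1/4}})\le 2\) for large \(t\), and hence \(\|G_t(z)-I\|\le 4\varepsilon e^{-\alpha t^{1/4}}\). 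For the inverse we use \(\odif G_t^{-1}=-\Theta_tG_t^{-1}\) and run the identical argument, which gives the same bound for \(\|G_t(z)^{-1}-I\|\). Finally, decreasing \(\alpha\) slightly absorbs the constant \(4\varepsilon\).

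The main obstacle is the behaviour at the origin, where the segment starts. Near \(z=0\) the model solutions \(u^{\mathrm M}_{k,t}\) have logarithmic singularities in \(z\), and the frame \(\mathcal F_\tau\) is the holomorphic one, so I need \(\Theta_t\) to be integrable (in fact bounded) along the segment up to \(s=0\). The plan is to observe the following. First, \(A_t-A^{\mathrm M}_t\) involves only \(\partial_z v_{k,t}\) and products of the form \(e^{u^{\mathrm M}}(e^{v}-1)\). Second, \(u_{k,t}\) and \(u^{\mathrm M}_{k,t}\) are smooth at \(0\) in the holomorphic frame: it is \(\tilde u\), not \(u\), that is singular. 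Third, \(\Psi_t^{\mathrm M}\) is continuous with \(\Psi_t^{\mathrm M}(0)=I\). Together these mean the estimates of Propositions~\ref{prop:interior-comparison-v} and~\ref{prop:interior-comparison-v-derivatives-Laplacians} apply uniformly on \(B_\varepsilon\), including near \(0\), while the conjugation by \(\Psi_t^{\mathrm M}\) costs at most \(e^{4t^{1/4}\frac{4}{N}\varepsilon^{N/4}}\) times a polynomial factor. So the sectorial bound extends to the closed segment, and the Grönwall step goes through.
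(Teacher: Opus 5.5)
Your proposal is correct and follows the paper's argument: integrate \(\odif G_t=G_t\Theta_t\) and \(\odif G_t^{-1}=-\Theta_tG_t^{-1}\) along the radial segment \(s\mapsto sz\) inside a closed sector that avoids the Stokes rays, and combine Proposition~\ref{prop:conjugated-error-estimate} with a Gr\"onwall bound. The differences are cosmetic. The paper cites \cite[Lemma~B.1]{dumas2015polynomial} and passes to \(H_t=(G_t^{-1})^{\mathsf T}\) to treat the inverse, where you run Gr\"onwall by hand. Of your two options for the radius, the second is the right one: the \(\varepsilon\) chosen in the proof of Proposition~\ref{prop:conjugated-error-estimate} does not depend on the sector, so no shrinking is needed.
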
 
\begin{proof} 
Choose a closed sector \(W_{\varepsilon}\subset B_{\varepsilon}\)
with vertex at the origin such that \(z\in W_{\varepsilon}\) and
\(W_{\varepsilon}\setminus\{0\}\) is compactly contained in the open
sector between two adjacent Stokes directions. Then the segment \(c(s)=sz\) for \(0 \leq s \leq 1\) lies entirely in \(W_{\varepsilon}\).
Recall that 
\begin{align*}
    \odv{G_t(c(s))}{s}&=G_t(c(s))\Theta_t(\dot{c}(s))   \\
    \odv{G_t^{-1}(c(s))}{s}&=-\Theta_t(\dot{c}(s))G_t^{-1}(c(s))
\end{align*}
with \(G_t(c(0))=I\) and \(G_t^{-1}(c(0))=I\). Let \(H_t=(G_t^{-1})^{\mathsf T}\); then we have 
\[
\odv{H_t(c(s))}{s}=-H_t(c(s))\left(\Theta_t(\dot{c}(s))\right)^{\mathsf T}.
\]
Applying \cite[Lemma~B.1]{dumas2015polynomial} and Proposition~\ref{prop:conjugated-error-estimate} to the above equations, we obtain this proposition.
\end{proof}

For the holonomy computation, we need a version of the preceding comparison in the natural-coordinate frames. For every \(j\in\mathbb Z/N\mathbb Z\), let \(w_j\) be the sectorial natural coordinate introduced in Section~\ref{subsec:explicit-computation-for-z^d}. Recall that, under the rescaling
\(
\tau=t^{1/N}z
\)
for \(N=d+4\),
the corresponding natural coordinate for \(q_t=tq\) is
\(
w_j(\tau)=t^{1/4}w_j(z).
\)
Hence the canonical holomorphic frame induced by the \(q_t\)-natural coordinate \(w_j(\tau)\) is
\[
\mathcal F_{t,w_j}
=
\left(
t^{\frac38}(\odif w_j)^{\frac32},
\;
t^{\frac18}(\odif w_j)^{\frac12},
\;
t^{-\frac18}(\odif w_j)^{-\frac12},
\;
t^{-\frac38}(\odif w_j)^{-\frac32}
\right).
\]
On the overlap \(U_j\cap U_{j+1}\), the transition law
\(
w_{j+1}=-\sqrt{-1}w_j
\)
gives
\[
\mathcal F_{t,w_{j+1}}
=
\mathcal F_{t,w_j}R,
\]
where \(R\) is the constant matrix defined in \eqref{eq:definition-for-frame-change-R}. Thus \(\{\mathcal F_{t,w_j}\}\) is a system of sectorial frames with constant transition matrices.

Since \(\mathcal F_{t,w_j}\) is not defined at the zero, we fix an arbitrary nonzero reference point
\(
p_j^*\in U_j\cap B_\varepsilon^*
\)
which is not in a Stokes direction. We normalize the surface and model fundamental matrices in the frame \(\mathcal F_{t,w_j}\) by
\[
\Psi_t^{w_j}(p_j^*)=I,
\qquad
\Psi_t^{\mathrm M,w_j}(p_j^*)=I.
\]
The choice of \(p_j^*\) will be suppressed from the notation.
\begin{proposition}
\label{prop:sectorial-comparison-natural-frames}
Fix \(j\in\mathbb Z/N\mathbb Z\), and let
\(
z\in U_j\cap B_\varepsilon^*
\)
be a point that does not lie on a Stokes direction. Denote by
\(
\Psi_t^{w_j}(z)
\)
and
\(\Psi_t^{\mathrm M,w_j}(z)
\)
the fundamental matrices of the surface and polynomial model, respectively, in the sectorial frame
\(\mathcal F_{t,w_j}\). Define
\[
G_t^{w_j}(z)
:=
\Psi_t^{w_j}(z)
\bigl(\Psi_t^{\mathrm M,w_j}(z)\bigr)^{-1}.
\]
Then there exists a constant \(\alpha>0\) such that, for all sufficiently large \(t\),
\[
\bigl\|G_t^{w_j}(z)-I\bigr\|
\leq
e^{-\alpha t^{1/4}}.
\]
In particular,
\[
G_t^{w_j}(z)\longrightarrow I
\]
as \(t\to+\infty\). Equivalently,
\[
\Psi_t^{w_j}(z)
=
\bigl(I+o(I)\bigr)
\Psi_t^{\mathrm M,w_j}(z).
\]
\end{proposition}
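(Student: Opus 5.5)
The plan is to reduce the statement to Proposition~\ref{prop:comparison-parallel-transport-matrices} by a change of frame and of normalization point. On \(U_j\cap B_\varepsilon^*\) both the surface connection \(D_t\) and the model connection \(D_t^{\mathrm M}\) are written in the two frames \(\mathcal F_\tau\) and \(\mathcal F_{t,w_j}\). These are related by a holomorphic, diagonal change of frame \(\mathcal F_{t,w_j}=\mathcal F_\tau C_j\), with
\[
C_j=\operatorname{diag}\bigl(\lambda^{3},\lambda,\lambda^{-1},\lambda^{-3}\bigr),
\qquad
\lambda=\Bigl(\frac{\odif w_j(\tau)}{\odif\tau}\Bigr)^{1/2},
\]
where the branch of the square root is the one fixed by our choice of half-differentials. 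The key point is that the same matrix function \(C_j\) serves for both connections, because the frames are purely holomorphic objects built from \(\tau\) and \(w_j\), independent of the harmonic metric.

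By the transformation rule of Section~\ref{subsec:parallel-transport-holonomy}, the fundamental matrices normalized at \(p_j^*\) in the frame \(\mathcal F_{t,w_j}\) are
\[
\Psi_t^{w_j}(z)=C_j(p_j^*)^{-1}\Psi_t(p_j^*)^{-1}\Psi_t(z)C_j(z),
\]
where \(\Psi_t\) is normalized at \(0\) in \(\mathcal F_\tau\). The analogous formula holds for the model, with \(\Psi_t^{\mathrm M}\) in place of \(\Psi_t\).

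Substituting \(\Psi_t=G_t\Psi_t^{\mathrm M}\) gives
\[
G_t^{w_j}(z)=C_j(p_j^*)^{-1}\Psi_t^{\mathrm M}(p_j^*)^{-1}G_t(p_j^*)^{-1}G_t(z)\Psi_t^{\mathrm M}(p_j^*)C_j(p_j^*).
\]
Hence \(G_t^{w_j}(z)-I\) is the conjugate of \(G_t(p_j^*)^{-1}G_t(z)-I\) by \(P_t:=\Psi_t^{\mathrm M}(p_j^*)C_j(p_j^*)\), and
\[
\bigl\|G_t^{w_j}(z)-I\bigr\|\le \|P_t\|\,\|P_t^{-1}\|\,\bigl\|G_t(p_j^*)^{-1}G_t(z)-I\bigr\|.
\]
Proposition~\ref{prop:comparison-parallel-transport-matrices}, applied at the two non-Stokes points \(z\) and \(p_j^*\), bounds the last factor by
\[
\|G_t(p_j^*)^{-1}-I\|\,\|G_t(z)\|+\|G_t(z)-I\|\le 3e^{-\alpha t^{1/4}}.
\]

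The main obstacle is the conjugation factor. The entries of \(C_j(p_j^*)\) are powers of \(t^{1/8}\) and of a fixed constant, so they are polynomial in \(t\) and harmless. The factor \(\|\Psi_t^{\mathrm M}(p_j^*)\|\,\|\Psi_t^{\mathrm M}(p_j^*)^{-1}\|\), however, grows exponentially in \(t^{1/4}\). From the proof of Proposition~\ref{prop:conjugated-error-estimate} it is at most
\[
C_2(t)\,e^{4t^{1/4}\frac{4}{d+4}|p_j^*|^{(d+4)/4}},
\]
with \(C_2(t)\) polynomial in \(t\), so it is not obviously beaten by \(e^{-\alpha t^{1/4}}\). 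I would handle this by tracking the rate \(\alpha\) in Proposition~\ref{prop:conjugated-error-estimate}.

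That proof gives a decay rate roughly \(\sqrt2\frac{4}{d+4}\varepsilon_0^{(d+4)/4}-4\frac{4}{d+4}\varepsilon^{(d+4)/4}\), and \(\varepsilon\) was chosen precisely so that this difference is positive. To beat the extra growth at \(p_j^*\), I would shrink \(\varepsilon\) further, so that
\[
8\cdot\tfrac{4}{d+4}\varepsilon^{(d+4)/4}<\sqrt2\cdot\tfrac{4}{d+4}\varepsilon_0^{(d+4)/4}.
\]
With this choice, every point of \(B_\varepsilon\) contributes at most \(e^{4t^{1/4}\frac{4}{d+4}\varepsilon^{(d+4)/4}}\) twice, once from \(\Theta_t\) and once from the conjugation by \(P_t\), and the net exponent stays negative. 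Since \(p_j^*\in B_\varepsilon\), this covers the reference point as well.

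With this choice the product remains bounded by \(e^{-\alpha' t^{1/4}}\) for some \(\alpha'>0\), after absorbing polynomial factors. The final statements, \(G_t^{w_j}(z)\to I\) and \(\Psi_t^{w_j}(z)=(I+o(I))\Psi_t^{\mathrm M,w_j}(z)\), are immediate reformulations of this bound.
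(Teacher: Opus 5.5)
Your proof is correct and follows the same route as the paper's: pass to the frame $\mathcal F_{t,w_j}$ with normalization at $p_j^*$, reduce to Proposition~\ref{prop:comparison-parallel-transport-matrices} at $z$ and $p_j^*$, and control the conjugating matrix. At the decisive step, however, your computation differs from the paper's, and yours is the correct one. The paper writes
\[
G_t^{w_j}(z)=C_{t,j}(z)^{-1}G_t(p_j^*)^{-1}G_t(z)\,C_{t,j}(z),
\]
so that it only has to absorb the polynomial factor $\|C_{t,j}(z)\|\,\|C_{t,j}(z)^{-1}\|$. But a change of frame conjugates $G$ by the frame change evaluated at the normalization point, not at $z$. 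Moreover, moving the normalization from $0$ to $p_j^*$ brings in the model transport. The correct identity is therefore the one you derived,
\[
G_t^{w_j}(z)=P_t^{-1}\,G_t(p_j^*)^{-1}G_t(z)\,P_t,
\qquad
P_t=\Psi_t^{\mathrm M}(p_j^*)\,C_{t,j}(p_j^*).
\]
Here $\|P_t\|\,\|P_t^{-1}\|$ grows exponentially in $t^{1/4}$ and is bounded only by a polynomial times $e^{4t^{1/4}\frac{4}{d+4}|p_j^*|^{(d+4)/4}}$. So the rate $\alpha$ of Proposition~\ref{prop:comparison-parallel-transport-matrices} does not transfer automatically, and the paper's proof overlooks this factor. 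Your remedy closes the gap: read off the rate $\sqrt{2}\,r_0-4\cdot\frac{4}{d+4}\varepsilon^{(d+4)/4}$ from the proof of Proposition~\ref{prop:conjugated-error-estimate}, then shrink $\varepsilon$ until $8\cdot\frac{4}{d+4}\varepsilon^{(d+4)/4}<\sqrt{2}\,r_0$. This costs nothing downstream, since in Section~\ref{sec:asymptotic-holonomy} the radius $\varepsilon$ is only required to be sufficiently small. Alternatively, you could keep $\varepsilon$ and choose $p_j^*$ close enough to the zero, which the paper's arbitrary choice of $p_j^*$ permits.
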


\begin{proof}
Let \(C_{t,j}(z)\) be the change-of-frame matrix determined by
\[
\mathcal F_{t,w_j}
=
\mathcal F_\tau C_{t,j}.
\]
Writing
\(
\tau=t^{1/N}z,
\)
we have
\[
a_{t,j}(z)
:=
\frac{\odif w_j}{\odif\tau}
=
e^{-\frac{\pi j\sqrt{-1}}{2}}\tau^{d/4},
\]
and hence
\[
C_{t,j}(z)
=
\operatorname{diag}\left(
a_{t,j}(z)^{3/2},
a_{t,j}(z)^{1/2},
a_{t,j}(z)^{-1/2},
a_{t,j}(z)^{-3/2}
\right).
\]
Applying the same change of frame to the surface and model fundamental solutions, we obtain
\[
G_t^{w_j}(z)
=
C_{t,j}(z)^{-1}G_t(p_j^*)^{-1}G_t(z)C_{t,j}(z).
\]
Therefore,
\[
G_t^{w_j}(z)-I
=
C_{t,j}(z)^{-1}
\bigl(G_t(p_j^*)^{-1} G_t(z)-I\bigr)
C_{t,j}(z).
\]

For the fixed point \(z\neq0\), the change-of-frame matrix and its inverse grow at most polynomially in \(t\). More precisely,
\[
\bigl\|C_{t,j}(z)\bigr\|\cdot
\bigl\|C_{t,j}(z)^{-1}\bigr\|
\leq
C_1 t^M
\]
for some constants \(C_1>0\) and \(M>0\).

By Proposition~\ref{prop:comparison-parallel-transport-matrices}, there exists \(\alpha>0\) such that
\[
\bigl\|G_t(z)-I\bigr\|
\leq
e^{-\alpha t^{1/4}}, \qquad \bigl\|G_t(p_j^*)^{-1}-I\bigr\| \leq e^{-\alpha t^{1/4}}
\]
for all sufficiently large \(t\). 
Note that 
\[
G_t(p_j^*)^{-1} G_t(z)-I=(G_t(p_j^*)^{-1}-I)G_t(z)+ (G_t(z)-I).
\]
Hence there exist constants \(C_2>0\) and \(\alpha>0\) such that 
\[
\bigl\|G_t(p_j^*)^{-1} G_t(z)-I\bigr\|\leq C_2 e^{-\alpha t^{1/4}}
\]
for all sufficiently large \(t\). 
It follows that
\[
\bigl\|G_t^{w_j}(z)-I\bigr\|
\leq
\bigl\|C_{t,j}(z)^{-1}\bigr\|\cdot
\bigl\|G_t(p_j^*)^{-1} G_t(z)-I\bigr\|\cdot
\bigl\|C_{t,j}(z)\bigr\| 
\leq
C_3 t^M e^{-\alpha t^{1/4}}
\]
for some constant \(C_3>0\) and all sufficiently large \(t\). 
Since the polynomial factor can be absorbed into the exponential decay, after replacing \(\alpha\) by a smaller positive constant \(\alpha'\), we obtain
\[
\bigl\|G_t^{w_j}(z)-I\bigr\|
\leq
e^{-\alpha' t^{1/4}}
\]
for all sufficiently large \(t\).
\end{proof}
\begin{remark}
The preceding argument does not require the initial point \(p_j^*\) and the terminal point \(z\) to lie in the same stable sector.
\end{remark}
\section{Asymptotic holonomy}
\label{sec:asymptotic-holonomy}

In this section we study the asymptotic behavior of the holonomy of the flat
connection \(D_t\) along the ray \(q_t=tq\) as \(t\to+\infty\).

Let \(X\) be a compact Riemann surface and let
\(q \in H^0(X, K^4)\) be a holomorphic quartic differential.
We consider the ray family
\[
q_t = t q , \qquad t > 0 .
\]
Denote by
\((\mathbb{K}_{\mathrm{Sp}(4,\mathbb{R})}, \phi_t, h_t, D_t)\)
the corresponding cyclic \(\mathrm{Sp}(4,\mathbb{R})\)-Higgs bundle in the
Hitchin section, its harmonic metric, and the associated flat connection,
where
\[
\phi_t = \phi(q_t) , \qquad
D_t = \nabla_{h_t} + \phi_t + \phi_t^{*_{h_t}} .
\]
(See Section~\ref{subsec:the-cyclic-SP(4,R)-Higgs-bundles-in-the-Hitchin-ection}.)

Let \([\gamma]\) be a non-trivial free homotopy class of closed curves
on \(X\). As discussed in
Section~\ref{subsec:quartic-differentials-and-the-induced-flat-metrics},
we choose a closed geodesic representative \(c_\gamma\) for the
singular flat metric \(|q|^{1/2}\). We assume that \(c_\gamma\) is a
finite concatenation of saddle connections and write
\[
c_\gamma=c_{\ell}*c_{\ell-1}*\cdots*c_1.
\]
Let \(p_1,\ldots,p_\ell\) denote the successive zeros encountered by
\(c_\gamma\). We orient each saddle connection \(c_i\) from \(p_i\) to
\(p_{i-1}\), where the indices are understood modulo \(\ell\).

Following the construction of Loftin, Tamburelli, and Wolf
\cite{loftin2026limits}, we modify \(c_\gamma\) by replacing its passage through each zero of \(q\) with an arc along the boundary of a small disk centered at that zero.

For each zero \(p_i\), choose a local holomorphic coordinate \(z_i\),
centered at \(p_i\), such that
\[
q=z_i^{d_i}\odif{z_i}^{4},
\]
where \(d_i\) is the order of the zero. Choose
\(\varepsilon>0\) sufficiently small and set
\[
B_i(\varepsilon)
=
\bigl\{
z_i:|z_i|\leq\varepsilon
\bigr\}.
\]
We require that the closed disks \(B_i(\varepsilon)\) are pairwise
disjoint, contain no zeros of \(q\) other than their centers, and
intersect the saddle connections only in radial segments near their
endpoints. We also choose \(\varepsilon\) sufficiently small so that
the conclusion of
Proposition~\ref{prop:comparison-parallel-transport-matrices}
holds on each \(B_i(\varepsilon)\).

Let \(m_i\) be any point in the interior of \(c_i\). We also assume that every \(m_i\) lies outside the disks
\(B_j(\varepsilon)\). Decompose
\[
c_i=\delta_i*\alpha_{i-1},
\]
where \(\delta_i\) is the segment from \(p_i\) to \(m_i\), and
\(\alpha_{i-1}\) is the segment from \(m_i\) to \(p_{i-1}\).

After deleting the interiors of the disks, define the truncated
segments
\[
\widetilde\delta_i
=
\delta_i\cap
\left(
X\setminus\bigcup_{j=1}^{\ell}B_j(\varepsilon)^\circ
\right),
\]
and
\[
\widetilde\alpha_{i-1}
=
\alpha_{i-1}\cap
\left(
X\setminus\bigcup_{j=1}^{\ell}B_j(\varepsilon)^\circ
\right).
\]
Thus the truncated portion of the saddle connection \(c_i\) is
\[
\widetilde c_i
=
\widetilde\delta_i*\widetilde\alpha_{i-1}.
\]

Near the zero \(p_i\), the geodesic \(c_\gamma\) enters the disk along
\(\widetilde\alpha_i\) and exits along
\(\widetilde\delta_i\). Let
\[
\widetilde\beta_i\subset\partial B_i(\varepsilon)
\]
be the counterclockwise boundary arc from the terminal point of
\(\widetilde\alpha_i\) to the initial point of
\(\widetilde\delta_i\).

More precisely, choose real lifts \(\theta_i\) and \(\theta_i'\) of
the arguments of these two endpoints such that
\[
\theta_i<\theta_i'<\theta_i+2\pi,
\]
and parametrize
\[
\widetilde\beta_i(\theta)
=
\varepsilon e^{\sqrt{-1}\theta},
\qquad
\theta_i\leq\theta\leq\theta_i'.
\]
Thus every arc \(\widetilde\beta_i\) is oriented counterclockwise.

We define the modified closed curve \(\widetilde c_\gamma\) by
replacing the passage of \(c_\gamma\) through each zero \(p_i\) with
the corresponding circular arc \(\widetilde\beta_i\). Starting at the
point \(m_{1}\), it is given by
\[
\begin{aligned}
\widetilde c_\gamma
={}&
\widetilde\alpha_\ell*
\widetilde\beta_\ell*
\widetilde\delta_\ell*
\widetilde\alpha_{\ell-1}*
\widetilde\beta_{\ell-1}*
\widetilde\delta_{\ell-1}
*\cdots*
\widetilde\alpha_1*
\widetilde\beta_1*
\widetilde\delta_1.
\end{aligned}
\]
The curve \(\widetilde c_\gamma\) is contained in
\(X\setminus Z(q)\). Moreover, it is homotopic to \(c_\gamma\)
relative to the base point \(m_1\). Hence
\[
\operatorname{Hol}_t(\widetilde c_\gamma)
=
\operatorname{Hol}_t(c_\gamma).
\]

See Figure~\ref{fig:global-modification} for the case of three zeros.

\begin{figure}[htbp]
\centering
\begin{tikzpicture}[scale=1.1,>=Stealth]

%------------------------------------------------
% Before modification
%------------------------------------------------
\begin{scope}
  \coordinate (p1) at (0,-2.0);
  \coordinate (p2) at (2.0,1.5);
  \coordinate (p3) at (-2.0,1.5);

  % c_i is oriented from p_i to p_{i-1}
  \coordinate (m1) at ($(p1)!0.5!(p3)$);
  \coordinate (m2) at ($(p2)!0.5!(p1)$);
  \coordinate (m3) at ($(p3)!0.5!(p2)$);

  % Oriented saddle connections
  \draw[->,thick] (m2) -- (p1);
  \draw[->,thick] (p1) -- (m1);

  \draw[->,thick] (m3) -- (p2);
  \draw[->,thick] (p2) -- (m2);

  \draw[->,thick] (m1) -- (p3);
  \draw[->,thick] (p3) -- (m3);

  % Labels
  \node at ($(m2)!0.5!(p1)+(0.35,-0.25)$)
        {$\alpha_1$};
  \node at ($(p1)!0.5!(m1)+(-0.35,-0.25)$)
        {$\delta_1$};

  \node at ($(m3)!0.5!(p2)+(0.30,0.35)$)
        {$\alpha_2$};
  \node at ($(p2)!0.5!(m2)+(0.50,0.10)$)
        {$\delta_2$};

  \node at ($(m1)!0.5!(p3)+(-0.50,0.10)$)
        {$\alpha_3$};
  \node at ($(p3)!0.5!(m3)+(-0.30,0.35)$)
        {$\delta_3$};

  % Zeros
  \fill (p1) circle (2pt) node[below] {$p_1$};
  \fill (p2) circle (2pt) node[right] {$p_2$};
  \fill (p3) circle (2pt) node[left]  {$p_3$};

  % Midpoints
  \fill (m1) circle (1.5pt)
    node[below left] {\footnotesize \(m_1\)};
  \fill (m2) circle (1.5pt)
    node[below right] {\footnotesize \(m_2\)};
  \fill (m3) circle (1.5pt)
    node[above] {\footnotesize \(m_3\)};

  \node at (0,-3.2) {Before modification};
\end{scope}

\node at (4.5,-0.25) {\Huge \(\Longrightarrow\)};

%------------------------------------------------
% After modification
%------------------------------------------------
\begin{scope}[xshift=9cm]
  \def\R{0.8}

  \coordinate (p1) at (0,-2.0);
  \coordinate (p2) at (2.0,1.5);
  \coordinate (p3) at (-2.0,1.5);

  % Disks around the zeros
  \foreach \i in {1,2,3} {
    \draw[dashed,thin] (p\i) circle (\R);
  }

  % Incoming and outgoing endpoints
  \coordinate (a1in)  at ($(p1)+(60:\R)$);
  \coordinate (a1out) at ($(p1)+(120:\R)$);

  \coordinate (a2in)  at ($(p2)+(180:\R)$);
  \coordinate (a2out) at ($(p2)+(240:\R)$);

  \coordinate (a3in)  at ($(p3)+(300:\R)$);
  \coordinate (a3out) at ($(p3)+(0:\R)$);

  % Midpoints of the truncated saddle connections
  \coordinate (m1) at ($(a1out)!0.5!(a3in)$);
  \coordinate (m2) at ($(a2out)!0.5!(a1in)$);
  \coordinate (m3) at ($(a3out)!0.5!(a2in)$);

  % Deleted radial pieces
  \draw[gray,dashed,thin]
    (a1in) -- (p1) -- (a1out);
  \draw[gray,dashed,thin]
    (a2in) -- (p2) -- (a2out);
  \draw[gray,dashed,thin]
    (a3in) -- (p3) -- (a3out);

  % Truncated straight segments
  \draw[->,thick] (m2) -- (a1in);
  \draw[->,thick] (a1out) -- (m1);

  \draw[->,thick] (m3) -- (a2in);
  \draw[->,thick] (a2out) -- (m2);

  \draw[->,thick] (m1) -- (a3in);
  \draw[->,thick] (a3out) -- (m3);

  % Labels of truncated segments
  \node at ($(m2)!0.5!(a1in)+(0.35,-0.25)$)
        {$\widetilde\alpha_1$};
  \node at ($(a1out)!0.5!(m1)+(-0.35,-0.25)$)
        {$\widetilde\delta_1$};

  \node at ($(m3)!0.5!(a2in)+(0.35,0.35)$)
        {$\widetilde\alpha_2$};
  \node at ($(a2out)!0.5!(m2)+(0.50,0.05)$)
        {$\widetilde\delta_2$};

  \node at ($(m1)!0.5!(a3in)+(-0.50,0.05)$)
        {$\widetilde\alpha_3$};
  \node at ($(a3out)!0.5!(m3)+(-0.35,0.35)$)
        {$\widetilde\delta_3$};

  % Counterclockwise circular arcs
  \draw[->,thick,red]
    (a1in) arc[start angle=60,end angle=120,radius=\R];

  \draw[->,thick,red]
    (a2in) arc[start angle=180,end angle=240,radius=\R];

  \draw[->,thick,red]
    (a3in) arc[start angle=300,end angle=360,radius=\R];

  % Labels inside the circular arcs
  \node[red,font=\footnotesize]
    at ($(p1)+(90:0.55*\R)$)
    {$\widetilde\beta_1$};

  \node[red,font=\footnotesize]
    at ($(p2)+(210:0.55*\R)$)
    {$\widetilde\beta_2$};

  \node[red,font=\footnotesize]
    at ($(p3)+(330:0.55*\R)$)
    {$\widetilde\beta_3$};

  % Zeros
  \fill (p1) circle (2pt) node[below] {$p_1$};
  \fill (p2) circle (2pt) node[right] {$p_2$};
  \fill (p3) circle (2pt) node[left]  {$p_3$};

  % Midpoints
  \fill (m1) circle (1.5pt)
    node[below left] {\footnotesize \(m_1\)};
  \fill (m2) circle (1.5pt)
    node[below right] {\footnotesize \(m_2\)};
  \fill (m3) circle (1.5pt)
    node[above] {\footnotesize \(m_3\)};

  \node at (0,-3.2) {After modification};
\end{scope}

\end{tikzpicture}
\caption{The geodesic \(c_\gamma\) and its modification near the
zeros, illustrated in the case of three zeros.}
\label{fig:global-modification}
\end{figure}
By multiplicativity of parallel transport,
\begin{equation}
\label{eq:holonomy-decomposition-modified-curve}
\operatorname{Hol}_t(\widetilde c_\gamma)
=
\prod_{i=1}^{\ell}
\left[
\operatorname{Hol}_t(\widetilde\delta_i)
\operatorname{Hol}_t(\widetilde\beta_i)
\operatorname{Hol}_t(\widetilde\alpha_i)
\right].
\end{equation}

To study the holonomy, we work in a natural coordinate \(w\) for
\(q\) and use the renormalized frame
\[
\mathcal F_{t,w}
=
\left(
t^{\frac{3}{8}}(\odif w)^{\frac{3}{2}},
\;
t^{\frac{1}{8}}(\odif w)^{\frac{1}{2}},
\;
t^{-\frac{1}{8}}(\odif w)^{-\frac{1}{2}},
\;
t^{-\frac{3}{8}}(\odif w)^{-\frac{3}{2}}
\right).
\]
Equivalently, this is the holomorphic frame induced by the
\(q_t\)-natural coordinate \(t^{1/4}w\). 
Note that changing between \(\mathcal F_w\) and
\(\mathcal F_{t,w}\) only grows polynomially in \(t\), does not affect the following estimate. All matrix asymptotics in this subsection
will nevertheless be stated in the renormalized frames
\(\mathcal F_{t,w}\).

\subsection{Asymptotic behavior of the holonomy factors}

We first analyze the holonomy along the straight pieces
\(\widetilde\alpha_i\) and \(\widetilde\delta_i\). These segments stay
a positive distance away from the zero set of \(q\), so a
single-valued natural coordinate \(w\) for \(q\) may be chosen on a
neighborhood of each of them. In this coordinate,
\[
q=\odif w^4,
\qquad
q_t=t\odif w^4.
\]
The constant model is diagonalized by the fixed matrix \(S\)
introduced in Section~\ref{sec:local-model}. The following is the
quartic analogue of
\cite[Proposition~5.6]{loftin2026limits}.

\begin{proposition}
\label{prop:holonomy-straight-pieces}
Let \(\widetilde\alpha_i\) and \(\widetilde\delta_i\) be the straight
pieces of the modified curve \(\widetilde c_\gamma\). Choose a natural
coordinate \(w\) for \(q\) on a neighborhood of each segment, and let
\[
\phi_k=\sqrt{-1}^{1-k}\odif w,
\qquad
k=1,2,3,4,
\]
be the four local fourth roots of \(q\). Define
\[
\tilde \mu_{i,k}
=
-2\operatorname{Re}\int_{\widetilde\alpha_i}\phi_k,
\qquad
\tilde \lambda_{i,k}
=
-2\operatorname{Re}\int_{\widetilde\delta_i}\phi_k,
\]
and set
\[
\tilde \mu_i=\max_{1\leq k\leq4}\tilde\mu_{i,k},
\qquad
\tilde \lambda_i=\max_{1\leq k\leq4}\tilde\lambda_{i,k}.
\]
Then, with respect to the frames \(\mathcal F_{t,w}\) induced by the
chosen natural coordinate at the endpoints,
\[
\operatorname{Hol}_t(\widetilde\alpha_i)
=
S
\operatorname{diag}
\left(
e^{t^{1/4}\tilde\mu_{i,1}},
e^{t^{1/4}\tilde\mu_{i,2}},
e^{t^{1/4}\tilde\mu_{i,3}},
e^{t^{1/4}\tilde\mu_{i,4}}
\right)
S^{-1}
+
o\left(e^{t^{1/4}\tilde\mu_i}\right),
\]
and
\[
\operatorname{Hol}_t(\widetilde\delta_i)
=
S
\operatorname{diag}
\left(
e^{t^{1/4}\tilde\lambda_{i,1}},
e^{t^{1/4}\tilde\lambda_{i,2}},
e^{t^{1/4}\tilde\lambda_{i,3}},
e^{t^{1/4}\tilde\lambda_{i,4}}
\right)
S^{-1}
+
o\left(e^{t^{1/4}\tilde\lambda_i}\right)
\]
as \(t\to+\infty\). Here \(S\) is the unitary matrix defined in
\eqref{eq:definition-of-S}, and the error estimates are taken in any
fixed matrix norm.
\end{proposition}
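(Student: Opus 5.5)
We compare \(D_t\) along each straight piece with the constant model \(D_0\) for \(q_t=t\,\odif w^4\), written in the renormalized frame \(\mathcal F_{t,w}\), and then show that the comparison factor tends to the identity exponentially fast.

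\textbf{Step 1: the model transport.} In the \(q_t\)-natural coordinate \(W=t^{1/4}w\), the frame \(\mathcal F_{t,w}\) is exactly the canonical frame \(\mathcal F_W\). The constant-model fundamental matrix is therefore \(\Psi_0(W)=S\exp(D(W))S^{-1}\) by \eqref{eq:constant-standard-model-diagonal}. Along a segment from \(w_a\) to \(w_b\), the model transport is \(\Psi_0(W_a)\Psi_0(W_b)^{-1}\). Since \(\Psi_0\) is the exponential of a linear function of \(W\), this equals \(S\exp\bigl(D(W_a-W_b)\bigr)S^{-1}\). The diagonal entry \(2|W|\cos(\theta-(k-1)\pi/2)\) with \(W=W_b-W_a\) gives \(2\operatorname{Re}\bigl(\sqrt{-1}^{1-k}(W_b-W_a)\bigr)\), up to the sign fixed by the paper's convention \(M=\Psi^{-1}\). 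Hence the exponents are \(-2t^{1/4}\operatorname{Re}\int\phi_k\), namely \(t^{1/4}\tilde\mu_{i,k}\) and \(t^{1/4}\tilde\lambda_{i,k}\). This step is only bookkeeping of signs and orientations.

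\textbf{Step 2: the true transport.} Write the connection form of \(D_t\) in \(\mathcal F_{t,w}\) as \(A_0+U\,\odif W+V\,\odif\bar W\). Here \(U,V\) are built from \(\tilde u_{k,t}\) exactly as in \eqref{eq:connection-form-on-right-half-plane}, with \(\tilde u_{k,t}\) the deviations of the harmonic metric from the flat metric \(|q_t|^{1/2}\). On a compact neighbourhood of the segment, at positive \(|q|^{1/2}\)-distance from \(Z(q)\), the estimates of \cite[Theorem~6.1]{collier2017asymptotics} already used in Proposition~\ref{prop:boundary-comparison-v} give \(|\tilde u_{k,t}|\le Ce^{-\beta t^{1/4}}\). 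Interior elliptic estimates then give the same bound for the first derivatives \(\partial_W\tilde u_{k,t}\). So \(\|U\|+\|V\|\le Ce^{-\beta t^{1/4}}\), where \(\beta\) is comparable to \(2\sqrt2\,\delta\) times the distance to the zeros.

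\textbf{Step 3: the gauge comparison.} Set \(G(s)=\Psi_t\Psi_0^{-1}\) along the segment. As in \eqref{eq:formula-for-G}, it satisfies \(G'=G\Theta\) with
\[
\Theta=S e^{D(W)}(R_1\dot W+R_2\dot{\bar W})e^{-D(W)}S^{-1}.
\]
The conjugation by \(e^{D(W)}\) amplifies the entries by at most \(e^{4|W|}\), and the worst entry is bounded by \(e^{4t^{1/4}L}\), where \(L\) is the length of the piece. Two cases arise.

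If this growth is beaten by \(e^{-\beta t^{1/4}}\), then \(\int\|\Theta\|\to0\) and \cite[Lemma~B.1]{dumas2015polynomial} gives \(G\to I\).

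If it is not, subdivide the segment into pieces of length \(L'\) so short that \(4L'<\beta\). On each piece, transport equals \(\Psi_0\)-transport \(\times(I+o(1))\) in the sense \(M=M_0(I+E)\) with \(E\to0\). Multiply the pieces together. The exponent matrices commute, since they are all diagonal in the \(S\)-basis along a straight line, so the product is
\[
Se^{D_1}S^{-1}(I+E_1)\,Se^{D_2}S^{-1}(I+E_2)\cdots .
\]
The cross terms \(e^{D_1}S^{-1}E_1Se^{D_2}\) are bounded by \(\|E_1\|e^{t^{1/4}(\max_1+\max_2)}\). Along a straight segment every piece has the same maximizing index \(k\), so \(\max_1+\max_2=\tilde\mu_i\) and the error is \(o(e^{t^{1/4}\tilde\mu_i})\).

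\textbf{Main obstacle.} The delicate point is making the decay rate \(\beta\) of the metric deviation dominate the conjugation growth \(e^{D}\). Near the endpoints on \(\partial B_j(\varepsilon)\), the distance to the zeros is only of order \(\varepsilon^{(d+4)/4}\), so \(\beta\) is small there. The subdivision argument handles this, because only the local length needs to be smaller than \(\beta/4\) and the pieces near the endpoints can be taken short. Directional alignment, meaning the same dominant index on every subpiece, is what makes the accumulated error \(o(e^{t^{1/4}\tilde\mu_i})\) rather than merely \(O\).
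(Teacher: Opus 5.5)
Your argument is correct, and its global skeleton is the paper's. You subdivide the straight piece into finitely many short collinear subsegments $\gamma_j$. On each you obtain a local estimate $S\exp\bigl(t^{1/4}D_w(\gamma_j)\bigr)S^{-1}(I+E_j)$ with $E_j\to0$. Then you multiply, using that all subsegments share one direction, so that $\max_k\upsilon_{1,k}+\max_k\upsilon_{2,k}=\max_k(\upsilon_{1,k}+\upsilon_{2,k})$ (even with ties) and every cross term is $o\bigl(e^{t^{1/4}\tilde\mu_i}\bigr)$.

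The difference is in where the short-segment estimate comes from.

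\begin{itemize}
\item The paper quotes \cite[Theorem~4.4]{collier2017asymptotics} on each subsegment, after translating Collier--Li's diagonalizing matrix to $S$. It leaves the required ``local length restrictions'' unspecified.
\item You rederive that estimate from the pointwise decay in \cite[Theorem~6.1]{collier2017asymptotics}. You do this through the gauge comparison $G=\Psi_t\Psi_0^{-1}$ and \cite[Lemma~B.1]{dumas2015polynomial}, the same mechanism the paper uses near the zeros in Proposition~\ref{prop:conjugated-error-estimate}.
\end{itemize}

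Your route buys two things. The length restriction becomes explicit: $4L'<\beta$, where $\beta$ is proportional to the flat distance from the piece to $Z(q)$, and this distance is bounded below on the truncated segments. You also get an exponentially small relative error $E_j$, rather than the $O(t^{-1/8})$ of the cited theorem.

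It costs two things. You need bounds on $\partial_W\tilde u_{k,t}$; these follow from interior estimates on unit balls in $W=t^{1/4}w$, which keeps the exponential rate. You also need to normalize $\Psi_0$ at the start of each subsegment, so that the conjugation growth is only $e^{4t^{1/4}L'}$ — you use this implicitly, and it is worth stating.

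Both versions are more than sufficient, since the multiplication step only uses $E_j=o(1)$.
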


\begin{proof}
The statement follows from
\cite[Theorem~4.4]{collier2017asymptotics}. We briefly explain how it translates into our notation. Two minor differences between the conventions of \cite{collier2017asymptotics} and those used here must be taken into account.

First, the diagonalizing matrix used in
\cite{collier2017asymptotics} is not exactly the matrix \(S\) employed
here. This does not affect the result, since both matrices diagonalize
the same constant-model connection. After translating to our
convention, the leading diagonal exponential matrices are precisely
those appearing in this proposition.

Second, \cite[Theorem~4.4]{collier2017asymptotics} is stated for paths
satisfying certain local length restrictions. For a general straight
segment \(\gamma\) disjoint from the zero set, we subdivide it into
finitely many shorter collinear subsegments
\[
\gamma=\gamma_1*\cdots*\gamma_m,
\]
each of which satisfies those restrictions. Applying
\cite[Theorem~4.4]{collier2017asymptotics} to each subsegment gives
\begin{align*}
\operatorname{Hol}_t(\gamma_j)
&=
S
\begin{pmatrix}
e^{t^{1/4}\upsilon_{j,1}} & & & \\
& e^{t^{1/4}\upsilon_{j,2}} & & \\
& & e^{t^{1/4}\upsilon_{j,3}} & \\
& & & e^{t^{1/4}\upsilon_{j,4}}
\end{pmatrix}
\bigl(I+O(t^{-1/8})\bigr)
S^{-1}
\\
&=
S
\begin{pmatrix}
e^{t^{1/4}\upsilon_{j,1}} & & & \\
& e^{t^{1/4}\upsilon_{j,2}} & & \\
& & e^{t^{1/4}\upsilon_{j,3}} & \\
& & & e^{t^{1/4}\upsilon_{j,4}}
\end{pmatrix}
S^{-1}
+
o\bigl(e^{t^{1/4}\upsilon_j}\bigr),
\end{align*}
where
\[
\upsilon_{j,k}
=
-2\operatorname{Re}\int_{\gamma_j}\phi_k,
\qquad
\upsilon_j
=
\max_{1\leq k\leq4}\upsilon_{j,k}.
\]
Here the first equality is the estimate supplied by
\cite[Theorem~4.4]{collier2017asymptotics}, after translating to our
choice of diagonalizing matrix, while the second is the weaker form
needed below.

Because the subsegments are collinear and have the same orientation,
the ordering of the four quantities
\(\upsilon_{j,1},\ldots,\upsilon_{j,4}\) is independent of \(j\). Multiplying the parallel transports of the subsegments therefore gives the estimates we need.
\end{proof}
\begin{remark}
We emphasize that the estimate obtained after multiplying the local
parallel transports is weaker than the estimate for each individual
subsegment: only the fact that the accumulated error has lower
exponential order than the leading term is retained. This weaker
estimate is sufficient for us.
\end{remark}
We next analyze the asymptotic holonomy along the circular arc \(\widetilde\beta_i\). Let \(p_i\) be the corresponding zero of \(q\), of order \(d_i\), and choose a local coordinate \(z_i\), centered at \(p_i\), such that
\(
q=z_i^{d_i}\odif{z_i}^{4}.
\)
Write
\(
\widetilde\beta_i(\theta)
=
\varepsilon e^{\sqrt{-1}\theta}
\) for 
\(
\theta_i^-\leq\theta\leq\theta_i^+,
\)
and set
\(
N_i=d_i+4.
\)

We use the sectorial natural coordinates, stable sectors, and indexing
convention introduced in
Section~\ref{subsec:explicit-computation-for-z^d}. Suppose that the
initial point satisfies
\(
\widetilde\beta_i(\theta_i^-)\in J_{a_i}^{(j_i)}
\)
for some \(a_i\in\{0,1\}\) and
\(j_i\in\mathbb Z/N_i\mathbb Z\).

Since \(\widetilde\beta_i\) is oriented counterclockwise,starting from the
chart \(U_{j_i}\), the index increases by one whenever the arc enters to the next sectorial chart. Suppose that
\(
\widetilde\beta_i(\theta_i^+)
\in
J_{b_i}^{(j_i+k_i)}
\)
for some \(b_i\in\{0,1\}\) and \(k_i\geq0\) , where the superscript is reduced modulo \(N_i\) when referring to the actual sector.

Throughout this discussion,
\(
T_{a_ib_i}^{(k_i)}
\)
denotes precisely the transition matrix introduced in Proposition~\ref{prop:large-circular-holonomy}. Recall that \(\Psi_0\) is the fundamental matrix for the constant model \(q=\odif z^4\).

\begin{proposition}
\label{prop:holonomy-circular-arcs}
Assume that neither \(\theta_i\) nor \(\theta_i'\) is a Stokes direction. Then, as \(t\to+\infty\),
\[
\operatorname{Hol}_t(\widetilde\beta_i)
=
\Bigl(
\Psi_0
\bigl(
t^{1/4}w_{j_i+k_i}(\widetilde\beta_i(\theta_i^+))
\bigr)
\Bigr)^{-1}
\left(
S T_{a_ib_i}^{(k_i)}S^{-1}+o(I)
\right)
\Psi_0
\bigl(
t^{1/4}w_{j_i}(\widetilde\beta_i(\theta_i^-))
\bigr),
\]
with respect to the initial frame \(\mathcal F_{t,w_{j_i}}\) and the terminal frame \(\mathcal F_{t,w_{j_i+k_i}}\). Here \(T_{ab}^{(k)}\) is precisely the transition matrix introduced in Proposition~\ref{prop:large-circular-holonomy}.
\end{proposition}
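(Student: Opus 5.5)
The plan is to factor the parallel transport along \(\widetilde\beta_i\) through the zero \(p_i\) and compare each factor with the planar model \(q_t^{\mathrm M}=tz_i^{d_i}\odif{z_i}^4\). Afterwards, Proposition~\ref{prop:large-circular-holonomy} will be applied to the rescaled model arc.

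\textbf{Step 1: reduction to the model.} Write \(z^\pm=\widetilde\beta_i(\theta_i^\pm)\). Using the sectorial frames \(\mathcal F_{t,w_{j}}\), the surface transport along \(\widetilde\beta_i\) is
\[
\operatorname{Hol}_t(\widetilde\beta_i)=\Psi_t^{w_{j_i+k_i}}(z^+)^{-1}\,(\text{frame changes})\,\Psi_t^{w_{j_i}}(z^-),
\]
where the frame changes are the constant matrices \(R\) of \eqref{eq:definition-for-frame-change-R}. Since the fundamental matrices are normalized at the same points \(p_j^*\) on both the surface and the model, the same expression with \(\Psi_t^{\mathrm M,w_j}\) gives the model transport \(\operatorname{Hol}^{\mathrm M}_t(\widetilde\beta_i)\).

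Alternatively, and more simply, write \(\operatorname{Hol}_t(\widetilde\beta_i)=\Psi_t(z^+)^{-1}\Psi_t(z^-)\) in the frame \(\mathcal F_\tau\), with normalization at \(0\), and insert \(\Psi_t=G_t\Psi_t^{\mathrm M}\). This gives
\[
\operatorname{Hol}_t(\widetilde\beta_i)=\Psi_t^{\mathrm M}(z^+)^{-1}G_t(z^+)^{-1}G_t(z^-)\Psi_t^{\mathrm M}(z^-).
\]
Proposition~\ref{prop:comparison-parallel-transport-matrices} applies at \(z^\pm\), which are not on Stokes directions. It gives \(G_t(z^+)^{-1}G_t(z^-)=I+E_t\) with \(\|E_t\|\le e^{-\alpha t^{1/4}}\). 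Hence
\[
\operatorname{Hol}_t(\widetilde\beta_i)=\operatorname{Hol}^{\mathrm M}_t(\widetilde\beta_i)+\Psi_t^{\mathrm M}(z^+)^{-1}E_t\Psi_t^{\mathrm M}(z^-).
\]
The frame changes \(\mathcal F_\tau\to\mathcal F_{t,w_j}\) at the fixed points \(z^\pm\) are diagonal and polynomial in \(t\), as in the proof of Proposition~\ref{prop:sectorial-comparison-natural-frames}, so they can be inserted harmlessly.

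\textbf{Step 2: the model term.} Under \(\tau=t^{1/N_i}z_i\), the model connection becomes \(D_{q_\tau}\) with \(q_\tau=\tau^{d_i}\odif\tau^4\). The arc becomes the circular arc of radius \(r=t^{1/N_i}\varepsilon\to\infty\) in the \(\tau\)-plane, with the same angular endpoints, and the natural coordinates satisfy \(w_j(\tau)=t^{1/4}w_j(z)\). The frames \(\mathcal F_{t,w_j}\) are exactly the canonical frames of the \(\tau\)-model natural coordinates. Therefore Proposition~\ref{prop:large-circular-holonomy} applies verbatim and yields
\[
\operatorname{Hol}^{\mathrm M}_t(\widetilde\beta_i)=\Psi_0\bigl(t^{1/4}w_{j_i+k_i}(z^+)\bigr)^{-1}\bigl(ST^{(k_i)}_{a_ib_i}S^{-1}+o(I)\bigr)\Psi_0\bigl(t^{1/4}w_{j_i}(z^-)\bigr).
\]
The membership of the endpoints in \(J_{a_i}^{(j_i)}\) and \(J_{b_i}^{(j_i+k_i)}\) is scale-invariant.

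\textbf{Step 3: absorbing the error (main obstacle).} It remains to show that
\[
\Psi_0(t^{1/4}w^+)\,\Psi_t^{\mathrm M}(z^+)^{-1}E_t\,\Psi_t^{\mathrm M}(z^-)\,\Psi_0(t^{1/4}w^-)^{-1}=o(I).
\]
Inside a stable sector, Theorem~\ref{thm:sectorial-limits} gives \(\Psi^{\mathrm M}_t=(L_*+o(1))\Psi_0\) up to polynomially bounded frame factors, so these products are bounded by \(C(t)\|E_t\|\). The point needing care is that \(\Psi_0\) grows like \(e^{2t^{1/4}|w|}\) with \(|w|=r_\varepsilon:=\tfrac{4}{N_i}\varepsilon^{N_i/4}\), while \(E_t\) decays only like \(e^{-\alpha t^{1/4}}\). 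One must therefore check that the gain from Proposition~\ref{prop:conjugated-error-estimate} dominates. I would revisit its proof: there \(\alpha\) came from \(\sqrt2 r_0-4r_\varepsilon>0\) with \(r_0\) tied to \(\varepsilon_0\). Choosing \(\varepsilon\) small relative to \(\varepsilon_0\), for instance by the same inequality strengthened by a further margin of \(4r_\varepsilon\), makes \(\|E_t\|\,\|\Psi_0\|\,\|\Psi_0^{-1}\|\le C(t)e^{-\alpha' t^{1/4}}\). I expect that this is the smallness of \(\varepsilon\) already imposed in the construction of \(\widetilde c_\gamma\). The conjugated error is then \(o(I)\), and combining with Step 2 gives the claim.
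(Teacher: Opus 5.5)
Your proof is correct and follows the paper's strategy: rescale by \(\tau=t^{1/N_i}z_i\), compare the surface transport with the planar model, and apply Proposition~\ref{prop:large-circular-holonomy} to the model arc of radius \(t^{1/N_i}\varepsilon\). The one real difference is the comparison lemma.

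The paper invokes Proposition~\ref{prop:sectorial-comparison-natural-frames}, whose fundamental matrices are normalized at \(p_j^*\). In the \(\tau\)-plane this base point is \(t^{1/N_i}p_j^*\to\infty\). Relating that comparison to \(G_t\) requires conjugating by \(\Psi_t^{\mathrm M}(p_j^*)C_{t,j}(p_j^*)\), which is exponentially large, so the paper's one-line proof hides why the \(o(I)\) survives. You instead use the \(0\)-normalized comparison of Proposition~\ref{prop:comparison-parallel-transport-matrices} and write the error explicitly as \(\Psi_t^{\mathrm M}(z^+)^{-1}E_t\Psi_t^{\mathrm M}(z^-)\). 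This is the cleaner route.

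However, the ``main obstacle'' of your Step 3 does not exist, and you should not shrink \(\varepsilon\) further. Since \(\tau=0\) is a \(t\)-independent point of the rescaled plane, fix an auxiliary point \(\tau_*\) in the relevant chart. Applying Theorem~\ref{thm:sectorial-limits} along the fixed ray through \(\tau^-=t^{1/N_i}z^-\) gives
\[
\Psi_t^{\mathrm M}(z^-)\,C_{t,j_i}(z^-)=K\bigl(L+o(1)\bigr)\Psi_0\bigl(t^{1/4}w_{j_i}(z^-)\bigr),
\]
and similarly
\[
C_{t,j_i+k_i}(z^+)^{-1}\Psi_t^{\mathrm M}(z^+)^{-1}=\Psi_0\bigl(t^{1/4}w_{j_i+k_i}(z^+)\bigr)^{-1}\bigl(L'+o(1)\bigr)^{-1}(K')^{-1}.
\]
Here \(\mathcal F_{t,w_j}=\mathcal F_\tau C_{t,j}\), and \(K=\Psi^{\mathrm M}(\tau_*)C(\tau_*)\), \(K'\), \(L\), \(L'\) are independent of \(t\).

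Hence, in the sectorial frames, the error equals
\[
\Psi_0\bigl(t^{1/4}w_{j_i+k_i}(z^+)\bigr)^{-1}\Bigl[\bigl(L'+o(1)\bigr)^{-1}(K')^{-1}E_tK\bigl(L+o(1)\bigr)\Bigr]\Psi_0\bigl(t^{1/4}w_{j_i}(z^-)\bigr),
\]
and the bracket is \(O(e^{-\alpha t^{1/4}})\). The exponential growth of \(\Psi_0\) is exactly the growth already displayed in the statement, and even the diagonal frame factors cancel.

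So the first sentence of your Step 3 already finishes the proof, and the estimate \(\|E_t\|\,\|\Psi_0\|\,\|\Psi_0^{-1}\|\to0\) is unnecessary. Your expectation that the strengthened smallness of \(\varepsilon\) is already imposed is also mistaken. The paper only uses \(4r_\varepsilon<\sqrt2\,r_0\), which is what makes \(\alpha>0\) in Proposition~\ref{prop:conjugated-error-estimate}.
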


\begin{proof}
Under the rescaling
\(
\tau=t^{1/N_i}z_i,
\)
we have
\[
t z_i^{d_i}\odif{z_i}^{4}
=
\tau^{d_i}\odif{\tau}^{4},
\]
and the arc \(\widetilde\beta_i\) is identified with the large circular arc \(t^{1/N_i}\beta_{i}\).

Moreover, the rescaling identifies the renormalized frame \(\mathcal F_{t,w_j}\) on the original \(z_i\)-disk with the canonical model frame \(\mathcal F_{w_j(\tau)}\) on the rescaled \(\tau\)-plane. Applying Proposition~\ref{prop:sectorial-comparison-natural-frames} and Proposition~\ref{prop:large-circular-holonomy} to the \(\tau\)-plane yields the proposition. We only point out that in Proposition~\ref{prop:large-circular-holonomy}, we use \(\Psi_0^{w_k}\). Note that \(\Psi_0^{w_k}(t^{1/N_i}z)=\Psi_0(w_k(t^{1/N_i}z))=\Psi_0(t^{1/4}w_k(z))\), which is the form used here.
\end{proof}

\subsection{Asymptotics of a single holonomy block}
\label{subsec:asymptotics-single-holonomy-block}
For a fixed index \(i\), we study the asymptotic behavior as \(t\to+\infty\) of the single holonomy block
\[
\operatorname{Hol}_t(\widetilde\delta_i)
\operatorname{Hol}_t(\widetilde\beta_i)
\operatorname{Hol}_t(\widetilde\alpha_i).
\]
We first explain how the diagonal terms arising from the straight pieces combine with the constant-model matrices that appear in the circular-arc asymptotics.

Let \(\gamma\) be an oriented straight segment contained in a natural
coordinate chart \(w\). Suppose that \(\gamma\) is parametrized by
\[
w(\gamma(s))
=
w_0+s e^{\sqrt{-1}\theta},
\qquad
0\leq s\leq L,
\]
where \(L\) is the \(|q|^{1/2}\)-length of \(\gamma\). For the four local fourth roots \(\phi_k = \sqrt{-1}^{1-k}\odif w\) (\(k=1,2,3,4\)), define
\[
\upsilon_k(\gamma) := -2\operatorname{Re}\int_\gamma\phi_k.
\]
We associate to \(\gamma\) the diagonal matrix
\begin{equation}
\label{eq:definition-D-gamma}
D_w(\gamma)
:=
\operatorname{diag}
\bigl(
\upsilon_1(\gamma),
\upsilon_2(\gamma),
\upsilon_3(\gamma),
\upsilon_4(\gamma)
\bigr)
\end{equation}
For the above parametrization, we have
\[
\int_\gamma\phi_k
=
\sqrt{-1}^{1-k}L e^{\sqrt{-1}\theta}.
\]
Hence
\[
\upsilon_k(\gamma)
=
-2L\cos\Bigl(\theta-\frac{(k-1)\pi}{2}\Bigr),
\]
and therefore
\begin{equation}
\label{eq:D-straight-segment-explicit}
D_w(\gamma)
=
\operatorname{diag}
\left(
-2L\cos\theta,
-2L\sin\theta,
2L\cos\theta,
2L\sin\theta
\right).
\end{equation}
In particular, the position of the largest diagonal entry of
\(D_w(\gamma)\) depends only on the direction angle \(\theta\) of
\(\gamma\) in the natural coordinate \(w\).

Recall that the fundamental matrix \(
\Psi_0\) of the constant model is given by 
\begin{equation}
    \Psi_0(z)
       = S\exp \left(D(z)\right)S^{-1} 
       = S\operatorname{diag}
       \left(
        e^{2|z|\cos\theta},e^{2|z|\sin\theta},e^{-2|z|\cos\theta},e^{-2|z|\sin\theta}
       \right)
        S^{-1}.
 \end{equation}
for \(z=|z|e^{\sqrt{-1}\theta}\). 

If \(x\) and \(y\)
are respectively the initial and terminal points of \(\gamma\), then
\[
w(y)-w(x)=Le^{\sqrt{-1}\theta}.
\]
Hence 
\begin{equation}
\label{eq:D-gamma-Psi-zero}
S\exp\bigl(t^{1/4}D_w(\gamma)\bigr)S^{-1}
=
\Psi_0\bigl(t^{1/4}w(y)\bigr)^{-1}
\Psi_0\bigl(t^{1/4}w(x)\bigr).
\end{equation}
Although the natural-coordinate frame is not defined at a zero of
\(q\), the sectorial natural coordinate \(w\) extends continuously to
the zero, with value \(w=0\). Moreover, the constant-model matrix
\(\Psi_0\) is defined on the whole complex plane and satisfies
\(
\Psi_0(0)=I.
\)
Consequently, \eqref{eq:D-gamma-Psi-zero} continues to hold when one of the endpoints of \(\gamma\) is a zero.

We will also use the additivity of \(D_w\). Suppose that
\[
\gamma=\gamma_1*\gamma_2
\]
is the concatenation of two collinear straight segments with the same orientation, expressed in the same natural coordinate. Then
\[
D_w(\gamma)
=
D_w(\gamma_1)+D_w(\gamma_2).
\]
Since these matrices are diagonal, it follows that
\begin{equation}
\label{eq:additivity-D-straight-segments}
\exp\bigl(t^{1/4}D_w(\gamma)\bigr)
=
\exp\bigl(t^{1/4}D_w(\gamma_1)\bigr)
\exp\bigl(t^{1/4}D_w(\gamma_2)\bigr).
\end{equation}

For each zero \(p_i\), let
\[
x_i^-
=
\widetilde\alpha_i\cap\partial B_i(\varepsilon),
\qquad
x_i^+
=
\widetilde\delta_i\cap\partial B_i(\varepsilon).
\]
The circular arc \(\widetilde\beta_i\) is oriented from \(x_i^-\) to
\(x_i^+\). As in Proposition~\ref{prop:holonomy-circular-arcs}, let
\(J_{a_i}^{(j_i)}\) be the stable sector containing the initial point \(x_i^-\). Thus the natural coordinate used at \(x_i^-\) is
\(w_{j_i}\). Since \(\widetilde\beta_i\) is oriented
counterclockwise, the sector index increases along the arc. Let
\(k_i\) denote the relative increase of the sector index. Then the terminal point \(x_i^+\) lies in
\(
J_{b_i}^{(j_i+k_i)}
\)
for some \(b_i\in\{0,1\}\), and the natural coordinate used at
\(x_i^+\) is \(w_{j_i+k_i}\), with the sector index understood modulo
\(N_i\).

Accordingly, we set
\[
w_i^-:=w_{j_i},
\qquad
w_i^+:=w_{j_i+k_i}.
\]
It follows from \eqref{eq:D-gamma-Psi-zero} that, the leading terms in Proposition~\ref{prop:holonomy-straight-pieces} are 
\begin{align}
S\exp\bigl(t^{1/4}D_{w_i^-}(\widetilde\alpha_i)\bigr)S^{-1}
&=
\Psi_0\bigl(t^{1/4}w_i^-(x_i^-)\bigr)^{-1}
\Psi_0\bigl(t^{1/4}w_i^-(m_{i+1})\bigr),
\label{eq:alpha-Psi-zero}
\\
S\exp\bigl(t^{1/4}D_{w_i^+}(\widetilde\delta_i)\bigr)S^{-1}
&=
\Psi_0\bigl(t^{1/4}w_i^+(m_i)\bigr)^{-1}
\Psi_0\bigl(t^{1/4}w_i^+(x_i^+)\bigr).
\label{eq:delta-Psi-zero}
\end{align}
Here \(m_i\) is the chosen point in the interior of \(c_i\). Note that the indices are understood modulo \(\ell\).

We first consider the case in which neither endpoint of
\(\widetilde\beta_i\) lies on a Stokes direction; that is, both
\(x_i^-\) and \(x_i^+\) are assumed to avoid the Stokes directions.
Other cases will
be treated separately later.

\begin{proposition}
\label{prop:asymptotics-single-holonomy-block-no-Stokes-directions}
Under the above assumption that neither endpoint of
\(\widetilde\beta_i\) lies on a Stokes direction and the notation
of Proposition~\ref{prop:holonomy-straight-pieces}. With respect to the frame
\(\mathcal F_{t,w_i^-}\) at \(m_{i+1}\) and the frame
\(\mathcal F_{t,w_i^+}\) at \(m_i\), we have
    \[
\operatorname{Hol}_t(\widetilde\delta_i)
 \operatorname{Hol}_t(\widetilde\beta_i)
 \operatorname{Hol}_t(\widetilde\alpha_i)=
S\exp\bigl(t^{1/4}D_{w_i^+}(\delta_i)\bigr)
T_i
\exp\bigl(t^{1/4}D_{w_i^-}(\alpha_i)\bigr)
S^{-1}
+
o\left(
e^{t^{1/4}(\lambda_i+\mu_i)}
\right)
\]
as \(t\to+\infty\). Here 
\[
\lambda_i:=
\max_{1\leq k\leq4}
\left\{
-2\operatorname{Re}\int_{\delta_i}\phi_k
\right\}, \qquad 
\mu_i:=
\max_{1\leq k\leq4}
\left\{
-2\operatorname{Re}\int_{\alpha_i}\phi_k
\right\}.
\]
And \(
T_i=T_{a_i b_i}^{(k_i)}
\)
is precisely the transition matrix defined in
Proposition~\ref{prop:large-circular-holonomy}.
\end{proposition}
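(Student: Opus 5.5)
We multiply the three asymptotic expansions already available and rewrite every constant-model factor $\Psi_0$ as a diagonal exponential via \eqref{eq:alpha-Psi-zero}, \eqref{eq:delta-Psi-zero} and \eqref{eq:D-gamma-Psi-zero}.

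\textbf{Step 1: the exact leading product.} Write $P_\alpha:=\Psi_0(t^{1/4}w_i^-(x_i^-))$ and $P_\delta:=\Psi_0(t^{1/4}w_i^+(x_i^+))$. By Proposition~\ref{prop:holonomy-circular-arcs}, $\operatorname{Hol}_t(\widetilde\beta_i)=P_\delta^{-1}(ST_iS^{-1}+o(I))P_\alpha$. By Proposition~\ref{prop:holonomy-straight-pieces} together with \eqref{eq:alpha-Psi-zero} and \eqref{eq:delta-Psi-zero}, the leading term of $\operatorname{Hol}_t(\widetilde\alpha_i)$ is $P_\alpha^{-1}\Psi_0(t^{1/4}w_i^-(m_{i+1}))$, and that of $\operatorname{Hol}_t(\widetilde\delta_i)$ is $\Psi_0(t^{1/4}w_i^+(m_i))^{-1}P_\delta$. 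In the product of leading terms, $P_\alpha$ and $P_\delta$ cancel. Since $\Psi_0(0)=I$ and the sectorial coordinates extend by $w=0$ at the zero, applying \eqref{eq:D-gamma-Psi-zero} to the full segments $\alpha_i$ and $\delta_i$ gives
\[
\Psi_0\bigl(t^{1/4}w_i^-(m_{i+1})\bigr)=S e^{t^{1/4}D_{w_i^-}(\alpha_i)}S^{-1},\qquad
\Psi_0\bigl(t^{1/4}w_i^+(m_i)\bigr)^{-1}=S e^{t^{1/4}D_{w_i^+}(\delta_i)}S^{-1}.
\]
Hence the product of leading terms is exactly the stated main term $S e^{t^{1/4}D_{w_i^+}(\delta_i)}T_ie^{t^{1/4}D_{w_i^-}(\alpha_i)}S^{-1}$. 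This is the same computation as in \cite{loftin2026limits}.

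\textbf{Step 2: the error terms.} Expand each of the three factors as (leading term) $+$ (error) and collect all cross terms. Each cross term contains at least one error factor. The arc error is $P_\delta^{-1}o(I)P_\alpha$. Using \eqref{eq:alpha-Psi-zero} and \eqref{eq:delta-Psi-zero}, the cross term built from this error and the two leading straight terms equals
\[
Se^{t^{1/4}D(\delta_i)}S^{-1}\,o(I)\,Se^{t^{1/4}D(\alpha_i)}S^{-1},
\]
which is $o\bigl(e^{t^{1/4}(\lambda_i+\mu_i)}\bigr)$, since the norms of the two diagonal exponentials are $e^{t^{1/4}\lambda_i}$ and $e^{t^{1/4}\mu_i}$.

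For the terms involving the straight-piece errors, bound everything by norms. Set
\[
\tilde\mu_i=\mu_i-\mu_i',\qquad \tilde\lambda_i=\lambda_i-\lambda_i',
\]
where $\mu_i'$ and $\lambda_i'$ are the corresponding maxima for the small radial pieces inside $B_i(\varepsilon)$. These radial pieces have the same direction as the full segments, so they shift the same diagonal index; this additivity is \eqref{eq:additivity-D-straight-segments}. Then:
\begin{itemize}
\item $\|\operatorname{Hol}_t(\widetilde\beta_i)\|\lesssim\|P_\delta^{-1}\|\,\|P_\alpha\|$, and $\|P_\alpha\|\le e^{t^{1/4}\cdot 2r_\varepsilon}$ with $r_\varepsilon=\frac{4}{N_i}\varepsilon^{N_i/4}$; the same bound holds for $\|P_\delta^{-1}\|$.
\item A term such as $E_\delta\operatorname{Hol}_t(\widetilde\beta_i)\,\mathrm{Lead}_\alpha$ is then bounded by $o\bigl(e^{t^{1/4}\tilde\lambda_i}\bigr)\,e^{4t^{1/4}r_\varepsilon}\,e^{t^{1/4}\tilde\mu_i}$.
\end{itemize}

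\textbf{Main obstacle.} This naive bound is not good enough: $\tilde\lambda_i+\tilde\mu_i+4r_\varepsilon$ may exceed $\lambda_i+\mu_i$, and the $o(\cdot)$ in Proposition~\ref{prop:holonomy-straight-pieces} carries no explicit exponential gain. I would fix this by using the sharper form available in the proof of Proposition~\ref{prop:holonomy-straight-pieces}: the subsegment estimates have the multiplicative form $S e^{t^{1/4}D}(I+O(t^{-1/8}))S^{-1}$. I would keep this structure and write
\[
\operatorname{Hol}_t(\widetilde\alpha_i)=P_\alpha^{-1}\bigl(I+E'_\alpha\bigr)\Psi_0\bigl(t^{1/4}w_i^-(m_{i+1})\bigr)
\]
with $E'_\alpha$ conjugated appropriately, and similarly for $\widetilde\delta_i$. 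If these conjugated errors tend to zero, then the $P_\alpha$ and $P_\delta$ factors cancel exactly, as in Step 1, and everything reduces to $Se^{t^{1/4}D(\delta_i)}(T_i+o(I))e^{t^{1/4}D(\alpha_i)}S^{-1}$, which gives the claim.

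If the conjugated error is not small uniformly, I would fall back on shrinking $\varepsilon$. This makes $r_\varepsilon$ and the radial corrections arbitrarily small compared with the exponential gap coming from the Collier--Li estimate, at the cost of taking the limits in the order: $t\to\infty$ first, with $\varepsilon$ fixed but small. The justification of this step is where I expect the real work.
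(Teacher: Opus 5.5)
Your Step 1 is correct. It is the paper's leading-term computation: the paper rewrites $P_\delta^{-1}$ and $P_\alpha$ as radial exponentials and uses additivity of $D$, while you cancel them and apply \eqref{eq:D-gamma-Psi-zero} to the full segments, which is the same identity. Your treatment of the cross term carrying only the arc error is also fine.

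\textbf{The gap.} The remaining cross terms are left unresolved. The ``main obstacle'' you identify comes from the lossy bound $\|P_\alpha\|,\|P_\delta^{-1}\|\leq e^{2t^{1/4}r_\varepsilon}$. That bound is off by a genuine exponential factor. If $\vartheta$ is the argument of the radial ray, the true rate is $2r_\varepsilon\max(|\cos\vartheta|,|\sin\vartheta|)$, which is strictly less than $2r_\varepsilon$ whenever the endpoint avoids the Stokes directions. So with your bound the estimate really does fail.

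Neither remedy closes the gap. Shrinking $\varepsilon$ cannot work: the straight-piece error in Proposition~\ref{prop:holonomy-straight-pieces}, even in the Collier--Li form, gains only a factor $O(t^{-1/8})$ over the leading term. A fixed excess $e^{ct^{1/4}}$ with $c>0$ is therefore never absorbed, however small $c$ is. The conjugated error $E'_\alpha$ is, up to $S$, the $O(t^{-1/8})$ error conjugated by $e^{t^{1/4}D_{w_i^-}(\alpha_i)}$. That conjugation generally multiplies off-diagonal entries by exponentially large factors, so there is no reason for $E'_\alpha$ to tend to zero.

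\textbf{The missing step.} You already have what you need. Apply \eqref{eq:D-gamma-Psi-zero} to the radial pieces $\alpha_i^0$ (from $x_i^-$ to $p_i$) and $\delta_i^0$ (from $p_i$ to $x_i^+$). This gives the exact identities
\[
P_\alpha=S\exp\bigl(t^{1/4}D_{w_i^-}(\alpha_i^0)\bigr)S^{-1},\qquad P_\delta^{-1}=S\exp\bigl(t^{1/4}D_{w_i^+}(\delta_i^0)\bigr)S^{-1}.
\]
Hence $\|P_\alpha\|\leq Ce^{t^{1/4}\mu_i'}$ and $\|P_\delta^{-1}\|\leq Ce^{t^{1/4}\lambda_i'}$, and so $\|\operatorname{Hol}_t(\widetilde\beta_i)\|\leq Ce^{t^{1/4}(\lambda_i'+\mu_i')}$.

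The radial and truncated pieces are collinear with the same orientation, so their maximal diagonal entries sit at the same index. This gives $\lambda_i=\tilde\lambda_i+\lambda_i'$ and $\mu_i=\tilde\mu_i+\mu_i'$. Each remaining cross term is then a product of three factors of norms $O(e^{t^{1/4}\tilde\lambda_i})$, $O(e^{t^{1/4}(\lambda_i'+\mu_i')})$ and $O(e^{t^{1/4}\tilde\mu_i})$. At least one factor is $o(\cdot)$ of its weight, so by sub-multiplicativity the term is $o\bigl(e^{t^{1/4}(\lambda_i+\mu_i)}\bigr)$. This is exactly the paper's error estimate, and no limit in $\varepsilon$ is needed.
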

\begin{proof}
Let \(\alpha_i^0\) and \(\delta_i^0\) denote the radial segments inside
\(B_i(\varepsilon)\), oriented so that
\[
\alpha_i
=
\widetilde\alpha_i * \alpha_i^0,
\qquad
\delta_i
=
\delta_i^0 * \widetilde\delta_i.
\]
Thus \(\alpha_i^0\) runs from \(x_i^-\) to \(p_i\), whereas
\(\delta_i^0\) runs from \(p_i\) to \(x_i^+\).

Under the assumption on the endpoints,
Proposition~\ref{prop:holonomy-circular-arcs} gives
\[
\operatorname{Hol}_t(\widetilde\beta_i)
=
\Psi_0\bigl(t^{1/4}w_i^+(x_i^+)\bigr)^{-1}
\left(
S T_iS^{-1}+o(I)
\right)
\Psi_0\bigl(t^{1/4}w_i^-(x_i^-)\bigr).
\]
Since
\[
w_i^+(p_i)=w_i^-(p_i)=0
\qquad\text{and}\qquad
\Psi_0(0)=I,
\]
formula~\eqref{eq:D-gamma-Psi-zero}, applied to the two radial
segments, yields
\[
\Psi_0\bigl(t^{1/4}w_i^+(x_i^+)\bigr)^{-1}
=
S\exp\bigl(t^{1/4}D_{w_i^+}(\delta_i^0)\bigr)S^{-1}, \qquad
\Psi_0\bigl(t^{1/4}w_i^-(x_i^-)\bigr)
=
S\exp\bigl(t^{1/4}D_{w_i^-}(\alpha_i^0)\bigr)S^{-1}.
\]
Consequently,
\begin{equation}
\label{eq:circular-holonomy-radial-D-form}
\operatorname{Hol}_t(\widetilde\beta_i)
=
S\exp\bigl(t^{1/4}D_{w_i^+}(\delta_i^0)\bigr)
\left(
T_i+o(I)
\right)
\exp\bigl(t^{1/4}D_{w_i^-}(\alpha_i^0)\bigr)
S^{-1}.
\end{equation}

By Proposition~\ref{prop:holonomy-straight-pieces}and \eqref{eq:D-gamma-Psi-zero}, we obtain
\begin{align*}
\operatorname{Hol}_t(\widetilde\alpha_i)
&=
S\exp\bigl(t^{1/4}D_{w_i^-}(\widetilde\alpha_i)\bigr)S^{-1}
+
o\left(e^{t^{1/4}\widetilde \mu_i}\right),
\\
\operatorname{Hol}_t(\widetilde\delta_i)
&=
S\exp\bigl(t^{1/4}D_{w_i^+}(\widetilde\delta_i)\bigr)S^{-1}
+
o\left(e^{t^{1/4}\widetilde\lambda_i}\right),
\end{align*}
The segments \(\delta_i^0\) and \(\widetilde\delta_i\) are collinear
and have the same orientation. The same is true of
\(\widetilde\alpha_i\) and \(\alpha_i^0\). Hence the ordering of the
four diagonal entries is unchanged along each pair, and
\[
D_{w_i^+}(\delta_i)
=
D_{w_i^+}(\delta_i^0)+D_{w_i^+}(\widetilde\delta_i),
\qquad
D_{w_i^-}(\alpha_i)
=
D_{w_i^-}(\widetilde\alpha_i)+D_{w_i^-}(\alpha_i^0).
\]
Since all these matrices are diagonal, their exponentials commute.
Multiplying the leading terms in the three holonomy factors therefore
gives
\begin{align*}
&S\exp\bigl(t^{1/4}D_{w_i^+}(\widetilde\delta_i)\bigr)
 \exp\bigl(t^{1/4}D_{w_i^+}(\delta_i^0)\bigr)
 T_i
 \exp\bigl(t^{1/4}D_{w_i^-}(\alpha_i^0)\bigr)
 \exp\bigl(t^{1/4}D_{w_i^-}(\widetilde\alpha_i)\bigr)
 S^{-1}
\\
=&
S\exp\bigl(t^{1/4}D_{w_i^+}(\delta_i)\bigr)
T_i
\exp\bigl(t^{1/4}D_{w_i^-}(\alpha_i)\bigr)
S^{-1}.
\end{align*}

It remains to estimate the error terms.
 Note that the maximal
exponential weights of the radial portions are \(\lambda_i-\widetilde\lambda_i\) and \(\mu_i-\widetilde\mu_i\), respectively. Thus we can rewrite \(\operatorname{Hol}_t(\widetilde\beta_i)\) as
\[
\operatorname{Hol}_t(\widetilde\beta_i)=O\left(
e^{t^{1/4}
\left[
(\lambda_i-\widetilde\lambda_i)
+
(\mu_i-\widetilde\mu_i)
\right]}
\right) + o\left(
e^{t^{1/4}
\left[
(\lambda_i-\widetilde\lambda_i)
+
(\mu_i-\widetilde\mu_i)
\right]}
\right).
\]
Similarly,
\begin{align*}
\operatorname{Hol}_t(\widetilde\alpha_i)
&=
O\left(e^{t^{1/4}\widetilde \mu_i}\right)
+
o\left(e^{t^{1/4}\widetilde \mu_i}\right),
\\
\operatorname{Hol}_t(\widetilde\delta_i)
&=
O\left(e^{t^{1/4}\widetilde\lambda_i}\right)
+
o\left(e^{t^{1/4}\widetilde\lambda_i}\right),
\end{align*}
Note that multiplying the three
asymptotic expansions, every term containing at least one
lower-order factor is
\(
o\left(e^{t^{1/4}(\lambda_i+\mu_i)}\right).
\)
Hence we obtain this proposition.
\end{proof}
We now identify the dominant contribution to the leading term of a
single holonomy block. Under the assumption that neither
endpoint of \(\widetilde\beta_i\) lies on a Stokes direction, the
directions of the full straight segments \(\alpha_i\) and
\(\delta_i\) also avoid the Stokes directions in the corresponding
natural coordinates. By the explicit formula
\eqref{eq:D-straight-segment-explicit} and Definition~\ref{def:stokes-directions} for Stokes directions, each of the diagonal matrices
\(D(\alpha_i)\) and \(D(\delta_i)\) therefore has a unique largest
entry.

\begin{proposition}
\label{prop:dominant-term-single-holonomy-block-no-Stokes-directions}
Assume that neither endpoint of \(\widetilde\beta_i\) lies on a Stokes
direction, namely,
\[
x_i^-\in J_{a_i}^{(j_i)},
\qquad
x_i^+\in J_{b_i}^{(j_i+k_i)},
\]
for some \(a_i,b_i\in\{0,1\}\).

Then the incoming and outgoing maximal index sets are given by
\[
\begin{array}{c|cc}
a_i & 0 & 1\\
\hline
\mathcal S_i & \{4\} & \{1\}
\end{array}
\qquad\text{and}\qquad
\begin{array}{c|cc}
b_i & 0 & 1\\
\hline
\mathcal R_i & \{2\} & \{3\}.
\end{array}
\]
and the geodesic condition gives
\[
4\leq 2k_i+b_i-a_i\leq 2N_i-4,
\]
or equivalently,
\[
2+a_i(1-b_i)
\leq k_i\leq
N_i-2-(1-a_i)b_i.
\]
Moreover, with respect to the frame
\(\mathcal F_{t,w_i^-}\) at \(m_{i+1}\) and the frame
\(\mathcal F_{t,w_i^+}\) at \(m_i\), as \(t\to+\infty\),
\[
\operatorname{Hol}_t(\widetilde\delta_i)
\operatorname{Hol}_t(\widetilde\beta_i)
\operatorname{Hol}_t(\widetilde\alpha_i)
=
e^{t^{1/4}(\lambda_i+\mu_i)}
S\mathcal C_iS^{-1}
+
o\left(
e^{t^{1/4}(\lambda_i+\mu_i)}
\right),
\]
where
\[
\mathcal C_i
=
\sum_{\substack{r\in\mathcal R_i\\ s\in\mathcal S_i}}
\bigl(T^{k_i}\bigr)_{r,s}E_{r,s}.
\]
Here \(T^{k_i}\) is the transition matrix defined in
Proposition~\ref{prop:large-circular-holonomy}.
\end{proposition}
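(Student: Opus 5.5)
The proof has three steps: identify the largest diagonal entries of \(D_{w_i^-}(\alpha_i)\) and \(D_{w_i^+}(\delta_i)\) from the stable sectors of \(x_i^\mp\); translate the geodesic condition at \(p_i\) into the inequality on \(k_i\); and extract the dominant part of the leading term in Proposition~\ref{prop:asymptotics-single-holonomy-block-no-Stokes-directions}.

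\emph{Maximal index sets.} In the chart \(w_i^-=w_{j_i}\) the zero is \(w=0\), and \(\alpha_i\) is collinear with the radial segment \(\alpha_i^0\) from \(x_i^-\) to \(0\). Its direction angle is therefore \(\arg w_i^-(x_i^-)+\pi\).
\begin{itemize}
\item If \(a_i=0\), this angle lies in \((\pi/2,3\pi/4)\). By \eqref{eq:D-straight-segment-explicit} the entries are \(-2L\cos\theta,-2L\sin\theta,2L\cos\theta,2L\sin\theta\). Since \(\sin\theta>|\cos\theta|\), the fourth entry is the unique maximum, so \(\mathcal S_i=\{4\}\).
\item If \(a_i=1\), the angle lies in \((3\pi/4,\pi)\). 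Now \(-\cos\theta>\sin\theta\geq0\), so the first entry wins and \(\mathcal S_i=\{1\}\).
\end{itemize}
Similarly, \(\delta_i\) leaves \(0\) in the direction \(\arg w_i^+(x_i^+)\).
\begin{itemize}
\item If \(b_i=0\), the direction lies in \((-\pi/2,-\pi/4)\). Then \(-\sin\theta>\cos\theta>0\), so the second entry is the maximum and \(\mathcal R_i=\{2\}\).
\item If \(b_i=1\), the direction lies in \((-\pi/4,0)\). Then \(\cos\theta>-\sin\theta\), so the third entry is the maximum and \(\mathcal R_i=\{3\}\).
\end{itemize}
Because the full segments are collinear with the radial pieces, these maxima are exactly \(\mu_i\) and \(\lambda_i\).

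\emph{Geodesic condition.} By the formula for \(w_j\), the sector \(J_0^{(j)}\) corresponds in the \(z\)-plane to \(\arg z\in((2j-2)\pi/N_i,(2j-1)\pi/N_i)\), and \(J_1^{(j)}\) to \(((2j-1)\pi/N_i,2j\pi/N_i)\). Stokes rays are spaced by \(\pi/N_i\) in \(\arg z\), which is \(\pi/4\) in \(|q|^{1/2}\)-cone angle (the factor \(N_i/4\)). Hence the counterclockwise cone angle from the incoming direction (pointing towards \(x_i^-\)) to the outgoing direction (pointing towards \(x_i^+\)) lies strictly between \((m-1)\pi/4\) and \((m+1)\pi/4\), where \(m=2k_i+b_i-a_i\).

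The radial pieces are fixed rays, so this angle does not depend on \(\varepsilon\). Since \(c_\gamma\) is a geodesic, both angles at \(p_i\) are \(\geq\pi\). The first gives \(m+1>4\). The complementary angle, out of the total \(2N_i\pi/4\), gives \(2N_i-m+1>4\). As \(m\) is an integer, this is \(4\le m\le 2N_i-4\). Splitting according to the parity of \(b_i-a_i\) gives the equivalent bounds on \(k_i\).

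\emph{Leading term.} Start from Proposition~\ref{prop:asymptotics-single-holonomy-block-no-Stokes-directions}. The \((r,s)\) entry of \(\exp(t^{1/4}D_{w_i^+}(\delta_i))T_i\exp(t^{1/4}D_{w_i^-}(\alpha_i))\) is \(e^{t^{1/4}(D^+_{rr}+D^-_{ss})}(T_i)_{r,s}\). By uniqueness of the maxima, every pair with \(r\notin\mathcal R_i\) or \(s\notin\mathcal S_i\) has exponent strictly below \(\lambda_i+\mu_i\), so those entries are \(o(e^{t^{1/4}(\lambda_i+\mu_i)})\). It remains to check that \((T_i)_{r,s}=(T^{k_i})_{r,s}\) for \(r\in\mathcal R_i\) and \(s\in\mathcal S_i\), using \(T_i=A_{b_i}^{-1}T^{k_i}A_{a_i}\).
\begin{itemize}
\item If \(a_i=1\), then \(s=1\), and column \(1\) of \(U_1=I+s_1E_{1,2}-s_1E_{4,3}\) is \(e_1\), so right multiplication by \(U_1\) does not change column \(1\).
\item If \(b_i=1\), then \(r=3\), and row \(3\) of \(U_1^{-1}=I-s_1E_{1,2}+s_1E_{4,3}\) is \(e_3^{\mathsf T}\), so left multiplication by \(U_1^{-1}\) does not change row \(3\).
\item If \(a_i=0\) or \(b_i=0\), the corresponding \(A\) is the identity and there is nothing to check.
\end{itemize}
This gives \(\mathcal C_i\). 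I expect the main obstacle to be the cone-angle bookkeeping in the second step, namely matching the sector indices \((j,a)\) with flat angles consistently with the orientation conventions. The other two steps are direct computations.
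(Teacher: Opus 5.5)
Your proposal is correct and follows essentially the same route as the paper. You read off the dominant positions \(\mathcal S_i,\mathcal R_i\) from the stable sectors using the explicit diagonal of \(D_w\); you obtain \(4\leq 2k_i+b_i-a_i\leq 2N_i-4\) from the local geodesic condition at \(p_i\), where the paper counts the Stokes rays crossed by \(\widetilde\beta_i\) and its complementary arc, which is equivalent to your cone-angle bounds; and you identify \((T_i)_{r,s}=(T^{k_i})_{r,s}\) because column \(1\) of \(U_1\) and row \(3\) of \(U_1^{-1}\) are standard basis vectors.
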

\begin{proof} 
Note that the initial and terminal points of
\(\widetilde\beta_i\) satisfy
\[
x_i^-\in J_{a_i}^{(j_i)},
\qquad
x_i^+\in J_{b_i}^{(j_i+k_i)},
\]
for some \(a_i,b_i\in\{0,1\}\). And 
\(
w_i^-=w_{j_i}, w_i^+=w_{j_i+k_i}.
\)

Since the path is locally geodesic at \(p_i\), both angles determined
by the incoming and outgoing rays are at least \(\pi\). In particular,
the angle swept out by \(\widetilde\beta_i\) is at least \(\pi\), and
so is its complementary angle.

Since two consecutive Stokes directions are separated by an angle
\(\pi/4\), both \(\widetilde\beta_i\) and its complementary arc cross
at least four Stokes directions. Let
\(
N_i=d_i+4.
\)
The total cone angle at \(p_i\) is
\(
\frac{N_i\pi}{2},
\)
and hence there are \(2N_i\) Stokes directions around \(p_i\).

The number of Stokes directions crossed from
\(J_{a_i}^{(j_i)}\) to \(J_{b_i}^{(j_i+k_i)}\) is
\[
2k_i+b_i-a_i.
\]
The complementary arc therefore crosses
\[
2N_i-\bigl(2k_i+b_i-a_i\bigr)
\]
Stokes directions. Consequently,
\[
4
\leq
2k_i+b_i-a_i
\leq
2N_i-4.
\]
Since \(a_i,b_i\in\{0,1\}\) and \(k_i\) is an integer, this is
equivalent to
\[
k_i\in
\left\{
2+a_i(1-b_i),\ldots,
N_i-2-(1-a_i)b_i
\right\}.
\]

Now, we identify the unique indices \(r_i,s_i\in\{1,2,3,4\}\) such that
\[
\lambda_{i,r_i}=\lambda_i,
\qquad
\mu_{i,s_i}=\mu_i.
\]
Let
\[
\vartheta_i^-:=\arg w_{j_i}(x_i^-),
\qquad
\vartheta_i^+:=\arg w_{j_i+k_i}(x_i^+).
\]
The segment \(\alpha_i\) is oriented toward \(p_i\), so its direction
in the coordinate \(w_{j_i}\) is \(\vartheta_i^-+\pi\). If \(a_i=0\),
then
\[
-\frac{\pi}{2}
<
\vartheta_i^-
<
-\frac{\pi}{4},
\]
and hence
\[
\frac{\pi}{2}
<
\vartheta_i^-+\pi
<
\frac{3\pi}{4}.
\]
By \eqref{eq:D-straight-segment-explicit}, the unique largest entry of
\(D_{w_i^-}(\alpha_i)=D_{w_{j_i}}(\alpha_i)\) occurs in position \(4\). If \(a_i=1\), then
\[
\frac{3\pi}{4}
<
\vartheta_i^-+\pi
<
\pi,
\]
and the unique largest entry occurs in position \(1\). Therefore,
\[
s_i=
\begin{cases}
4, & a_i=0,\\
1, & a_i=1.
\end{cases}
\]

Similarly, \(\delta_i\) is oriented away from \(p_i\), so its direction
in the coordinate \(w_{j_i+k_i}\) is \(\vartheta_i^+\). Applying
\eqref{eq:D-straight-segment-explicit} again, we obtain
\[
r_i=
\begin{cases}
2, & b_i=0,\\
3, & b_i=1.
\end{cases}
\]

Recalling the explicit form of \(U_1\) from
Proposition~\ref{prop:explicit-U}, Proposition~\ref{prop:large-circular-holonomy}
gives
\[
(T_i)_{r_i,s_i}
=
\begin{cases}
\bigl(T^{k_i}\bigr)_{2,4},
&
(a_i,b_i)=(0,0),
\qquad 2\leq k_i\leq N_i-2,
\\
\bigl(U_1^{-1}T^{k_i}\bigr)_{3,4}
=
\bigl(T^{k_i}\bigr)_{3,4},
&
(a_i,b_i)=(0,1),
\qquad 2\leq k_i\leq N_i-3,
\\
\bigl(T^{k_i}U_1\bigr)_{2,1}
=
\bigl(T^{k_i}\bigr)_{2,1},
&
(a_i,b_i)=(1,0),
\qquad 3\leq k_i\leq N_i-2,
\\
\bigl(U_1^{-1}T^{k_i}U_1\bigr)_{3,1}
=
\bigl(T^{k_i}\bigr)_{3,1},
&
(a_i,b_i)=(1,1),
\qquad 2\leq k_i\leq N_i-2.
\end{cases}
\]
Combining with Proposition~\ref{prop:asymptotics-single-holonomy-block-no-Stokes-directions}, we obtain this proposition.
\end{proof}

\begin{remark}
This is slightly simpler than the cubic setting considered in \cite{loftin2026limits}, where the corresponding
assumptions do not by themselves guarantee a unique dominant diagonal
entry.
\end{remark}

We now remove the temporary assumption that the endpoints of the
circular arcs avoid the Stokes directions, following the perturbation
argument of
\cite[Proposition~5.13]{loftin2026limits}.

For every endpoint of \(\widetilde\beta_i\) lying on a Stokes
direction, choose \(\eta>0\) sufficiently small and perturb its angle
by \(\pm\eta\). Endpoints that do not lie on Stokes directions are
left unchanged. Denote the resulting initial and terminal points by
\[
x_i^{-,\eta},
\qquad
x_i^{+,\eta},
\]
respectively, and let \(\widetilde\beta_i^\eta\) be the
counterclockwise circular arc from \(x_i^{-,\eta}\) to
\(x_i^{+,\eta}\).

We modify the adjacent straight pieces at the same time. 
Let
\(
\widetilde\alpha_i^\eta
\)
be the straight segment joining \(m_{i+1}\) to
\(x_i^{-,\eta}\), and let
\(
\widetilde\delta_i^\eta
\)
be the straight segment joining \(x_i^{+,\eta}\) to \(m_i\).
Thus the original truncated pieces are replaced directly by the
straight segments connecting the same midpoints to the perturbed
endpoints of the circular arcs. Let
\(
\alpha_i^{0,\eta}
\)
be the straight radial segment in the local \(z_i\)-coordinate from
\(x_i^{-,\eta}\) to \(p_i\), and let
\(
\delta_i^{0,\eta}
\)
be the straight radial segment from \(p_i\) to
\(x_i^{+,\eta}\). Define the perturbed full segments by
\[
\alpha_i^\eta
=
\widetilde\alpha_i^\eta * \alpha_i^{0,\eta},
\qquad
\delta_i^\eta
=
\delta_i^{0,\eta} * \widetilde\delta_i^\eta.
\]
Thus \(\alpha_i^\eta\) is oriented from \(m_{i+1}\) to \(p_i\),
whereas \(\delta_i^\eta\) is oriented from \(p_i\) to \(m_i\). See Figure~\ref{fig:perturbation-single-holonomy-block}.
\begin{figure}[htbp]
\centering
\begin{tikzpicture}[scale=1.05,>=Stealth]

% ================================================================
% Before perturbation
% ================================================================
\begin{scope}
  \def\R{1.25}

  \coordinate (p)    at (0,0);
  \coordinate (mL)   at (-3,0);
  \coordinate (mR)   at (3,0);
  \coordinate (xin)  at ($(p)+(180:\R)$);
  \coordinate (xout) at ($(p)+(0:\R)$);

  % Boundary circle
  \draw[dashed,thin] (p) circle (\R);

  % Radial pieces alpha_i^0 and delta_i^0
  \draw[->,thick,densely dashed,gray!70!black]
    (xin) -- (p);
  \draw[->,thick,densely dashed,gray!70!black]
    (p) -- (xout);

  % Holonomy block
  \draw[->,thick] (mL) -- (xin);
  \draw[->,thick]
    (xin) arc[start angle=180,end angle=360,radius=\R];
  \draw[->,thick] (xout) -- (mR);

  % Points
  \fill (p) circle (2pt)
    node[above] {$p_i$};
  \fill (mL) circle (1.5pt)
    node[below] {\footnotesize \(m_{i+1}\)};
  \fill (mR) circle (1.5pt)
    node[below] {\footnotesize \(m_i\)};
  \fill (xin) circle (1.5pt)
    node[above left] {\footnotesize \(x_i^-\)};
  \fill (xout) circle (1.5pt)
    node[above right] {\footnotesize \(x_i^+\)};

  % Labels for the truncated pieces
  \node at ($(mL)!0.5!(xin)+(0,0.28)$)
    {$\widetilde\alpha_i$};
  \node at ($(xout)!0.5!(mR)+(0,0.28)$)
    {$\widetilde\delta_i$};
  \node at ($(p)+(270:0.68*\R)$)
    {$\widetilde\beta_i$};

  % Labels for the radial pieces
  \node[gray!70!black]
    at ($(xin)!0.5!(p)+(0,-0.25)$)
    {\footnotesize \(\alpha_i^0\)};
  \node[gray!70!black]
    at ($(p)!0.5!(xout)+(0,-0.25)$)
    {\footnotesize \(\delta_i^0\)};

  \node at (0,-2.35) {Before perturbation};
\end{scope}

\node at (4.25,0) {\Huge \(\Longrightarrow\)};

% ================================================================
% After perturbation
% ================================================================
\begin{scope}[xshift=8.5cm]
  \def\R{1.25}

  \coordinate (p)       at (0,0);
  \coordinate (mL)      at (-3,0);
  \coordinate (mR)      at (3,0);
  \coordinate (xineta)  at ($(p)+(140:\R)$);
  \coordinate (xouteta) at ($(p)+(40:\R)$);

  % Boundary circle
  \draw[dashed,thin] (p) circle (\R);

  % Radial pieces alpha_i^{0,eta} and delta_i^{0,eta}
  \draw[->,thick,densely dashed,red!70!black]
    (xineta) -- (p);
  \draw[->,thick,densely dashed,red!70!black]
    (p) -- (xouteta);

  % Perturbed holonomy block
  \draw[->,thick,red] (mL) -- (xineta);
  \draw[->,thick,red]
    (xineta) arc[start angle=140,end angle=400,radius=\R];
  \draw[->,thick,red] (xouteta) -- (mR);

  % Points
  \fill (p) circle (2pt)
    node[above] {$p_i$};
  \fill (mL) circle (1.5pt)
    node[below] {\footnotesize \(m_{i+1}\)};
  \fill (mR) circle (1.5pt)
    node[below] {\footnotesize \(m_i\)};
  \fill[red] (xineta) circle (1.5pt)
    node[above left] {\footnotesize \(x_i^{-,\eta}\)};
  \fill[red] (xouteta) circle (1.5pt)
    node[above right] {\footnotesize \(x_i^{+,\eta}\)};

  % Labels for the perturbed truncated pieces
  \node[red] at ($(mL)!0.55!(xineta)+(0,0.28)$)
    {$\widetilde\alpha_i^\eta$};
  \node[red] at ($(xouteta)!0.45!(mR)+(0,0.28)$)
    {$\widetilde\delta_i^\eta$};
  \node[red] at ($(p)+(270:0.68*\R)$)
    {$\widetilde\beta_i^\eta$};

  % Labels for the perturbed radial pieces
  \node[red!70!black]
    at ($(xineta)!0.5!(p)+(-0.28,-0.24)$)
    {\footnotesize \(\alpha_i^{0,\eta}\)};
  \node[red!70!black]
    at ($(p)!0.5!(xouteta)+(0.28,-0.24)$)
    {\footnotesize \(\delta_i^{0,\eta}\)};

  \node at (0,-2.35) {After perturbation};
\end{scope}

\end{tikzpicture}
\caption{Perturbation near a Stokes direction.}
\label{fig:perturbation-single-holonomy-block}
\end{figure}

For \(\eta>0\) sufficiently small, the perturbed path
\(
\widetilde\alpha_i^\eta
*
\widetilde\beta_i^\eta
*
\widetilde\delta_i^\eta
\)
is homotopic to
\(
\widetilde\alpha_i
*
\widetilde\beta_i
*
\widetilde\delta_i
\)
relative to the endpoints \(m_{i+1}\) and \(m_i\). Since \(D_t\) is
flat, their parallel transports coincide. Therefore,
\[
\operatorname{Hol}_t(\widetilde\delta_i)
 \operatorname{Hol}_t(\widetilde\beta_i)
 \operatorname{Hol}_t(\widetilde\alpha_i)
 =
\operatorname{Hol}_t(\widetilde\delta_i^\eta)
\operatorname{Hol}_t(\widetilde\beta_i^\eta)
\operatorname{Hol}_t(\widetilde\alpha_i^\eta).
\]
Fix a natural coordinate \(w\), the four roots are 
\[
\phi_k=(\sqrt{-1})^{1-k}\odif w,
\qquad k=1,2,3,4.
\]
For the perturbed full segments, define
\[
\mu_{i,k}^{\eta}
:=
-2\operatorname{Re}\int_{\alpha_i^\eta}\phi_k,
\qquad
\lambda_{i,k}^{\eta}
:=
-2\operatorname{Re}\int_{\delta_i^\eta}\phi_k,
\]
and set
\[
\mu_i^\eta
:=
\max_{1\leq k\leq4}\mu_{i,k}^\eta,
\qquad
\lambda_i^\eta
:=
\max_{1\leq k\leq4}\lambda_{i,k}^\eta.
\]
For \(\eta>0\) sufficiently small, the paths
\(\alpha_i^\eta\) and \(\alpha_i\) are also homotopic relative to their endpoints. The same
holds for \(\delta_i^\eta\) and \(\delta_i\). Since \(\phi_k\)
is exact, we have
\[
\int_{\alpha_i^\eta}\phi_k
=
\int_{\alpha_i}\phi_k,
\qquad
\int_{\delta_i^\eta}\phi_k
=
\int_{\delta_i}\phi_k.
\]
Consequently,
\[
D_w(\alpha_i^\eta)=D_w(\alpha_i),
\qquad
D_w(\delta_i^\eta)=D_w(\delta_i).
\]
In particular,
\(
\mu_{i,k}^\eta=\mu_{i,k},
\lambda_{i,k}^\eta=\lambda_{i,k},
\)
and hence
\(
\mu_i^\eta=\mu_i,
\lambda_i^\eta=\lambda_i.
\)
\begin{proposition}
\label{prop:asymptotics-single-holonomy-block-Stokes-directions}
Let \(\eta>0\) be sufficiently small, and let
\(
\widetilde\alpha_i^\eta,
\widetilde\beta_i^\eta,
\widetilde\delta_i^\eta
\)
be an admissible perturbation constructed above. Suppose that
\[
x_i^{-,\eta}\in J_{a_i^\eta}^{(j_i^\eta)},
\qquad
x_i^{+,\eta}\in
J_{b_i^\eta}^{(j_i^\eta+k_i^\eta)},
\]
where \(a_i^\eta,b_i^\eta\in\{0,1\}\). Choose the corresponding
sectorial natural coordinates
\[
w_i^{-,\eta}:=w_{j_i^\eta},
\qquad
w_i^{+,\eta}:=w_{j_i^\eta+k_i^\eta},
\]
and set
\[
T_i^\eta
:=
T_{a_i^\eta b_i^\eta}^{(k_i^\eta)}.
\]
Then,  with respect to the frame
\(\mathcal F_{t,w_i^{-,\eta}}\) at \(m_{i+1}\) and the frame
\(\mathcal F_{t,w_i^{+,\eta}}\) at \(m_i\),
 \[
\operatorname{Hol}_t(\widetilde\delta_i)
 \operatorname{Hol}_t(\widetilde\beta_i
 \operatorname{Hol}_t(\widetilde\alpha_i)
 =
S
\exp\left(
t^{1/4}D_{w_i^{+,\eta}}(\delta_i)
\right)
T_i^\eta
\exp\left(
t^{1/4}D_{w_i^{-,\eta}}(\alpha_i)
\right)
S^{-1}
+
o\left(
e^{t^{1/4}(\lambda_i+\mu_i)}
\right)
\]
as \(t\to+\infty\). Here 
\[
\lambda_i:=
\max_{1\leq k\leq4}
\left\{
-2\operatorname{Re}\int_{\delta_i}\phi_k
\right\}, \qquad 
\mu_i:=
\max_{1\leq k\leq4}
\left\{
-2\operatorname{Re}\int_{\alpha_i}\phi_k
\right\}.
\]
\end{proposition}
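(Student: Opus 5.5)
The plan is to reduce to Proposition~\ref{prop:asymptotics-single-holonomy-block-no-Stokes-directions}, applied to the perturbed block. By construction, the perturbed endpoints \(x_i^{-,\eta}\) and \(x_i^{+,\eta}\) avoid the Stokes directions. The perturbed path \(\widetilde\alpha_i^\eta*\widetilde\beta_i^\eta*\widetilde\delta_i^\eta\) is homotopic rel endpoints to the original block, so by flatness of \(D_t\) the left-hand side equals \(\operatorname{Hol}_t(\widetilde\delta_i^\eta)\operatorname{Hol}_t(\widetilde\beta_i^\eta)\operatorname{Hol}_t(\widetilde\alpha_i^\eta)\). Only this product needs to be analyzed.

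First I will check that the two straight pieces still satisfy the hypotheses of Proposition~\ref{prop:holonomy-straight-pieces}. The segments \(\widetilde\alpha_i^\eta\) and \(\widetilde\delta_i^\eta\) are straight in a natural coordinate, they stay a definite distance from \(Z(q)\) for \(\eta\) small, and they lie in the same simply connected neighborhood. Hence Proposition~\ref{prop:holonomy-straight-pieces} applies to them with the chosen coordinates \(w_i^{\mp,\eta}\). Proposition~\ref{prop:holonomy-circular-arcs} applies to \(\widetilde\beta_i^\eta\) with the data \(a_i^\eta,b_i^\eta,k_i^\eta\), because its endpoints are non-Stokes.

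Next I will rerun the proof of Proposition~\ref{prop:asymptotics-single-holonomy-block-no-Stokes-directions}. This uses \eqref{eq:D-gamma-Psi-zero} to absorb the factors \(\Psi_0(t^{1/4}w_i^{\pm,\eta}(x_i^{\pm,\eta}))\) into \(S\exp(t^{1/4}D(\delta_i^{0,\eta}))\) and \(S\exp(t^{1/4}D(\alpha_i^{0,\eta}))\). There is one point where it does not carry over verbatim. The pieces \(\widetilde\alpha_i^\eta\) and \(\alpha_i^{0,\eta}\) are no longer collinear, since they meet at a slight angle at \(x_i^{-,\eta}\). So I will not appeal to collinearity. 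Instead I will use that \(D_w\) is defined through \(\int\phi_k\), which is additive under concatenation for any paths in one natural chart. Since \(w_i^{-,\eta}\) is defined along all of \(\alpha_i^\eta\) (the chart \(U_{j_i^\eta}\) contains it), this gives
\[
D_{w_i^{-,\eta}}(\alpha_i^\eta)=D_{w_i^{-,\eta}}(\widetilde\alpha_i^\eta)+D_{w_i^{-,\eta}}(\alpha_i^{0,\eta}).
\]
The same holds for \(\delta_i^\eta\). Because these matrices are diagonal, the exponentials commute and the leading terms multiply to
\[
S\exp\bigl(t^{1/4}D(\delta_i^\eta)\bigr)T_i^\eta\exp\bigl(t^{1/4}D(\alpha_i^\eta)\bigr)S^{-1}.
\]
Then exactness of \(\phi_k\) gives \(D(\alpha_i^\eta)=D(\alpha_i)\) and \(D(\delta_i^\eta)=D(\delta_i)\), as recorded just before the statement. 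This yields the displayed main term.

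\textbf{Main obstacle: the error estimate.} In the non-Stokes case the error bound came from multiplying terms of orders \(e^{t^{1/4}\widetilde\lambda_i}\), \(e^{t^{1/4}(\lambda_i-\widetilde\lambda_i+\mu_i-\widetilde\mu_i)}\) and \(e^{t^{1/4}\widetilde\mu_i}\). That relied on the maximal exponent being additive over the subdivision, which needs collinearity. After perturbation, the maxima need not add: one could have \(\widetilde\lambda_i^\eta+\max D(\delta_i^{0,\eta})>\lambda_i\). I will handle this by choosing the perturbation sign at each Stokes endpoint, and \(\eta\) small, so that the direction of each piece stays within \(O(\eta)\) of the direction of the original full segment. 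Then each piece's maximal exponent differs from its collinear counterpart by \(O(\eta)\), and a crude bound gives error \(o(e^{t^{1/4}(\lambda_i+\mu_i+C\eta)})\).

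That is not quite \(o(e^{t^{1/4}(\lambda_i+\mu_i)})\), and I see two ways to close the gap. The first is to refine the bound: the index achieving the maximum for \(\widetilde\alpha_i^\eta\) and for \(\alpha_i^{0,\eta}\) is the same for small \(\eta\), when the original direction is not Stokes. The sum of the actual entries at that index is then exactly \(\mu_i\), and the other entries are strictly smaller up to \(O(\eta)\). The second is to follow \cite[Proposition~5.13]{loftin2026limits}: bound the perturbed error separately for each fixed small \(\eta\), and use that the left-hand side is independent of \(\eta\). I would try the first approach first, since only the error term, not the leading term, is delicate.
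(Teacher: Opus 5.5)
Your reduction is the paper's own proof. It applies Proposition~\ref{prop:asymptotics-single-holonomy-block-no-Stokes-directions} to the perturbed block, using homotopy invariance and the identities $D_w(\alpha_i^\eta)=D_w(\alpha_i)$, $D_w(\delta_i^\eta)=D_w(\delta_i)$ that follow from exactness. You are right on two points. The leading term only needs additivity of $\int\phi_k$. The error estimate of that proposition, however, used collinearity to get $\widetilde\lambda_i+\max D(\delta_i^0)=\lambda_i$, and the paper's one-line proof does not address this.

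Neither of your repairs closes this, so the error term $o(e^{t^{1/4}(\lambda_i+\mu_i)})$ remains unproved. Your first repair needs the two pieces to share a maximizing index. That holds at $-\pi/2$ endpoints, where the maximum of $D$ is unique. It fails at $-\pi/4$ endpoints, which are exactly the tie directions. With $x_i^{+,\eta}$ at argument $-\pi/4-\eta$, the radial piece $\delta_i^{0,\eta}$ has its maximal entry at index $2$. The piece $\widetilde\delta_i^\eta$ has direction $-\pi/4+\varphi$ with $\varphi>0$, and its maximal entry is at index $3$. The broken path bends to opposite sides of the tie direction, so this happens for either sign of $\eta$. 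The index-$2$ and index-$3$ totals over $\delta_i^\eta$ both equal $\lambda_i$. Hence the maximal exponent $\widetilde\lambda_i^\eta$ of $\widetilde\delta_i^\eta$ satisfies $\widetilde\lambda_i^\eta+\max D(\delta_i^{0,\eta})-\lambda_i=2\sqrt2\,r\sin\eta>0$, where $r$ is the $|q|^{1/2}$-radius of $B_i(\varepsilon)$. The incoming side behaves the same way with indices $1$ and $4$. So the unstructured error of $\operatorname{Hol}_t(\widetilde\delta_i^\eta)$ times $\Psi_0\bigl(t^{1/4}w_i^{+,\eta}(x_i^{+,\eta})\bigr)^{-1}$ is only bounded by $o\bigl(e^{t^{1/4}(\lambda_i+\mu_i+2\sqrt2\, r\sin\eta)}\bigr)$.

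Your second repair gives only $\limsup_{t\to+\infty}t^{-1/4}\log\|\mathrm{error}\|\leq\lambda_i+\mu_i$. Bounds $o(e^{t^{1/4}(\lambda_i+\mu_i+C\eta)})$ for each fixed $\eta$ do not imply $o(e^{t^{1/4}(\lambda_i+\mu_i)})$; consider $t\,e^{t^{1/4}(\lambda_i+\mu_i)}$. This weaker form is too weak for \eqref{eq:asymptotics-composition-all-holonomy-blocks}, which needs each block to be $e^{t^{1/4}(\lambda_i+\mu_i)}(S\mathcal C_iS^{-1}+o(1))$. Letting $\eta=\eta(t)\to0$ does not help either. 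Absorbing the excess forces $\eta$ to be of order $t^{-1/4}$ up to logarithms. Proposition~\ref{prop:holonomy-circular-arcs}, however, needs the arc endpoint outside the Stokes transition zone, whose angular width is of order $t^{-1/8}$.

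One way to close the gap at a $-\pi/4$ endpoint is not to bend at all. Keep the original collinear pieces and let the arc end on the Stokes ray. Run the proof of Theorem~\ref{thm:stokes-jumps} over the half-arc $[\theta_k-\varepsilon,\theta_k]$; this cuts the Gaussian in half. It shows that $G$ converges on the Stokes ray to $L_{k-1}S\bigl(I+\tfrac12(U_k-I)\bigr)S^{-1}$. The arc holonomy then has the form of Proposition~\ref{prop:holonomy-circular-arcs}, with $T_i^\eta$ replaced by a matrix $T'$ that differs from it by factors $I+c(U_1-I)$. Left multiplication by such a factor alters only rows $1,4$, and right multiplication alters only columns $2,3$. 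So $T'$ and $T_i^\eta$ agree on $\{2,3\}\times\{1,4\}\supset\mathcal R_i^\eta\times\mathcal S_i^\eta$. The collinear error analysis then applies verbatim, and $e^{t^{1/4}D_{w_i^+}(\delta_i)}(T'-T_i^\eta)e^{t^{1/4}D_{w_i^-}(\alpha_i)}=o(e^{t^{1/4}(\lambda_i+\mu_i)})$. This also requires the surface--model comparison of Section~\ref{sec:local-comparison-near-zeros} on the Stokes ray itself, which should go through because the model $G$ stays bounded there.
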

\begin{proof}
The result follows immediately from
Proposition~\ref{prop:asymptotics-single-holonomy-block-no-Stokes-directions} and the discussion above.
\end{proof}
Before stating the dominant-term asymptotics, we point out an
essential difference from the non-Stokes case: If an original endpoint lies on a
Stokes direction, the maximal diagonal entry of the corresponding
exponential matrix need not be attained at a unique index.

For the computation of the dominant term, we now fix the signs of the
endpoint perturbations as follows. At each endpoint on the Stokes directions, choose the unique sectorial
natural coordinate \((U_j,w_j)\) in which the corresponding radial ray from \(p_i\)
has argument either
\(
-\pi/4
\)
or
\(
-\pi/2.
\)
If the ray has argument \(-\pi/4\), we perturb it to
\(
-\frac{\pi}{4}-\eta,
\)
so that the perturbed endpoint lies in the first stable sector
\(J_0^{(j)}\). If the ray has argument \(-\pi/2\), we perturb it to
\(
-\pi/2+\eta.
\)
Thus, both perturbed endpoints lie in the stable sector
 \(J_0^{(j)}\).
\begin{proposition}
\label{prop:dominant-term-single-holonomy-block-Stokes-directions}
Under the assumptions of
Proposition~\ref{prop:asymptotics-single-holonomy-block-Stokes-directions}, and the perturbation convention above. Then the incoming maximal index set is given by
\[
\begin{array}{c|cccc}
\text{incoming direction}
&
-\dfrac{\pi}{4}
&
-\dfrac{\pi}{2}
&
J_0
&
J_1
\\[1ex]
\hline
\mathcal S_i^\eta
&
\{1,4\}
&
\{4\}
&
\{4\}
&
\{1\}
\end{array}
\]
and the outgoing maximal index set is given by
\[
\begin{array}{c|cccc}
\text{outgoing direction}
&
-\dfrac{\pi}{4}
&
-\dfrac{\pi}{2}
&
J_0
&
J_1
\\[1ex]
\hline
\mathcal R_i^\eta
&
\{2,3\}
&
\{2\}
&
\{2\}
&
\{3\}.
\end{array}
\]
Here \(J_0\) and \(J_1\) indicate that the corresponding endpoint is
non-Stokes and lies in a stable sector of type \(J_0\) or \(J_1\),
respectively.
Let
\(
k_i:=k_i^\eta.
\)
Then, with respect to the frame
\(\mathcal F_{t,w_i^{-,\eta}}\) at \(m_{i+1}\) and the frame
\(\mathcal F_{t,w_i^{+,\eta}}\) at \(m_i\), as \(t\to+\infty\),
\[
\operatorname{Hol}_t(\widetilde\delta_i)
\operatorname{Hol}_t(\widetilde\beta_i)
\operatorname{Hol}_t(\widetilde\alpha_i)
=
e^{t^{1/4}(\lambda_i+\mu_i)}
S\mathcal C_i^\eta S^{-1}
+
o\left(
e^{t^{1/4}(\lambda_i+\mu_i)}
\right),
\]
where
\[
\mathcal C_i^\eta
=
\sum_{\substack{
r\in\mathcal R_i^\eta\\
s\in\mathcal S_i^\eta
}}
\bigl(T^{k_i}\bigr)_{r,s}E_{r,s}.
\]
Let \(N_i=d_i+4\). With the perturbation convention fixed above, the corresponding admissible ranges of \(k\) are
\[
\renewcommand{\arraystretch}{1.5}
\begin{array}{c|c|c|c|c}
x_i^+\backslash x_i^-
&
-\dfrac{\pi}{4}
&
-\dfrac{\pi}{2}
&
J_0
&
J_1
\\[1.2ex]
\hline
\rule{0pt}{4ex}
-\dfrac{\pi}{4}
&
2\leq k\leq N_i-2
&
2\leq k\leq N_i-3
&
2\leq k\leq N_i-3
&
3\leq k\leq N_i-2
\\[1.2ex]
\hline
\rule{0pt}{4ex}
-\dfrac{\pi}{2}
&
3\leq k\leq N_i-2
&
2\leq k\leq N_i-2
&
3\leq k\leq N_i-2
&
3\leq k\leq N_i-2
\\[1.2ex]
\hline
\rule{0pt}{3.5ex}
J_0
&
3\leq k\leq N_i-2
&
2\leq k\leq N_i-3
&
2\leq k\leq N_i-2
&
2\leq k\leq N_i-3
\\[1ex]
\hline
\rule{0pt}{3.5ex}
J_1
&
2\leq k\leq N_i-3
&
2\leq k\leq N_i-3
&
3\leq k\leq N_i-2
&
2\leq k\leq N_i-2
\end{array}
\]
\end{proposition}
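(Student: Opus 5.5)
The plan is to reduce to Proposition~\ref{prop:asymptotics-single-holonomy-block-Stokes-directions}, which already gives the block as
\[
S\exp\bigl(t^{1/4}D_{w_i^{+,\eta}}(\delta_i)\bigr)T_i^\eta\exp\bigl(t^{1/4}D_{w_i^{-,\eta}}(\alpha_i)\bigr)S^{-1}+o\bigl(e^{t^{1/4}(\lambda_i+\mu_i)}\bigr).
\]
Writing the middle matrix entrywise, its \((r,s)\) entry is \(e^{t^{1/4}(\lambda_{i,r}+\mu_{i,s})}(T_i^\eta)_{r,s}\). Here \(\lambda_{i,r}\), \(\mu_{i,s}\) are the diagonal entries of \(D_{w_i^{\pm,\eta}}(\delta_i)\), \(D_{w_i^{\pm,\eta}}(\alpha_i)\) computed in the perturbed charts. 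Entries with \(r\notin\mathcal R_i^\eta\) or \(s\notin\mathcal S_i^\eta\) are \(o(e^{t^{1/4}(\lambda_i+\mu_i)})\), since \(T_i^\eta\) is constant. It therefore suffices to do three things: identify the maximal index sets, check that \((T_i^\eta)_{r,s}=(T^{k_i})_{r,s}\) on \(\mathcal R_i^\eta\times\mathcal S_i^\eta\), and derive the admissible range of \(k\).

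\textbf{Index sets.} Use \eqref{eq:D-straight-segment-explicit}. In the chart \(w_i^{-,\eta}\) the incoming segment \(\alpha_i\) is radial toward \(p_i\), with direction \(\vartheta+\pi\) where \(\vartheta\) is the original endpoint argument. The diagonal is \((-2L\cos,-2L\sin,2L\cos,2L\sin)\) evaluated at \(\vartheta+\pi\), i.e.\ \((2L\cos\vartheta,2L\sin\vartheta,-2L\cos\vartheta,-2L\sin\vartheta)\).
\begin{itemize}
\item For \(\vartheta=-\pi/4\) this is \(\sqrt2L(1,-1,-1,1)\), with maxima at \(\{1,4\}\).
\item For \(\vartheta=-\pi/2\) it is \((0,-2L,0,2L)\), with maximum at \(\{4\}\).
\end{itemize}
By the perturbation convention the chart \(w_i^{-,\eta}\) is exactly the one in which the original ray has these arguments. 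The equality \(D_w(\alpha_i^\eta)=D_w(\alpha_i)\) shows the diagonal is unchanged. The non-Stokes cases \(J_0,J_1\) are Proposition~\ref{prop:dominant-term-single-holonomy-block-no-Stokes-directions}.

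The outgoing direction is \(\vartheta\) itself, with diagonal \((-2L\cos\vartheta,-2L\sin\vartheta,2L\cos\vartheta,2L\sin\vartheta)\).
\begin{itemize}
\item For \(\vartheta=-\pi/4\) this is \(\sqrt2L(-1,1,1,-1)\), giving \(\{2,3\}\).
\item For \(\vartheta=-\pi/2\) this is \((0,2L,0,-2L)\), giving \(\{2\}\).
\end{itemize}

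\textbf{Reduction to \(T^{k_i}\).} Since the perturbed endpoints lie in \(J_0\) (Stokes cases) or in their original sectors, Proposition~\ref{prop:large-circular-holonomy} gives \(T_i^\eta=A_b^{-1}T^{k}A_a\) with \(A_1=U_1=I+s_1(E_{1,2}-E_{4,3})\). If \(a=0\), right multiplication by \(A_a\) is trivial. If \(a=1\), it only alters columns \(2\) and \(3\), and \(s=1\notin\{2,3\}\). If \(b=1\), \(A_b^{-1}=I-s_1(E_{1,2}-E_{4,3})\) only alters rows \(1\) and \(4\), while \(r=3\). Hence \((T_i^\eta)_{r,s}=(T^{k_i})_{r,s}\) on \(\mathcal R_i^\eta\times\mathcal S_i^\eta\), which yields the displayed leading term with \(\mathcal C_i^\eta\).

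\textbf{Range of \(k\).} This is the main bookkeeping step and the one I expect to be the real obstacle. The angle condition at a geodesic vertex requires both the arc and its complement to have cone angle \(\geq\pi\), which is \(4\) Stokes steps of \(\pi/4\); the total is \(2N_i\) steps. I would express the original endpoint positions in ``Stokes step'' units. A Stokes endpoint at argument \(-\pi/4\) in chart \(j\) sits at position \(2j-1\), and one at \(-\pi/2\) sits at \(2j-2\). A \(J_a\) endpoint sits at \(2j-2+a+\tfrac12\), in units of \(\pi/4\) measured from \(\arg w_j=-\pi/2\).

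The condition is then
\[
4\leq \text{pos}(x^+)-\text{pos}(x^-)\leq 2N_i-4,
\]
with equality allowed exactly when both endpoints are Stokes rays. The case where both endpoints are at \(-\pi/4\) allows equality, since the angle is then exactly \(\pi\) and \(4\) steps are permitted. Substituting \(j^+=j^-+k\) and solving for the integer \(k\) in each of the sixteen cells should reproduce the table. The \((J_0,J_0)\) and \((J_1,J_1)\) cells, for instance, give \(2\leq k\leq N_i-2\), matching the non-Stokes range \(2+a(1-b)\leq k\leq N_i-2-(1-a)b\).

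The delicate point is the perturbation. It shifts \(j\) when an endpoint lies at \(-\pi/2+\eta\) versus \(-\pi/4-\eta\), so I would verify, cell by cell, that \(k=k_i^\eta\) is computed from the perturbed chart indices. I would also recheck that strict versus non-strict inequalities appear exactly at the cells involving two Stokes endpoints.
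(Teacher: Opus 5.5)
Your derivation of the leading term is the paper's own proof. You reduce to Proposition~\ref{prop:asymptotics-single-holonomy-block-Stokes-directions} and read the maximal positions off \eqref{eq:D-straight-segment-explicit} in the chart fixed by the perturbation convention. You then use that \(U_1^{-1}M\) differs from \(M\) only in rows \(1,4\), and \(MU_1\) only in columns \(2,3\), to replace \(T_i^\eta\) by \(T^{k_i}\) on \(\mathcal R_i^\eta\times\mathcal S_i^\eta\). The index sets and this reduction are correct. Your Stokes-step count is the paper's inequality \(\pi\le\vartheta_i^++\kappa_i\pi/2-\vartheta_i^-\le N_i\pi/2-\pi\), written in units of \(\pi/4\).

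\textbf{The strictness rule fails.} The step that goes wrong as written is your rule that equality is allowed exactly when both endpoints are Stokes rays. A sector endpoint with representative \(2j-2+a+\tfrac12\) has true position strictly within \(\tfrac12\) of it. So when both endpoints are non-Stokes, the true difference ranges over an open interval of radius \(1\) about the integer \(2k+b-a\). A true angle \(\ge\pi\) is then possible exactly when \(2k+b-a\ge4\), so the inequality must be non-strict. Indeed, the true angle can equal \(\pi\) with neither endpoint on a Stokes ray. In mixed cells the representative difference is a half-integer, so strictness is irrelevant; in Stokes--Stokes cells the difference is exact. Hence \(4\le\mathrm{pos}(x^+)-\mathrm{pos}(x^-)\le 2N_i-4\) should be imposed non-strictly in every cell. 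Your rule would give \(3\le k\le N_i-3\) in the \((J_0,J_0)\) and \((J_1,J_1)\) cells, contradicting the \(2\le k\le N_i-2\) you quote.

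\textbf{The printed table is not reproduced in two cells.} The sixteen-cell computation will reproduce the twelve cells with a Stokes endpoint and the two diagonal sector cells. It will not reproduce the two off-diagonal sector cells as printed. Incoming \(J_1\)/outgoing \(J_0\) gives \(3\le k\le N_i-2\), and incoming \(J_0\)/outgoing \(J_1\) gives \(2\le k\le N_i-3\). These agree with Proposition~\ref{prop:dominant-term-single-holonomy-block-no-Stokes-directions}; the printed table has the two entries interchanged. The paper's proof treats only configurations with at least one Stokes endpoint (its Cases 1--3) and takes the sector--sector cells from that earlier proposition, and you should do the same. The corrected values are the ones needed downstream. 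For incoming \(J_1\)/outgoing \(J_0\) the relevant entry is \((T^k)_{2,1}=-b_{k-1}\), which vanishes at \(k=2\). For incoming \(J_0\)/outgoing \(J_1\) it is \((T^k)_{3,4}=b_{k+1}\), which vanishes at \(k=N_i-2\).
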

\begin{proof}
We first determine the positions of the maximal diagonal entries and
identify the corresponding entries of \(T_i^\eta\) that contribute to
the dominant term. Recall that we use the perturbation convention
described above. Consequently, with respect to the sectorial natural coordinate
associated with the perturbed endpoint, the original radial ray has
direction either
\[
-\frac{\pi}{4}
\qquad\text{or}\qquad
-\frac{\pi}{2}.
\]

We now list all the possible entries occurring in the dominant term.
Set
\[
\vartheta_i^-
:=
\arg_{w_i^{-,\eta}}( x_i^-),
\qquad
\vartheta_i^+
:=
\arg_{w_i^{+,\eta}}( x_i^+).
\]
Whenever the corresponding endpoint lies on a Stokes direction, our
perturbation convention gives
\[
\vartheta_i^\pm
\in
\left\{
-\frac{\pi}{4},-\frac{\pi}{2}
\right\}.
\]
For the incoming segment,
\[
\mathcal S_i^\eta
=
\begin{cases}
\{1,4\},
&
\vartheta_i^-=-\tfrac{\pi}{4},
\\
\{4\},
&
\vartheta_i^-=-\tfrac{\pi}{2},
\end{cases}
\]
while for the outgoing segment,
\[
\mathcal R_i^\eta
=
\begin{cases}
\{2,3\},
&
\vartheta_i^+=-\tfrac{\pi}{4},
\\
\{2\},
&
\vartheta_i^+=-\tfrac{\pi}{2}.
\end{cases}
\]

Before considering the different cases, recall that the original path
is locally geodesic at \(p_i\). Hence both angles determined by the
original incoming and outgoing rays are at least \(\pi\). Recall that
 the total cone angle at \(p_i\) is
\(
\frac{N_i\pi}{2}.
\)

Let \(\kappa_i\) denote the sectorial index difference determined by
the original incoming and outgoing rays, before applying the
perturbation. If
\[
\vartheta_i^-
=
\arg_{w_i^-}(p_ix_i^-),
\qquad
\vartheta_i^+
=
\arg_{w_i^+}(p_ix_i^+),
\]
then the counterclockwise angle from the original incoming ray to the
original outgoing ray in the developed natural coordinate is
\[
\Theta_i
=
\vartheta_i^+
+
\frac{\kappa_i\pi}{2}
-
\vartheta_i^-.
\]
Its complementary angle is
\[
\frac{N_i\pi}{2}-\Theta_i.
\]
The local geodesic condition therefore gives
\[
\Theta_i\geq\pi,
\qquad
\frac{N_i\pi}{2}-\Theta_i\geq\pi,
\]
or equivalently,
\[
\pi
\leq
\vartheta_i^+
+
\frac{\kappa_i\pi}{2}
-
\vartheta_i^-
\leq
\frac{N_i\pi}{2}-\pi.
\]

The integer appearing in the perturbed holonomy block is
\(k_i=k_i^\eta\), rather than \(\kappa_i\). Depending on the Stokes
directions containing the two endpoints, the perturbation may change
the sectorial index difference. Thus the relation between
\(\kappa_i\) and \(k_i^\eta\) is case-dependent. 

\smallskip

\noindent
\emph{Case 1: both \(x_i^-\) and \(x_i^+\) lie on Stokes
directions.}
In this case,
\[
a_i^\eta=b_i^\eta=0,
\qquad
T_i^\eta=T^{k_i^\eta}.
\]
The entries occurring in the dominant term are
\[
\renewcommand{\arraystretch}{1.5}
\begin{array}{c|c|c}
\vartheta_i^+\backslash\vartheta_i^-
&
-\dfrac{\pi}{4}
&
-\dfrac{\pi}{2}
\\[1ex]
\hline
\rule{0pt}{4ex}
-\dfrac{\pi}{4}
&
\begin{matrix}
\bigl(T^{k_i^\eta}\bigr)_{2,1},
&
\bigl(T^{k_i^\eta}\bigr)_{2,4},
\\
\bigl(T^{k_i^\eta}\bigr)_{3,1},
&
\bigl(T^{k_i^\eta}\bigr)_{3,4}
\end{matrix}
&
\begin{matrix}
\bigl(T^{k_i^\eta}\bigr)_{2,4},
\\
\bigl(T^{k_i^\eta}\bigr)_{3,4}
\end{matrix}
\\[2ex]
\hline
\rule{0pt}{4ex}
-\dfrac{\pi}{2}
&
\begin{matrix}
\bigl(T^{k_i^\eta}\bigr)_{2,1},
&
\bigl(T^{k_i^\eta}\bigr)_{2,4}
\end{matrix}
&
\bigl(T^{k_i^\eta}\bigr)_{2,4}.
\end{array}
\]
The corresponding admissible ranges are
\[
\begin{array}{c|cc}
\vartheta_i^+\backslash\vartheta_i^-
&
-\dfrac{\pi}{4}
&
-\dfrac{\pi}{2}
\\[1ex]
\hline
\rule{0pt}{4ex}
-\dfrac{\pi}{4}
&
2\leq k_i^\eta\leq N_i-2
&
2\leq k_i^\eta\leq N_i-3
\\[1ex]
\hline
\rule{0pt}{4ex}
-\dfrac{\pi}{2}
&
3\leq k_i^\eta\leq N_i-2
&
2\leq k_i^\eta\leq N_i-2.
\end{array}
\]

\smallskip

\noindent
\emph{Case 2: only \(x_i^-\) lies on a Stokes direction.}
Here \(a_i^\eta=0\). If the terminal endpoint lies in a stable sector
of type \(J_0\), then
\[
b_i^\eta=0,
\qquad
T_i^\eta=T^{k_i^\eta},
\]
and the relevant entries are
\[
\begin{cases}
(T^{k_i^\eta})_{2,1},
\ (T^{k_i^\eta})_{2,4},
&
\vartheta_i^-=-\tfrac{\pi}{4},
\qquad 3\leq k\leq N_i-2,
\\
(T^{k_i^\eta})_{2,4},
&
\vartheta_i^-=-\tfrac{\pi}{2},
\qquad 2\leq k\leq N_i-3.
\end{cases}
\]
If the terminal endpoint lies in a stable sector of type \(J_1\),
then
\[
b_i^\eta=1,
\qquad
T_i^\eta=U_1^{-1}T^{k_i^\eta},
\]
and the relevant entries are
\[
\begin{cases}
(U_1^{-1}T^{k_i^\eta})_{3,1},
\ (U_1^{-1}T^{k_i^\eta})_{3,4},
&
\vartheta_i^-=-\tfrac{\pi}{4},
\qquad 2\leq k\leq N_i-3,
\\
(U_1^{-1}T^{k_i^\eta})_{3,4},
&
\vartheta_i^-=-\tfrac{\pi}{2},
\qquad 2\leq k\leq N_i-3.
\end{cases}
\]

\smallskip

\noindent
\emph{Case 3: only \(x_i^+\) lies on a Stokes direction.}
Here \(b_i^\eta=0\). If the initial endpoint lies in a stable sector
of type \(J_0\), then
\[
a_i^\eta=0,
\qquad
T_i^\eta=T^{k_i^\eta},
\]
and the relevant entries are
\[
\begin{cases}
(T^{k_i^\eta})_{2,4},
\ (T^{k_i^\eta})_{3,4},
&
\vartheta_i^+=-\tfrac{\pi}{4},
\qquad 2\leq k\leq N_i-3,
\\
(T^{k_i^\eta})_{2,4},
&
\vartheta_i^+=-\tfrac{\pi}{2},
\qquad 3\leq k\leq N_i-2.
\end{cases}
\]
If the initial endpoint lies in a stable sector of type \(J_1\), then
\[
a_i^\eta=1,
\qquad
T_i^\eta=T^{k_i^\eta}U_1,
\]
and the relevant entries are
\[
\begin{cases}
(T^{k_i^\eta}U_1)_{2,1},
\ (T^{k_i^\eta}U_1)_{3,1},
&
\vartheta_i^+=-\tfrac{\pi}{4},
\qquad 3\leq k\leq N_i-2,
\\
(T^{k_i^\eta}U_1)_{2,1},
&
\vartheta_i^+=-\tfrac{\pi}{2},
\qquad 3\leq k\leq N_i-2.
\end{cases}
\]
These exhaust all possibilities in the Stokes case. Recalling the
explicit form of \(U_1\) from
Proposition~\ref{prop:explicit-U}, we have
\begin{align*}
  (U_1^{-1}T^k)_{3,s}&=(T^k)_{3,s},
  \qquad s\in\{1,4\}, \\
  (T^kU_1)_{r,1}&=(T^k)_{r,1},
  \qquad r\in\{2,3\}.
\end{align*}
Combining this with
Proposition~\ref{prop:asymptotics-single-holonomy-block-Stokes-directions}
proves the result.
\end{proof}

\subsection{Powers of the transition matrix and non-vanishing}
We consider the transition matrix \(T\) in Proposition~\ref{prop:large-circular-holonomy} for \(q=z^d\odif z^4\). Let \(N=d+4\) and \(\theta=\frac{\pi}{d+4}.\) By Proposition~\ref{prop:explicit-U},
Proposition~\ref{prop:large-circular-holonomy}, and
\eqref{eq:definition-for-frame-change-Q}, we have
\begin{equation}
  \begin{aligned}
  T
  &=
  Q^{-1}U_2^{-1}U_1^{-1}
  \\ \notag
  &=
  \begin{pmatrix}
  0 & 1 & 0 & 0\\
  0 & 0 & 1 & 0\\
  0 & 0 & 0 & 1\\
  -1 & 0 & 0 & 0
  \end{pmatrix}
  \begin{pmatrix}
  1 & 0 & s_2 & 0\\
  0 & 1 & 0 & 0\\
  0 & 0 & 1 & 0\\
  0 & 0 & 0 & 1
  \end{pmatrix}
  \begin{pmatrix}
  1 & -s_1 & 0 & 0\\
  0 & 1 & 0 & 0\\
  0 & 0 & 1 & 0\\
  0 & 0 & s_1 & 1
  \end{pmatrix}
  \\ \notag
  &=
  \begin{pmatrix}
  0 & 1 & 0 & 0\\
  0 & 0 & 1 & 0\\
  0 & 0 & s_1 & 1\\
  -1 & s_1 & -s_2 & 0
  \end{pmatrix},
  \label{eq:explicit-value-for-T}
  \end{aligned}
\end{equation}
where
\[
s_1
=
\frac{\sin(4\theta)}{\sin\theta},
\qquad
s_2
=
\frac{\sin(4\theta)\sin(3\theta)}
{\sin\theta\sin(2\theta)}.
\]

By the results of the preceding subsection, the only entries of
\(T^k\) that occur in the dominant coefficient matrices are
\[
(T^k)_{2,1},
\qquad
(T^k)_{2,4},
\qquad
(T^k)_{3,1},
\qquad
(T^k)_{3,4}.
\] 
We show that all four entries can be expressed in terms of the single sequence.
\begin{lemma}\label{lem:relevant-entries-in-terms-of-bk}
  Let \(b_k:=(T^k)_{2,4}.\) Then.
  \[
(T^k)_{2,1}=-b_{k-1},
\qquad
(T^k)_{2,4}=b_k,
\qquad
(T^k)_{3,1}=-b_k,
\qquad
(T^k)_{3,4}=b_{k+1}.
  \]
\end{lemma}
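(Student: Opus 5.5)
The identities follow from two elementary row and column relations satisfied by the explicit matrix \(T\) in \eqref{eq:explicit-value-for-T}, applied to the factorizations \(T^k=T\cdot T^{k-1}\) and \(T^k=T^{k-1}\cdot T\). No spectral information about \(T\) is needed. Throughout, \(k\geq 1\), with the convention \(T^0=I\), so that \(b_0=(I)_{2,4}=0\).

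\emph{Row relation.} The second row of \(T\) is the standard basis row vector \(e_3^{\mathsf T}\). Hence for any \(4\times4\) matrix \(M\), the second row of \(TM\) equals the third row of \(M\). Taking \(M=T^{k}\) gives
\[
(T^{k+1})_{2,j}=(T^{k})_{3,j},\qquad j=1,\dots,4.
\]
With \(j=4\) this is immediately \((T^k)_{3,4}=(T^{k+1})_{2,4}=b_{k+1}\).

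\emph{Column relation.} The first column of \(T\) is \(-e_4\), since its only nonzero entry is \(T_{4,1}=-1\). Hence for any \(M\), the first column of \(MT\) equals minus the fourth column of \(M\). Taking \(M=T^{k-1}\) gives
\[
(T^{k})_{i,1}=-(T^{k-1})_{i,4},\qquad i=1,\dots,4.
\]
With \(i=2\) this yields \((T^k)_{2,1}=-(T^{k-1})_{2,4}=-b_{k-1}\). With \(i=3\), combined with the row relation at index \(k-1\), it yields
\[
(T^k)_{3,1}=-(T^{k-1})_{3,4}=-b_k.
\]
The identity \((T^k)_{2,4}=b_k\) is the definition of \(b_k\).

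There is no real obstacle here. The only points requiring care are reading off the second row and first column of \(T\) correctly from the product \(Q^{-1}U_2^{-1}U_1^{-1}\), and handling the boundary case \(k=1\) through the convention \(b_0=0\). These cases are covered by the same two relations, since \(T^0=I\).
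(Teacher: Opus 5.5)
Your proof is correct and is essentially the paper's argument. The paper also uses \(Te_1=-e_4\) to get \((T^k)_{r,1}=-(T^{k-1})_{r,4}\) and \(e_2^{\mathsf T}T=e_3^{\mathsf T}\) to get \((T^k)_{3,4}=b_{k+1}\), and combines them exactly as you do; your explicit treatment of \(k=1\) via \(T^0=I\), \(b_0=0\) is a harmless addition.
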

\begin{proof}
  Let \(e_1,\ldots,e_4\) denote the standard basis of
\(\mathbb R^4\). Since
\[
Te_1=-e_4,
\]
for every \(k\geq1\) we have
\[
(T^k)_{r,1}
=
e_r^{\mathsf T}T^{k-1}(Te_1)
=
-(T^{k-1})_{r,4}.
\]
Moreover, since the second row of \(T\) is \(e_3^{\mathsf T}\),
\[
e_2^{\mathsf T}T=e_3^{\mathsf T},
\]
and hence
\[
(T^k)_{3,4}
=
e_3^{\mathsf T}T^ke_4
=
e_2^{\mathsf T}T^{k+1}e_4
=
b_{k+1}.
\]
It follows that
\[
(T^k)_{2,1}=-b_{k-1},
\qquad
(T^k)_{2,4}=b_k,
\qquad
(T^k)_{3,1}=-b_k,
\qquad
(T^k)_{3,4}=b_{k+1}.
\]
\end{proof}
Thus it remains only to determine the sequence \((b_k)\).
\begin{proposition}
\label{prop:nonvanishing-relevant-entries}
For every \(k\geq0\),
\[
b_k
=
\frac{
\sin((k-1)\theta)\sin(k\theta)\sin((k+1)\theta)
}{
\sin\theta\sin(2\theta)\sin(3\theta)
}.
\]
In particular,
\[
b_k>0
\qquad
\text{for }\qquad
2\leq k\leq d+2,
\]
while
\[
b_1=b_{d+3}=0.
\]
Consequently, the signs and possible vanishing of the four relevant
entries are determined by
Lemma~\ref{lem:relevant-entries-in-terms-of-bk}.
\end{proposition}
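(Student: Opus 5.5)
The plan is to obtain $b_k=(T^k)_{2,4}$ from a linear recurrence given by the characteristic polynomial of $T$. The proposed closed form, after product-to-sum expansion, is a linear combination of $e^{\pm\sqrt{-1}k\theta}$ and $e^{\pm 3\sqrt{-1}k\theta}$. So I expect the eigenvalues of $T$ to be $e^{\pm\sqrt{-1}\theta}$ and $e^{\pm3\sqrt{-1}\theta}$, and the proof reduces to three checks: the characteristic polynomial, the initial values, and a sign analysis.

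\textbf{Step 1: characteristic polynomial.} From the explicit form of $T$ in \eqref{eq:explicit-value-for-T}, I will expand $\det(xI-T)$. The expected result is the palindromic polynomial
\[
x^4-s_1x^3+s_2x^2-s_1x+1 ,
\]
as befits a symplectic matrix. I then compare with
\[
(x-e^{\sqrt{-1}\theta})(x-e^{-\sqrt{-1}\theta})(x-e^{3\sqrt{-1}\theta})(x-e^{-3\sqrt{-1}\theta}).
\]
This comparison is a pair of trigonometric identities. The first is $2\cos\theta+2\cos3\theta=4\cos\theta\cos2\theta=\sin4\theta/\sin\theta=s_1$. The second is
\[
2+4\cos\theta\cos3\theta=2+2\cos2\theta+2\cos4\theta .
\]
It equals $s_2$ because $\sin4\theta/\sin2\theta=2\cos2\theta$ and $\sin3\theta/\sin\theta=1+2\cos2\theta$. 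This bookkeeping, including getting the sign conventions of $s_1,s_2$ in $T$ right, is the step I expect to need the most care.

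\textbf{Step 2: recurrence and initial values.} By Cayley--Hamilton, the sequence $b_k$ satisfies
\[
b_{k+4}=s_1b_{k+3}-s_2b_{k+2}+s_1b_{k+1}-b_k .
\]
Since $\theta=\pi/N$ with $N\ge5$, the four roots are distinct. Hence the solution space is spanned by $e^{\pm\sqrt{-1}k\theta}$ and $e^{\pm3\sqrt{-1}k\theta}$, and a solution is fixed by $b_0,\dots,b_3$.

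The proposed formula lies in this span, since the product-to-sum expansion of $\sin((k-1)\theta)\sin(k\theta)\sin((k+1)\theta)$ involves only the frequencies $k\theta$ and $3k\theta$. It therefore suffices to match four values. I compute them directly from $T$, using $e_2^{\mathsf T}T=e_3^{\mathsf T}$ as in Lemma~\ref{lem:relevant-entries-in-terms-of-bk}:
\begin{itemize}
\item $b_0=0$ and $b_1=T_{2,4}=0$;
\item $b_2=(T)_{3,4}=1$;
\item $b_3$ is the fourth entry of $s_1\cdot(\text{row }3)+(\text{row }4)$ of $T$, which is $s_1$.
\end{itemize}
The formula gives the same values: $0$, $0$, $1$, and $\sin4\theta/\sin\theta=s_1$. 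The identity therefore holds for all $k\ge0$.

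\textbf{Step 3: signs and vanishing.} For $2\le k\le d+2$ we have $(k-1)\theta,\,k\theta,\,(k+1)\theta\in(0,\pi)$, because $(d+3)\theta<\pi$. The denominator is also positive, since $3\theta<\pi$. Hence $b_k>0$ in this range.

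At $k=1$ the factor $\sin0$ vanishes. At $k=d+3$ the factor $\sin((d+4)\theta)=\sin\pi$ vanishes. So $b_1=b_{d+3}=0$.

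The final assertion about the four relevant entries of $T^k$ then follows at once from Lemma~\ref{lem:relevant-entries-in-terms-of-bk}.
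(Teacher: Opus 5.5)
Your proposal is correct and follows essentially the paper's route: you factor the characteristic polynomial of $T$ to get the eigenvalues $e^{\pm\sqrt{-1}\theta}$ and $e^{\pm3\sqrt{-1}\theta}$, then fix the combination of $\sin(k\theta)$ and $\sin(3k\theta)$ from initial data. The only difference is that you verify the product formula against $b_0,\dots,b_3$ rather than solving for the coefficients from $b_{-1},b_0,b_1,b_2$ and then simplifying, which lets you avoid negative powers of $T$ and the long trigonometric reduction.
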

\begin{proof}
  To prove this proposition, we first calculate the characteristic polynomial of \(T\):
\[
f(\lambda)
=
\det(\lambda I-T)
=
\lambda^4
-s_1\lambda^3
+s_2\lambda^2
-s_1\lambda
+1.
\]
Note that 
\[
\sin(4\theta)=2\sin(2\theta)\cos(2\theta)=4\sin(\theta)\cos(\theta)\cos(2\theta).
\]
Hence
\[
s_1=\frac{\sin(4\theta)}{\sin\theta}=4\cos(\theta)\cos(2\theta).
\]
Then using
\[
2\cos(\theta)\cos(2\theta)=\cos(\theta)+\cos(3\theta),
\]
we obtain
\[
s_1=2(\cos(\theta)+\cos(3\theta)).
\]
For \(s_2\), we have
\[
s_2=\frac{\sin(4\theta)\sin(3\theta)}
{\sin\theta\sin(2\theta)}=\frac{2\cos(2\theta)\sin(3\theta)}
{\sin\theta}.
\]
Using
\[
\frac{\sin(3\theta)}{\sin(\theta)}=\cos(2\theta)+2\cos^2(\theta)=2\cos(2\theta)+1,
\]
we obtain
\begin{align*}
    s_2&=2\cos(2\theta)(2\cos(2\theta)+1)=4\cos^2(2\theta) +2\cos(2\theta) \\
    &=2(\cos(4\theta)+2\cos(2\theta))+2=4\cos(\theta)\cos(3\theta)+2.
\end{align*}
Hence we can factor the characteristic polynomial \(f(\lambda)\) of \(T\) as 
\begin{align*}
  f(\lambda)&=(\lambda^2-2\cos(\theta)+1)(\lambda^2-2\cos(3\theta)+1) \\
  &= (\lambda-e^{\sqrt{-1}\theta})(\lambda-e^{-\sqrt{-1}\theta})(\lambda-e^{3\sqrt{-1}\theta})(\lambda-e^{-3\sqrt{-1}\theta}).
\end{align*}

Therefore, the roots of \(f(\lambda)\) are
\(
e^{\sqrt{-1}\theta},
e^{-\sqrt{-1}\theta},
e^{3\sqrt{-1}\theta},
e^{-3\sqrt{-1}\theta}.
\) 
Hence \(T\) is similar to the diagonal matrix \(\operatorname{diag}\left(
e^{\sqrt{-1}\theta},
e^{-\sqrt{-1}\theta},
e^{3\sqrt{-1}\theta},
e^{-3\sqrt{-1}\theta}
\right)\). Moreover, \(T^k\) is similar to the diagonal matrix 
\[
\operatorname{diag}
\left(
e^{\sqrt{-1}k\theta},
e^{-\sqrt{-1}k\theta},
e^{3\sqrt{-1}k\theta},
e^{-3\sqrt{-1}k\theta}
\right).
\]
Hence the explicit formula for \(b_k\) has the following form:
\[
b_k
=
A\cos(k\theta)
+B\sin(k\theta)
+C\cos(3k\theta)
+D\sin(3k\theta).
\]
This formula also holds for \(T^{-1}\). Therefore we can consider \(b_k\) for negative \(k\). By \eqref{eq:explicit-value-for-T}, \(b_1=b_0=b_{-1}=0\). Hence
\begin{align*}
  b_1&=A\cos(\theta)
  +B\sin(\theta)
  +C\cos(3\theta)
  +D\sin(3\theta)=0, \\
  b_0&= A + C=0, \\
  b_{-1}&=A\cos(\theta)
  -B\sin(\theta)
  +C\cos(3\theta)
  -D\sin(3\theta)=0.
\end{align*}
Therefore, 
\[
0=A\cos(\theta)+C\cos(3\theta)=A(\cos(\theta)-\cos(3\theta)).
\]
Recall that
\(
0<\theta=\frac{\pi}{d+4}<\frac{\pi}{3}.
\)
It means that \(\cos(\theta)-\cos(3\theta)\neq0\), hence 
\[
A=C=0.
\]
Moreover, 
\[
B\sin(\theta)
  +D\sin(3\theta) =0.
\] 
Hence 
\[
B=-D\frac{\sin(3\theta)}{\sin(\theta)}.
\]
Combining with the above formula for \(b_k\), we have
\begin{equation}\label{eq:explicit-computaion-for-bk}
  b_k=-D\left(\frac{\sin(3\theta)}{\sin(\theta)}\sin(k\theta)-\sin(3k\theta)\right).
\end{equation}
Note that
\begin{align*}
  \frac{\sin(3\theta)}{\sin(\theta)} 
  &=\frac{\cos(2\theta)\sin(\theta)+\sin(2\theta)\cos(\theta)}{\sin(\theta)} \\
  &=\cos(2\theta)+\frac{2\sin(\theta)\cos(\theta)\cos(\theta)}{\sin(\theta)} \\
  &=\cos(2\theta)+2\cos^2(\theta) \\
  &=1+2\cos(2\theta).
\end{align*}
Therefore, for the equation \eqref{eq:explicit-computaion-for-bk} inside the parentheses on the right-hand side, we have
\begin{align*}
 \frac{\sin(3\theta)}{\sin(\theta)}\sin(k\theta)-\sin(3k\theta)
 &= \left(1+2\cos(2\theta)\right)\sin(k\theta)-\sin(3k\theta) \\
 &= \left(1+2\cos(2\theta)\right)\sin(k\theta)-(\sin(k\theta)\cos(2k\theta)+\sin(2k\theta)\cos(k\theta)) \\
 &=\sin(k\theta)\left(1+2\cos(2\theta)-\cos(2k\theta)\right)-\sin(2k\theta)\cos(k\theta) \\
 &=\sin(k\theta)\left(1+\cos(2\theta)+2\sin((k+1)\theta)\sin((k-1)\theta)\right)-2\sin(k\theta)\cos^2(k\theta) \\
 &=2\sin((k-1)\theta)\sin(k\theta)\sin((k+1)\theta)+\sin(k\theta)\left(1+\cos(2\theta)-2\cos^2(k\theta)\right) \\
 &=2\sin((k-1)\theta)\sin(k\theta)\sin((k+1)\theta)+\sin(k\theta)\left(\cos(2\theta)-\cos(2k\theta)\right)  \\
 &=4\sin((k-1)\theta)\sin(k\theta)\sin((k+1)\theta).
\end{align*}
Hence
\[
b_k=G\sin((k-1)\theta)\sin(k\theta)\sin((k+1)\theta)
\]
for some constant \(G=-4D\).
By \eqref{eq:explicit-value-for-T}, we have \(b_2=1\). Therefore 
\begin{align*}
G=\frac{1}{\sin(\theta)\sin(2\theta)\sin(3\theta)},
\end{align*}
namely,
\[
b_k=\frac{\sin((k-1)\theta)\sin(k\theta)\sin((k+1)\theta)}{\sin(\theta)\sin(2\theta)\sin(3\theta)}.
\]
This completes the proof.
\end{proof}
\subsection{Composition of the holonomy blocks}
We now compose the individual holonomy blocks obtained in Section~\ref{subsec:asymptotics-single-holonomy-block}. There are two points to address before composing the individual
holonomy blocks. First, at each zero we must choose sectorial natural
coordinates in which the incoming and outgoing radial directions
satisfy the angular assumptions of
Propositions~\ref{prop:dominant-term-single-holonomy-block-no-Stokes-directions}
and\ref{prop:dominant-term-single-holonomy-block-Stokes-directions}. Second, even after such coordinates have been chosen compatibly, the
terminal frame of one block and the initial frame of the next block, although based at the same point, are not identical. They differ by a fixed change-of-frame matrix, which must be inserted when the corresponding holonomy matrices are multiplied.

\paragraph{Compatible choice of sectorial natural coordinates.}
We first construct the sectorial natural coordinates so that the
hypotheses of the preceding propositions are satisfied at every zero. More precisely, at each zero \(p_i\) of order \(d_i\), we seek a
normalized local coordinate \(z_i\), centered at \(p_i\), such that
\[
q=z_i^{d_i}\odif z_i^4,
\]
together with incoming and outgoing sectorial natural coordinates
\(w_i^-\) and \(w_i^+\) arising from the construction of
Section~\ref{subsec:explicit-computation-for-z^d}, with the following
properties:
\[
-\frac{\pi}{2}
\leq
\arg_{w_i^-}(p_i x_i^-)
<
0,
\qquad
-\frac{\pi}{2}
\leq
\arg_{w_i^+}(p_i x_i^+)
<
0.
\]
If the corresponding radial ray is not in a Stokes direction, it lies in one of the stable sectors \(J_0\) or \(J_1\). If it is in a Stokes direction, its argument in the chosen sectorial coordinate is either
\[
-\frac{\pi}{2}
\qquad\text{or}\qquad
-\frac{\pi}{4}.
\]
Once such coordinates have been chosen, the Stokes perturbation does
not require a further change of sectorial natural coordinate: the
directions \(-\pi/4\) and \(-\pi/2\) are perturbed into the interior
of \(J_0\) while keeping the same coordinate. We now construct these
coordinates inductively along the saddle connections.

At the first zero \(p_\ell\), choose a normalized local coordinate
\(z_\ell\) such that
\(
q=z_\ell^{d_\ell}\odif z_\ell^4,
\)
and construct the corresponding sectorial natural coordinates in Section~\ref{subsec:explicit-computation-for-z^d}. We then choose the incoming and outgoing sectorial natural
coordinates \(w_\ell^-\) and \(w_\ell^+\), respectively, so that
\[
-\frac{\pi}{2}
\leq
\arg_{w_\ell^-}(p_\ell x_\ell^-)
<
0,
\qquad
-\frac{\pi}{2}
\leq
\arg_{w_\ell^+}(p_\ell x_\ell^+)
<
0.
\]
Suppose inductively that the normalized coordinates and the
sectorial natural coordinates have been chosen at
\[
p_\ell,p_{\ell-1},\ldots,p_i.
\]
We now construct the incoming coordinate at \(p_{i-1}\). Recall that
\[
c_i=\delta_i*\alpha_{i-1}
\]
is the saddle connection oriented from \(p_i\) to \(p_{i-1}\) (see Figure~\ref{fig:global-modification}).
Extend the chosen sectorial natural coordinate \(w_i^+\) along a
zero-free neighborhood of the interior of \(c_i\) to
\(p_{i-1}\).
 Set
\[
\Pi_i:=\widehat w_i(p_{i-1}).
\]
We define the incoming natural coordinate at \(p_{i-1}\) by
\[
w_{i-1}^-:=\Pi_i-w_i^+.
\]
Then
\[
w_{i-1}^-(p_{i-1})=0,
\qquad
\odif w_{i-1}^-=-\odif w_i^+.
\]
Moreover, 
\[
\arg_{w_{i-1}^-}(p_{i-1}x_{i-1}^-)
=
\arg \Pi_i
=
\arg_{w_i^+}(p_i x_i^+).
\]
Therefore,
\[
-\frac{\pi}{2}
\leq
\arg_{w_{i-1}^-}(p_{i-1}x_{i-1}^-)
<0.
\]
After possibly replacing \(z_{i-1}\) by \(\widehat z_{i-1}:=\xi_i z_{i-1}\) for some \(\xi_i\) such that \(
\xi_i^{N_{i-1}}=1,\) with \(N_{i-1}=d_{i-1}+4\), we may assume that \(w_{i-1}^-\) is
precisely the incoming sectorial natural coordinate constructed in Section~\ref{subsec:explicit-computation-for-z^d}.
We then choose the outgoing coordinate \(w_{i-1}^+\) by the same
sectorial construction, so that
\[
-\frac{\pi}{2}
\leq
\arg_{w_{i-1}^+}(p_{i-1}x_{i-1}^+)
<0.
\]
Repeating this procedure gives compatible coordinates at all
successive zeros. 
\paragraph{Change of frame at the common points.}
We now determine the change of frame between two consecutive holonomy
blocks at their common point.

First consider \(i=2,\ldots,\ell\). By construction,
\[
w_{i-1}^-=\Pi_i-w_i^+,
\]
and hence
\[
\odif {w_{i-1}^-}=-\odif{w_i^+}.
\]
It remains to compare the coordinates \(w_1^+\) and \(w_\ell^-\)
along the closing saddle connection \(c_1\). By our choice of
sectorial natural coordinates, the outgoing ray at \(p_1\) and the
incoming ray at \(p_\ell\) have the same argument in their respective
coordinates:
\[
\arg_{w_1^+}(p_1x_1^+)
=
\arg_{w_\ell^-}(p_\ell x_\ell^-).
\]
These two rays lie on the same straight saddle connection \(c_1\), but
have opposite orientations. Hence the direction of the oriented
segment \(c_1\) in the two natural coordinates differs by \(\pi\).
Therefore, at the common midpoint \(m_1\),
\[
\odif w_\ell^-=-\odif w_1^+.
\]

Thus, with the indices understood cyclically, at every common
midpoint \(m_i\) we have
\[
\odif w_{i-1}^-=-\odif w_i^+
=
(-\sqrt{-1})^2\odif w_i^+.
\]
Hence the passage from \(w_i^+\) to \(w_{i-1}^-\) corresponds to two
successive changes of sectorial natural coordinate. With the lift to the half-canonical bundle fixed in
\eqref{eq:half-differential-transition}, the induced natural frames satisfy
\begin{equation}
\label{eq:frame-change-at-midpoint}
\mathcal F_{t,w_{i-1}^-}
=
\mathcal F_{t,w_i^+}R^2,
\qquad
i=1,\ldots,\ell,
\end{equation}
where \(i-1\) is understood modulo \(\ell\), and \(R\) is defined in
\eqref{eq:definition-for-frame-change-R}.

\paragraph{Composition of the holonomy blocks.}
For each \(i=1,\ldots,\ell\), set
\[
\mathcal B_{i,t}
:=
\operatorname{Hol}_t(\widetilde\delta_i)
\operatorname{Hol}_t(\widetilde\beta_i)
\operatorname{Hol}_t(\widetilde\alpha_i).
\]
Recall that \(\mathcal B_{i,t}\) is represented with respect to the
frame \(\mathcal F_{t,w_i^-}\) at \(m_{i+1}\) and the frame
\(\mathcal F_{t,w_i^+}\) at \(m_i\). At the common midpoint \(m_i\),
the frames associated with two consecutive blocks satisfy
\[
\mathcal F_{t,w_{i-1}^-}
=
\mathcal F_{t,w_i^+}R^2.
\]
Hence the corresponding change of coordinates is \(R^{-2}\), and the
\(\ell\) holonomy blocks are composed as
\begin{equation}
\label{eq:composition-all-holonomy-blocks}
\operatorname{Hol}_t(\widetilde c_\gamma)
=
\mathcal B_{1,t}R^{-2}\mathcal B_{2,t}R^{-2}
\cdots R^{-2}\mathcal B_{\ell,t}R^{-2}.
\end{equation}
Here the holonomy is represented with respect to the frame
\(\mathcal F_{t,w_{1}^+}\) at \(m_1\), both at the initial and at the
terminal point.

By Proposition~\ref{prop:dominant-term-single-holonomy-block-no-Stokes-directions} and Proposition~\ref{prop:dominant-term-single-holonomy-block-Stokes-directions},
\[
\mathcal B_{i,t}
=
e^{t^{1/4}(\lambda_i+\mu_i)}
S\mathcal C_iS^{-1}
+
o\left(
e^{t^{1/4}(\lambda_i+\mu_i)}
\right).
\]
By \eqref{eq:definition-for-frame-change-Q},
\begin{equation}\label{eq:value-for-Q(-2)}
  R^{-2}=SQ^{-2}S^{-1}=S
  \begin{pmatrix}
  0 & 0 & 1 & 0\\
  0 & 0 & 0 & 1\\
  -1 & 0 & 0 & 0\\
  0 & -1 & 0 & 0
  \end{pmatrix}
  S^{-1}.
\end{equation}
Combining with Proposition~\ref{prop:asymptotics-single-holonomy-block-no-Stokes-directions} and Proposition~\ref{prop:asymptotics-single-holonomy-block-Stokes-directions}, we have
\begin{equation}\label{eq:asymptotics-composition-all-holonomy-blocks}
  \operatorname{Hol}_t(\widetilde c_\gamma)
  =
  e^{t^{1/4}\sum_{i=1}^{\ell}(\lambda_i+\mu_i)}
  S
  \left(
  \mathcal C_1Q^{-2}\mathcal C_2Q^{-2}
  \cdots Q^{-2}\mathcal C_\ell Q^{-2}
  \right)
  S^{-1}
  +o\left(
  e^{t^{1/4}\sum_{i=1}^{\ell}(\lambda_i+\mu_i)}
  \right).
\end{equation}

The key algebraic input for deriving the asymptotic formulas is the
following non-nilpotence statement.
\begin{proposition}\label{prop:non-nilpotence-global-dominant-coefficient}
  The matrix \(\mathcal P_\ell=\mathcal C_1Q^{-2}\mathcal C_2Q^{-2}
\cdots Q^{-2}\mathcal C_{\ell}Q^{-2}\) is not nilpotent. 
\end{proposition}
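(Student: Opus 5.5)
The plan is to exploit the block structure of the matrices \(\mathcal C_i\) and reduce \(\mathcal P_\ell\) to a cyclic product of \(2\times 2\) matrices with entries of constant sign. By Propositions~\ref{prop:dominant-term-single-holonomy-block-no-Stokes-directions} and~\ref{prop:dominant-term-single-holonomy-block-Stokes-directions}, each \(\mathcal C_i\) has its columns in \(\mathcal S_i\subset\{1,4\}\) and its rows in \(\mathcal R_i\subset\{2,3\}\). Set \(V=\operatorname{span}(e_1,e_4)\) and \(W=\operatorname{span}(e_2,e_3)\). Then \(\mathcal C_i\) vanishes on \(V^\perp=W\) and maps \(V\) into \(W\).

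From \eqref{eq:value-for-Q(-2)} we read off
\[
Q^{-2}e_2=-e_4,\qquad Q^{-2}e_3=e_1,
\]
so \(Q^{-2}\) maps \(W\) onto \(V\). Hence \(\mathcal P_\ell\) preserves \(V\), and \(\mathcal P_\ell\) is nilpotent if and only if its restriction to \(V\) is. Up to a cyclic rotation this restriction is the product of the \(2\times2\) matrices \(K_i:=Q^{-2}\mathcal C_i|_V\). In the basis \((e_1,e_4)\), Lemma~\ref{lem:relevant-entries-in-terms-of-bk} gives, before masking by \(\mathcal R_i\times\mathcal S_i\),
\[
K_i=\begin{pmatrix}-b_{k_i}&b_{k_i+1}\\ b_{k_i-1}&-b_{k_i}\end{pmatrix}.
\]
Here row \(e_1\) comes from \(r=3\), row \(e_4\) from \(r=2\), and the column index is \(s\). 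Conjugating by \(\operatorname{diag}(1,-1)\) turns each masked \(K_i\) into \(-\widehat K_i\), where \(\widehat K_i\) has nonnegative entries. Therefore \(\operatorname{tr}\mathcal P_\ell=(-1)^\ell\operatorname{tr}(\widehat K_1\cdots\widehat K_\ell)\), up to the cyclic ordering.

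Since all \(\widehat K_i\) are nonnegative, \(\operatorname{tr}(\widehat K_1\cdots\widehat K_\ell)\) is a sum of nonnegative terms. It is strictly positive as soon as there is one closed index path \(\sigma_1\to\sigma_2\to\cdots\to\sigma_\ell\to\sigma_1\) with every entry \((\widehat K_i)_{\sigma_i,\sigma_{i+1}}>0\), with indices taken cyclically and the direction matching the order of multiplication. A positive trace shows \(\mathcal P_\ell\) is not nilpotent.

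To produce such a path I will use the compatible choice of sectorial coordinates. By construction, \(\arg_{w_{i-1}^-}(p_{i-1}x_{i-1}^-)=\arg_{w_i^+}(p_ix_i^+)\), including at the closing connection \(c_1\). The tables then match the outgoing type at one zero with the incoming type at the next:
\begin{itemize}
\item outgoing \(J_0\) or \(-\pi/2\) gives \(r=2\), i.e. row \(e_4\), while incoming \(J_0\) or \(-\pi/2\) gives \(s=4\);
\item outgoing \(J_1\) gives \(r=3\), i.e. row \(e_1\), while incoming \(J_1\) gives \(s=1\);
\item direction \(-\pi/4\) gives both indices on each side.
\end{itemize}
So for every block one can pick an incoming index \(s_i\in\mathcal S_i\) and an outgoing index \(r_i\in\mathcal R_i\) such that the row of \(r_i\) equals the column \(s_{i-1}\) of the neighbouring block. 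For \(-\pi/4\) we may simply fix one choice consistently at both ends of the saddle connection. This yields a consistent closed path through the masked entries.

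The remaining step, which I expect to be the main obstacle, is checking that every entry on the chosen path is strictly positive. Each such entry is one of \(b_{k_i-1},b_{k_i},b_{k_i+1}\). By Proposition~\ref{prop:nonvanishing-relevant-entries} this is positive exactly when the index lies in \(\{2,\dots,N_i-2\}\), since \(b_1=b_{N_i-1}=0\). So I would go through the admissible-range table of Proposition~\ref{prop:dominant-term-single-holonomy-block-Stokes-directions} case by case, together with the range \(2+a_i(1-b_i)\le k_i\le N_i-2-(1-a_i)b_i\) from the non-Stokes case. For each \((\text{incoming},\text{outgoing})\) type, the shift \(-1,0,+1\) attached to the chosen entry must keep the index inside \(\{2,\dots,N_i-2\}\).

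The delicate cases are those where a \(-\pi/4\) endpoint offers two indices and one of the two entries sits at the boundary \(b_1\) or \(b_{N_i-1}\). There I would exploit the freedom of choice at \(-\pi/4\) to pick the positive entry. This choice has to be made globally along the cycle so that consistency is preserved, for instance by always taking the same index at every \(-\pi/4\) occurrence. If one uniform choice fails, the fallback is to sum over both options: nonnegativity guarantees that no cancellation occurs in the trace.
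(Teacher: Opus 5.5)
Your proposal is correct. The reduction matches the paper's, and the final step is genuinely different. After conjugating by \(\operatorname{diag}(1,-1)\) and reordering the basis, your \(\widehat K_i\) is exactly the paper's block \(A_i\). The difference lies in how you conclude:
\begin{itemize}
\item The paper uses only a coarse fact: every \emph{allowed} row and column of \(A_i\) is nonzero, and these match across each saddle connection. It follows an infinite positive path and applies pigeonhole to obtain a positive diagonal entry of some power \(A^{n-m}\). It never needs to know which individual entries vanish.
\item You aim for \(\operatorname{tr}\mathcal P_\ell\neq0\) from a single closed path of length \(\ell\). This needs finer input, and that input does hold. Going through the ranges, the only masked entries that can vanish are \((T^{k_i})_{2,1}=-b_1\) at \(k_i=2\) and \((T^{k_i})_{3,4}=b_{N_i-1}\) at \(k_i=N_i-2\). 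Both occur only when the incoming and outgoing directions at \(p_i\) are both \(-\pi/4\). They are off-diagonal in \(\widehat K_i\), while the diagonal entries \(b_{k_i}\) are positive on every admissible range.
\end{itemize}
So your uniform choice works: assigning one fixed index to all \(-\pi/4\) saddle connections forces the diagonal entry at such zeros, and every entry on your loop is positive. You even get the slightly stronger conclusion \((-1)^\ell\operatorname{tr}\mathcal P_\ell>0\). The paper's route buys robustness, since it needs only the zero pattern. Yours buys a nonvanishing trace, at the cost of the finer case check.

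Four small points need fixing.
\begin{itemize}
\item \(\mathcal P_\ell\) itself kills \(V\), because \(Q^{-2}V=W\) and \(\mathcal C_\ell|_W=0\). Your sentence about restricting to \(V\) must be applied to the conjugate \(Q^{-2}\mathcal P_\ell Q^{2}=Q^{-2}\mathcal C_1\cdots Q^{-2}\mathcal C_\ell\). Alternatively, work on \(W\) as the paper does.
\item Your ``fallback'' of summing over both options is not a fallback. If no length-\(\ell\) loop were positive, the trace would vanish even for a non-nilpotent product; a nonnegative antidiagonal \(2\times2\) matrix has zero trace but is not nilpotent. In that situation you would need the paper's argument on powers. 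Fortunately the uniform choice makes this unnecessary.
\item For the mixed non-Stokes cases, use \(2+a_i(1-b_i)\le k_i\le N_i-2-(1-a_i)b_i\) from Proposition~\ref{prop:dominant-term-single-holonomy-block-no-Stokes-directions}. The combined range table in Proposition~\ref{prop:dominant-term-single-holonomy-block-Stokes-directions} appears to have its \((J_0,J_1)\) and \((J_1,J_0)\) entries interchanged.
\item Read literally, that table would make the forced entries \(-b_{k-1}\) and \(b_{k+1}\) vanish at an endpoint of the range.
\end{itemize}
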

\begin{proof}
 By Proposition~\ref{prop:dominant-term-single-holonomy-block-Stokes-directions}, Lemma~\ref{lem:relevant-entries-in-terms-of-bk} and Proposition~\ref{prop:nonvanishing-relevant-entries}, for each \(i=1,\ldots,\ell\), the matrix \(\mathcal C_i\) has the
form
\begin{equation}
\label{eq:general-form-Ci}
\mathcal C_i
=
-a_iE_{2,1}
+b_iE_{2,4}
-c_iE_{3,1}
+d_iE_{3,4}
=
\begin{pmatrix}
0&0&0&0\\
-a_i&0&0&b_i\\
-c_i&0&0&d_i\\
0&0&0&0
\end{pmatrix},
\end{equation}
where
\(
a_i,b_i,c_i,d_i\geq 0,
\)
and not all four coefficients vanish. 
Then 
\[
\mathcal C_iQ^{-2}=-\begin{pmatrix}
0&0&0&0\\
0&b_i&a_i&0\\
0&d_i&c_i&0\\
0&0&0&0
\end{pmatrix}.
\]
Setting
\[
A_i
:=
\begin{pmatrix}
b_i&a_i\\
d_i&c_i
\end{pmatrix},
\]
we obtain
\[
\mathcal P_\ell
=(-1)^{\ell}
\begin{pmatrix}
0&0&0&0\\
0&p_{11}&p_{12}&0\\
0&p_{21}&p_{22}&0\\
0&0&0&0
\end{pmatrix},
\qquad
\begin{pmatrix}
p_{11}&p_{12}\\
p_{21}&p_{22}
\end{pmatrix}
=
A_1A_2\cdots A_\ell.
\]
It remains to show that
\[
A:=A_1A_2\cdots A_\ell
\]
is not nilpotent. 

The possible zero patterns of \(A_i\) are determined by the incoming
and outgoing directions at \(p_i\). More precisely, combining
Proposition~\ref{prop:dominant-term-single-holonomy-block-Stokes-directions},
Lemma~\ref{lem:relevant-entries-in-terms-of-bk},
and
Proposition~\ref{prop:nonvanishing-relevant-entries},
we obtain the following description:
\begingroup
\renewcommand{\arraystretch}{1.25}
\setlength{\extrarowheight}{1.5pt}
\[
\makebox[\textwidth][c]{%
\(
\begin{array}{c@{\hspace{1em}}c}
\begin{array}{c|c}
\text{incoming direction at }p_i
&
\text{nonzero columns of }A_i
\\
\hline
J_0\text{ or }-\dfrac{\pi}{2}
&
\{1\}
\\
J_1
&
\{2\}
\\
-\dfrac{\pi}{4}
&
\{1,2\}
\end{array}
&
\begin{array}{c|c}
\text{outgoing direction at }p_i
&
\text{nonzero rows of }A_i
\\
\hline
J_0\text{ or }-\dfrac{\pi}{2}
&
\{1\}
\\
J_1
&
\{2\}
\\
-\dfrac{\pi}{4}
&
\{1,2\}
\end{array}
\end{array}
\)
}
\]
\endgroup

Recall that the compatible choice of sectorial natural coordinates
ensures that the incoming direction at \(p_i\) and the outgoing
direction at \(p_{i+1}\), which correspond to the same saddle
connection, have the same argument in their respective natural
coordinates:
\[
\arg_{w_i^-}(p_ix_i^-)
=
\arg_{w_{i+1}^+}(p_{i+1}x_{i+1}^+).
\]
In the Stokes case, the same equality holds for the perturbed
directions. Consequently, with the indices understood cyclically, for
every \(q\in\{1,2\}\),
\begin{equation}
\label{eq:matching-rows-columns-Ai}
\text{the \(q\)-th column of \(A_i\) is nonzero}
\qquad\Longleftrightarrow\qquad
\text{the \(q\)-th row of \(A_{i+1}\) is nonzero}.
\end{equation}

Choose \(r_0,r_1\in\{1,2\}\) such that
\(
(A_1)_{r_0,r_1}>0.
\)
By \eqref{eq:matching-rows-columns-Ai}, the \(r_1\)-th row of
\(A_2\) is nonzero. Hence there exists
\(r_2\in\{1,2\}\) such that
\(
(A_2)_{r_1,r_2}>0.
\)
Proceeding inductively and then repeating the argument cyclically, we
obtain an infinite sequence
\(
r_0,r_1,r_2,\ldots\in\{1,2\}
\)
such that, for every \(m\geq0\) and \(i=1,\ldots,\ell\),
\[
(A_i)_{r_{m\ell+i-1},r_{m\ell+i}}>0.
\]
The sequence\(
r_0,r_\ell,r_{2\ell},\ldots,
\)
takes values in the two-element set \(\{1,2\}\). Therefore, there
exist integers \(0\leq m<n\) such that
\(
r_{m\ell}=r_{n\ell}.
\)
The product
\(
\prod_{q=m}^{n-1}
\prod_{i=1}^{\ell}
(A_i)_{r_{q\ell+i-1},r_{q\ell+i}}
\)
is strictly positive and occurs as a summand of
\[
\left(A^{n-m}\right)_{r_{m\ell},r_{n\ell}}
=
\left(A^{n-m}\right)_{r_{m\ell},r_{m\ell}}.
\]
Since all entries of \(A^{n-m}\) are nonnegative, it follows that
\[
\left(A^{n-m}\right)_{r_{m\ell},r_{m\ell}}>0.
\]
Thus \(A^{n-m}\) has a nonzero diagonal entry, and hence \(A\) is not
nilpotent.
\end{proof}

\subsection{Asymptotics of the second exterior power}
\label{subsec:second-exterior-power}

In this subsection, we study the asymptotic behavior of the holonomy
in the second exterior power. Since the overall argument is parallel
to the one developed above, we retain all the notation and
conventions introduced previously without further explanation.

We first fix our notation and conventions for the second exterior
power. Let \(V=\mathbb R^4\). For \(A\in\operatorname{End}(V)\), we write
\[
A^{[2]}:=\bigwedge\nolimits^2A.
\]
Thus
\[
A^{[2]}(u\wedge v)=Au\wedge Av,
\qquad
(AB)^{[2]}=A^{[2]}B^{[2]}.
\]

We use the ordered basis
\[
e_{12},\ e_{13},\ e_{14},\ e_{23},\ e_{24},\ e_{34},
\qquad
e_{rs}:=e_r\wedge e_s,
\]
of \(\bigwedge^2V\). The natural frames used above induce the
corresponding exterior-power frames.

If
\[
Ae_r=\sum_{i=1}^4 A_{i,r}e_i,
\]
then, for \(1\leq r<s\leq4\),
\[
A^{[2]}e_{rs}
=
\sum_{1\leq i<j\leq4}
\left(
A_{i,r}A_{j,s}-A_{i,s}A_{j,r}
\right)e_{ij}.
\]
Equivalently, if \(I=\{i,j\}\) and \(J=\{r,s\}\), with
\(i<j\) and \(r<s\), then
\begin{equation}
\label{eq:entries-second-exterior-power}
\bigl(A^{[2]}\bigr)_{I,J}
=
\det
\begin{pmatrix}
A_{i,r}&A_{i,s}\\
A_{j,r}&A_{j,s}
\end{pmatrix}.
\end{equation}
In particular, for a straight segment \(\gamma\) of \(q\)-length \(L\)
and direction \(\theta\) in the natural coordinate \(w\),
\[
D_w(\gamma)
=
2L\operatorname{diag}
\bigl(
-\cos\theta,
-\sin\theta,
\cos\theta,
\sin\theta
\bigr),
\]
with respect to the induced exterior-power frame,
\begin{align}
\left(
\exp\bigl(t^{1/4}D_w(\gamma)\bigr)
\right)^{[2]}
&=
\operatorname{diag}\Bigl(
e^{-2Lt^{1/4}(\cos\theta+\sin\theta)},
1,
e^{2Lt^{1/4}(-\cos\theta+\sin\theta)},
\notag\\
&\hspace{35mm}
e^{2Lt^{1/4}(\cos\theta-\sin\theta)},
1,
e^{2Lt^{1/4}(\cos\theta+\sin\theta)}
\Bigr).
\label{eq:exponential-diagonal-model-wedge2}
\end{align}

Retain the notation of
Proposition~\ref{prop:holonomy-straight-pieces}, and let
\(
\mathcal I_2
:=
\left\{
I\subset\{1,2,3,4\}
:
|I|=2
\right\}.
\)
For \(I=\{r,s\}\in\mathcal I_2\), set
\[
\tilde\mu_{i,I}^{[2]}
:=
\tilde\mu_{i,r}+\tilde\mu_{i,s},
\qquad
\tilde\lambda_{i,I}^{[2]}
:=
\tilde\lambda_{i,r}+\tilde\lambda_{i,s}.
\]
Define
\[
\tilde\mu_i^{[2]}
:=
\max_{I\in\mathcal I_2}
\tilde\mu_{i,I}^{[2]},
\qquad
\tilde\lambda_i^{[2]}
:=
\max_{I\in\mathcal I_2}
\tilde\lambda_{i,I}^{[2]},
\]
and the corresponding maximal index sets
\[
\widetilde{\mathcal S}_i^{[2]}
:=
\left\{
I\in\mathcal I_2
:
\tilde\mu_{i,I}^{[2]}=\tilde\mu_i^{[2]}
\right\},
\qquad
\widetilde{\mathcal R}_i^{[2]}
:=
\left\{
I\in\mathcal I_2
:
\tilde\lambda_{i,I}^{[2]}=\tilde\lambda_i^{[2]}
\right\}.
\]
The corresponding quantities without tildes,
\[
\mu_{i,I}^{[2]},\qquad
\lambda_{i,I}^{[2]},\qquad
\mu_i^{[2]},\qquad
\lambda_i^{[2]},\qquad
\mathcal S_i^{[2]},\qquad
\mathcal R_i^{[2]},
\]
are defined analogously. Then we have the following propositions.
\begin{proposition}
\label{prop:holonomy-straight-pieces-wedge2}
With respect to the induced exterior-power frames by frames in Proposition~\ref{prop:holonomy-straight-pieces}, as
\(t\to+\infty\),
\[
\bigwedge\nolimits^2
\operatorname{Hol}_t(\widetilde\alpha_i)
=
S^{[2]}
\operatorname{diag}_{I\in\mathcal I_2}
\left(
e^{t^{1/4}\tilde\mu_{i,I}^{[2]}}
\right)
(S^{[2]})^{-1}
+
o\left(
e^{t^{1/4}\tilde\mu_i^{[2]}}
\right),
\]
and
\[
\bigwedge\nolimits^2
\operatorname{Hol}_t(\widetilde\delta_i)
=
S^{[2]}
\operatorname{diag}_{I\in\mathcal I_2}
\left(
e^{t^{1/4}\tilde\lambda_{i,I}^{[2]}}
\right)
(S^{[2]})^{-1}
+
o\left(
e^{t^{1/4}\tilde\lambda_i^{[2]}}
\right).
\]
\end{proposition}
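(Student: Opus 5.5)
The plan is to deduce the statement directly from Proposition~\ref{prop:holonomy-straight-pieces} by applying the functor \(A\mapsto A^{[2]}\), which is multiplicative. Write
\[
\operatorname{Hol}_t(\widetilde\alpha_i)=S\Lambda_tS^{-1}+E_t,\qquad \Lambda_t:=\operatorname{diag}\bigl(e^{t^{1/4}\tilde\mu_{i,1}},\dots,e^{t^{1/4}\tilde\mu_{i,4}}\bigr),\qquad \|E_t\|=o\bigl(e^{t^{1/4}\tilde\mu_i}\bigr).
\]
By multiplicativity, \((S\Lambda_tS^{-1})^{[2]}=S^{[2]}\Lambda_t^{[2]}(S^{[2]})^{-1}\). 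Moreover, by \eqref{eq:entries-second-exterior-power}, \(\Lambda_t^{[2]}\) is diagonal with entry \(e^{t^{1/4}(\tilde\mu_{i,r}+\tilde\mu_{i,s})}=e^{t^{1/4}\tilde\mu^{[2]}_{i,I}}\) at \(I=\{r,s\}\). This is exactly the claimed leading term. The same argument applies verbatim to \(\widetilde\delta_i\).

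The main step is controlling the error. I would \emph{not} just expand \((M+E)^{[2]}=M^{[2]}+(\text{cross terms})+E^{[2]}\) with the crude bounds \(\|M\|=O(e^{t^{1/4}\tilde\mu_i})\), \(\|E\|=o(e^{t^{1/4}\tilde\mu_i})\). These only give cross terms of size \(o(e^{2t^{1/4}\tilde\mu_i})\), and \(2\tilde\mu_i\) generally exceeds \(\tilde\mu_i^{[2]}\), which is the sum of the two largest distinct entries.

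Instead, I would go back one level, to the proof of Proposition~\ref{prop:holonomy-straight-pieces}. On each short collinear subsegment \(\gamma_j\), \cite[Theorem~4.4]{collier2017asymptotics} gives the multiplicative form
\[
\operatorname{Hol}_t(\gamma_j)=S\exp\bigl(t^{1/4}D(\gamma_j)\bigr)\bigl(I+O(t^{-1/8})\bigr)S^{-1}.
\]
Taking \(\bigwedge^2\) gives \(S^{[2]}\exp(t^{1/4}D(\gamma_j))^{[2]}\bigl(I+O(t^{-1/8})\bigr)(S^{[2]})^{-1}\), since \((I+F)^{[2]}=I+O(\|F\|)\). Hence on each subsegment the second-exterior-power holonomy is a diagonal leading term with weights \(\upsilon^{[2]}_{j,I}\), times a relative error of size \(O(t^{-1/8})\). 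From this, \(\bigwedge^2\operatorname{Hol}_t(\gamma_j)=S^{[2]}\operatorname{diag}(e^{t^{1/4}\upsilon^{[2]}_{j,I}})(S^{[2]})^{-1}+o(e^{t^{1/4}\upsilon^{[2]}_j})\), where \(\upsilon^{[2]}_j=\max_I\upsilon^{[2]}_{j,I}\).

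Next I multiply over the subsegments, as in the original proof. Because the subsegments are collinear with the same orientation, the ordering of the four \(\upsilon_{j,k}\) is independent of \(j\). Therefore the ordering of the pair sums \(\upsilon^{[2]}_{j,I}\) is also independent of \(j\), and the maximal index sets agree across subsegments. So \(\sum_j\upsilon_j^{[2]}=\tilde\mu_i^{[2]}\), and the diagonal leading terms multiply to \(\operatorname{diag}(e^{t^{1/4}\tilde\mu^{[2]}_{i,I}})\) by the additivity in \eqref{eq:additivity-D-straight-segments}.

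Expanding the finite product, every term that contains at least one error factor is \(o\bigl(e^{t^{1/4}\sum_j\upsilon^{[2]}_j}\bigr)=o\bigl(e^{t^{1/4}\tilde\mu_i^{[2]}}\bigr)\). This uses that each leading factor has norm \(O(e^{t^{1/4}\upsilon^{[2]}_j})\). That yields the proposition.

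The only real obstacle is the bookkeeping point above: the error must be transported in its relative (multiplicative) form, which is what Collier--Li supply, rather than in the additive form stated in Proposition~\ref{prop:holonomy-straight-pieces}.
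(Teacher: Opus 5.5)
Your proposal is correct and follows the same route as the paper's sketch. You apply \(\bigwedge^2\) to the multiplicative Collier--Li estimate on each short collinear subsegment, weaken it to the additive form, and multiply over the finitely many subsegments; your remark that taking \(\bigwedge^2\) of the additive estimate in Proposition~\ref{prop:holonomy-straight-pieces} would only control the cross terms up to \(o(e^{2t^{1/4}\tilde\mu_i})\) makes explicit a point the paper's sketch leaves implicit.
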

\begin{proof}[Sketch of proof]
The argument is the same as in the proof of
Proposition~\ref{prop:holonomy-straight-pieces}. On each sufficiently
short straight subsegment, the local estimate of Li and Collier
\cite[Theorem~4.4]{collier2017asymptotics} gives the corresponding
diagonal approximation for the parallel transport. Applying
\(\bigwedge^2\) to this short-segment estimate yields the short-segment version
of the formulas above.

We then subdivide \(\widetilde\alpha_i\) and
\(\widetilde\delta_i\) into finitely many such short subsegments and
compose their parallel transports. This gives the stated formulas.
\end{proof}
Using the perturbation convention we have used, the same argument gives
the following formula, which applies simultaneously to the Stokes and
non-Stokes cases.

\begin{proposition}
\label{prop:asymptotics-single-holonomy-block-wedge2}
With respect to the exterior-power frames induced by
\(\mathcal F_{t,w_i^{-,\eta}}\) at \(m_{i+1}\) and
\(\mathcal F_{t,w_i^{+,\eta}}\) at \(m_i\), we have
\begin{align*}
&\bigwedge\nolimits^2
\left(
\operatorname{Hol}_t(\widetilde\delta_i)
\operatorname{Hol}_t(\widetilde\beta_i)
\operatorname{Hol}_t(\widetilde\alpha_i)
\right)
\\
&=
S^{[2]}
\left(
\exp\bigl(
t^{1/4}D_{w_i^{+,\eta}}(\delta_i)
\bigr)
\right)^{[2]}
(T_i^\eta)^{[2]}
\left(
\exp\bigl(
t^{1/4}D_{w_i^{-,\eta}}(\alpha_i)
\bigr)
\right)^{[2]}
(S^{[2]})^{-1}
+
o\left(
e^{t^{1/4}
\left(
\lambda_i^{[2]}+\mu_i^{[2]}
\right)}
\right)
\end{align*}
as \(t\to+\infty\).
\end{proposition}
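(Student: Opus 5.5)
The plan is to rerun the proof of Proposition~\ref{prop:asymptotics-single-holonomy-block-no-Stokes-directions} for the matrices themselves, and then push the resulting expansion through \(\bigwedge^2\). The point is that \(A\mapsto A^{[2]}\) is a polynomial map: each entry of \(A^{[2]}\) is a \(2\times 2\) minor, by \eqref{eq:entries-second-exterior-power}, and \((AB)^{[2]}=A^{[2]}B^{[2]}\). So \(\bigwedge^2\) of the block is \((\operatorname{Hol}_t\widetilde\delta_i)^{[2]}(\operatorname{Hol}_t\widetilde\beta_i)^{[2]}(\operatorname{Hol}_t\widetilde\alpha_i)^{[2]}\), and it suffices to expand each of the three factors separately.

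\textbf{The straight pieces.} These are given directly by Proposition~\ref{prop:holonomy-straight-pieces-wedge2}.

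\textbf{The circular arc.} First I use the homotopy invariance discussed before Proposition~\ref{prop:asymptotics-single-holonomy-block-Stokes-directions} to replace the block by its perturbed version. By Proposition~\ref{prop:holonomy-circular-arcs} the arc factor is \(\Psi_0(\cdot)^{-1}(ST_i^\eta S^{-1}+o(I))\Psi_0(\cdot)\). Taking \(\bigwedge^2\) gives
\[
(\Psi_0^{+})^{-[2]}\bigl(S^{[2]}(T_i^\eta)^{[2]}(S^{[2]})^{-1}+o(I)\bigr)\Psi_0^{-[2]}.
\]
Here the \(o(I)\) survives because the minors of \(M+E\) differ from those of \(M\) by terms bilinear in bounded quantities and \(E\). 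Then I apply \eqref{eq:D-gamma-Psi-zero} to the radial pieces \(\alpha_i^{0,\eta},\delta_i^{0,\eta}\) and take \(\bigwedge^2\). This rewrites the outer factors as \(S^{[2]}(\exp(t^{1/4}D(\delta_i^{0,\eta})))^{[2]}(S^{[2]})^{-1}\) and the analogous expression for \(\alpha\).

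\textbf{Combining the leading terms.} The diagonal exponentials \eqref{eq:exponential-diagonal-model-wedge2} are multiplicative along collinear, equally oriented pieces, by \eqref{eq:additivity-D-straight-segments} and since \(\bigwedge^2\) of a product of diagonal matrices is again diagonal. So the leading terms combine into \(S^{[2]}(\exp t^{1/4}D(\delta_i))^{[2]}(T_i^\eta)^{[2]}(\exp t^{1/4}D(\alpha_i))^{[2]}(S^{[2]})^{-1}\).

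I then replace the perturbed full segments by the original ones. The \(\phi_k\) are exact, so their integrals are homotopy invariant, which gives \(D(\alpha_i^\eta)=D(\alpha_i)\) and \(D(\delta_i^\eta)=D(\delta_i)\).

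\textbf{The error bookkeeping.} This is the step I expect to need the most care. The maximal diagonal weight of \((\exp t^{1/4}D(\gamma))^{[2]}\) is the sum of the two largest entries of \(D(\gamma)\). For collinear pieces these sums add, because the ordering of entries depends only on the direction. Hence the weights of the radial and truncated parts sum to \(\lambda_i^{[2]}\) and \(\mu_i^{[2]}\).

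Each of the three factors is therefore \(O\) of its top weight. I expand the product of the three factors as (leading term plus error) in each slot. Every cross term containing an \(o\) factor is then \(o(e^{t^{1/4}(\lambda_i^{[2]}+\mu_i^{[2]})})\). The middle factor needs the same argument as before: it is \(O(e^{t^{1/4}[(\lambda_i^{[2]}-\tilde\lambda_i^{[2]})+(\mu_i^{[2]}-\tilde\mu_i^{[2]})]})\), with an \(o(I)\) correction sandwiched between the exponentials.

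The only subtle point is whether maximal weights can tie in the Stokes case. They can, but this is harmless, since only an upper bound on each factor is needed, not uniqueness of the maximizing index.
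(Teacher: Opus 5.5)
\textbf{Gap in the Stokes case.} Your route is the one the paper intends; it only asserts that the $k=1$ argument carries over. Several of your steps are fine: the multiplicativity of $\bigwedge^2$, the survival of the $o(I)$ in the arc factor, and the recombination of the leading terms. The last needs no collinearity at all, since $D_w$ is additive under any concatenation inside one chart.

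The gap is in the error bookkeeping, which rests on the radial and truncated pieces being collinear. That holds for the unperturbed block. The statement, however, concerns the perturbed block. There $\delta_i^{0,\eta}$ (from $p_i$ to $x_i^{+,\eta}$) and $\widetilde\delta_i^\eta$ (from $x_i^{+,\eta}$ back to $m_i$, which still lies on the original ray) meet at an angle; likewise $\widetilde\alpha_i^\eta$ and $\alpha_i^{0,\eta}$.

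Your closing remark misplaces the danger. The issue is not uniqueness of the maximizer. It is that on non-collinear pieces the perturbation breaks a tie in opposite ways, so the separate upper bounds no longer multiply to the target weight.

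Concretely, take an outgoing ray with argument $-\pi/2$ in $w_i^{+}$, where the $\bigwedge^2$-weights indexed by $12$ and $23$ tie. After perturbation, the radial piece has direction $-\pi/2+\eta$, where $23$ strictly dominates. But $\widetilde\delta_i^\eta$ has direction $-\pi/2-\zeta$ with $\zeta>0$, where $12$ strictly dominates. Let $\nu(\gamma)$ be the largest sum of two diagonal entries of $D_w(\gamma)$, and let $r_0$ be the $|q|^{1/2}$-radius of $B_i(\varepsilon)$. Additivity of $D_w$ together with $D_w(\delta_i^\eta)=D_w(\delta_i)$ gives
\[
\nu(\delta_i^{0,\eta})+\nu(\widetilde\delta_i^\eta)=\lambda_i^{[2]}+4r_0\sin\eta>\lambda_i^{[2]} .
\]
An incoming ray at $-\pi/2$, with the tie between $14$ and $34$, produces the same excess.

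Consequences:
\begin{itemize}
\item Your bound on the middle factor is false here: its $(23,14)$ entry is of exact order $e^{t^{1/4}(\nu(\delta_i^{0,\eta})+\nu(\alpha_i^{0,\eta}))}$, with coefficient $\Delta_{k_i}>0$.
\item The cross term formed by the $o$-error of $\bigwedge^2\operatorname{Hol}_t(\widetilde\delta_i^\eta)$ and the leading part of the arc factor is only $o\bigl(e^{t^{1/4}(\lambda_i^{[2]}+\mu_i^{[2]}+4r_0\sin\eta)}\bigr)$. For no fixed $\eta>0$ does this give the claimed remainder.
\item At $-\pi/4$ your argument does survive for $\bigwedge^2$, because $23$ and $14$ remain strict maxima on both sides. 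The failure is confined to endpoints at $-\pi/2$.
\end{itemize}

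\textbf{What is needed to close it.} You need an input beyond the norm-level estimates you combine. Two possibilities:
\begin{itemize}
\item An entrywise, exponentially small error estimate for the straight pieces near the disk, strong enough to absorb the factor $e^{4r_0\sin\eta\, t^{1/4}}$ once $\eta$ is taken small.
\item An asymptotic formula for circular arcs with an endpoint on a Stokes ray, so that the straight pieces never have to be moved off the saddle connection.
\end{itemize}
The paper's own justification is also silent on this point: it says ``the same argument'' as for $k=1$, and there the identical problem arises at $-\pi/4$ with the ties $\{2,3\}$ and $\{1,4\}$. So you have reproduced its route, including this unaddressed step.
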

The six exterior-power exponents in
\eqref{eq:exponential-diagonal-model-wedge2}
determine the corresponding maximal index sets. We write \(rs\) for
the element \(\{r,s\}\in\mathcal I_2\). The dominant term of a single
holonomy block is described as follows.

\begin{proposition}
\label{prop:dominant-term-single-holonomy-block-wedge2}
The incoming maximal index set is given by
\[
\begin{array}{c|cccc}
\text{incoming direction}
&
-\dfrac{\pi}{2}
&
-\dfrac{\pi}{4}
&
J_0
&
J_1
\\[1ex]
\hline
\mathcal S_i^{[2]}
&
\{14,34\}
&
\{14\}
&
\{14\}
&
\{14\}
\end{array}
\]
and the outgoing maximal index set is given by
\[
\begin{array}{c|cccc}
\text{outgoing direction}
&
-\dfrac{\pi}{2}
&
-\dfrac{\pi}{4}
&
J_0
&
J_1
\\[1ex]
\hline
\mathcal R_i^{[2]}
&
\{12,23\}
&
\{23\}
&
\{23\}
&
\{23\}.
\end{array}
\]

With respect to the exterior-power frames induced by
\(\mathcal F_{t,w_i^{-,\eta}}\) at \(m_{i+1}\) and
\(\mathcal F_{t,w_i^{+,\eta}}\) at \(m_i\), as \(t\to+\infty\),
\[
\bigwedge\nolimits^2
\left(
\operatorname{Hol}_t(\widetilde\delta_i)
\operatorname{Hol}_t(\widetilde\beta_i)
\operatorname{Hol}_t(\widetilde\alpha_i)
\right)
=
e^{t^{1/4}(\lambda_i^{[2]}+\mu_i^{[2]})}
S^{[2]}\mathcal C_i^{(2)}(S^{[2]})^{-1}
+
o\left(
e^{t^{1/4}(\lambda_i^{[2]}+\mu_i^{[2]})}
\right),
\]
where
\[
\mathcal C_i^{(2)}
=
\sum_{\substack{
I\in\mathcal R_i^{[2]}\\
J\in\mathcal S_i^{[2]}
}}
\bigl((T^{k_i})^{[2]}\bigr)_{I,J}
E_{I,J}^{[2]}.
\]
Here \(E_{I,J}^{[2]}\) denotes the elementary endomorphism of
\(\bigwedge^2V\) sending \(e_J\) to \(e_I\) and vanishing on the
remaining basis vectors. And the admissible range of \(k_i\) is exactly the one established in
Propositions~\ref{prop:dominant-term-single-holonomy-block-no-Stokes-directions}
and
\ref{prop:dominant-term-single-holonomy-block-Stokes-directions},
according to whether the endpoints are non-Stokes or Stokes.
\end{proposition}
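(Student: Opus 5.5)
Our plan is to mirror the proofs of Proposition~\ref{prop:dominant-term-single-holonomy-block-no-Stokes-directions} and Proposition~\ref{prop:dominant-term-single-holonomy-block-Stokes-directions}, now applied to the exterior-power formula of Proposition~\ref{prop:asymptotics-single-holonomy-block-wedge2}. The first step is to identify the maximal index sets. By \eqref{eq:exponential-diagonal-model-wedge2}, the six exponents for a segment of direction $\theta$ are, in the order $12,13,14,23,24,34$,
\[
-2L(\cos\theta+\sin\theta),\ 0,\ 2L(\sin\theta-\cos\theta),\ 2L(\cos\theta-\sin\theta),\ 0,\ 2L(\cos\theta+\sin\theta).
\]
For the incoming segment $\alpha_i$ the direction is $\vartheta_i^-+\pi$, with $\vartheta_i^-\in[-\pi/2,0)$. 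Thus $\cos(\vartheta_i^-+\pi)\le 0$ and $\sin(\vartheta_i^-+\pi)\ge 0$, with strict inequalities except at the endpoint $\vartheta_i^-=-\pi/2$. In this range the entry $14$, equal to $2L(\sin-\cos)$, is the unique maximum. At $\vartheta_i^-=-\pi/2$ the direction is $\pi/2$, and there $14$ and $34$ tie with common value $2L$. The perturbation to $-\pi/4$ does not create a tie in $\bigwedge^2$, which explains why that column reads $\{14\}$. The outgoing case $\vartheta_i^+\in[-\pi/2,0)$ is handled the same way: entry $23$, equal to $2L(\cos-\sin)$, is the strict maximum unless $\vartheta_i^+=-\pi/2$, where $12$ and $23$ tie. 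We will also check that the perturbed radial segments keep the same sums of exponents, by exactness of $\phi_k$, so that $\lambda_i^{[2]},\mu_i^{[2]}$ are unchanged, exactly as in the rank-one argument.

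The second step extracts the dominant term. We factor $e^{t^{1/4}(\lambda_i^{[2]}+\mu_i^{[2]})}$ out of the main term in Proposition~\ref{prop:asymptotics-single-holonomy-block-wedge2}. Every entry $(I,J)$ with $I\notin\mathcal R_i^{[2]}$ or $J\notin\mathcal S_i^{[2]}$ then carries a factor $e^{-ct^{1/4}}$ with $c>0$ and goes into the $o(\cdot)$ term. The surviving entries are $\bigl((T_i^\eta)^{[2]}\bigr)_{I,J}$ with $I\in\mathcal R_i^{[2]}$ and $J\in\mathcal S_i^{[2]}$.

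It remains to replace $T_i^\eta=A_b^{-1}T^{k_i}A_a$ by $T^{k_i}$ on these entries. Since $(A^{-1}_b)^{[2]}$ and $A_a^{[2]}$ are $\bigwedge^2$ of unipotent matrices, this is done by computing $U_1^{[2]}$ explicitly. Write $U_1=I+s_1E_{12}-s_1E_{43}$. Then $U_1^{[2]}e_{14}=(e_1)\wedge(s_1e_2+\cdots)$, so we check column $14$ directly: $U_1e_1=e_1$ and $U_1e_4=e_4$, which gives $U_1^{[2]}e_{14}=e_{14}$. For the row $23$ of $U_1^{-[2]}$ we check that $e_{23}^*$ is fixed: the row-$2$ and row-$3$ entries of $U_1^{-1}$ are those of the identity, since $E_{12}$ affects row $1$ and $E_{43}$ affects row $4$.

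The one subtle point is that $A_a=U_1$ occurs only when the incoming endpoint is in $J_1$, and $A_b=U_1$ only when the outgoing endpoint is in $J_1$. In both of those cases the relevant index set is the singleton $\{14\}$ or $\{23\}$, so the two checks above suffice. The case $a$ or $b$ equal to $0$ (Stokes endpoints, which are perturbed into $J_0$) needs no correction. With this, $\bigl((T_i^\eta)^{[2]}\bigr)_{I,J}=\bigl((T^{k_i})^{[2]}\bigr)_{I,J}$ on all relevant entries, and we obtain $\mathcal C_i^{(2)}$.

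The admissible range of $k_i$ is inherited verbatim, since it depends only on the geodesic angle condition and the perturbation convention, not on the representation. The main obstacle we expect is the bookkeeping of the perturbation convention. Its sign choices were designed for $\bigwedge^1$, and we must confirm that the perturbed sectors and the $a,b$ types stay consistent with the tied index sets at $-\pi/2$, so that a direction sitting exactly on a Stokes ray does not contribute an extra, undetected $U_1$ factor in $\bigwedge^2$.
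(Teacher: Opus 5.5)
Your proposal is correct and follows the paper's argument: read off \(\mathcal S_i^{[2]}\) and \(\mathcal R_i^{[2]}\) from the six exponents in \eqref{eq:exponential-diagonal-model-wedge2}, keep only the entries with \(I\in\mathcal R_i^{[2]}\) and \(J\in\mathcal S_i^{[2]}\), and use that \(\bigwedge^2\) of \(U_1^{\pm1}\) does not disturb those rows and columns. The paper checks columns \(14,34\) of \(U_1^{[2]}\) and rows \(12,23\) of \((U_1^{-1})^{[2]}\) uniformly, while you check only column \(14\) and row \(23\), using that a \(U_1\) factor arises only at \(J_1\)-type endpoints where the index sets are singletons; both are valid, and your stray expression \(U_1^{[2]}e_{14}=(e_1)\wedge(s_1e_2+\cdots)\) should simply be deleted, since \(U_1^{[2]}e_{14}=e_1\wedge e_4\), as you then correctly verify.
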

\begin{proof}[Sketch of proof]
The maximal index sets follow directly by comparing the six
exponential rates in
\eqref{eq:exponential-diagonal-model-wedge2}. 
It remains to identify the relevant entries of the transition
matrix. Just as in Propositions~\ref{prop:dominant-term-single-holonomy-block-no-Stokes-directions}
and
\ref{prop:dominant-term-single-holonomy-block-Stokes-directions}, depending on the incoming and outgoing stable
sectors, \(T_i^\eta\) is obtained from \(T^{k_i}\) by multiplying on
the left or on the right by \(U_1^{-1}\) or \(U_1\). From the explicit
form of \(U_1\) in Proposition~\ref{prop:explicit-U}, its second exterior power leaves unchanged the
columns indexed by \(14\) and \(34\), while
\((U_1^{-1})^{[2]}\) leaves unchanged the rows indexed by \(12\) and
\(23\). Since
\[
\mathcal R_i^{[2]}\subset\{12,23\},
\qquad
\mathcal S_i^{[2]}\subset\{14,34\},
\]
we obtain
\[
\bigl((T_i^\eta)^{[2]}\bigr)_{I,J}
=
\bigl((T^{k_i})^{[2]}\bigr)_{I,J}
\]
for every
\(I\in\mathcal R_i^{[2]}\) and
\(J\in\mathcal S_i^{[2]}\).
\end{proof}
We now compute the entries of \((T^k)^{[2]}\) that occur in
Proposition~\ref{prop:dominant-term-single-holonomy-block-wedge2}. We restrict to the \(q=z^d \odif{z}^4\) case. Let \(N=d+4\) and \(\theta=\pi/N\). In Lemma~\ref{lem:relevant-entries-in-terms-of-bk} and Proposition~\ref{prop:nonvanishing-relevant-entries}, we have obtained that 
\[
(T^k)_{2,1}=-b_{k-1},
\qquad
(T^k)_{2,4}=b_k,
\qquad
(T^k)_{3,1}=-b_k,
\qquad
(T^k)_{3,4}=b_{k+1}
\]
for
\[
b_k=\frac{\sin((k-1)\theta)\sin(k\theta)\sin((k+1)\theta)}{\sin(\theta)\sin(2\theta)\sin(3\theta)}.
\]
Set
\[
\Delta_k:=b_k^2-b_{k-1}b_{k+1}.
\]

\begin{lemma}
\label{lem:relevant-minors-of-Tk}
For every \(k\geq2\), the relevant entries of
\((T^k)^{[2]}\) are
\[
\begin{aligned}
  \bigl((T^k)^{[2]}\bigr)_{12,14}
  &=\Delta_{k-1},&
  \bigl((T^k)^{[2]}\bigr)_{12,34}
  &=\Delta_k,\\
  \bigl((T^k)^{[2]}\bigr)_{23,14}
  &=\Delta_k,&
  \bigl((T^k)^{[2]}\bigr)_{23,34}
  &=\Delta_{k+1}.
\end{aligned}
\]
Moreover,
\[
\Delta_k
=
\frac{
\sin((k-1)\theta)\sin^2(k\theta)\sin((k+1)\theta)
}{
\sin\theta\sin^2(2\theta)\sin(3\theta)
}.
\]
\end{lemma}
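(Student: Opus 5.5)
Each of the four entries is a $2\times2$ minor of $T^k$, by \eqref{eq:entries-second-exterior-power}, and each minor only uses rows $\{1,2\}$ or $\{2,3\}$ and columns $\{1,4\}$ or $\{3,4\}$. For rows $2,3$ against columns $1,4$, Lemma~\ref{lem:relevant-entries-in-terms-of-bk} gives the answer at once:
\[
\det\begin{pmatrix}-b_{k-1}&b_k\\-b_k&b_{k+1}\end{pmatrix}=b_k^2-b_{k-1}b_{k+1}=\Delta_k .
\]
For the remaining entries I would first extend the row and column shifts of that lemma. The second row of $T$ is $e_3^{\mathsf T}$ and the first row is $e_2^{\mathsf T}$, so $e_1^{\mathsf T}T^k=e_2^{\mathsf T}T^{k-1}$. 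Hence row $1$ of $T^k$ is row $2$ of $T^{k-1}$. This gives $(T^k)_{1,1}=-b_{k-2}$ and $(T^k)_{1,4}=b_{k-1}$.

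Column $3$ needs a separate shift. From $Te_4=e_3$ we get $T^ke_3=T^{k+1}e_4$, so column $3$ of $T^k$ is column $4$ of $T^{k+1}$. This gives $(T^k)_{1,3}=b_k$, $(T^k)_{2,3}=b_{k+1}$ and $(T^k)_{3,3}=b_{k+2}$.

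Substituting these into the three remaining minors gives:
\begin{itemize}
\item $(12,14)$: $-b_{k-2}b_k+b_{k-1}^2=\Delta_{k-1}$;
\item $(12,34)$: $b_kb_k-b_{k-1}b_{k+1}=\Delta_k$;
\item $(23,34)$: $b_{k+1}^2-b_kb_{k+2}=\Delta_{k+1}$.
\end{itemize}
These require $k\ge2$ so that every index is defined (negative indices are allowed via $T^{-1}$ anyway, as in the proof of Proposition~\ref{prop:nonvanishing-relevant-entries}).

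For the closed form, write $b_k=G\,f(k-1)f(k)f(k+1)$ with $f(j)=\sin(j\theta)$ and $G^{-1}=\sin\theta\sin2\theta\sin3\theta$. Then
\[
\Delta_k=G^2f(k-1)f(k)^2f(k+1)\bigl[f(k-1)f(k+1)-f(k-2)f(k+2)\bigr].
\]
By the product-to-sum identity $\sin(a-b)\sin(a+b)=\sin^2a-\sin^2b$, the bracket equals
\[
(\sin^2k\theta-\sin^2\theta)-(\sin^2k\theta-\sin^22\theta)=\sin^22\theta-\sin^2\theta=\sin\theta\sin3\theta .
\]
Multiplying by $G^2$ gives the stated formula for $\Delta_k$.

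The only step that needs care is the bookkeeping of the row and column shifts: the sign and the index offset in column $3$ and row $1$. I would check these against the explicit matrix $T$, for instance by verifying $k=2$ directly; the minor computation itself is routine.
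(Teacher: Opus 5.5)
Your proposal is correct and follows the paper's proof essentially line by line. You use the same shift identities $e_1^{\mathsf T}T=e_2^{\mathsf T}$, $e_2^{\mathsf T}T=e_3^{\mathsf T}$, $Te_4=e_3$, combined with Lemma~\ref{lem:relevant-entries-in-terms-of-bk}, to read off the needed entries of $T^k$, then compute the same four $2\times2$ minors and use the same trigonometric identity for the closed form. The only addition is that you derive $\sin((k-1)\theta)\sin((k+1)\theta)-\sin((k-2)\theta)\sin((k+2)\theta)=\sin\theta\sin3\theta$ from $\sin(a-b)\sin(a+b)=\sin^2a-\sin^2b$, whereas the paper simply states it.
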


\begin{proof}
Recalling the explicit form of \(T\) in
\eqref{eq:explicit-value-for-T}, we have
\[
e_1^{\mathsf T}T=e_2^{\mathsf T},
\qquad
e_2^{\mathsf T}T=e_3^{\mathsf T},
\qquad
Te_4=e_3.
\]
Together with
Lemma~\ref{lem:relevant-entries-in-terms-of-bk}, these identities
determine all the entries of \(T^k\) needed below. For example,
\begin{align*}
(T^k)_{1,1}
&=
e_1^{\mathsf T}T^ke_1
=
e_2^{\mathsf T}T^{k-1}e_1
=
(T^{k-1})_{2,1}
=
-b_{k-2},\\
(T^k)_{1,4}
&=
e_1^{\mathsf T}T^ke_4
=
e_2^{\mathsf T}T^{k-1}e_4
=
(T^{k-1})_{2,4}
=
b_{k-1},\\
(T^k)_{1,3}
&=
e_1^{\mathsf T}T^ke_3
=
e_1^{\mathsf T}T^{k+1}e_4
=
(T^{k+1})_{1,4}
=
b_k.
\end{align*}
Similarly, we obtain
\[
\begin{array}{lll}
(T^k)_{1,1}=-b_{k-2},
&
(T^k)_{1,3}=b_k,
&
(T^k)_{1,4}=b_{k-1},
\\
(T^k)_{2,1}=-b_{k-1},
&
(T^k)_{2,3}=b_{k+1},
&
(T^k)_{2,4}=b_k,
\\[1mm]
(T^k)_{3,1}=-b_k,
&
(T^k)_{3,3}=b_{k+2},
&
(T^k)_{3,4}=b_{k+1}.
\end{array}
\]
By formula~\eqref{eq:entries-second-exterior-power}, we obtain
\[
\begin{alignedat}{2}
\bigl((T^k)^{[2]}\bigr)_{12,14}
&= \det\begin{pmatrix} -b_{k-2}&b_{k-1}\\ -b_{k-1}&b_k \end{pmatrix} =\Delta_{k-1},
&&\bigl((T^k)^{[2]}\bigr)_{12,34}
= \det\begin{pmatrix} b_k&b_{k-1}\\ b_{k+1}&b_k \end{pmatrix} =\Delta_k,\\
\bigl((T^k)^{[2]}\bigr)_{23,14}
&= \det\begin{pmatrix} -b_{k-1}&b_k\\ -b_k&b_{k+1} \end{pmatrix} =\Delta_k,
&&\bigl((T^k)^{[2]}\bigr)_{23,34}
= \det\begin{pmatrix} b_{k+1}&b_k\\ b_{k+2}&b_{k+1} \end{pmatrix} =\Delta_{k+1}.
\end{alignedat}
\]
Finally, substituting
\[
b_k
=
\frac{
\sin((k-1)\theta)\sin(k\theta)\sin((k+1)\theta)
}{
\sin\theta\sin(2\theta)\sin(3\theta)
}
\]
and using
\[
\sin((k-1)\theta)\sin((k+1)\theta)
-
\sin((k-2)\theta)\sin((k+2)\theta)
=
\sin\theta\sin(3\theta)
\]
gives the stated formula for \(\Delta_k\).
\end{proof}
Combining with\eqref{eq:frame-change-at-midpoint}, we obtain, with respect to the exterior-power
frame induced by \(\mathcal F_{t,w_1^+}\) at \(m_1\),
\begin{equation}  \label{eq:global-asymptotics-wedge2}
  \bigwedge\nolimits^2
  \operatorname{Hol}_t(\widetilde c_\gamma)
  =
  e^{t^{1/4}
  \sum_{i=1}^{\ell}
  \left(
  \lambda_i^{[2]}+\mu_i^{[2]}
  \right)}
  S^{[2]}\mathcal P_\ell^{(2)}(S^{[2]})^{-1}
  +
  o\left(
  e^{t^{1/4}
  \sum_{i=1}^{\ell}
  \left(
  \lambda_i^{[2]}+\mu_i^{[2]}
  \right)}
  \right),
\end{equation}
where
\begin{equation}
\label{eq:global-dominant-coefficient-wedge2}
\mathcal P_\ell^{(2)}
=
\mathcal C_1^{(2)}(Q^{-2})^{[2]}
\mathcal C_2^{(2)}(Q^{-2})^{[2]}
\cdots
\mathcal C_\ell^{(2)}(Q^{-2})^{[2]}.
\end{equation}
\begin{proposition}
\label{prop:non-nilpotence-global-dominant-coefficient-wedge2}
The matrix \(\mathcal P_\ell^{(2)}\) is not nilpotent.
\end{proposition}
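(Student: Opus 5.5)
The plan is to run the argument of Proposition~\ref{prop:non-nilpotence-global-dominant-coefficient} again, now on $\bigwedge^2$: reduce $\mathcal P_\ell^{(2)}$ to a product of $2\times2$ nonnegative matrices and then use the same combinatorial path argument. The steps are: compute $(Q^{-2})^{[2]}$, extract a $2\times 2$ block for each zero, check its sign and zero pattern using Lemma~\ref{lem:relevant-minors-of-Tk}, and finally run the cyclic path argument.

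\emph{Step 1: reduction to $2\times2$ blocks.} From \eqref{eq:value-for-Q(-2)} we have $Q^{-2}e_1=-e_3$, $Q^{-2}e_2=-e_4$, $Q^{-2}e_3=e_1$ and $Q^{-2}e_4=e_2$. Hence
\[
(Q^{-2})^{[2]}e_{12}=e_{34},\quad (Q^{-2})^{[2]}e_{23}=e_{14},\quad (Q^{-2})^{[2]}e_{14}=e_{23},\quad (Q^{-2})^{[2]}e_{34}=e_{12}.
\]
By Proposition~\ref{prop:dominant-term-single-holonomy-block-wedge2}, $\mathcal C_i^{(2)}$ has rows in $\{12,23\}$ and columns in $\{14,34\}$. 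Therefore $\mathcal C_i^{(2)}(Q^{-2})^{[2]}$ vanishes on $e_{13},e_{14},e_{24},e_{34}$ and maps $\operatorname{span}\{e_{12},e_{23}\}$ into itself. In the ordered basis $(e_{12},e_{23})$ it is represented by
\[
B_i=\begin{pmatrix} ((T^{k_i})^{[2]})_{12,34} & ((T^{k_i})^{[2]})_{12,14}\\ ((T^{k_i})^{[2]})_{23,34} & ((T^{k_i})^{[2]})_{23,14}\end{pmatrix}
=\begin{pmatrix}\Delta_{k_i}&\Delta_{k_i-1}\\ \Delta_{k_i+1}&\Delta_{k_i}\end{pmatrix},
\]
where entries are masked by $\mathcal R_i^{[2]}\times\mathcal S_i^{[2]}$ and the values come from Lemma~\ref{lem:relevant-minors-of-Tk}. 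So $\mathcal P_\ell^{(2)}$ is $B_1B_2\cdots B_\ell$ on this invariant plane and zero on its complement. It therefore suffices to show that $B:=B_1\cdots B_\ell$ is not nilpotent; unlike the $k=1$ case, no signs appear here.

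\emph{Step 2: signs and zero patterns.} The formula for $\Delta_k$ in Lemma~\ref{lem:relevant-minors-of-Tk} shows $\Delta_k\ge0$ for $1\le k\le N_i-1$ and $\Delta_k>0$ for $2\le k\le N_i-2$, since $0<m\theta<\pi$ for $1\le m\le N_i-1$. The zero pattern of $B_i$ is read off from the tables in Proposition~\ref{prop:dominant-term-single-holonomy-block-wedge2}:
\begin{itemize}
\item column $2$ (index $14$) always survives;
\item column $1$ (index $34$) survives exactly when the incoming direction is $-\pi/2$;
\item row $2$ (index $23$) always survives;
\item row $1$ (index $12$) survives exactly when the outgoing direction is $-\pi/2$.
\end{itemize}
Next I check, against the admissible ranges of $k_i$ in Propositions~\ref{prop:dominant-term-single-holonomy-block-no-Stokes-directions} and \ref{prop:dominant-term-single-holonomy-block-Stokes-directions}, that every surviving row of $B_i$ contains a strictly positive entry. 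For row $2$, the entry $\Delta_{k_i}>0$ always suffices. For row $1$ (outgoing $-\pi/2$), the table gives $k_i\ge3$ unless the incoming direction is also $-\pi/2$. If $k_i\ge3$, the entry $\Delta_{k_i-1}>0$ works; in the remaining case column $1$ survives and $\Delta_{k_i}>0$ works. This case check is the only delicate point, and I expect it to be the main obstacle: the endpoints $k=1$ and $k=N_i-1$, where $\Delta$ vanishes, must be excluded exactly by the ranges in the tables.

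\emph{Step 3: matching and the path argument.} The compatible choice of sectorial coordinates gives $\arg_{w_i^-}(p_ix_i^-)=\arg_{w_{i+1}^+}(p_{i+1}x_{i+1}^+)$, including after perturbation. Consequently column $q$ of $B_i$ survives if and only if row $q$ of $B_{i+1}$ survives, for $q\in\{1,2\}$, with indices cyclic. This is the analogue of \eqref{eq:matching-rows-columns-Ai}. By Step 2, each surviving row has a positive entry, and that entry lies in a surviving column.

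Starting from a positive entry of $B_1$, I choose successively $r_0,r_1,r_2,\ldots\in\{1,2\}$ with $(B_i)_{r_{m\ell+i-1},r_{m\ell+i}}>0$. The matching guarantees that row $r_{m\ell+i}$ survives in $B_{i+1}$, so the choice can always be continued. By pigeonhole, $r_{m\ell}=r_{n\ell}$ for some $m<n$. Since all entries of $B$ are nonnegative, the positive product of entries along this path gives $(B^{n-m})_{r_{m\ell},r_{m\ell}}>0$. Hence $B$, and with it $\mathcal P_\ell^{(2)}$, is not nilpotent.
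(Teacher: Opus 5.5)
Your argument is correct, but it takes a heavier route than the paper. The paper never runs a path argument. It observes three facts:
\begin{itemize}
\item $23\in\mathcal R_i^{[2]}$ and $14\in\mathcal S_i^{[2]}$ in every incoming/outgoing configuration;
\item $(Q^{-2})^{[2]}e_{23}=e_{14}$;
\item consequently the $(23,23)$ entry of every factor $\mathcal C_i^{(2)}(Q^{-2})^{[2]}$ (your $(B_i)_{2,2}$) equals $\Delta_{k_i}>0$.
\end{itemize}
Since all entries of all factors are nonnegative, $(\mathcal P_\ell^{(2)})_{23,23}\ge\prod_i\Delta_{k_i}>0$. The same bound holds for every power, which rules out nilpotence.

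In your language, the constant path $r_m\equiv 2$ is always available. So the row/column matching in Step~3 is not needed. Neither is the row-$1$ case check against the admissible ranges of $k_i$, which you flagged as the main obstacle. You had already recorded the two facts that make both superfluous: row $2$ and column $2$ always survive, and $\Delta_{k_i}>0$.

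What your route buys is robustness. The cyclic path/pigeonhole argument from the $k=1$ case still works when no single index is uniformly present. That is exactly the situation in Proposition~\ref{prop:non-nilpotence-global-dominant-coefficient}, where the surviving rows and columns of $\mathcal C_i$ depend on whether the directions lie in $J_0$, in $J_1$, or on a Stokes ray. For $\bigwedge^2$ that generality is unnecessary.
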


\begin{proof}
Proposition~\ref{prop:dominant-term-single-holonomy-block-wedge2} and Lemma~\ref{lem:relevant-minors-of-Tk} imply that
\(
\bigl(\mathcal C_i^{[2]}\bigr)_{23,14}
=
\Delta_{k_i}>0.
\)
Recall that
\[
Q^{-2}
=
\begin{pmatrix}
0&0&1&0\\
0&0&0&1\\
-1&0&0&0\\
0&-1&0&0
\end{pmatrix}.
\]
Namely,
\[
Q^{-2}e_1=-e_3,
\qquad
Q^{-2}e_2=-e_4,
\qquad
Q^{-2}e_3=e_1,
\qquad
Q^{-2}e_4=e_2.
\]
It follows that
\[
\begin{aligned}
(Q^{-2})^{[2]}e_{12}&=e_{34},&
(Q^{-2})^{[2]}e_{13}&=e_{13},\\
(Q^{-2})^{[2]}e_{14}&=e_{23},&
(Q^{-2})^{[2]}e_{23}&=e_{14},\\
(Q^{-2})^{[2]}e_{24}&=e_{24},&
(Q^{-2})^{[2]}e_{34}&=e_{12}.
\end{aligned}
\]
Then 
\[
\left(
\mathcal C_i^{(2)}(Q^{-2})^{[2]}
\right)_{23,23}
=
\Delta_{k_i}>0.
\]
and all the relevant entries are nonnegative, and therefore
\[
\bigl(\mathcal P_\ell^{(2)}\bigr)_{23,23}
\geq
\prod_{i=1}^{\ell}\Delta_{k_i}>0.
\]
Hence \(\mathcal P_\ell^{(2)}\) is not nilpotent.
\end{proof}

\subsection{Main results}
\label{subsec:main-results}

We now collect the preceding estimates and state the main asymptotic
formulas. For each saddle connection \(c_i\), let
\[
v_i
=
\bigl(
v_{i,1},v_{i,2},v_{i,3},v_{i,4}
\bigr),
\qquad
v_{i,1}\geq v_{i,2}\geq v_{i,3}\geq v_{i,4},
\]
be the non-increasing rearrangement of
\[
-2\operatorname{Re}\int_{c_i}\phi_1,\qquad
-2\operatorname{Re}\int_{c_i}\phi_2,\qquad
-2\operatorname{Re}\int_{c_i}\phi_3,\qquad
-2\operatorname{Re}\int_{c_i}\phi_4.
\]
Note that 
\[
v_{i,4}=-v_{i,1},
\qquad
v_{i,3}=-v_{i,2}.
\]
Hence
\[
\sum_{j=1}^3v_{i,j}=v_{i,1},
\qquad
\sum_{j=1}^4v_{i,j}=0.
\]
\begin{theorem}
\label{thm:exterior-power-holonomy-asymptotics-norm}
Let \(c_\gamma\) be the geodesic representative of \(\gamma\) with respect to the singular flat metric induced by \(q\). Then, for every \(k=1,2,3,4\),
\[
\lim_{t\to+\infty}
\frac{1}{t^{1/4}}
\log
\left\|
\bigwedge\nolimits^k
\operatorname{Hol}_t(c_\gamma)
\right\|
=
\sum_{i=1}^{\ell}
\sum_{j=1}^{k}v_{i,j}.
\]
where \(\lVert\cdot\rVert\) is any fixed matrix norm.
\end{theorem}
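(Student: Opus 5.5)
The plan is to read off the norm asymptotics from the global expansions \eqref{eq:asymptotics-composition-all-holonomy-blocks} and \eqref{eq:global-asymptotics-wedge2}, and to obtain $k=3,4$ by symplectic duality. Conjugation by a fixed matrix, or replacing $c_\gamma$ by the homotopic curve $\widetilde c_\gamma$, changes all norms only by bounded factors. Likewise, switching between the frames $\mathcal F_{t,w}$ and any fixed frame costs only factors polynomial in $t$. None of this affects $t^{-1/4}\log\|\cdot\|$, so it suffices to work in the frame $\mathcal F_{t,w_1^+}$ at $m_1$.

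\textbf{Case $k=1$.} The expansion \eqref{eq:asymptotics-composition-all-holonomy-blocks} reads
\[
\operatorname{Hol}_t(\widetilde c_\gamma)=e^{t^{1/4}\Lambda}\bigl(S\mathcal P_\ell S^{-1}+o(1)\bigr),\qquad \Lambda=\sum_{i=1}^{\ell}(\lambda_i+\mu_i).
\]
Proposition~\ref{prop:non-nilpotence-global-dominant-coefficient} gives $\mathcal P_\ell\neq0$. Hence the norm is bounded above and below by constant multiples of $e^{t^{1/4}\Lambda}$, and the limit equals $\Lambda$. It remains to identify $\Lambda$ with $\sum_i v_{i,1}$. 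Since the $\phi_k$ are exact along $c_i=\delta_i*\alpha_{i-1}$, the diagonal matrices $D$ are additive: in the natural coordinate $w_i^+$ (extended along $c_i$) we have $D(c_i)=D(\delta_i)+D(\alpha_{i-1})$. The coordinate $w_{i-1}^-=\Pi_i-w_i^+$ differs from $w_i^+$ by a sign, which permutes the indices $k\mapsto k+2$, and this does not change maxima. The compatible choice of coordinates makes $\alpha_{i-1}$ and $\delta_i$ point in the same direction, so each $-2\operatorname{Re}\int\phi_k$ is a positive multiple of the same function of the angle. The maximizing index therefore agrees on both pieces, giving $\lambda_i+\mu_{i-1}=v_{i,1}$. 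Summing cyclically yields $\Lambda=\sum_i v_{i,1}$. In the Stokes-tie case ties only enlarge the maximal index sets, and the maximum value is still additive.

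\textbf{Case $k=2$.} The same argument applies to \eqref{eq:global-asymptotics-wedge2}, using Proposition~\ref{prop:non-nilpotence-global-dominant-coefficient-wedge2}, so the limit is $\sum_i(\lambda_i^{[2]}+\mu_{i-1}^{[2]})$. For collinear, co-oriented pieces the largest sum of two entries is again attained at a common index pair, so the sum is additive and equals $v_{i,1}+v_{i,2}$ for each saddle connection.

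\textbf{Cases $k=3,4$.} Since $\operatorname{Hol}_t\in\operatorname{Sp}(4,\mathbb R)$, we have $\det=1$, so $\bigwedge^4$ is the scalar $1$. The limit is then $0=\sum_j v_{i,j}$ summed over $i$. For $k=3$, the Hodge-star identification $\bigwedge^3 V\cong V^*\otimes\det$ shows that $\|\bigwedge^3 M\|$ is comparable to $\|M^{-1}\|$. The symplectic relation $M^{-1}=\Omega^{-1}M^{\mathsf T}\Omega$ gives $\|M^{-1}\|\asymp\|M\|$. This yields $\sum_i v_{i,1}=\sum_i\sum_{j\le3}v_{i,j}$, as required.

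The main obstacle I expect is the exponent bookkeeping, not the analysis. One must check carefully that the maximal exponents of the adjacent half-pieces $\delta_i$ and $\alpha_{i-1}$ really add up to the maximum over the whole saddle connection, given the $\pm$ sign change and the quarter-turn relabellings between coordinates. The Stokes-tie conventions are where this is most delicate, and there I would verify the index matching directly against the tables in Proposition~\ref{prop:dominant-term-single-holonomy-block-Stokes-directions}.
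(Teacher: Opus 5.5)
Your proposal is correct and follows essentially the same route as the paper. For $k=1,2$ you read off the exponent from the global expansions \eqref{eq:asymptotics-composition-all-holonomy-blocks} and \eqref{eq:global-asymptotics-wedge2}, using that $\mathcal P_\ell^{(k)}$ is non-nilpotent and hence nonzero, and you identify the exponent through $\sum_{j\le k}v_{i,j}=\lambda_i^{[k]}+\mu_{i-1}^{[k]}$; for $k=3$ you use the adjugate identification $\bigwedge^3M\simeq(M^{-1})^{\mathsf T}$ together with $M^{-1}=\Omega^{-1}M^{\mathsf T}\Omega$, where the paper uses the reciprocal pairing of singular values to get $\|\bigwedge^3M\|_{\mathrm{op}}=\|M\|_{\mathrm{op}}$ exactly, and the two arguments are equivalent.
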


\begin{proof}
By equivalence of norms, it suffices to consider the operator norm. 

For \(k=1,2\), set
\[
S^{[1]}:=S,
\qquad
\mathcal P_\ell^{(1)}:=\mathcal P_\ell,
\]
and retain the notation \(S^{[2]}\) and
\(\mathcal P_\ell^{(2)}\) introduced in Section~\ref{subsec:second-exterior-power}. Define
\[
L_k
:=
\sum_{i=1}^{\ell}
\left(
\lambda_i^{[k]}+\mu_i^{[k]}
\right).
\]
By the asymptotic formulas~\eqref{eq:asymptotics-composition-all-holonomy-blocks} and \eqref{eq:global-asymptotics-wedge2}, we have 
\[
\bigwedge\nolimits^k
\operatorname{Hol}_t(\widetilde c_\gamma)
=
e^{t^{1/4}L_k}
\left(S^{[k]}
\mathcal P_\ell^{(k)}(S^{[k]})^{-1}+o(I)
\right),
\qquad
k=1,2.
\]
It follows that
\[
\log
\left\|
\bigwedge\nolimits^k
\operatorname{Hol}_t(\widetilde c_\gamma)
\right\|_{\mathrm{op}}
=t^{1/4}L_k+
\log \left\|
S^{[k]}\mathcal P_\ell^{(k)}(S^{[k]})^{-1}+o(I)
\right\|_{\mathrm{op}}.
\]
By
Propositions~\ref{prop:non-nilpotence-global-dominant-coefficient}
and
\ref{prop:non-nilpotence-global-dominant-coefficient-wedge2},
the matrices
\(\mathcal P_\ell^{(1)}\) and
\(\mathcal P_\ell^{(2)}\) are not nilpotent. In particular, they are
nonzero. Hence
\[
S^{[k]}\mathcal P_\ell^{(k)}(S^{[k]})^{-1}\neq0,
\qquad
k=1,2.
\]
Namely, as \(t \to +\infty\),
\[
\left\|
S^{[k]}\mathcal P_\ell^{(k)}(S^{[k]})^{-1}+o(I)
\right\|_{\mathrm{op}} \to \left\|
S^{[k]}\mathcal P_\ell^{(k)}(S^{[k]})^{-1}
\right\|_{\mathrm{op}} >0
\]
Therefore,
\[
\lim_{t\to+\infty}
\frac{1}{t^{1/4}}
\log
\left\|
\bigwedge\nolimits^k
\operatorname{Hol}_t(\widetilde c_\gamma)
\right\|_{\mathrm{op}}
=
L_k,
\qquad
k=1,2.
\]
Recall that
\(
c_i=\delta_i*\alpha_{i-1},
\)
with indices understood cyclically. We have
\[
\sum_{j=1}^k v_{i,j}
=
\lambda_i^{[k]}+\mu_{i-1}^{[k]},
\qquad
k=1,2.
\]
Summing over \(i\), we obtain
\begin{align*}
L_k
=
\sum_{i=1}^{\ell}
\left(
\lambda_i^{[k]}+\mu_i^{[k]}
\right)
=
\sum_{i=1}^{\ell}
\left(
\lambda_i^{[k]}+\mu_{i-1}^{[k]}
\right)
=
\sum_{i=1}^{\ell}\sum_{j=1}^k v_{i,j}.
\end{align*}
This proves the formula for \(k=1,2\).

The cases \(k=3\) and \(k=4\) now follow from the duality.
Note that for a symplectic matrix \(M\), its singular values occur in reciprocal
pairs:
\[
\sigma_4(M)=\sigma_1(M)^{-1},
\qquad
\sigma_3(M)=\sigma_2(M)^{-1}.
\]
Hence
\[
\left\|\bigwedge\nolimits^3M\right\|_{\mathrm{op}}
=
\sigma_1(M)\sigma_2(M)\sigma_3(M)
=
\sigma_1(M)
=
\|M\|_{\mathrm{op}},
\]
and
\[
\left\|\bigwedge\nolimits^4M\right\|_{\mathrm{op}}
=
\sigma_1(M)\sigma_2(M)\sigma_3(M)\sigma_4(M)
=
1.
\]
Together with
\[
\sum_{j=1}^3v_{i,j}=v_{i,1},
\qquad
\sum_{j=1}^4v_{i,j}=0,
\]
this proves the cases \(k=3\) and \(k=4\). This completes the proof.
\end{proof}
The preceding theorem immediately determines the asymptotic behavior
of each singular value. Indeed, the product of the first
\(k\) singular values is the operator norm of the \(k\)-th exterior
power. Taking the quotient of two consecutive such products gives the
following corollary.
\begin{corollary}
\label{cor:singular-value-asymptotics}
For every
\(k=1,2,3,4\), we have
\[
\lim_{t\to+\infty}
\frac{1}{t^{1/4}}
\log
\sigma_k\left(
\operatorname{Hol}_t(c_\gamma)
\right)
=
\sum_{i=1}^{\ell}v_{i,k}.
\]
\end{corollary}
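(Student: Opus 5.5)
The corollary should follow from Theorem~\ref{thm:exterior-power-holonomy-asymptotics-norm} by elementary linear algebra; no new analysis is needed. Set \(M_t:=\operatorname{Hol}_t(c_\gamma)\in\operatorname{Sp}(4,\mathbb R)\). Since the theorem holds for any fixed matrix norm, we may use the operator norm induced by the Euclidean inner product. For this norm we have the standard identity
\[
\left\|\bigwedge\nolimits^k M\right\|_{\mathrm{op}}=\sigma_1(M)\cdots\sigma_k(M),\qquad k=1,2,3,4.
\]
To see this, write the singular value decomposition \(M=U\Sigma V^{\mathsf T}\). Then \(\bigwedge^kM=\bigwedge^kU\,\bigwedge^k\Sigma\,\bigwedge^kV^{\mathsf T}\). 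Both \(\bigwedge^kU\) and \(\bigwedge^kV\) are orthogonal for the induced inner product, and \(\bigwedge^k\Sigma\) is diagonal with entries \(\sigma_{i_1}\cdots\sigma_{i_k}\), whose largest value is \(\sigma_1\cdots\sigma_k\).

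Next, set \(P_k(t):=\log\bigl\|\bigwedge^kM_t\bigr\|_{\mathrm{op}}\), with the convention \(P_0:=0\). Since \(M_t\) is invertible, every singular value is positive, so
\[
\log\sigma_k(M_t)=P_k(t)-P_{k-1}(t).
\]
Divide by \(t^{1/4}\) and apply Theorem~\ref{thm:exterior-power-holonomy-asymptotics-norm} to \(P_k\) and \(P_{k-1}\). The first gives \(\sum_i\sum_{j\le k}v_{i,j}\); for \(k\ge2\) the second gives \(\sum_i\sum_{j\le k-1}v_{i,j}\), and for \(k=1\) the term \(P_0\) is identically zero. Both limits exist, so the limit of the difference is the difference of the limits, which telescopes to \(\sum_{i=1}^{\ell}v_{i,k}\).

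The only points needing care are these:
\begin{itemize}
\item \(M_t\) must be invertible, which holds because it is symplectic.
\item The norm must be the operator norm. Any other norm differs from it by a bounded factor, which vanishes after taking \(\log\) and dividing by \(t^{1/4}\).
\end{itemize}
There is no substantive obstacle here: all the analytic difficulty is already contained in the theorem.
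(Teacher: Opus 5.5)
Your proposal is correct and follows essentially the same route as the paper: identify \(\bigl\|\bigwedge^k M\bigr\|_{\mathrm{op}}\) with \(\sigma_1(M)\cdots\sigma_k(M)\), then take quotients of consecutive exterior-power norms (differences of logarithms), feeding in Theorem~\ref{thm:exterior-power-holonomy-asymptotics-norm}. Your remarks on invertibility and on the choice of norm make explicit what the paper leaves implicit.
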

We next consider the spectral radius \(\Lambda\). As in the singular-value case,
the symplectic structure reduces the proof to the first two exterior
powers. Indeed, the eigenvalues of a symplectic matrix occur in
reciprocal pairs. Consequently, for
\(M\in\operatorname{Sp}(4,\mathbb R)\),
\[
\Lambda\left(\bigwedge\nolimits^3M\right)
=
\Lambda(M),
\qquad
\Lambda\left(\bigwedge\nolimits^4M\right)
=
1.
\]
Thus only the cases \(k=1,2\) require an independent argument.
\begin{theorem}
\label{thm:exterior-power-spectral-radius-asymptotics}
For every \(k=1,2,3,4\),
\[
\lim_{t\to+\infty}
\frac{1}{t^{1/4}}
\log
\Lambda\left(
\bigwedge\nolimits^k
\operatorname{Hol}_t(c_\gamma)
\right)
=
\sum_{i=1}^{\ell}\sum_{j=1}^k v_{i,j}.
\]
\end{theorem}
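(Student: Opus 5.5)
The argument parallels the proof of Theorem~\ref{thm:exterior-power-holonomy-asymptotics-norm}, with the operator norm replaced by the spectral radius. By the reciprocal-pair structure of eigenvalues of symplectic matrices recorded just before the statement, it suffices to treat \(k=1,2\). For \(k=3,4\) we use \(\Lambda(\bigwedge^3M)=\Lambda(M)\), \(\Lambda(\bigwedge^4M)=1\), \(\sum_{j\le3}v_{i,j}=v_{i,1}\) and \(\sum_{j\le4}v_{i,j}=0\). Since \(\operatorname{Hol}_t(c_\gamma)\) and \(\operatorname{Hol}_t(\widetilde c_\gamma)\) are conjugate, their spectral radii agree, so we may work with the expansion
\[
\bigwedge\nolimits^k\operatorname{Hol}_t(\widetilde c_\gamma)=e^{t^{1/4}L_k}\Bigl(S^{[k]}\mathcal P_\ell^{(k)}(S^{[k]})^{-1}+o(I)\Bigr),\qquad k=1,2,
\]
from \eqref{eq:asymptotics-composition-all-holonomy-blocks} and \eqref{eq:global-asymptotics-wedge2}. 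Here \(L_k=\sum_i(\lambda_i^{[k]}+\mu_i^{[k]})=\sum_i\sum_{j\le k}v_{i,j}\), exactly as computed in the proof of Theorem~\ref{thm:exterior-power-holonomy-asymptotics-norm}.

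\textbf{Upper bound.} Since \(\Lambda(M)\le\|M\|_{\mathrm{op}}\), Theorem~\ref{thm:exterior-power-holonomy-asymptotics-norm} immediately gives
\[
\limsup_{t\to+\infty} t^{-1/4}\log\Lambda\Bigl(\bigwedge\nolimits^k\operatorname{Hol}_t\Bigr)\le L_k .
\]

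\textbf{Lower bound.} This is where non-nilpotence enters. Spectral radius is homogeneous, so \(\Lambda(\bigwedge^k\operatorname{Hol}_t)=e^{t^{1/4}L_k}\,\Lambda(M_t)\) with
\[
M_t:=S^{[k]}\mathcal P_\ell^{(k)}(S^{[k]})^{-1}+o(I)\longrightarrow M_\infty:=S^{[k]}\mathcal P_\ell^{(k)}(S^{[k]})^{-1}.
\]
Eigenvalues depend continuously on the matrix (the roots of the characteristic polynomial vary continuously with its coefficients), so \(\Lambda\) is continuous on matrices. Hence \(\Lambda(M_t)\to\Lambda(M_\infty)=\Lambda(\mathcal P_\ell^{(k)})\). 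By Propositions~\ref{prop:non-nilpotence-global-dominant-coefficient} and~\ref{prop:non-nilpotence-global-dominant-coefficient-wedge2}, \(\mathcal P_\ell^{(k)}\) is not nilpotent, so it has a nonzero eigenvalue and \(\Lambda(\mathcal P_\ell^{(k)})>0\). Taking \(\log\), dividing by \(t^{1/4}\), and letting \(t\to\infty\) gives the limit \(L_k\), which proves the cases \(k=1,2\).

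The only point needing care is that non-nilpotence, not merely nonvanishing, of the dominant coefficient is what controls the spectral radius; the cited propositions supply exactly this. The conclusion is also independent of the frame and base-point choices, since conjugation preserves the spectral radius. One might worry that the \(o(I)\) error is taken relative to a specific frame at \(m_1\). The expansion is an exact equality up to conjugation, however, so the continuity argument applies directly and no obstacle remains.
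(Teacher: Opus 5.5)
Your proposal is correct and follows the paper's proof essentially verbatim. You reduce to \(k=1,2\) via the reciprocal-pair structure, write \(\bigwedge^k\operatorname{Hol}_t(\widetilde c_\gamma)=e^{t^{1/4}L_k}\bigl(S^{[k]}\mathcal P_\ell^{(k)}(S^{[k]})^{-1}+o(I)\bigr)\), and combine the non-nilpotence of \(\mathcal P_\ell^{(k)}\) with continuity of the spectral radius. Your separate upper bound via \(\Lambda\le\|\cdot\|_{\mathrm{op}}\) is harmless but redundant, since the continuity argument already gives the full limit.
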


\begin{proof}
  Retain the notation introduced in proof of Theorem~\ref{thm:exterior-power-holonomy-asymptotics-norm}.
  The asymptotic formulas~\eqref{eq:asymptotics-composition-all-holonomy-blocks} and \eqref{eq:global-asymptotics-wedge2} imply that 
\[
\bigwedge\nolimits^k
\operatorname{Hol}_t(\widetilde c_\gamma)
=
e^{t^{1/4}L_k}
\left(S^{[k]}
\mathcal P_\ell^{(k)}(S^{[k]})^{-1}+o(I)
\right),
\qquad
k=1,2.
\]
By
Propositions~\ref{prop:non-nilpotence-global-dominant-coefficient}
and
\ref{prop:non-nilpotence-global-dominant-coefficient-wedge2},
the matrix \(\mathcal P_\ell^{(k)}\) is not nilpotent. Hence
\[
\Lambda\left(S^{[k]}
\mathcal P_\ell^{(k)}(S^{[k]})^{-1}\right)=\Lambda\left(\mathcal P_\ell^{(k)}\right)>0.
\]
By continuity of the spectral radius, as \(t \to +\infty\),
\[
\Lambda\left(S^{[k]}\mathcal P_\ell^{(k)}(S^{[k]})^{-1}+o(I)\right) \to \Lambda\left(S^{[k]}\mathcal P_\ell^{(k)}(S^{[k]})^{-1}\right) >0
\]
It follows that
\[
\lim_{t\to+\infty}
\frac{1}{t^{1/4}}
\log
\Lambda \left(
\bigwedge\nolimits^k
\operatorname{Hol}_t(\widetilde c_\gamma)
\right)
=
L_k
=
\sum_{i=1}^{\ell}\sum_{j=1}^k v_{i,j},
\qquad
k=1,2.
\]
\end{proof}
Using the same argument as in
Corollary~\ref{cor:singular-value-asymptotics}, we obtain the
following consequence.
\begin{corollary}
\label{cor:eigenvalue-modulus-asymptotics}
Let
\[
|\lambda_1(M)|\geq
|\lambda_2(M)|\geq
|\lambda_3(M)|\geq
|\lambda_4(M)|>0
\]
denote the absolute values of eigenvalues of \(M \in \mathrm{SP}(4,\mathbb{R})\). Then, for every
\(k=1,2,3,4\),
\[
\lim_{t\to+\infty}
\frac{1}{t^{1/4}}
\log
|\lambda_k(M)|
=
\sum_{i=1}^{\ell}v_{i,k}.
\]
\end{corollary}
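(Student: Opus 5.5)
The statement to prove is Corollary~\ref{cor:eigenvalue-modulus-asymptotics}. Its content is that, for \(M=\operatorname{Hol}_t(c_\gamma)\), the normalized logarithms of the ordered eigenvalue moduli converge to \(\sum_i v_{i,k}\). I would deduce it from Theorem~\ref{thm:exterior-power-spectral-radius-asymptotics}, in the same way Corollary~\ref{cor:singular-value-asymptotics} follows from Theorem~\ref{thm:exterior-power-holonomy-asymptotics-norm}.

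The key linear-algebra fact is as follows. For any \(M\in\operatorname{GL}(4,\mathbb R)\), the eigenvalues of \(\bigwedge^k M\) are the products \(\lambda_{i_1}(M)\cdots\lambda_{i_k}(M)\) over \(i_1<\dots<i_k\), counted with multiplicity. This is seen by triangularizing \(M\) over \(\mathbb C\): in the induced basis, \(\bigwedge^k M\) is triangular with exactly these products on the diagonal. Consequently
\[
\Lambda\Bigl(\bigwedge\nolimits^k M\Bigr)=|\lambda_1(M)|\cdots|\lambda_k(M)|,
\]
since choosing the \(k\) largest moduli maximizes the product. All eigenvalues are nonzero because \(M\in\operatorname{Sp}(4,\mathbb R)\).

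The proof then proceeds in three steps.
\begin{enumerate}
\item Set \(M_t=\operatorname{Hol}_t(c_\gamma)\) and \(\Pi_k(t)=\log\Lambda(\bigwedge^k M_t)\), with \(\Pi_0=0\). For \(k=1,\dots,4\),
\[
\log|\lambda_k(M_t)|=\Pi_k(t)-\Pi_{k-1}(t).
\]
\item Divide by \(t^{1/4}\) and apply Theorem~\ref{thm:exterior-power-spectral-radius-asymptotics} to both terms. This gives the limit
\[
\sum_{i=1}^{\ell}\sum_{j=1}^k v_{i,j}-\sum_{i=1}^{\ell}\sum_{j=1}^{k-1} v_{i,j}=\sum_{i=1}^{\ell}v_{i,k}.
\]
For \(k=1\) the subtracted term is \(0\), which matches \(\Pi_0=0\).
\item Note that \(\operatorname{Hol}_t(c_\gamma)\) is well defined only up to conjugacy, because it depends on the base point and frame, and that \(\operatorname{Hol}_t(c_\gamma)=\operatorname{Hol}_t(\widetilde c_\gamma)\). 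Eigenvalue moduli are conjugation invariant, so these choices do not affect the statement.
\end{enumerate}

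There is no real obstacle here. The only point needing care is the identity \(\Lambda(\bigwedge^k M)=\prod_{j\le k}|\lambda_j(M)|\) when eigenvalues are repeated or non-real, and the Schur triangularization argument in the key fact already covers those cases.
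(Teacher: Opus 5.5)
Your proposal is correct and is essentially the paper's argument. The paper derives this corollary from Theorem~\ref{thm:exterior-power-spectral-radius-asymptotics} by the same argument used for Corollary~\ref{cor:singular-value-asymptotics}: it uses \(\Lambda(\bigwedge\nolimits^k M)=|\lambda_1(M)|\cdots|\lambda_k(M)|\) and takes differences of consecutive normalized logarithms, exactly as you do, with your triangularization argument and conjugation-invariance remark making explicit what the paper leaves implicit.
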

\section*{Declarations} \subsection*{Conflict of Interest} The author declares that there is no conflict of interest. \subsection*{Data Availability} Data sharing is not applicable to this article, as no datasets were generated or analyzed. \subsection*{Declaration of Generative AI and AI-Assisted Technologies} During the preparation of this paper, the author used ChatGPT Pro (GPT-5.6 Thinking, OpenAI) and DeepSeek-V3.1 (DeepSeek) to improve the grammar and readability of the text and to assist in checking calculations. All AI-assisted suggestions and outputs were independently reviewed and verified by the author, who takes full responsibility for the content of the paper.
\printbibliography[heading=bibintoc]

@misc{garciaprada2012hitchinkobayashi,
  title         = {{The Hitchin--Kobayashi correspondence, Higgs pairs and surface group representations}},
  author        = {Garc{\'\i}a-Prada, Oscar and Gothen, Peter B. and Mundet i Riera, Ignasi},
  year          = {2012},
  doi           = {10.48550/arXiv.0909.4487}
}

@article{li2023higgs,
  author  = {Li, Qiongling and Mochizuki, Takuro},
  title   = {Higgs bundles in the Hitchin section over non-compact hyperbolic surfaces},
  journal = {Proceedings of the London Mathematical Society},
  volume  = {129},
  number  = {6},
  pages   = {e70008},
  doi     = {10.1112/plms.70008},
  year    = {2024}
}

@article{hitchin1992lie,
  title     = {{Lie groups and Teichm{\"u}ller space}},
  author    = {Hitchin, Nigel J},
  journal   = {Topology},
  volume    = {31},
  number    = {3},
  pages     = {449--473},
  year      = {1992},
  doi       = {10.1016/0040-9383(92)90044-I},
  publisher = {Elsevier}
}

@article{hitchin1987self,
  author  = {Hitchin, N. J.},
  title   = {The Self-Duality Equations on a Riemann Surface},
  journal = {Proceedings of the London Mathematical Society},
  volume  = {s3-55},
  number  = {1},
  pages   = {59-126},
  doi     = {10.1112/plms/s3-55.1.59},
  year    = {1987}
}

@article{simpson1988constructing,
  title   = {{Constructing variations of Hodge structure using Yang-Mills theory and applications to uniformization}},
  author  = {Simpson, Carlos T},
  journal = {Journal of the American Mathematical Society},
  volume  = {1},
  number  = {4},
  pages   = {867--918},
  doi     = {10.1090/S0894-0347-1988-0944577-9},
  year    = {1988}
}

@article{simpson1990harmonic,
  title   = {{Harmonic bundles on noncompact curves}},
  author  = {Simpson, Carlos T},
  journal = {Journal of the American Mathematical Society},
  volume  = {3},
  number  = {3},
  pages   = {713--770},
  doi     = {10.1090/S0894-0347-1990-1040197-8},
  year    = {1990}
}

@article{biquard2004wild,
  title        = {{Wild non-abelian Hodge theory on curves}},
  author       = {Biquard, Olivier and Boalch, Philip},
  journal      = {Compositio Mathematica},
  volume       = {140},
  number       = {1},
  pages        = {179--204},
  year         = {2004},
  doi          = {10.1112/S0010437X03000010},
  publisher    = {London Mathematical Society}
}

@article{biquard1997higgs,
  author    = {Biquard, Olivier},
  title     = {Fibr{\'e}s de {Higgs} et connexions int{\'e}grables :
               le cas logarithmique (diviseur lisse)},
  journal   = {Annales Scientifiques de l'{\'E}cole Normale Sup{\'e}rieure},
  volume    = {30},
  number    = {1},
  pages     = {41--96},
  year      = {1997},
  publisher = {Elsevier},
  doi       = {10.1016/S0012-9593(97)89915-6}
}

@book{mochizuki2006kobayashi,
  author    = {Mochizuki, Takuro},
  title     = {Kobayashi--Hitchin Correspondence for Tame Harmonic
               Bundles and an Application},
  series    = {Ast{\'e}risque},
  number    = {309},
  pages     = {viii+117},
  year      = {2006},
  publisher = {Soci{\'e}t{\'e} Math{\'e}matique de France},
  address   = {Paris}
}

@article{mochizuki2009kobayashi,
  author    = {Mochizuki, Takuro},
  title     = {Kobayashi--Hitchin Correspondence for Tame Harmonic
               Bundles {II}},
  journal   = {Geometry \& Topology},
  volume    = {13},
  number    = {1},
  pages     = {359--455},
  year      = {2009},
  publisher = {Mathematical Sciences Publishers},
  doi       = {10.2140/gt.2009.13.359}
}

@article{Mochizuki2021good,
  author       = {Takuro Mochizuki},
  title        = {Good wild harmonic bundles and good filtered Higgs bundles},
  journal      = {Symmetry, Integrability and Geometry: Methods and Applications (SIGMA)},
  volume       = {17},
  year         = {2021},
  pages        = {068},
  note         = {66 pages},
  doi          = {10.3842/SIGMA.2021.068}
}

@article{Li2023generically,
  title     = {{Harmonic metrics of generically regular semisimple Higgs bundles on noncompact Riemann surfaces}},
  volume    = {5},
  issn      = {2576-7658},
  doi       = {10.2140/tunis.2023.5.663},
  number    = {4},
  journal   = {Tunisian Journal of Mathematics},
  publisher = {Mathematical Sciences Publishers},
  author    = {Li, Qiongling and Mochizuki, Takuro},
  year      = {2023},
  month     = nov,
  pages     = {663-711}
}

@phdthesis{baraglia2010g2,
  title         = {{$G_2$ geometry and integrable systems}},
  author        = {Baraglia, David},
  year          = {2009},
  school        = {University of Oxford},
  eprint        = {1002.1767},
  archiveprefix = {arXiv}
}

@article{Sirakov2009,
  author  = {Boyan Sirakov},
  title   = {Some estimates and maximum principles for weakly coupled systems of elliptic PDE},
  journal = {Nonlinear Analysis},
  volume  = {70},
  number  = {8},
  pages   = {3039--3046},
  year    = {2009},
  doi     = {10.1016/j.na.2008.12.026}
}

@article{donaldson1987twisted,
  author  = {Donaldson, S. K.},
  title   = {Twisted Harmonic Maps and the Self-Duality Equations},
  journal = {Proceedings of the London Mathematical Society},
  volume  = {s3-55},
  number  = {1},
  pages   = {127-131},
  doi     = {10.1112/plms/s3-55.1.127},
  year    = {1987}
}

@article{corlette1988flat,
  title     = {{Flat $ G $-bundles with canonical metrics}},
  author    = {Corlette, Kevin},
  journal   = {Journal of differential geometry},
  volume    = {28},
  number    = {3},
  pages     = {361--382},
  year      = {1988},
  publisher = {Lehigh University},
  doi       = {10.4310/jdg/1214442469}
}

@article{Mochizuki_2016,
  title   = {{Asymptotic behaviour of certain families of harmonic bundles on Riemann surfaces}},
  author  = {Mochizuki, Takuro},
  journal = {Journal of Topology},
  volume  = {9},
  number  = {4},
  pages   = {1021--1073},
  year    = {2016},
  doi     = {10.1112/jtopol/jtw018}
}

@book{mochizuki2010wild,
  author    = {Mochizuki, Takuro},
  title     = {{Wild harmonic bundles and wild pure twistor $D$-modules}},
  series    = {Ast\'erisque},
  publisher = {Soci\'et\'e math\'ematique de France},
  number    = {340},
  year      = {2011},
  mrnumber  = {2919903},
  zbl       = {1245.32001},
  language  = {en},
  url       = {https://arxiv.org/abs/0803.1344}
}

@article{biquard2020parabolic,
  title         = {{Parabolic Higgs bundles and representations of the fundamental group of a punctured surface into a real group}},
  author        = {Biquard, Olivier and Garc{\'\i}a-Prada, Oscar and Mundet i Riera, Ignasi},
  journal       = {Advances in Mathematics},
  volume        = {372},
  pages         = {107305},
  year          = {2020},
  publisher     = {Elsevier},
  doi           = {10.1016/j.aim.2020.107305}
}

@article{li2025complete,
  title     = {{Complete Solutions of Toda Equations and Cyclic Higgs Bundles Over Non-compact Surfaces}},
  author    = {Li, Qiongling and Mochizuki, Takuro},
  journal   = {International Mathematics Research Notices},
  volume    = {2025},
  number    = {7},
  pages     = {rnaf081},
  year      = {2025},
  doi       = {10.1093/imrn/rnaf081},
  publisher = {Oxford University Press}
}

@article{labourie2004,
  title   = {{Anosov flows, surface groups and curves in projective space}},
  author  = {Labourie, Fran{\c{c}}ois},
  journal = {Inventiones Mathematicae},
  volume  = {165},
  number  = {1},
  pages   = {51--114},
  year    = {2006},
  doi     = {10.1007/s00222-005-0487-3}
}

@article{tamburelli2024planar,
  title     = {{Planar minimal surfaces with polynomial growth in the $\mathrm{Sp}(4,\mathbb{R})$-symmetric space}},
  author    = {Tamburelli, Andrea and Wolf, Michael},
  journal   = {American Journal of Mathematics},
  volume    = {146},
  number    = {4},
  pages     = {871--944},
  year      = {2024},
  doi       = {10.1353/ajm.2024.a932432},
  publisher = {Johns Hopkins University Press}
}

@article{guestlin2026110730,
  title   = {{The tt*-Toda equations of $A_n$ type}},
  author  = {Guest, Martin A. and Its, Alexander R. and Lin, Chang-Shou},
  journal = {Advances in Mathematics},
  volume  = {485},
  pages   = {110730},
  year    = {2026},
  issn    = {0001-8708},
  doi     = {10.1016/j.aim.2025.110730}
}

@article{collier2017asymptotics,
  title     = {Asymptotics of Higgs bundles in the Hitchin component},
  author    = {Collier, Brian and Li, Qiongling},
  journal   = {Advances in Mathematics},
  volume    = {307},
  pages     = {488--558},
  year      = {2017},
  publisher = {Elsevier},
  doi       = {10.1016/j.aim.2016.11.031}
}

@article{dumas2015polynomial,
  title     = {Polynomial cubic differentials and convex polygons in the projective plane},
  author    = {Dumas, David and Wolf, Michael},
  journal   = {Geometric and Functional Analysis},
  volume    = {25},
  number    = {6},
  pages     = {1734--1798},
  year      = {2015},
  publisher = {Springer},
  doi       = {10.1007/s00039-015-0344-5}
}

@article{mochizuki2014harmonic,
  title     = {Harmonic bundles and Toda lattices with opposite sign II},
  author    = {Mochizuki, Takuro},
  journal   = {Communications in Mathematical Physics},
  volume    = {328},
  number    = {3},
  pages     = {1159--1198},
  year      = {2014},
  publisher = {Springer},
  doi       = {10.1007/s00220-014-1994-0}
}

@book{kac2002quantum,
  title     = {{Quantum Calculus}},
  author    = {Kac, Victor G. and Cheung, Pokman},
  series    = {Universitext},
  year      = {2002},
  publisher = {Springer New York},
  doi       = {10.1007/978-1-4613-0071-7}
}

@article{loftin2026limits,
  title     = {{Limits of cubic differentials and buildings}},
  author    = {Loftin, John and Tamburelli, Andrea and Wolf, Michael},
  journal   = {Proceedings of the London Mathematical Society},
  volume    = {132},
  number    = {4},
  pages     = {e70154},
  year      = {2026},
  doi       = {10.1112/plms.70154},
  publisher = {Wiley}
}

@article{labourie2017cyclic,
  title     = {Cyclic surfaces and Hitchin components in rank 2},
  author    = {Labourie, Fran{\c{c}}ois},
  journal   = {Annals of Mathematics},
  volume    = {185},
  number    = {1},
  pages     = {1--58},
  year      = {2017},
  publisher = {Department of Mathematics, Princeton University Princeton, New Jersey, USA},
  doi       = {10.4007/annals.2017.185.1.1}
}

@article{loftin2001affine,
  title     = {{Affine spheres and convex $\mathbb{RP}^n$-manifolds}},
  author    = {Loftin, John C.},
  journal   = {American Journal of Mathematics},
  volume    = {123},
  number    = {2},
  pages     = {255--274},
  year      = {2001},
  doi       = {10.1353/ajm.2001.0011},
  publisher = {Johns Hopkins University Press}
}

@article{labourie2007flat,
  title     = {Flat projective structures on surfaces and cubic holomorphic differentials},
  author    = {Labourie, Fran{\c{c}}ois},
  journal   = {Pure and Applied Mathematics Quarterly},
  volume    = {3},
  number    = {4},
  pages     = {1057--1099},
  year      = {2007},
  publisher = {International Press of Boston},
  doi       = {10.4310/PAMQ.2007.v3.n4.a10}
}

@article{reid2026limits,
  title     = {{Limits of Convex Projective Surfaces and Finsler Metrics}},
  author    = {Reid, Charles},
  journal   = {The Journal of Geometric Analysis},
  volume    = {36},
  number    = {7},
  pages     = {246},
  year      = {2026},
  doi       = {10.1007/s12220-026-02486-x},
  publisher = {Springer}
}

@article{sagman2026local,
  title     = {{Local asymptotics for Hitchin's equations and high energy harmonic maps}},
  author    = {Sagman, Nathaniel and Smillie, Peter},
  journal   = {Mathematische Annalen},
  volume    = {394},
  number    = {1},
  pages     = {17},
  year      = {2026},
  doi       = {10.1007/s00208-026-03375-y},
  publisher = {Springer}
}

@article{parreau2012compactification,
  author    = {Parreau, Anne},
  title     = {Compactification d'espaces de représentations de groupes de type fini},
  journal   = {Mathematische Zeitschrift},
  volume    = {272},
  number    = {1-2},
  pages     = {51--86},
  year      = {2012},
  publisher = {Springer},
  doi       = {10.1007/s00209-011-0921-8}
}

@article{parreau2022invariant,
  author    = {Parreau, Anne},
  title     = {Invariant Weakly Convex Cocompact Subspaces for Surface
               Groups in {$A_2$}-Buildings},
  journal   = {Transactions of the American Mathematical Society},
  volume    = {375},
  number    = {4},
  pages     = {2293--2339},
  year      = {2022},
  publisher = {American Mathematical Society},
  doi       = {10.1090/tran/8344}
}

@article{burger2021currents,
  author    = {Burger, Marc and Iozzi, Alessandra and Parreau, Anne and Pozzetti, Maria Beatrice},
  title     = {Currents, Systoles, and Compactifications of Character Varieties},
  journal   = {Proceedings of the London Mathematical Society},
  volume    = {123},
  number    = {6},
  pages     = {565--596},
  year      = {2021},
  publisher = {Wiley},
  doi       = {10.1112/plms.12419}
}

@article{burger2021real-spectrum,
  author    = {Burger, Marc and Iozzi, Alessandra and Parreau, Anne and Pozzetti, Maria Beatrice},
  title     = {The Real Spectrum Compactification of Character Varieties: Characterizations and Applications},
  journal   = {Comptes Rendus. Math{\'e}matique},
  volume    = {359},
  number    = {4},
  pages     = {439--463},
  year      = {2021},
  publisher = {Acad{\'e}mie des sciences, Paris},
  doi       = {10.5802/crmath.123}
}

@misc{burger2023real-spectrum,
  author        = {Burger, Marc and Iozzi, Alessandra and Parreau, Anne
                   and Pozzetti, Maria Beatrice},
  title         = {The Real Spectrum Compactification of Character Varieties},
  year          = {2023},
  doi           = {10.48550/arXiv.2311.01892},
  note          = {Version 2, revised 14 July 2025}
}

@article{ouyang2021limits,
  author    = {Ouyang, Charles and Tamburelli, Andrea},
  title     = {Limits of {Blaschke} Metrics},
  journal   = {Duke Mathematical Journal},
  volume    = {170},
  number    = {8},
  pages     = {1683--1722},
  year      = {2021},
  publisher = {Duke University Press},
  doi       = {10.1215/00127094-2021-0027}
}

@article{ouyang2023length,
  author    = {Ouyang, Charles and Tamburelli, Andrea},
  title     = {Length Spectrum Compactification of the
               {$\mathrm{SO}_0(2,3)$}-{Hitchin} Component},
  journal   = {Advances in Mathematics},
  volume    = {420},
  pages     = {108997},
  year      = {2023},
  publisher = {Elsevier},
  doi       = {10.1016/j.aim.2023.108997}
}

@article{ouyang2023high-energy,
  author    = {Ouyang, Charles},
  title     = {High-Energy Harmonic Maps and Degeneration of Minimal Surfaces},
  journal   = {Geometry \& Topology},
  volume    = {27},
  number    = {5},
  pages     = {1691--1746},
  year      = {2023},
  publisher = {Mathematical Sciences Publishers},
  doi       = {10.2140/gt.2023.27.1691}
}

@article{katzarkov2015harmonic,
  author    = {Katzarkov, Ludmil and Noll, Alexander and Pandit, Pranav and Simpson, Carlos},
  title     = {Harmonic Maps to Buildings and Singular Perturbation Theory},
  journal   = {Communications in Mathematical Physics},
  volume    = {336},
  number    = {2},
  pages     = {853--903},
  year      = {2015},
  publisher = {Springer},
  doi       = {10.1007/s00220-014-2276-6}
}

@article{martone2024closed,
  author    = {Martone, Giuseppe and Ouyang, Charles and Tamburelli, Andrea},
  title     = {A Closed Ball Compactification of a Maximal Component via Cores of Trees},
  journal   = {Algebraic \& Geometric Topology},
  volume    = {24},
  number    = {7},
  pages     = {3693--3717},
  year      = {2024},
  publisher = {Mathematical Sciences Publishers},
  doi       = {10.2140/agt.2024.24.3693}
}

@article{wolf1989teichmuller,
  author    = {Wolf, Michael},
  title     = {The {Teichm{\"u}ller} Theory of Harmonic Maps},
  journal   = {Journal of Differential Geometry},
  volume    = {29},
  number    = {2},
  pages     = {449--479},
  year      = {1989},
  publisher = {International Press},
  doi       = {10.4310/jdg/1214442885}
}

@article{wolf1995rtrees,
  author    = {Wolf, Michael},
  title     = {Harmonic Maps from Surfaces to {$\mathbb{R}$}-Trees},
  journal   = {Mathematische Zeitschrift},
  volume    = {218},
  number    = {4},
  pages     = {577--593},
  year      = {1995},
  publisher = {Springer},
  doi       = {10.1007/BF02571924}
}

@incollection{katzarkov2017constructing,
  author    = {Katzarkov, Ludmil and Noll, Alexander and Pandit, Pranav and Simpson, Carlos},
  title     = {Constructing Buildings and Harmonic Maps},
  booktitle = {Algebra, Geometry, and Physics in the 21st Century:
               Kontsevich Festschrift},
  editor    = {Auroux, Denis and Katzarkov, Ludmil and Pantev, Tony
               and Soibelman, Yan and Tschinkel, Yuri},
  series    = {Progress in Mathematics},
  volume    = {324},
  pages     = {203--260},
  year      = {2017},
  publisher = {Birkh{\"a}user},
  address   = {Cham},
  doi       = {10.1007/978-3-319-59939-7_6}
}

@misc{evans2024polynomial,
  author        = {Evans, Parker},
  title         = {Polynomial Almost-Complex Curves in
                   {$\widehat{\mathbb S}^{2,4}$}},
  year          = {2024},
  eprint        = {2208.14409},
  archiveprefix = {aXiv}
}
\end{document}